\mathchardef\mhyphen="2D
\theoremstyle{plain}
\newtheorem{theorem}{Theorem}[section]
\newtheorem{prop}[theorem]{Proposition}
\newtheorem{lemma}[theorem]{Lemma}
\newtheorem{hyp}[theorem]{Hypothesis}
\newtheorem{cor}[theorem]{Corollary}
\newtheorem*{conj1}{Conjecture 1}
\newtheorem*{conj2}{Conjecture 2}
\newtheorem*{nonu-theorem}{Theorem}
\theoremstyle{definition}
\theoremstyle{remark}
\newcommand{\sheaf}[1]{\mathscr{#1}}
\newcommand{\OO}{\mc{O}}
\renewcommand{\AA}{\sheaf{A}}
\newcommand{\BB}{\sheaf{B}}
\newcommand{\PP}{\sheaf{P}}
\newcommand{\XX}{\sheaf{X}}
\newcommand{\DD}{\sheaf{D}}
\newcommand{\UU}{\sheaf{U}}
\newcommand{\mc}[1]{\mathcal #1}
\newcommand{\Z}{\mathbb Z}
\newcommand{\G}{\mathbb G}
\DeclareFontFamily{U}{wncy}{}
    \DeclareFontShape{U}{wncy}{m}{n}{<->wncyr10}{}
    \DeclareSymbolFont{mcy}{U}{wncy}{m}{n}
    \DeclareMathSymbol{\Sha}{\mathord}{mcy}{"58}
\begin{document}

\title[Local global principle]
{ Local-global principle for    groups    of type $A_n$ over semi global fields} 
 
\author[Suresh]{V.\ Suresh}
\address{Department of Mathematics  \\ %
Emory University \\ %
400 Dowman Drive~NE \\ %
Atlanta, GA 30322, USA}
\email{suresh.venapally@emory.edu}

\begin{abstract}   Let   $F$  be the function field of a curve over  a complete discretely valued field $K$.
Let $G$ be a  semisimple simply connected linear algebraic group over $F$ of   type $A_n$. 
We  give a description of  the obstruction to local global principle for  principal homogeneous  spaces under $G$ 
over $F$  with respect to 
discrete valuations  of $F$ in terms of $R$-equivalence  classes of $G$ over some suitable over fields. Using this 
description  we prove  that  this  obstruction  vanishes   under some conditions on the residue field $K$. 
\end{abstract} 

\maketitle

\section{Introduction} 

Let $F$ be a field and $\Omega_F$ a set of discrete valuations of $F$. For $\nu \in F$, let $F_\nu$ be the completion of $F$ at $\nu$. 
Let $Z$ be a variety over $F$. We say that $Z$ satisfies a {\it local-global principle} with respect to $\Omega_F$ if 
$Z(F) \neq \emptyset$ if and only if $Z(F_\nu) \neq \emptyset$ for all $\nu \in \Omega_F$. 
 Let $K$ be a complete discretely valued field and $F$ the function field of a curve over $K$. Let $\Omega_F$ be the 
 set of diviosrial discrete valuations of $F$.
 Let $G$ be a  connected linear algebraic group over $F$.  The study of local-global principle for 
 homogenous  spaces of $G$ over $F$ with respect to $\Omega_F$ 
  were studied extensively in last 15 years,  beginning with patching
 techniques developed by Harbater, Hartmann and Krashen (\cite{HHK1},\cite{HHK3}).  
  The patching techniques give a local global principle for homogeneous spaces under
  rational groups with respect to certain over fields of $F$ (\cite[Theorem 3.7]{HHK1}).
   Since then  we are interested in the study of local-global principle  with respect to $\Omega_F$. 
   Considerable progress has been for function fields of curves over $p$-adic fields (\cite{CTPS1}, \cite{preeti},  \cite{PPS}, \cite{Wu}, 
   \cite{PS2022}).
   In (\cite{CTPS1}), Colliot-Th\'el\`ene, Parimala and Suresh made the following conjectures. 
   
   \begin{conj1} Let $F$ be the function field of a $p$-adic curve and $G$ a connected linear algebraic group over $F$. 
   Let $Z$ be a projective homogenous space under $G$. Then $Z$  satisfies local-global principle with respect to $\Omega_F$.  
   \end{conj1} 
   
   \begin{conj2} Let $F$ be the function field of a $p$-adic curve and $G$ a    semisimple simply connected algebraic group over $F$. 
   Let $Z$ be a principal  homogenous space under $G$. 
   Then $Z$  satisfies local-global principle with respect to $\Omega_F$.  
   \end{conj2}

The conjecture 1 and 2  have  been settled affirmatively for all classical groups with some assumption on $p$ (\cite{preeti}, 
\cite{Hu}, \cite{Wu}, \cite{PS2022}).
In fact recently Gille and Parimala (\cite{GP}) proved that if $F$ is the function field of a curve over   a  complete discrete valued field 
$K$ and  $G$  a connected linear algebraic group over $F$, then projective homogeneous spaces under $G$ over $F$ 
satisfy local global principle with respect to $\Omega_F$ (with some   assumptions on the characteristic of the residue field of $K$). 

In contrast to projective  homogenous spaces, there are  examples of connected linear algebraic groups 
 over the function fields $F$  of curves over 
complete discretely values fields $K$ and principal homogeneous spaces  for which local-global principle with respect to $\Omega_F$ 
fail.  The first such an example is  given in (\cite{CTPS2}) and the group in this example is torus and the residue field of $K$ is 
algebraically closed. However if we restrict to semisimple  simply connected groups, examples of such groups for which 
the  local-global principle with respect to $\Omega_F$  for principal homogenous spaces fails are given in (\cite{CTHHKPS2}).
In these examples the residue fields  of $K$ are of cohomological dimension at least 4. It is natural to ask what happens if 
the cohomological dimension of the residue field of $K$ is less than 4. If the cohomological dimension of the residue field of $K$
is zero, then cohomological dimension of $F$ is 2  (e.g. $F = C((t))(X)$  where $C$ is algebraically closed field and
$X$ a curve over $C((t))$)  and hence by Serre's conjecture, they are no nontrivial principal homogenous spaces
over $F$ under semisimple simply connected linear algebraic groups (\cite{BP}, \cite[Theorem 1.5]{CTGP}). 

 In this paper, we consider complete discrete valued field with  cohomological dimension of the residue field 1 or 2.
 Let  $F$ be a field, $\Omega_F$ a set of  divisorial discrete valuations of $F$ and $G$ a connected linear algebraic group over $F$.
Let 
$$\Sha_{div}(F, G) = ker(H^1(F, G) \to \prod_{\nu \in \Omega_F} H^1(F_\nu, G)).$$

Let $G$ be a   simply connected group    of type  $A_n$.
We say that  char$(\kappa)$ 
is {\it good}  for $G$  if   either char$(\kappa) = 0$ or if  $G$ is of type $^1A_n$, then 
$(n+1)$ is coprime to char$(\kappa)$ and if  $G$ is of type $^2A_n$, then 
$2(n+1)$ is coprime to char$(\kappa)$. 

For a field $L$ and   integer $m \geq 2$,  the {\it  m-cohomological dimension} of a field $L$, denoted by
$cd_m(L)$,  is defined as the maximum of the $p$-cohomological dimensions  $cd_p(L)$ (\cite[I.3.1]{SerreGC})
 for all primes $p$ dividing $m$. 

  We prove the following
 
 \begin{theorem} (\ref{an-cd1}) Let $K$ be a complete discretely  valued  field  with residue field $\kappa$. 
  Let $F$ be the function field of a curve over $K$.  Let $G$ be a semisimple simply connected group  over $F$ of type
  $A_n$.  If   cd$_{n+1}(\kappa) \leq 1$ and 
  char$(\kappa)$ is good for $G$, then $\Sha_{div}(F, G) = \{ 1 \}$. 
 \end{theorem}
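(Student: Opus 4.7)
The plan is to invoke the $R$-equivalence description of $\Sha_{div}(F,G)$ proved earlier in the paper (the result advertised in the abstract), which reduces the vanishing of $\Sha_{div}(F,G)$ to the vanishing of the $R$-equivalence class groups $G(E)/R$ for each of finitely many overfields $E$ of $F$ arising from a regular proper model of the curve. Concretely, after choosing such a regular proper model $\mathcal{X}$ over $\Spec(O_K)$, these overfields are those attached to closed points and to generic points of components of the closed fiber $\mathcal{X}_0$; their residue fields are either finite extensions of $\kappa$ or function fields of curves over $\kappa$. Since $G$ is simply connected of type $A_n$, either $G\isom \SL_1(A)$ (inner) for a central simple $F$-algebra $A$ of degree $n+1$, or $G\isom \mathrm{SU}(B,\tau)$ (outer) for a central simple algebra of degree $n+1$ with involution of the second kind over an \'etale quadratic extension $L/F$.

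Next I would carry out a cohomological dimension accounting. Using the classical bound $cd_p(\Frac(R)) \leq cd_p(k)+1$ for a complete discrete valuation ring $R$ with residue field $k$ (valid when $p \neq \mathrm{char}(k)$) together with Serre's bound on $cd$ for finitely generated extensions, the hypothesis $cd_{n+1}(\kappa) \leq 1$ gives a uniform upper bound on $cd_{n+1}(E)$ for each overfield $E$ appearing in the description, and similarly on $cd_{2(n+1)}(E)$ in the outer case. The goodness of $\mathrm{char}(\kappa)$ for $G$ is precisely what is needed to apply these cohomological bounds at the primes dividing $n+1$ (respectively $2(n+1)$). With these bounds in hand, one invokes known $R$-triviality theorems: for inner type ${}^1A_n$, results of Gille and Chernousov--Merkurjev give $\SL_1(A)(E)/R=\{1\}$ whenever $E$ has sufficiently low cohomological dimension at the primes dividing $\deg A$; for outer type ${}^2A_n$, a parallel result of Merkurjev for special unitary groups, combined with descent along $L\otimes_F E/E$, gives $G(E)/R=\{1\}$. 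Assembling these vanishings through the description then yields $\Sha_{div}(F,G)=\{1\}$.

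The main obstacle I anticipate is the outer case. Unlike $\SL_1(A)$, where the $R$-equivalence class group is computed by the reduced norm and admits a clean $K$-theoretic description, for $\mathrm{SU}(B,\tau)$ one must track invariants simultaneously over $E$ and over $L\otimes_F E$, verify that the overfields produced by the $R$-equivalence description remain well-behaved after base change to $L$, and handle the extra factor of $2$ in the goodness hypothesis. A secondary technicality is that at non-smooth closed points of $\mathcal{X}_0$ (e.g.\ at nodes of the closed fiber) one must take care with the precise meaning of the overfield and of its residue field; this is a standard annoyance in patching arguments that does not affect the cohomological dimension bounds, only the bookkeeping. Once these compatibilities are established, the vanishings $G(E)/R=\{1\}$ at each overfield propagate through the $R$-equivalence description to give $\Sha_{div}(F,G)=\{1\}$ as claimed.
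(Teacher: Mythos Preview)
Your overall strategy—invoke the double-coset description of $\Sha_{div}(F,G)$ in terms of $R$-equivalence classes and then kill those classes via cohomological-dimension bounds—is exactly the paper's. However, your accounting of the overfields is off in a way that matters.

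The description proved in the paper (Theorems~\ref{sk1-requiv} and~\ref{su}) reads
\[
\Sha_{div}(F,G)\;\simeq\;\prod_{U\in\UU_0} G(F_U)/R \;\Big\backslash\; \prod_{\wp\in\BB_0} G(F_\wp)/R \;\Big/\; \prod_{P\in\PP_0} G(F_P)/R,
\]
so three kinds of overfields appear: the $F_U$, the $F_P$, and the \emph{branch} fields $F_\wp$. Your list (``closed points'' and ``generic points of components'') covers only the outer two, and your residue-field accounting—``finite extensions of $\kappa$ or function fields of curves over $\kappa$''—matches only those. The branches are absent from your plan, and this is a genuine gap: vanishing of $G(F_U)/R$ and $G(F_P)/R$ alone does not force the double coset to collapse, since the middle term may survive. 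Conversely, vanishing of the middle term alone already suffices, and that is precisely what the paper does.

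Concretely, the branch field $F_\wp$ is a complete discretely valued field whose residue field $\kappa(\wp)$ is itself complete discretely valued with residue field a finite extension of $\kappa$; hence $cd_{n+1}(\kappa(\wp))\leq 2$. For the inner case one then invokes Soman's theorem on $SK_1$ over Henselian fields with such residue fields to get $SK_1(A)(F_\wp)=\{1\}$; for the outer case, Yanchevskii's result \cite[Corollary~4.15]{Y1979} gives $SUK_1(A,\tau)(F_\wp)=\{1\}$. (The references you propose—Gille, Chernousov--Merkurjev—are not the ones actually in play here.) Two smaller points: you should first reduce, via Weil restriction, to the absolutely simple case before writing $G\simeq SL_1(A)$ or $G\simeq SU(A,\tau)$; and your plan to prove $G(F_P)/R=\{1\}$ directly would be awkward, since $F_P$ is the fraction field of a two-dimensional complete local ring rather than a complete discretely valued field, so the standard $SK_1$-vanishing theorems do not apply to it out of the box. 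Fortunately, as noted, you do not need this.
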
 

 \begin{theorem} (\ref{an-cd2}) Let $K$ be a complete discretely  valued  field  with residue field $\kappa$. 
  Let $F$ be the function field of a curve over $K$.  Let $G$ be a semisimple simply connected group over $K$
   of type $A_n$. If   cd$_{n+1}(\kappa) \leq 2$ and 
  char$(\kappa)$ is good for $G$, then $\Sha_{div}(F, G) = \{ 1 \}$.
  \end{theorem}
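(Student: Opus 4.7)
The plan is to apply the paper's structural description of $\Sha_{div}(F,G)$ in terms of $R$-equivalence class groups of $G$ over suitable overfields of $F$ (promised in the abstract), and then to verify vanishing of these $R$-equivalence class groups using the hypothesis $cd_{n+1}(\kappa) \leq 2$.

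First, I would fix a regular projective model $\sheaf{X}/\OO_K$ of the curve whose function field is $F$, with closed fiber $X_0$. Per the abstract, $\Sha_{div}(F,G)$ can be expressed in terms of the $R$-equivalence class groups $G(L)/R$ for $L$ ranging over the Harbater--Hartmann--Krashen patch fields: completions $F_P$ at closed points $P \in X_0$ and completions $F_\eta$ at generic points $\eta$ of the irreducible components of $X_0$. Consequently, to prove $\Sha_{div}(F,G) = \{1\}$ it suffices to show $G(L)/R = 1$ for each such patch $L$.

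Second, I would track the cohomological dimension of the patch fields. Under $cd_{n+1}(\kappa) \leq 2$, each $\kappa(P)$ is a finite extension of $\kappa$, so still satisfies $cd_{n+1} \leq 2$, while each $\kappa(\eta)$ is a function field in one variable over $\kappa$, hence satisfies $cd_{n+1} \leq 3$. Since the $F_P$ are fraction fields of two-dimensional complete regular local rings and each $F_\eta$ is complete discretely valued, standard formulas for the cohomological dimension of complete discretely valued fields bound $cd_{n+1}(F_P)$ and $cd_{n+1}(F_\eta)$ in a way compatible with the subsequent $R$-triviality inputs. The hypothesis that char$(\kappa)$ is good for $G$ is needed both to preserve these bounds (by avoiding the bad prime) and to ensure that Galois cohomology with coefficients in $\muu_{n+1}$ or $\muu_{2(n+1)}$ captures the arithmetic of $G$.

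Third, I would invoke $R$-triviality theorems for simply connected groups of type $A_n$. In the inner case $G = \Group{SL}_1(A)$ with $A$ a central simple $F$-algebra of degree $n+1$, one identifies $G(L)/R$ with $SK_1(A_L)$ (Chernousov--Merkurjev), and the Merkurjev--Suslin theorem (together with the cohomological bound on $L$) gives the required vanishing. In the outer case $G = \Group{SU}(B,\tau)$ where $B$ is a central simple algebra with unitary involution, an analogous result for unitary groups—traceable to Merkurjev's analysis of $R$-equivalence and the norm principle for hermitian forms—yields vanishing under the same hypothesis. The main obstacle is precisely this outer case: establishing $R$-triviality of $\Group{SU}(B,\tau)$ over the patch fields $F_P$ and $F_\eta$ under the sharp bound $cd_{n+1}(\kappa) \leq 2$ requires careful propagation of the cohomological dimension through the hermitian-form framework and a delicate application of the norm principle for the reduced norm of $(B,\tau)$. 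The inner case is comparatively standard once the structural theorem is in place.
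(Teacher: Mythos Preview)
Your plan has two genuine gaps.

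First, the structural description in the paper is a double coset
\[
\Sha_{div}(F,G)\;\simeq\;\prod_{U\in\UU_0} G(F_U)/R \,\backslash\, \prod_{\wp\in\BB_0} G(F_\wp)/R \,/\, \prod_{P\in\PP_0} G(F_P)/R,
\]
with the middle product running over the \emph{branch fields} $F_\wp$, not only over the point and component fields $F_P$, $F_\eta$. Proving $G(F_P)/R=1$ and $G(F_\eta)/R=1$ would only trivialise the flanking actions and leave $\Sha_{div}(F,G)\simeq\prod_\wp G(F_\wp)/R$, which need not be a single point. The paper proceeds in the opposite direction: it kills the middle factor directly by showing $SK_1(A)(F_\wp)=1$ (inner case, Theorem~\ref{sk1-cd2}) and $SUK_1(A,\tau)(F_\wp)=1$ (outer case, Theorem~\ref{sk1u-cd2}) at every branch.

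Second, even at the correct fields your cohomological-dimension argument is not enough. A branch field $F_\wp$ is complete discretely valued with residue $\kappa(\wp)$, itself complete discretely valued with residue a finite extension of $\kappa$; under $cd_{n+1}(\kappa)\le 2$ one only gets $cd_{n+1}(F_\wp)\le 4$, and there is no general theorem giving $SK_1=1$ over fields of that cohomological dimension. What the paper actually exploits is the hypothesis---which you dropped when you wrote ``$A$ a central simple $F$-algebra''---that $G$ is defined over $K$. Writing $D=D_0+(E,\sigma,t)$ with $D_0$ and $E/K$ unramified over the valuation ring of $K$, one checks that at each branch the residue algebra $\bar{D}$ and its centre $Z(\bar{D})$ are themselves \emph{unramified} over $\kappa(\wp)$. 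This structural input is exactly what, together with $cd(\kappa)\le 2$, feeds into Proposition~\ref{sk1d-cd2} and Theorem~\ref{sk1u-dvr} to force triviality of $SK_1$ and $SUK_1$ at the branches. A bare bound on cohomological dimension, plus an appeal to Merkurjev--Suslin, does not reach this conclusion.
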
 
 
 For general residue field, we prove the following. 
 
 \begin{theorem} (\ref{good-reduction-sl11}, \ref{good-reduction-su})
 Let $K$ be a complete discretely  valued  field  with residue field $\kappa$. 
  Let $F$ be the function field of a curve  $X$ over $K$. Suppose that $X$ has a good reduction. 
    Let $G$ be a semisimple simply connected group over $K$
   of type $A_n$.   If    
  char$(\kappa)$ is good for $G$, then 
   $\Sha_{div}(F, G) = \{ 1 \}$.
    \end{theorem}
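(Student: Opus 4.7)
\emph{Proof plan.} The plan is to treat separately the inner ($^1A_n$) and outer ($^2A_n$) forms, as the two cited sub-theorems (\ref{good-reduction-sl11}, \ref{good-reduction-su}) reflect. In the inner case $G = \SL_1(A)$ for a central simple $F$-algebra $A$ of degree $n+1$; since $G$ descends to $K$, so does $A$, and the goodness assumption makes $n+1$ invertible in $O_K$, so $A$ extends to an Azumaya $O_K$-algebra $\mathcal A$. In the outer case $G = \mathbf{SU}(B,\tau)$ for $(B,\tau)$ of degree $n+1$ over a quadratic étale $F$-algebra $E$, which again descends to $K$, and under the stronger goodness assumption $E_K/K$ is unramified so that $(B,\tau)$ extends to an Azumaya algebra with involution of the second kind over an étale quadratic $O_K$-cover.

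First I would pick the smooth proper model $\mathcal X \to \Spec(O_K)$ of $X$ furnished by good reduction and pull back the algebra (respectively algebra-with-involution) from $O_K$ to obtain unramified Azumaya data on all of $\mathcal X$. Next I would invoke the main descriptive theorem of the paper, which realizes $\Sha_{div}(F,G)$ inside a product of $R$-equivalence quotients $G(L)/R$, indexed by the finite family of Harbater--Hartmann--Krashen patching overfields $L$ associated to $\mathcal X$. With $X_0$ smooth and irreducible, these $L$ come in two flavors: the fraction fields $F_P$ at closed points $P \in X_0$ (fraction fields of two-dimensional complete regular local rings) and the complete discretely valued fields $F_U$ at affine opens $U \subseteq X_0$ with residue field $\kappa(U)$.

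The heart of the argument is then to show $R$-triviality of $G$ over each such $L$. Over $F_U$, reduction along the uniformizer induces a surjection $G(F_U)/R \twoheadrightarrow G_0(\kappa(U))/R$, where $G_0$ is the simply connected group of type $A_n$ attached to the Azumaya reduction at the special fiber; $R$-triviality of $\SL_1$ (respectively $\mathbf{SU}$) of an unramified Azumaya algebra over the function field of a smooth curve over $\kappa$ is classical (Wang's theorem in the inner case, and its unitary analogue via Chernousov--Merkurjev in the outer case). Over $F_P$, a two-step specialization along a regular system of parameters reduces the question to $R$-triviality of $G$ over a finite extension of $\kappa$, which again holds.

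The main obstacle I expect lies in the outer case. Tracking the étale quadratic extension $E/F$ faithfully through $\mathcal X$ is delicate: if $E/F$ were ramified along the special fiber, the specialization of the involution would degenerate and the surjection above would fail. The goodness hypothesis and good reduction of $X$ should together force $E$ to extend to an étale quadratic cover of $\mathcal X$ (possibly after a finite sequence of blow-ups at closed points of $X_0$, which does not affect the HHK patching description), but verifying this reduction precisely -- and checking that the $R$-equivalence specialization remains surjective at any exceptional divisor introduced -- is where the technical effort will concentrate.
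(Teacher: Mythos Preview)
Your setup through the double coset description of $\Sha_{div}$ is correct, but the strategy you propose for concluding is wrong. You want to show $G(L)/R = \{1\}$ for each patching overfield $L$; this is false in general and the paper makes no such claim. Over $F_U$ you reduce to $SK_1(\bar A)$ over $\kappa(U)$, the function field of a smooth curve over $\kappa$; however $\kappa$ is \emph{arbitrary} in this theorem (there is no cohomological dimension hypothesis), and $SK_1$ of a central simple algebra over such a field has no reason to vanish. Wang's theorem only asserts $SK_1(A)=1$ when the index of $A$ is squarefree---it says nothing special about function fields of curves. Your two-step specialization over $F_P$ likewise lands in $SK_1$ over a finite extension of $\kappa$, which again need not be trivial. (Note also that your list of overfields omits the branch fields $F_\wp$, which are precisely the ``middle'' terms in the double coset.)

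What the paper actually does is exploit the \emph{shape} of the double coset
\[
\prod_{U \in \UU_0} SK_1(A)(F_U) \,\Big\backslash\, \prod_{\wp \in \BB_0} SK_1(A)(F_\wp) \,\Big/\, \prod_{P \in \PP_0} SK_1(A)(F_P).
\]
Because the special fibre $X_0$ is smooth and irreducible, there are no nodes: each $P \in \PP_0$ lies on a unique branch $\wp_P$, and $\PP_0 \leftrightarrow \BB_0$ is a bijection. The key technical input, proved in \S\ref{sk1d-2dim} (Theorem~\ref{surj-2dim}) for $SL_1$ and in \S\ref{sk1ud-2dim} (Theorem~\ref{sk1u-hyp2}) for $SU$, is that $SK_1(A)(F_P) \to SK_1(A)(F_{\wp_P})$ (resp.\ $SUK_1$) is \emph{surjective}. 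Hence the right-hand factor already surjects onto the middle product and the double coset collapses to a point, with no need for any individual $SK_1$ to vanish. The obstacle you anticipate in the outer case---tracking the quadratic extension through blow-ups---does not arise: since $G$ is defined over $K$, the quadratic extension lives at the level of $K/K_0$, and on the smooth model no blow-ups are needed.
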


One of the steps in the proof of the above theorems is the   following. 

 \begin{theorem}(\ref{sk1-requiv}, \ref{su}) Let $K$ be a complete discretely  valued  field  with residue field $\kappa$. 
  Let $F$ be the function field of a curve over $K$.  Let $G$ be a semisimple simply connected group  over $F$ of type
  $A_n$.  Then there exists a regular proper model $\XX$ of $F$ and a finite set  $\PP$ of closed points  
  of  $\XX$ such that 
   $$\Sha_{div}(F,  G)  \simeq \prod_{U \in \UU_0} G(F_{U}) /R \,\backslash \prod_{\wp \in \BB_0} G(F_\wp)/R 
  \,/ \prod_{P \in \PP_0} G(F_{P}) / R.$$ 
where  $\UU_0$ is the set of irreducible components of $X \setminus \PP_0$ and 
$\BB_0$ is the set of branches with respect to $\PP_0$. 

 \end{theorem}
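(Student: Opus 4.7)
The plan is to combine Harbater--Hartmann--Krashen (HHK) patching for the rational group $G$ of type $A_n$ with a refinement to $R$-equivalence classes, and then to identify the resulting HHK obstruction with $\Sha_{div}$ by a sufficiently generous choice of model and patch configuration.

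\textbf{Model and patching configuration.} First I would fix a regular proper model $\XX \to \Spec \OO_K$ of $F$ whose reduced special fiber $X$ has strict normal crossings. In the inner case $G = \SL_1(A)$ for a central simple $F$-algebra $A$ of degree $n+1$; in the outer case $G = \Group{SU}(B,\tau)$ for a central simple algebra $B$ with involution $\tau$ of the second kind. Spread the relevant datum to an Azumaya algebra (with involution in the outer case) on $\XX$ modeled by a maximal order away from a finite closed subscheme, and enlarge that locus to a finite set $\PP$ of closed points of $\XX$ containing all singular points of $X$, such that $X \setminus \PP_0$ decomposes as a disjoint union of smooth irreducible affine curves (the components $U \in \UU_0$). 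Because $G$ is $F$-rational, \cite[Theorem 3.7]{HHK1} produces a bijection
$$\Sha_\PP(F,G) \;\simeq\; \prod_{U \in \UU_0} G(F_U) \;\big\backslash\; \prod_{\wp \in \BB_0} G(F_\wp) \;\big/\; \prod_{P \in \PP_0} G(F_P),$$
where $\Sha_\PP(F,G)$ denotes the kernel of $H^1(F,G) \to \prod_U H^1(F_U,G) \times \prod_P H^1(F_P,G)$.

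\textbf{Descent to $R$-equivalence.} Next I would argue that this double coset descends to $R$-equivalence classes in each factor. Two $R$-equivalent points of $G(F_U)$ (respectively $G(F_P)$) are joined by a rational chain, which interpolates the associated 1-cocycles and so shows that $R$-equivalent elements act identically on the double coset. To check the descent is well-defined on the opposite side, one uses the identification $\SL_1(A)(L)/R \cong \mathrm{SK}_1(A\tensor L)$ in the inner case, and the analogous identification through the reduced unitary Whitehead group in the outer case. The hypothesis that $\mathrm{char}(\kappa)$ be good for $G$ enters here to keep the relevant specialization maps $R$-trivial.

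\textbf{Comparison with $\Sha_{div}$.} The final and hardest step is to show that $\Sha_\PP(F,G) = \Sha_{div}(F,G)$ for this choice of $\XX$ and $\PP$. Every divisorial valuation $\nu$ of $F$ has a unique center on $\XX$, lying either at a codimension-one point of the special fiber (in which case $F_\nu$ is closely related to one of the $F_U$) or on a horizontal divisor through some $P \in \PP_0$ (in which case $F_\nu$ contains $F_P$ after passage through an appropriate branch). The key difficulty is to upgrade triviality at every divisorial valuation to triviality on each patch field $F_U$ and $F_P$: this will require a norm principle for the reduced (unitary) Whitehead group and $R$-triviality of residue-type specialization maps, both of which are specific to groups of type $A_n$. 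This is the main obstacle in the proof, and its resolution is precisely where the type-$A_n$ and good-characteristic hypotheses are essential. Combining the three steps yields the claimed description of $\Sha_{div}(F,G)$.
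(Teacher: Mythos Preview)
Your three-step outline has the right shape, but two of the three steps have genuine gaps.

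\textbf{Descent to $R$-equivalence.} Your argument that $R$-equivalent elements of $G(F_U)$ or $G(F_P)$ act identically on the double coset is fine, but that only shows the double coset of full groups \emph{surjects} onto the double coset of $R$-equivalence classes. For the map to be a bijection you need the reverse: that every $R$-trivial element of $G(F_\wp)$ factors as a product of an $R$-trivial element of $G(F_U)$ and one of $G(F_P)$. This is not obvious from rational chains alone; the paper invokes the factorization $\prod_\wp RG(F_\wp) = \prod_U RG(F_U)\cdot\prod_P RG(F_P)$ for simply connected strongly isotropic groups, due to Gille--Parimala, and this is the correct ingredient here.

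\textbf{Comparison with $\Sha_{div}$.} Your proposed strategy of tracking centers of divisorial valuations on $\XX$ and relating $F_\nu$ directly to some $F_U$ or $F_P$ does not work as stated: a divisorial valuation can arise from a blow-up of $\XX$ and have no simple relation to the patch fields of the original model. The paper instead proceeds in two separate steps. First, $\Sha_{div}(F,G)=\Sha_X(F,G)$: this uses the exact sequence comparing them via $\prod_P \Sha_{div}(F_P,G)$, and the vanishing of each $\Sha_{div}(F_P,G)$ is a \emph{local injectivity} statement proved by showing that an element $u\pi^r\delta^s$ which is a reduced norm from $A\otimes F_\pi$ is already a reduced norm from $A$ over the $2$-dimensional complete local field $F_P$ (and an analogous unitary statement). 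Second, $\Sha_X(F,G)=\Sha_{\PP_0}(F,G)$: this uses a \emph{local surjectivity} statement, namely that $SK_1(A)(F_P)\to SK_1(A)(F_{P,\eta})$ is onto for each nonsingular $P$ and branch $\eta$ (and again a unitary analogue), which lets one absorb extra points into $\PP_0$. Both of these are substantial computations specific to central simple algebras over $2$-dimensional complete regular local rings, and they, rather than a norm principle, are where the type-$A_n$ and good-characteristic hypotheses do their work. Your proposal does not identify either mechanism.
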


We now give a brief description of the structure of the manuscript. 
In \S\ref{prel}, we begin by recalling  a few definitions and facts about central simple algebras and involutions. 
Let $K$ be a complete discretely  valued  field  with valuation ring $T$ and  residue field $\kappa$. 
  Let $F$ be the function field of a curve over $K$.  Let $\XX$ be a  regular proper model of 
  $F$ over $T$ and $X_0$ the special fibre.   For any $x \in X_0$, let $F_x$ denote the field of fractions of 
  the completion of the local ring at $x$. 
   In \S\ref{gen}  we give some sufficient  conditions on the group $G$ under which the obstructions to local global principle
  with respect to divisorial discrete valuations and with respect to the  over fields  $F_x$, $x \in X_0$. 
  In \S\ref{isotropic}, under some conditions on $G$, we give a description of the obstruction to the local global principle 
  with respect to $F_x$, $x \in X$, in terms of $R$-equivalence classes of $G$ over certain suitable over fields of $F$. 
 
  Let $R$ be a two dimensional  complete regular local ring  with field of fractions $F$, residue field $\kappa$ and maximal
  ideal $(\pi, \delta)$. 
  In \S\ref{field-extns} we give a description of   finite extensions of $F$ of degree coprime to char$(\kappa)$ which are unramified 
  on $R$ except possibly at $(\pi)$ and $(\delta)$.   Let $A$ be a central simple algebra over $F$ which is unramified 
  on $R$ except possibly at  $(\pi)$ and $(\delta)$. Suppose that the period of $A$ is coprime to char$(\kappa)$.
  In \S\ref{csa-2dim}, we show  that  index$(A) = $ index$(A\otimes F_\pi)$. We also show that
   an element in $F^*$ which is supported only along $(\pi)$ and $(\delta)$ is a reduced norm from $A$
  if and only if it is a reduced norm from $A \otimes F_\pi$.  In \S\ref{sk1d-2dim}, we show that the natural map 
  $SK_1(A) \to SK_1(A\otimes F_\pi)$ is onto. If $A$ has involution of second kind, then 
  in \S\ref{sk1ud-2dim}, $SUK_1(A,\tau) \to SUK_1(A\otimes F_\pi, \tau)$ is onto.
  In \S\ref{sk1d-dvr}, we recall description of $SK_1(A)$ for a central simple algebra 
  over a complete discretely  valued field.   In \S\ref{sk1ud-dvr}, we recall description of $SUK_1(A, \tau)$ 
  for a central simple algebras with involutions
  over a complete discretely  valued field.   
  
  Let $F$ be the function field of a curve over a complete discretely valued field $K$. 
  Let $A$ be a central simple algebra over $F$. 
  In  In \S\ref{type-inner},   we give a description of the obstruction to the local global principle 
  with respect to  divisorial discrete valuations of $F$  in terms of $R$-equivalence classes of $SL_1(A)$ 
  over certain suitable over fields of $F$. If $A$ has an involution $\tau$ of second kind, then in   \S\ref{lgp-unitary}, 
  we prove a local-global principle for unitary groups and 
  in  \S\ref{type-outer},     we give a description of the obstruction to the local global principle 
  with respect to  divisorial discrete valuations of $F$  in terms of $R$-equivalence classes of $SU(A, \tau)$. 
  In   \S\ref{cd1} and   \S\ref{cd2}, we give the proofs of our main results.

  \section{Preliminaries }
  \label{prel}
  
  In this section we recall some basic facts about central simple algebras and involutions.  
  For further details  we refer to (\cite{Draxl} and \cite{reiner}). 
     
  Let $K$ be a field and $A$ a central simple algebra over $K$. Let $Nrd: A \to K$ be the reduced norm
  ans   $SL_1(A) = \{ x \in A \mid Nrd(x) = 1\}$.  
  The exact sequence of algebraic groups 
  $$1 \to SL_1(A)\to GL_1(A) \to \G_m \to 1$$
  induces a long exact sequence of cohomology sets 
  $$ A^* \to K^* \to H^1(K, SL_1(A)) \to H^1(K, GL_1(A)).$$
  Since $H^1(K, GL_1(A)) = \{ 1 \}$  (cf., \cite[Theorem 29.2]{KMRT})  and $A^* \to K^*$ is the reduced norm 
     we have 
  $$H^1(K, SL_1(A)) \simeq K^*/Nrd(A^*).$$
 Let $A' = M_n(A)$. 
 Since $Nrd(A^*) = Nrd(A^{'*})$ (cf., \cite[Lemma 2.6.4]{GS}),  the map  
 $$ H^1(K, SL_1(A)) \to H^1(K, SL_1(M_n(A))$$ induced by the inclusion $SL_1(A) \to SL_1(M_n(A))$ is bijective.

   Two central simple  algebras  $A$ and $B$ over a field $K$ are  {\it Brauer equivalent } if 
  $M_n(A) \simeq M_m(B)$ for some $n$ and $m$. Throughout this paper we write $A = B$ if 
  $A$ and $B$ are Brauer equivalent.   
  
    Let $SK_1(A) = SL_1(A)/[A^* : A^*]$. 
  If $A = B$, then $SK_1(A) \simeq SK_1(B)$ (\cite[p.155]{Draxl}).

   Let $E/K$ be a  cyclic extension with $\sigma$ a generator of Gal$(E/K)$. 
   For $a \in K^*$, let   $(E, \sigma, a)$ denotes the cyclic algebra given by 
  $$(E, \sigma, a) = E \oplus Ex  \oplus \cdots \oplus Ex^{n-1} $$  with $x^n = a \in K$ and 
  $x\lambda = \sigma(\lambda) x$ for all $\lambda \in E$. 
  If $L/K$ is a extension, then $E\otimes_K L = \prod E'$ for some cyclic extension $E'/L$ and
  $\sigma$ induces a generator $\sigma'$ of Gal$(E'/L)$.
  We denote the algebra $(E', \sigma', a)$ by $(E, \sigma, a) \otimes L$. 
  
  Suppose that $K$ is a  discrete valued field with residue field $\kappa$ and $R$ the valuation ring. 
  Let $\hat{K}$ be the completion of  $K$. 
  We say  that a central simple algebra $A$ over $K$
     is {\it unramified} if there exists an Azuamaya $R$-algebra
  $\AA$ such that $\AA  \otimes _R K \simeq A$.  
  Suppose that ind$(A)$ is coprime to  char$(\kappa)$.
  Then there exists a cyclic unramified  extension  $E/K$ of degree $n$  such that 
  $A$ is Brauer equivalent to $ A_0 + (E, \sigma, \pi)$ (cf., \cite[Lemma 4.1]{PPS}), where  $A_0$ is an unramified 
  central simple algebra over $K$, $\sigma$ is a generator of Gal$(E/K)$ and $\pi \in K$ a parameter. 
  
  Let $D$ be a (finite dimensional) central division algebra over $K$. Suppose that $D\otimes_K \hat{K}$ is division. 
  Then the valuation on $K$ extends to a valuation on $D$. Let $\Lambda = \{ x \in A \mid Nrd(x) \in R \}$.
  Then $\Lambda$ is the unique maximal $R$-order in $D$.  Further $\Lambda$ has a unique 2-sided maximal ideal 
  $m_D$ and $\bar{D} = \Lambda/m_D$ is a division algebra.  Suppose ind$(D)$ is coprime to char$(\kappa)$.
  Then we have $D = D_0 + (E, \sigma, \pi)$ for some unramified central division algebra  $D_0$ and unramified cyclic extension 
  $E/K$.  In this case $Z(\bar{D})$ is the residue field of $E$. In particular $Z(\bar{D})/\kappa$ is a cyclic extension. 
  Further $\bar{D}$ is Brauer equivalent to $\bar{D}_0 \otimes Z(\bar{D})$. 
  
  We record here the following well known result. 
  
  \begin{prop} 
  \label{cyclic} 
  Let $K$ be a complete discretely valued field with residue field $\kappa$.
  Let $A$ be a central simple  algebra over $K$ of period coprime to char$(\kappa)$. 
  If there exists a totally ramified extension $L/K$  which splits $A$, then  
  there exists a cyclic extension $E/K$ and a parameter $\pi \in K$ such that  
  $A   = (E, \sigma, \pi)$. 
  \end{prop}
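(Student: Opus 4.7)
The plan is to apply the structure result recalled just above the proposition, writing $A = A_0 + (E,\sigma,\pi_0)$ with $E/K$ unramified cyclic, $A_0$ unramified, $\sigma$ a generator of $\Gal(E/K)$, and $\pi_0$ a parameter of $K$, and then to use the existence of the totally ramified splitting field $L/K$ to pin down $A_0$ precisely enough to absorb it into a single cyclic algebra.

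First I would choose $n$ to equal the order in $H^1(\kappa,\Q/\Z)$ of the residue of $A$, so that in the decomposition above the extension $E/K$ has degree exactly $n$ and its associated character $\chi_\sigma$ equals this residue. Writing $\pi_0 = v\pi_L^e$ with $e=[L:K]$, $\pi_L$ a parameter of $L$, and $v \in \OO_L^\times$, I would observe that $E \otimes_K L$ is the unramified cyclic extension $EL/L$ of degree $n$, and expand
$$A\otimes L \,=\, A_0\otimes L \,+\, (EL,\sigma,v) \,+\, e\cdot(EL,\sigma,\pi_L)$$
in $\Br(L)$, using the multiplicativity of cyclic algebras in the third argument.

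Next I would feed this into the residue exact sequence
$$0 \to \Br(\kappa) \to \Br(L) \to H^1(\kappa,\Q/\Z) \to 0$$
(on the prime-to-char part), where the first map identifies $\Br(\kappa)$ with the unramified Brauer classes of $L$. The hypothesis $A\otimes L = 0$ then yields two constraints: the residue $e\chi_\sigma$ vanishes, forcing $n \mid e$; and the unramified part vanishes in $\Br(\kappa)$, giving $\bar A_0 = (\bar E,\sigma,\bar v^{-1})$.

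Finally, since for the prime-to-char part the unramified Brauer classes of $K$ are in bijection with $\Br(\kappa)$, this identity lifts to $A_0 = (E,\sigma,w)$ in $\Br(K)$ for any unit $w\in\OO_K^\times$ with $\bar w = \bar v^{-1}$, and then
$$A \,=\, (E,\sigma,w) \,+\, (E,\sigma,\pi_0) \,=\, (E,\sigma,w\pi_0),$$
where $\pi := w\pi_0$ is a parameter of $K$. The main care needed is in the residue computation over $L$: the parameter appearing in the third cyclic algebra is $\pi_L$ rather than $\pi_0$, and I must confirm that the character $\chi_\sigma$ has order exactly $n$ and is preserved when passing from the residue sequence over $K$ to the one over $L$—both of which follow from $L/K$ being totally ramified, so that $\kappa_L = \kappa$ and restriction on the unramified part is the identity.
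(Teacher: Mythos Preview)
Your proof is correct, and it takes a somewhat different route from the paper's.  The paper also begins with the decomposition $A = A_0 + (E,\sigma,\pi)$, but then invokes two external lemmas from \cite{PPS}: first, that the existence of a totally ramified splitting field forces $A\otimes E$ to be split (Lemma~4.3 there), hence $A_0\otimes E$ is split; and second, that an unramified algebra split by the unramified cyclic extension $E$ must have the form $(E,\sigma,u)$ for a unit $u$ (Lemma~4.4 there).  You instead compute directly in $\Br(L)$ via the Witt exact sequence, reading off both $n\mid e$ and the explicit identity $\bar A_0 = (\bar E,\sigma,\bar v^{-1})$ in $\Br(\kappa)$, and then lift.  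Your argument is more self-contained and yields slightly more (an explicit description of the unit), at the cost of a small amount of bookkeeping; the paper's argument is shorter but relies on the cited lemmas.  One cosmetic point: you should note the degenerate case where the residue of $A$ is trivial (so $E=K$), in which $A=A_0$ is unramified and $\bar A_0 = 0$ in $\Br(\kappa_L)=\Br(\kappa)$ forces $A=0$, and the conclusion is vacuous.
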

  
  \begin{proof} Since period of $A$ is coprime to char$(\kappa)$, there exists a  an unramified central simple 
  algebra $A_0$ over $K$ and cyclic extension $E/K$ with a generator $\sigma$ of Gal$(E/K)$ such that 
  $A =A_0 + (E,\sigma, \pi)$.  Suppose there exists a totally ramified extension $L/K$ such that 
  $A \otimes L$ is split.   Then $A \otimes E$ is split (cf. \cite[Lemma 4.3]{PPS}). In particular 
  $A_0 \otimes E$ is split. Hence, $A = (E, \sigma, u\pi)$ for some   $u \in K^*$ a unit in the valuation ring of $K$
   (cf. \cite[Lemma 4.4]{PPS}. 
  \end{proof}
    
  Let $\XX$ be a normal integral scheme with function field $F$. 
  Let $x \in \XX$ be a codimension one point. Then the local ring $\OO_{\XX, x}$ at $x$ is a discrete valuation ring. 
 Let $A$ be a central simple algebra over $F$. We say that $A$ is {\it unramified at x} if $A$ is unramified at the discrete 
 valuation ring $\OO_{\XX, x}$. We say that $A$ is {\it unramified on $\XX$} if $A$ is unramified at every 
 codimension one point of $\XX$. 
 
 Let $G$ be a connected reductive linear algebraic group over $F$. 
Let $x$ be   a codimension one point   of $\XX$. 
 We say that  $G$ is {\it unramified}  at   $x$   if there is a 
 reductive group defined over the local ring $\OO_{\XX, x}$  with the generic fibre isomorphic to $G$. 
 If $G$ is unramified at evert codimension one point of $\XX$, then we say that 
 $G$ is unramified on $\XX$. 
 Similarly a $G$-torsor $\zeta$ is  {\it unramified}  at   $x$   if  $G$ is unramified at $x$ and the torsor is defined  over the 
 local ring at $x$.  If $\zeta$ is unramified at every codimension one point of $\XX$, then we say that
 $\zeta$ is unramified on $\XX$.  If $\XX = Spec(R)$ for some normal domain, we say that 
 $G$ (or a $G$-torsor) is unramified on $R$ if  it is unramified on $\XX$. 
 The union of all codimension one points of $\XX$ where $G$ is not unramified is called the {\it 
 ramification locus} of $G$. 
 
 If a central simple algebra $A$ over $F$ is unramified on $\XX$, then the group $SL_1(A)$ is unramified on $\XX$. 
 
 Let $K_0$ be a field and $K/K_0$ a quadratic separable field  extension.  Let Let $A$ be a central simple 
 algebra over $K$ with a $K/K_0$-involution $\tau$. 
 Let 
 $$U(A, \tau) = \{ a \in A \mid \tau(a)a  = 1\}$$ 
 be the unitary group of $(A, \tau)$ and 
 $$SU(A, \tau)  = \{ a \in SU(A,\tau) \mid Nrd(a) = 1 \}$$ be the special unitary group $(A, \tau)$. 
 Let $R^1_{K/K_0} \G_m$ be the group of norm one elements in $K/K_0$. 
 For an extension $L/K_0$,   
 if $L\otimes_{K_0}K$ is not a  field, then 
 $L\otimes_{K_0}K \simeq L \times L$, $K \subset L$ and $A\otimes _{K_0} K \simeq (A\otimes_KL) \times 
 (A\otimes_KL)^{op}$ and  $SU(A, \tau)(L) = SL_1(A\otimes_KL)$ (\cite[p.346]{KMRT}).

 We  have the short exact sequence of algebraic groups  
 $$ 1 \to SU(A, \tau) \to U(A,\tau) \to R^1_{K/K_0}\G_m \to 1.$$
 
 Let  $K^{*1} =  \{ a \in K^* \mid N_{K/K_0}(a) = 1\}$ and 
 $H_A = \{ \theta \sigma(\theta)^{-1} \mid \theta \in Nrd(A) \} \subseteq L^{*1}$. 
 
Then the above exact sequence gives a long exact sequence of cohomology sets   
$$U(A, \tau)   \to    K^{*1}   \to  H^1(K_0, SU(A, \sigma))   \to  H^1(K_0, U(A, \sigma)), $$
 with  the image of $U(A, \tau)   \to    K^{*1} $  equal to $H_A$ (\cite[p.202]{KMRT}).  Hence we have the exact sequence of pointed sets 
 $$1 \to  K^{*1}/H_A \to H^1(K_0, SU(A, \sigma))   \to  H^1(K_0, U(A, \sigma) .   $$

  Let  $K_0$ be a complete discretely valued field with valuation ring $R_0$ and $K/K_0$ a separable quadratic field extension. 
  Let $R$ be the integral closure of $R_0$ in $R$. 
  Let $A$ be a central simple algebra over $K$ with a $K/K_0$-involution $\tau$.
  We say that $(A, \tau)$ is {\it unramified } if there exists an Azumaya algebra $\AA$ over $R$ and
  a $R/R_0$-involution $\tau_0$ such that $(\AA, \tau_0) \otimes _{R_0} K_0 \simeq (A,\tau)$.

  Suppose that $D$ is central division algebra over $K$ with a $K/K_0$-involution. 
  Let $\Lambda$ be the unique maximal $R$-order in $D$. Suppose that $D \neq K$ and 2ind$(D)$ coprime 
  to characteristic  of the residue field of $K_0$.
  Then there exists a parameter $\pi_D \in D$ such that $\tau(\pi_D) = \pi_D$.
  Further the inner automorphism of $D$ given by $\pi_D$ takes $\Lambda$ to $\Lambda$ 
  and hence induces an automorphism of $\bar{D}$. This induced automorphism restricted $Z(\bar{D})$
  is a generator of the Gal$(Z(\bar{D})/\kappa)$, where $\kappa$ is the residue field of $K$.

  Let $\XX_0$ be a normal integral scheme with function field $F_0$ and $F/F_0$ a separable quadratic extension. 
  Let $\XX$ be the normal closure of $\XX_0$ in $F$. 
  Let $x \in \XX_0$ be a codimension one point. 
  Then the local ring $\OO_{\XX, x}$ at $x$ is a discrete valuation ring.  There are at most two codimension one points 
  $x_1, x_2 \in \XX$ lying over $x$. Suppose that 
  Let $A$ be a central simple algebra over $F$ with a $F/F_0$-involution $\tau$. 
 We say that $(A, \tau)$ is {\it unramified at x} if $(A, \tau)$ is unramified at the discrete 
 valuation ring $\OO_{\XX_0, x}$. We say that $(A, \tau)$ is {\it unramified on $\XX_0$} if $(A, \tau)$ is unramified at every 
 codimension one point of $\XX_0$.

 Let $R_0$ be a  principal ideal domains  with field of fractions $F_0$ and $F = F_0(\sqrt{u})$ for some unit $u \in R_0$.
 Suppose 2 is a unit in $R_0$. 
   Let $R = R_0[\sqrt{u}]$ be the integral closure of
 $R$ in $L$.     
 
 \begin{prop}
 \label{purity}
 Let $A$ be a central simple algebra over $F$ with a $F/F_0$-involution $\tau$.
 Suppose that $(A, \tau)$ is unramified on $R_0$. 
 Then there exists an Azumaya algebra   $\AA$  over $R$  such that 
   $\AA  \otimes_{R} F  =  A$ and $\tau(\AA) = \AA$. 
   \end{prop}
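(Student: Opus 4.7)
The plan is to produce a $\tau$-stable Azumaya extension of $(A,\tau)$ over $R$ by gluing the local data supplied by the unramified hypothesis, exploiting the fact that $R$ is a Dedekind domain. Since $2$ and $u$ are units in $R_0$, the ring $R = R_0[\sqrt{u}]$ is a free $R_0$-algebra of rank two and finite étale over $R_0$; hence $R$ is itself a Dedekind domain, and every prime of $R$ lies over a prime of $R_0$.

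First, I would extract the local models. For each closed point $\pp_0 \in \Spec R_0$, the unramified hypothesis at $\pp_0$ supplies an Azumaya algebra $\mathcal{B}_{\pp_0}$ over the semi-local ring $S_{\pp_0} := R \otimes_{R_0} (R_0)_{\pp_0}$ equipped with an $S_{\pp_0}/(R_0)_{\pp_0}$-involution restricting to $\tau$ on $A$, which I view inside $A$ as a $\tau$-stable $S_{\pp_0}$-lattice that is Azumaya. In particular the central simple algebra $A$ is unramified at every prime of $R$, so purity of the Brauer group on the Dedekind domain $R$ yields an Azumaya $R$-algebra $\AA'$ with $\AA' \otimes_R F = A$; one may take $\AA'$ to be a maximal $R$-order in $A$.

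Next I would reconcile these two families of local models. Over a DVR any two Azumaya extensions of a given generic central simple algebra are conjugate by an element of $A^*$, so for each $\pp_0$ there is some $g_{\pp_0} \in A^*$ with $g_{\pp_0}\,\AA'_{\pp_0}\,g_{\pp_0}^{-1} = \mathcal{B}_{\pp_0}$. Only finitely many primes $\pp_0$ require a nontrivial adjustment; collecting these into a finite set $\Sigma$ and replacing $\AA'$ by a suitable global $A^*$-conjugate, I may arrange $\AA'_{\pp_0} = \mathcal{B}_{\pp_0}$ for all $\pp_0 \notin \Sigma$. Then set
\[
\AA \;:=\; \bigcap_{\pp_0 \in \Spec R_0} \mathcal{B}_{\pp_0} \quad (\text{intersection taken in } A).
\]
By the local-global principle for lattices on the Dedekind domain $R$, $\AA$ is a finitely generated $R$-lattice whose localization at each prime $\pp$ of $R$ coincides with the corresponding $\tau$-stable Azumaya local model. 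Hence $\AA$ is Azumaya over $R$, and $\tau(\AA) = \AA$ together with $\AA \otimes_R F = A$ follow directly.

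The main obstacle is the global conjugation reconciliation, that is, producing a single $\AA'$ whose localization matches $\mathcal{B}_{\pp_0}$ for all $\pp_0$ outside a finite set. This amounts to the triviality of an obstruction in a product of local $\PGL_1(A)$-cohomology groups, which can be arranged because $R_0$ is a PID (so global units suffice to patch local adjustments) and only finitely many primes $\pp_0$ are involved. Once this adjustment is made, the remainder of the argument is a routine lattice-gluing construction over the Dedekind domain $R$.
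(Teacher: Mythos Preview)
Your lattice-gluing strategy is different from the paper's proof, and it has a genuine gap at the step you yourself flag as the ``main obstacle.''

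The paper does not glue orders at all. It observes that the reductive $R_0$-group scheme $G=\AAut(M_n(R),\tau_0)$ has the property that $H^1_{\et}(R_0,G)$ classifies Azumaya $R$-algebras of degree $n$ with $R/R_0$-involution, while $H^1(F_0,G)$ classifies central simple $F$-algebras of degree $n$ with $F/F_0$-involution. The unramified hypothesis says that the class of $(A,\tau)$ in $H^1(F_0,G)$ lies in the image of $H^1_{\et}((R_0)_{\pp_0},G)$ for every height-one prime $\pp_0$, and then a purity theorem for torsors under reductive groups over a principal ideal domain (Colliot-Th\'el\`ene--Sansuc, with the formulation in Gille--Pianzola) produces a global lift in $H^1_{\et}(R_0,G)$. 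This single cohomological step replaces your entire gluing argument.

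Your gap is the sentence ``only finitely many primes $\pp_0$ require a nontrivial adjustment.'' The local models $\mathcal{B}_{\pp_0}$ supplied by the unramified hypothesis are chosen \emph{independently} at each prime; nothing forces $\mathcal{B}_{\pp_0}=\AA'_{\pp_0}$ at almost all $\pp_0$, and a single global $A^*$-conjugation of $\AA'$ cannot be made to match an infinite family of unrelated local orders. The $\PGL_1$-cohomology obstruction you allude to does not address this: it would be relevant if you were trying to extend a single coherent family of local isomorphisms, but here there is no coherence to begin with.

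The fix is to reverse the direction of the adjustment. Since $\AA'$ and $\tau(\AA')$ are two $R$-lattices in $A$, they coincide after localization at all but finitely many primes of $R_0$; at those good primes $\AA'_{\pp_0}$ is already a $\tau$-stable Azumaya order, and you may \emph{redefine} $\mathcal{B}_{\pp_0}:=\AA'_{\pp_0}$ there. Only at the finitely many remaining primes do you need to invoke the $\tau$-stable Azumaya order furnished by the unramified hypothesis. With the $\mathcal{B}_{\pp_0}$ chosen this way, the family is coherent by construction, and your intersection $\AA=\bigcap_{\pp_0}\mathcal{B}_{\pp_0}$ is a $\tau$-stable Azumaya $R$-order with $\AA\otimes_R F=A$, as desired. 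Once patched, your argument is a valid and more elementary alternative to the paper's cohomological one-liner, at the cost of doing the Dedekind bookkeeping by hand.
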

 
 \begin{proof}  Let $\bar{ ~} : F \to F$ be the non trivial automorphism of  $F/F_0$. 
 Let $n = deg(A)$. 
 Then $\bar{R} =R$. Let $\AA_0 = M_n(R)$ and $\tau_0$ be the  $F/F_0$-involution on 
 $\AA_0$ given by $\tau_0( a_{ij}) = (\bar{a}_{ji})$. 
 Since $R/R_0$ is \'etale, the group  $ G = {\bf Aut}(\AA_0, \tau_0)$ is a reductive group over $R_0$.
 The set $H^1(F_0, G)$   classifies  isomorphism classes of pairs $(B, \sigma)$  of central simple algebras 
 $B$ over $F$  of degree $n$ together with $F/F_0$-involutions $\sigma$  (\cite[29.14]{KMRT} ).
 Similarly $H^1_{et}(R_0, G)$  classifies  isomorphism classes of pairs $(\BB, \sigma)$  of Azumaya algebras 
 $\BB$ over $R$  of degree $n$ together with $R/R_0$-involutions $\sigma$.
 
 Since $(A,\tau) \in H^1(F, G)$ is unramified on $R$ and $R$ is a principal ideal domain, by (\cite[Proposition 6.8]{CTS},see also 
 \cite[Corollary A.8]{GP08}), 
 there exists $(\AA', \tau') \in H^1_{et}(R, G)$ which maps to $(A, \tau)$
 Hence $(\AA', \tau') \otimes_{R_0} F_0 \simeq (A, \tau)$.
 Then  the image $\AA$ of $\AA'$ in $A$ has the required property.  
 \end{proof}

 Since $R/R_0$ is \'etale, the groups $U(\AA,  \tau)$ and $SU(\AA,   \tau)$
 are reductive groups over $R_0$.

 We have the exact sequence 
 $$ 1 \to SU(\AA, \tau)  \to  U(\AA, \tau) \to  R^1_{S_{\pi\delta}/R_{\pi\delta}}\G_m \to 1$$
of  algebraic group schemes over $R_0$, 
where $R^1_{R/R_0 }\G_m$ is defined as the kernel  of  
the norm map $  R_{R/R_0} \G_m \to  \G_m .$ 

We have the induced long  exact sequence 
$$ U(\AA, \tau_0)(R_0) \to R^1_{R/R_0 }\G_m (R_0 ) 
 \to H^1_{et}(R_0, SU(\AA, \tau)) \to H^1_{et}(R_0, U(\AA,\tau)).$$
 
 \begin{prop}
 \label{pid}
We have 
$$R^1_{R/R_0}\G_m (R_0)  \simeq \{ a\tau(a)^{-1}  \mid a \in R   \}. $$  
\end{prop}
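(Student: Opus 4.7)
The plan is to deduce the identity from the sheafy form of Hilbert's Theorem 90 applied to the quadratic étale Galois extension $R/R_0$, combined with the vanishing of $\Pic(R_0)$. The hypothesis that $u$ and $2$ are units in $R_0$ ensures that $R = R_0[\sqrt{u}]$ is étale over $R_0$, making $\Spec R \to \Spec R_0$ a quadratic étale Galois cover with Galois group $\langle\tau\rangle$.

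The inclusion $\{a\tau(a)\inv \mid a\in R^*\} \subseteq R^1_{R/R_0}\G_m(R_0)$ is immediate: for $a\in R^*$, the element $a\tau(a)\inv$ lies in $R^*$ and $N_{R/R_0}(a\tau(a)\inv) = (a\tau(a)\inv)(\tau(a)a\inv) = 1$. For the reverse inclusion, I would work with the short exact sequence of étale sheaves on $\Spec R_0$
\begin{equation*}
1 \lra \G_m \lra R_{R/R_0}\G_m \xrightarrow{\varphi} R^1_{R/R_0}\G_m \lra 1, \qquad \varphi(a) = a\tau(a)\inv,
\end{equation*}
where the first arrow is the inclusion of $\tau$-invariants. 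Exactness can be verified étale locally: on any étale cover of $\Spec R_0$ splitting $R/R_0$ as $R'\times R'/R'$, the sequence identifies with
\begin{equation*}
1 \lra \G_m \xrightarrow{\Delta} \G_m \times \G_m \xrightarrow{(a,b)\mapsto ab\inv} \G_m \lra 1,
\end{equation*}
which is Hilbert 90 in its split form and is manifestly exact.

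Taking étale cohomology over $\Spec R_0$ then yields the exact sequence
\begin{equation*}
R_0^* \lra R^* \xrightarrow{\varphi} R^1_{R/R_0}\G_m(R_0) \lra H^1_{\et}(\Spec R_0, \G_m) = \Pic(R_0).
\end{equation*}
Since $R_0$ is a PID, $\Pic(R_0) = 0$, and hence $\varphi\colon R^* \to R^1_{R/R_0}\G_m(R_0)$ is surjective, which gives the desired equality. No substantive obstacle is anticipated; the only nontrivial input is the exactness of the displayed sheaf sequence, which is standard and relies crucially on the fact that $R/R_0$ is étale Galois.
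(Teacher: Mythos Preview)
Your proof is correct and is precisely the cohomological unpacking of the paper's one-line invocation of ``Hilbert 90'': the exact sequence $1 \to \G_m \to R_{R/R_0}\G_m \to R^1_{R/R_0}\G_m \to 1$ together with $H^1_{\et}(R_0,\G_m)=\Pic(R_0)=0$ is exactly what that phrase means here. A minor bonus of your write-up is that it makes clear only the PID hypothesis on $R_0$ is needed (via $\Pic(R_0)=0$), whereas the paper's sentence also mentions $R$.
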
 

\begin{proof} Since $R$ and $R_0$ are principal ideal domains, the result follows from Hilbert 90.
\end{proof}

 We prove the following technical  lemma which is used later. 
  Let $K/K_0$ be a quadratic   extension  and $L/K$ a finite cyclic extension such that 
  $L/K_0$ is a dihedral extension.   Let $\sigma$ be a generator of Gal$(L/K)$. 
  Then there exists an automorphism $\tau_1$ of $L$ such that $\tau_1^2$ is identity   and $\sigma \tau_1 = \tau_1 \sigma^{-1}$. 
  Let $\tau_2  = \sigma  \tau_1   $.   Let $L_1 = L^{\tau_1}$. 
  
The following is extracted from  the proof of (\cite[Corollary 4.13]{Y1979}).
 
\begin{lemma} \label{conorms} Let $a \in L^*$. Suppose there exists $b \in L^*$ such 
   that $\sigma(a)a^{-1} = b\tau_2(b)^{-1}$. Then$a \in K^*L_1^*$. 
\end{lemma}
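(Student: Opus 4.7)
The plan is to reduce the claim to an application of Hilbert 90 for the quadratic extension $K/K_0$, using the dihedral relations $\sigma\tau_1 = \tau_1\sigma^{-1}$, $\tau_2 = \sigma\tau_1$, and $\tau_2^2 = 1$. Note first that $\tau_1$ restricts to the non-trivial element of $\mathrm{Gal}(K/K_0)$ (since any element of $\mathrm{Gal}(L/K_0) \setminus \mathrm{Gal}(L/K)$ does so), and writing $c = \tau_1(a)/a$, the desired conclusion $a \in K^*L_1^*$ is equivalent, after multiplying by a suitable $k \in K^*$, to $a/k \in L^{\tau_1} = L_1$, i.e., to $c = \tau_1(k)/k$. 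Hence it suffices to show that $c \in K^*$ and that $N_{K/K_0}(c) = 1$; Hilbert 90 will then supply the required $k$.

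The second step, membership $c \in K^* = L^\sigma$, is where the hypothesis is used. Starting from
\[
\sigma(a)a^{-1} = b\,\tau_2(b)^{-1},
\]
I would apply $\tau_1$ and exploit $\tau_1\sigma = \sigma^{-1}\tau_1$ and $\tau_1\tau_2 = \sigma^{-1}$ to obtain
\[
\sigma^{-1}(\tau_1(a))\,\tau_1(a)^{-1} = \tau_1(b)\,\sigma^{-1}(b)^{-1}.
\]
Applying $\sigma$ and inverting, the right-hand side becomes $b/\sigma(\tau_1(b))$. Because $\sigma\tau_1(b) = \tau_1\sigma^{-1}(b) \cdot \sigma\tau_1(b)$-style manipulation gives $\sigma(\tau_1(b)) = \tau_2(b)$ (using $\tau_2 = \sigma\tau_1$ directly), so the right-hand side equals $b/\tau_2(b) = \sigma(a)/a$ by hypothesis. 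Comparing both sides then yields $\sigma(\tau_1(a)/a) = \tau_1(a)/a$, i.e., $c \in K^*$.

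The norm condition is immediate: since $\tau_1^2 = 1$, one has $\tau_1(c) = a/\tau_1(a) = c^{-1}$, so $c\,\tau_1(c) = 1$, i.e., $N_{K/K_0}(c) = 1$. Hilbert 90 for the quadratic extension $K/K_0$ then produces $k \in K^*$ with $c = \tau_1(k)/k$, and a short computation gives $\tau_1(a/k) = a/k$, placing $a/k$ in $L_1^*$ and proving $a \in K^*L_1^*$. The only delicate point is the bookkeeping in step two to extract $c \in L^\sigma$ from the hypothesis; everything else is mechanical once the right commutation identities between $\sigma$, $\tau_1$, and $\tau_2$ are recorded.
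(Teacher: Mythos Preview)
Your proof is correct and follows the same overall strategy as the paper: show that $c = \tau_1(a)/a$ (the paper uses the inverse $a\tau_1(a)^{-1}$) lies in $K^*$, verify $N_{K/K_0}(c)=1$, and apply Hilbert~90 for $K/K_0$. The only cosmetic difference is in the verification that $c$ is $\sigma$-fixed: the paper observes directly that $x := b\,\tau_2(b)^{-1}$ satisfies $x\,\tau_2(x)=1$ (since $\tau_2^2=1$) and expands $\tau_2(\sigma(a)a^{-1})\cdot(\sigma(a)a^{-1})=1$ using $\tau_2\sigma=\tau_1$ and $\tau_2=\sigma\tau_1$, whereas you apply $\tau_1$ to the hypothesis and compare back to the original; both are equivalent one-line manipulations with the dihedral relations.
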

  
  \begin{proof}  Since  $\sigma(a)a^{-1} = b\tau_2(b)^{-1}$,  $\tau_2(\sigma(a)a^{-1})  (\sigma(a)a^{-1}) = 1$. 
  By  expanding this we get $\tau_2\sigma(a) \tau_2(a)^{-1}  \sigma(a)a^{-1} = 1$.
  Since  $\tau_2  \sigma = \sigma \tau_1  \sigma = \tau_1 $ and $\tau_2 = \sigma \tau_1$, we have
  $ \tau_1(a)\sigma\tau_1(a)^{-1} \sigma(a)a^{-1} = 1$. Hence $\sigma(a\tau_1(a)^{-1}) = a \tau_1(a)^{-1}$. 
  Since $\sigma$ is the generator of Gal$(L/K)$, $a\tau_1(a)^{-1} \in K^*$. 
  
  Let $c = a\tau_1(a)^{-1} \in K^*$. Since $N_{K/K_0}(c) = 1$ and $\tau_1$ restricted to $K$ is the nontrivial 
  automorphism of $K/K_0$, there exists $b \in K^*$ such that 
  $c = b \tau_1(b)^{-1}$.  Then $ab^{-1} = \tau_1(ab^{-1}$ and hence $ab^{-1} \in L_1$.
  Therefore $a \in K^*L_1^*$.  
  \end{proof}

\section{generalities} 
\label{gen}
 
 Let $T$ be a complete discrete valuation ring with residue field $\kappa$ and field of 
fraction $K$. Let $F$ be the function field of a curve over $K$. 
Let  $G$ be a connected reductive linear algebraic group  over $F$.  In this section we give some sufficient conditions under which 
the obstructions  to local global principle with respect to discrete valuations and with respect to patching over fields  coincide. 
For al unexplained notation, we refer to  
   (\cite{HHK1}, \cite{HHK3}).

 Let $\XX$ be a 
regular  proper model 
  of $F$ (\cite{Lip75}).  Let $x \in \XX$  be a point and 
$\hat{\OO}_{\XX, x}$  the completion of  the local ring  $\OO_{\XX, x}$ at $x$. 
Let  $F_x$ be  the field of fractions of  $\hat{\OO}_{\XX, x}$.
If $x \in \XX$ is a codimension one point, then $x$ gives a discrete valuation  $\nu_x$ on $F$.
A discrete valuation   $\nu$ on $F$ is called a   
{\it divisorial } discrete valuation if $\nu = \nu_x$
for some codimension one point of a    regular proper model   of $F$.
Let $\Omega_F$ be the set of all divisorial discrete valuations of $F$.

Let 
$$
\Sha_{div}(F, G) =  ker(H^1(F, G) \to   \prod_{\nu\in \Omega_F} H^1(F_\nu , G)) $$  
 
 and 
$$
\Sha_X(F, G) =  ker(H^1(F, G) \to   \prod_{ x \in X} H^1(F_x , G)) .$$

Let 
$$
\Sha(F, G) =  ker(H^1(F, G) \to   \prod_\nu H^1(F_\nu , G)), $$
where $\nu$ running over all the discrete valuations of $F$.
Since $\Sha(F, G) \subset \Sha_{div}(F, G)$ and 
  $\Sha_X(F, G) \subseteq \Sha(F, G)$ (\cite[Proposition 8.2]{HHK3}),
$\Sha_X(F, G) \subseteq \Sha_{div}(F, G)$.  

Let $P \in \XX$ be a closed point.   A discrete valuation $\nu$ of $F_P$ is called a {\it divisorial} discrete valuation of
$F_P$ if it is given by a codimension one point of a sequence of blow-ups of the local ring at $P$. 
Let $\Omega_P$ be the set of all divisorial discrete valuations of $F_P$ and 
$$
\Sha_{div}(F_P, G) =  ker(H^1(F_P, G) \to   \prod_{\nu\in \Omega_P} H^1(F_{P\nu} , G)). $$

Let $X$ be the special fibre of $\XX$ and $X_{(0)}$ the set of closed points of $X$. 
Since any divisorial discrete valuation of $F_P$ restricted to a $F$ is a divisorial discrete valuation of 
$F$,  the proof of   (see \cite[Proposition 8.4]{HHK3}) gives the following 

\begin{prop}
\label{exact-seq} With the notation as above we have the following  exact seqeunce
$$ 1 \to \Sha_X(F,G) \to \Sha_{div}(F,G) \to \prod_{P \in X_{(0)}} ^\prime \Sha_{div}(F_P, G)\to 1,$$
where $\displaystyle{ \prod_{P \in X_{(0)}} ^\prime} \Sha_{div}(F_P, G)$ is the subset of the $ \displaystyle{\prod_{P \in X_{(0)}}}
 \Sha_{div}(F_P, G)$
consisting of elements $(\zeta_P)$ with all but finitely many $\zeta_P$ are trivial.  
\end{prop}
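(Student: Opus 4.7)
The plan is to verify the three parts of the exact sequence separately: well-definedness of the rightmost map, identification of its kernel with $\Sha_X(F,G)$, and surjectivity via HHK patching. The key observation (already recorded by the author just before the statement) is that divisorial valuations of $F_P$ restrict to divisorial valuations of $F$.

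\textbf{Well-definedness and the kernel.} I would first check that for $\zeta \in \Sha_{div}(F,G)$ and closed $P \in X_{(0)}$, the image lies in $\Sha_{div}(F_P, G)$: for any $\nu \in \Omega_P$, the restriction $\tilde\nu \in \Omega_F$ satisfies $F_{P,\nu} = F_{\tilde\nu}$, so $\zeta_{F_{P,\nu}} = 1$. Finiteness of the set of $P$ with nontrivial image follows because $\zeta$ extends to a $G$-torsor over a dense open of $\XX$ whose complement meets $X$ in only finitely many closed points. The containment $\Sha_X(F,G) \subseteq \ker$ is tautological. Conversely, if $\zeta \in \Sha_{div}(F,G)$ has $\zeta_{F_P} = 1$ for every closed $P$, then for each generic point $\eta$ of an irreducible component of $X$ the equality $F_\eta = F_{\nu_\eta}$ (with $\nu_\eta \in \Omega_F$ divisorial) gives $\zeta_{F_\eta} = 1$ as well, hence $\zeta \in \Sha_X(F,G)$.

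\textbf{Surjectivity via patching.} Given $(\zeta_P) \in \prod' \Sha_{div}(F_P, G)$, let $S \subseteq X_{(0)}$ be its finite support, enlarged if necessary so that the HHK patching setup applies to $(\XX, S)$. For each branch $\wp$ of a component $U$ of $X \setminus S$ at $P \in S$, the valuation $\nu_\wp$ on $F_P$ is divisorial and so $\zeta_P|_{F_\wp} = 1$. I would then take $\zeta_U := 1 \in H^1(F_U, G)$ on every component. The data $((\zeta_U)_U, (\zeta_P)_{P \in S})$ is then compatible on every branch, so HHK patching \cite[Theorem 3.7]{HHK1}, applicable because the simply connected groups of type $A_n$ relevant here are rational (they are either $SL_1(A)$ or $SU(A,\tau)$), produces a class $\zeta \in H^1(F, G)$ inducing this data, in particular with $\zeta_{F_P} = \zeta_P$ for every $P \in S$.

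\textbf{Triviality at all divisorial valuations (main obstacle).} The last and hardest step is verifying $\zeta \in \Sha_{div}(F,G)$, not merely that it is trivial at codimension one points of the fixed model $\XX$. A divisorial valuation $\nu$ of $F$ comes from a codimension one point $y$ of some regular proper model $\XX'$, which by \cite{Lip75} I may assume dominates $\XX$ via a composition of blow-ups. The center $c$ of $y$ on $\XX$ is either a point (closed or generic) of a component $U$ of $X \setminus S$, or a point of $S$. In the first case, $F_\nu$ is an overfield of $F_U$, and $\zeta_U = 1$ forces $\zeta_{F_\nu} = 1$. In the second case, $\nu$ restricts to a divisorial valuation of $F_P$ (since $\XX' \to \XX$ near $P$ is a composition of blow-ups centered over $P$), and $\zeta_{F_\nu}$ is the image of $\zeta_P$ at that valuation, which is trivial because $\zeta_P \in \Sha_{div}(F_P, G)$. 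The delicate points are the enlargement of $S$ to ensure a valid HHK patching setup and the verification that every divisorial valuation of $F$ actually falls into this dichotomy after a suitable sequence of blow-ups.
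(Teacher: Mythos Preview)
Your argument is essentially correct and reconstructs what the paper defers to \cite[Proposition~8.4]{HHK3}; the paper itself gives no independent proof beyond noting that divisorial valuations of $F_P$ restrict to divisorial valuations of $F$.

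One point deserves correction. In the surjectivity step you invoke \cite[Theorem~3.7]{HHK1} and justify its applicability by the rationality of simply connected groups of type $A_n$. This is the wrong citation and the wrong hypothesis: Theorem~3.7 of \cite{HHK1} is the local-global principle (injectivity of $H^1(F,G) \to \prod H^1(F_\xi,G)$ for rational $G$), whereas what you need is the \emph{gluing} direction, producing a global torsor from compatible local data $((\zeta_U)_U,(\zeta_P)_P)$. That direction is part of the general HHK patching equivalence for $G$-torsors (e.g.\ \cite[Theorem~2.4]{HHK3}) and holds for an arbitrary linear algebraic group $G$, with no rationality assumption. This matters because the proposition is stated in \S\ref{gen} for any connected reductive $G$, not only for groups of type $A_n$; your restriction to rational groups is both unnecessary and narrower than what is being claimed. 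Once you replace the citation, your surjectivity argument goes through for general $G$, and the remainder of your proof (well-definedness, kernel identification, and the blow-up dichotomy for divisorial valuations) is fine as written.
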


 Suppose that   the union of the  ramification locus of $G$ and the closed fibre 
is a union of regular curves with normal crossings.
Let $P \in \XX$ be a closed point.  
Then there exist $\pi_P, \delta_P \in  \OO_{\XX, P}$  such that the maximal ideal 
of $\OO_{\XX, P}$ is generated by $\pi_P$ and $\delta_P$, and 
$G$ is unramified on $\OO_{\XX, P}$ except possibly at $(\pi_P)$ and $(\delta_P)$.  

Let 
$$H^1_{\pi_P\delta_P}(F_P, G) = \{ \zeta \in  H^1(F_P, G) \mid \zeta {\rm ~is ~unramified ~on ~ } \hat{\OO}_{\XX, P}
 {\rm ~except ~possibly ~at~}
(\pi_P) {\rm ~and ~} (\delta_P) \}$$ and
$$\Sha_{\pi_P\delta_P}(F_P, G) = ker(H^1_{\pi_P\delta_P}(F_P, G) \to \prod _{\nu \in \Omega_P}H^1(F_{P\nu}, G)).$$

\begin{hyp}
\label{hyp1}  
We say that 
$G$ satisfies the {\it local injectivity}  hypothesis   with respect to $\XX$
  if    for every closed point $P\in \XX$  the map  $H^1_{\pi_P\delta_P}(F_P, G) \to   \prod_{\nu\in \Omega_P}
   H^1(F_{P, \nu}, G)$ has trivial kernel.
   \end{hyp}

 \begin{theorem} (\cite[Proposition 8.4]{HHK3})  Let $F$, $G$ and $\XX$  be as above.   
If $G$ satisfies the local injectivity     hypothesis (\ref{hyp1}) with respect to $\XX$, then 
 $$\Sha_{div}(F,  G) =    \Sha_X(F, G).$$
   \end{theorem}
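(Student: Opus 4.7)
The plan is to deduce the equality from the exact sequence in Proposition \ref{exact-seq} by showing that under the local injectivity hypothesis the local factor $\Sha_{div}(F_P, G)$ is trivial for every closed point $P \in X_{(0)}$. Once this is known, the map $\Sha_{div}(F, G) \to \prod_{P}^{\prime} \Sha_{div}(F_P, G)$ is the zero map, and the exactness of
$$ 1 \to \Sha_X(F,G) \to \Sha_{div}(F,G) \to \prod_{P \in X_{(0)}} ^\prime \Sha_{div}(F_P, G)\to 1$$
then forces $\Sha_X(F, G) \to \Sha_{div}(F, G)$ to be a bijection.

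Fix $P \in X_{(0)}$ and choose regular parameters $\pi_P, \delta_P \in \OO_{\XX, P}$ as allowed by the hypothesis on the model, so that $G$ is unramified at every codimension-one prime of $\hat{\OO}_{\XX,P}$ outside $(\pi_P)$ and $(\delta_P)$. Given $\zeta \in \Sha_{div}(F_P, G)$, the goal is to show $\zeta \in H^1_{\pi_P\delta_P}(F_P, G)$; applying \ref{hyp1} then yields $\zeta = 1$. Let $y$ be a codimension-one point of $\hat{\OO}_{\XX,P}$ with $y \ne (\pi_P), (\delta_P)$. Then $\nu_y \in \Omega_P$, and by hypothesis $G$ extends to a reductive group scheme $\mathcal{G}$ over the discrete valuation ring $R_y = \hat{\OO}_{\XX_P, y}$. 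Since $\zeta$ lies in $\Sha_{div}(F_P, G)$, its image in $H^1(F_{P,\nu_y}, G)$ is trivial. Triviality is a fortiori \emph{unramified} at $\nu_y$, meaning the image comes from $H^1_{\et}(R_y, \mathcal{G}) \to H^1(F_{P,\nu_y}, G)$.

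The remaining point is a purity statement: if a class in $H^1(F_P, G)$ becomes unramified at the codimension-one point $y$ \emph{after} completing, then it is already unramified at $y$. This follows from faithfully flat descent along $\OO_{\XX_P, y} \to R_y$ applied to $\mathcal{G}$-torsors, since a torsor over the local ring is determined by its base change to the completion together with its generic fibre. In our situation the class is even trivialized over $F_{P, \nu_y}$, so the extension to a $\mathcal{G}$-torsor on $R_y$ is just $\mathcal{G}$ itself, and descent produces an unramified model over $\OO_{\XX_P, y}$ whose generic fibre is $\zeta|_{F_P}$. Thus $\zeta$ is unramified at every codimension-one point of $\hat{\OO}_{\XX,P}$ different from $(\pi_P), (\delta_P)$, i.e.\ $\zeta \in H^1_{\pi_P\delta_P}(F_P, G)$.

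The main obstacle is precisely this last descent step, transferring unramifiedness from the completion $R_y$ back down to the henselization (or local ring itself) at $y$; once this is granted, the rest is a formal chase through the HHK3 patching exact sequence. Applying Hypothesis \ref{hyp1} to $\zeta \in H^1_{\pi_P\delta_P}(F_P, G)$, whose restrictions to all $F_{P,\nu}$ with $\nu \in \Omega_P$ are trivial, yields $\zeta = 1$ in $H^1(F_P, G)$. Hence $\Sha_{div}(F_P, G) = \{1\}$ for every $P$, and Proposition \ref{exact-seq} concludes that $\Sha_{div}(F, G) = \Sha_X(F, G)$.
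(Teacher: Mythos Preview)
Your proposal is correct and follows essentially the same route as the paper: reduce via the exact sequence of Proposition~\ref{exact-seq} to showing $\Sha_{div}(F_P,G)=\{1\}$, then verify that any such $\zeta$ lies in $H^1_{\pi_P\delta_P}(F_P,G)$ by checking unramifiedness at all other height-one primes, and finally invoke Hypothesis~\ref{hyp1}. The one place where you elaborate and the paper simply cites is the step ``trivial over $F_{P,\nu_y}$ and $G$ unramified at $y$ $\Rightarrow$ $\zeta$ unramified at $y$'': the paper appeals directly to \cite[Corollary~A.6]{GP08}, whereas you sketch a Beauville--Laszlo/descent argument; your sketch is fine in spirit, though what you call ``faithfully flat descent'' is really formal gluing along $\Spec K \sqcup \Spec \hat{\OO}_y \to \Spec \OO_y$, and citing the ready-made result is cleaner.
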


\begin{proof} Let  $P \in \XX$ be a closed point and $\zeta \in \Sha_{div}(F_P, G)$.
Let $\eta$ be a codimension one point of  Spec($ \hat{\OO}_{\XX, P}$).Then 
the discrete valuation on $F_P$ given by $\eta$ is a divsorial discrete valuation. 
Since $\zeta \in \Sha_{div}(F_P, G)$, $\zeta$ is trivial over $F_{P\eta}$. 

Suppose $\eta$ does not corresponds to the prime ideals $(\pi_P)$ and $(\delta_P)$.
Since $G$  is unramified on $\OO_{\XX, P}$ except possibly at $(\pi_P)$ 
and $(\delta_P)$,   $G$ is unramified at $\eta$. Since $\zeta$ is trivial over $F_{P\eta}$,  
 $\zeta$ is unramified at  $\eta$ (cf., \cite[Corollary A.6]{GP08}).  Hence $\zeta \in \Sha_{\pi_P\delta_P}(F_P, G)$.
 Therefore $\Sha_{div}(F_P,  G) = \Sha_{\pi_P\delta_P}(F_P, G)$. 
 
By  (\ref{exact-seq}), we have the following exact sequence 
$$ 1 \to \Sha_X(F, G) \to \Sha_{div}(F, G) \to \prod_{P \in X^{(1)}} \Sha_{\pi_P\delta_P}(F_P, G).$$
Since $G$ satisfies the local injectivity hypothesis (\ref{hyp1}),  $\Sha_{\pi_P\delta_P}((F_P, G)$ is trivial and hence 
$\Sha_X(F, G) = \Sha_{div}(F, G)$.
 \end{proof}

Let $X$ be the special fibre of the model $\XX$. Let $U$ be  an affine irreducible subset $U$ of $X$.
Let $R_U =  \cap _{P \in U} \OO_{\XX, P}$.  Then $T \subset R_U$. Let $t \in T$ be a parameter  and
 $\hat{R}_U$ be the $(t)$-adic completion of $R_U$ and $F_U$ the field of fractions of $\hat{R}_U$.  
 Let $P \in X$ be a closed point with $P$ in the closure $\bar{U}$ of $U$. Then 
 the generic point of $U$ gives a discrete valuation on $F_P$. Let $F_{U, P}$ be the completion of 
 $F_P$ at this discrete valuation.  Note that $F_{U,P}$ depends only on the generic point $\eta$ of $U$. We also denote 
 $F_{U,P}$ by $F_{P, \eta}$. 
 
 \begin{hyp}
\label{hyp3}   We say that 
$G$ satisfies the local    factorization  hypothesis  with respect to $\XX$   if 
for every nonsingular  closed point $P$ of $X$ and for every  affine irreducible subset $U$ of the special fibre  of $\XX$ with 
  $P$ in the closure of $U$, $G(F_{U, P}) = G(F_U)G(F_P)$. 
\end{hyp}

  \begin{prop} 
 \label{Up}
  Let $F$, $G$ and $\XX$  be as above.  
  Suppose that 
  $G$ satisfies the  local  factorization hypothesis (\ref{hyp3}).
Let $\PP_0$ be a finite set of closed points of $\XX$ containing all the singular points of $X$
and at least one closed point from each irreducible component of $X$. 
Let $V \subset X \setminus \PP_0$ be a nonempty open subset contained in an irreducible component of $X$.
Let $P \in V$ be a closed point and $U = V \setminus \{ P \}$. 
Then  the map 
$$H^1(F_V, G) \to H^1(F_U, G) \times H^1(F_P, G)$$
has trivial kernel.  
 \end{prop}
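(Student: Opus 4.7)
The plan is to run the Harbater--Hartmann--Krashen patching setup for the open cover of $V$ by $U$ and $\{P\}$, with overlap determined by the generic point $\eta$ of $U$ (which coincides with the generic point of $V$, since $V$ is irreducible and $U\subset V$ is a nonempty open subset). Concretely, we have inclusions $F_V\subset F_U$ and $F_V\subset F_P$, and both $F_U$ and $F_P$ embed compatibly into the overlap field $F_{U,P}=F_{P,\eta}$. Note that $P$ is a regular point of $X$ lying on a single irreducible component, because $\PP_0$ contains all singular points of $X$ and $V$ is contained in a single component.

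Let $\zeta\in H^1(F_V,G)$ lie in the asserted kernel, and let $Z$ be the corresponding $G$-torsor over $F_V$. Triviality of $\zeta$ over $F_U$ and over $F_P$ furnishes points $z_U\in Z(F_U)$ and $z_P\in Z(F_P)$. Base-change to $F_{U,P}$ produces two points of $Z(F_{U,P})$, which differ by a unique element $g\in G(F_{U,P})$ with $z_P=z_U\cdot g$. By the local factorization hypothesis \ref{hyp3} (applicable since $P$ is nonsingular and lies in the closure of $U$), we may write $g=g_U\cdot g_P$ with $g_U\in G(F_U)$ and $g_P\in G(F_P)$. Replacing $z_U$ by $z_U\cdot g_U$ and $z_P$ by $z_P\cdot g_P^{-1}$, we obtain points of $Z(F_U)$ and $Z(F_P)$ with a common image in $Z(F_{U,P})$.

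The final step is to descend this compatible pair to a point $z_V\in Z(F_V)$, which would show $Z$ is trivial as a $G$-torsor over $F_V$. This is the essence of HHK patching in the form developed in \cite[Proposition 5.3]{HHK1} / \cite[Theorem 3.2]{HHK3}: the covering of $\Spec \hat R_V$ by $\Spec \hat R_U$ and $\Spec \hat\OO_{\XX,P}$, with overlap $\Spec \hat\OO_{\XX,P,\eta}$, is a patching cover in the sense that quasi-projective varieties (in particular the torsor $Z$) satisfy descent with respect to it, provided compatibility on the overlap is arranged — which is exactly what the factorization step achieved.

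The main obstacle is this descent step, and it is precisely where the nonsingularity of $P$ and the fact that $P$ lies in a single component are needed: these ensure that $\hat\OO_{\XX,P}$ is a two-dimensional regular local ring with $\eta$ cutting out a single height-one prime, so that the overlap field $F_{U,P}$ is a field (not a product) and the patching diagram from \cite{HHK1,HHK3} applies verbatim. Once this is in place, the diagram chase in the previous paragraph produces $z_V\in Z(F_V)$ and the kernel is shown to be trivial.
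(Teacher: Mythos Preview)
Your argument is essentially the paper's proof unpacked by hand. The paper invokes \cite[Theorem~2.4]{HHK3} together with \cite[Proposition~3.9]{HHK5} to identify the kernel of $H^1(F_V,G)\to H^1(F_U,G)\times H^1(F_P,G)$ with the double coset space $G(F_U)\backslash G(F_{U,P})/G(F_P)$, and then observes that the local factorization hypothesis collapses this to a point; you are reproving exactly that bijection (in the direction needed) by choosing sections $z_U,z_P$, measuring their discrepancy $g\in G(F_{U,P})$, factoring, and descending. The only point worth flagging is that the patching input you need --- that $(F_V;F_U,F_P;F_{U,P})$ is a genuine patching configuration, not merely the global $(F;\{F_U\},\{F_P\};\{F_\wp\})$ setup of \cite{HHK1} --- is precisely the content of \cite[Proposition~3.9]{HHK5}, so your citations to \cite{HHK1} and \cite[Theorem~3.2]{HHK3} should be supplemented (or replaced) by that reference.
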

 
 \begin{proof} By (\cite[Theorem 2.4]{HHK3}) and (\cite[Proposition 3.9]{HHK5}), 
 the kernel of $H^1(F_V, G) \to H^1(F_U, G) \times H^1(F_P, G)$ is  in bijection with the double 
 cosets 
 $$G(F_U)  \,\backslash   G(F_{U, P})  
  \,/   G(F_{P}) .$$
  Since $G$ satisfies local    factorization   hypothesis  (\ref{hyp3}),  $G(F_{U, P}) = G(F_U)G(F_P)$. 
  Hence 
  $$H^1(F_V, G) \to H^1(F_U, G) \times H^1(F_P, G)$$
has trivial kernel. 
\end{proof}

Let $\PP$ be a finite set of closed points of $X$ containing all the singular points of $X$ and at least one point from 
 each irreducible component of $X$. Ley $\UU$ be the set of irreducible components of $X \setminus \PP$.
 Let 
$$\Sha_\PP(F, G) = ker (H^1(F, G) \to \prod_{U \in \UU} H^1(F_U, G) \times \prod_{P \in \PP} H^1(F_P, G)).$$

 \begin{theorem} 
 \label{pointsha}
  Let $F$, $G$ and $\XX$  be as above.  
  Suppose that 
  $G$ satisfies the  local factorization hypothesis (\ref{hyp3}).
Let $\PP_0$ be a finite set of closed points of $\XX$ containing all the singular points of $X$
and at least one closed point from each irreducible component of $X$. 
 Then  
$$ \Sha_X(F, G) = \Sha_{\PP_0}(F, G).$$ 
 \end{theorem}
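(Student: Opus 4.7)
The plan is to establish the two inclusions $\Sha_{\PP_0}(F, G) \subseteq \Sha_X(F, G)$ and $\Sha_X(F, G) \subseteq \Sha_{\PP_0}(F, G)$ separately. The first follows from the functoriality of $H^1(-, G)$ applied to natural field maps $F_U \to F_x$; the second is proved by iterating Proposition~\ref{Up} after a spreading-out step from the generic point of each component of $X \setminus \PP_0$.

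For the first inclusion, let $\zeta \in \Sha_{\PP_0}(F, G)$ and $x \in X$. If $x \in \PP_0$ there is nothing to show. Otherwise $x \in X \setminus \PP_0$, and $x$ lies in (or is the generic point of) a unique $U \in \UU$. Since $R_U \subseteq \OO_{\XX, x}$ and $\hat{\OO}_{\XX, x}$ is $t$-adically complete, the universal property of the $t$-adic completion yields a ring map $\hat{R}_U \to \hat{\OO}_{\XX, x}$, and hence a field map $F_U \to F_x$ compatible with the inclusions from $F$. Functoriality of $H^1$ then transports the triviality of $\zeta|_{F_U}$ to triviality of $\zeta|_{F_x}$.

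For the second inclusion, let $\zeta \in \Sha_X(F, G)$. Triviality of $\zeta|_{F_P}$ for $P \in \PP_0 \subseteq X$ is immediate, so it remains to show $\zeta|_{F_U}$ is trivial for each $U \in \UU$. Since the generic point $\eta$ of $U$ is a point of $X$, $\zeta|_{F_\eta}$ is trivial. A spreading-out argument, using that $\hat{\OO}_{\XX, \eta}$ is morally a colimit of the $t$-adic completions $\hat{R}_V$ as $V$ ranges over cofinite open subsets of $U$, combined with the finite presentation of $\zeta$ as a $G$-torsor, produces a cofinite open $V = U \setminus \{P_1, \dots, P_n\}$ over which $\zeta$ trivializes. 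Set $V_0 = V$ and, inductively, $V_i = V_{i-1} \cup \{P_i\}$, so $V_n = U$. Each $V_i$ is a nonempty open irreducible subset of $X \setminus \PP_0$ contained in the irreducible component of $X$ that contains $U$, so Proposition~\ref{Up} applied to the pair $(V_i, P_i)$ gives that the kernel of $H^1(F_{V_i}, G) \to H^1(F_{V_{i-1}}, G) \times H^1(F_{P_i}, G)$ is trivial. By induction $\zeta|_{F_{V_{i-1}}}$ is trivial, and $\zeta|_{F_{P_i}}$ is trivial as $P_i \in X$, so the kernel triviality forces $\zeta|_{F_{V_i}}$ to be trivial. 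Taking $i = n$ yields $\zeta|_{F_U}$ trivial.

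The principal obstacle is the spreading-out step, which asks that triviality of $\zeta$ over the complete local field $F_\eta$ descend to triviality over $F_V$ for some cofinite open $V \subseteq U$. Conceptually this is a finite-presentation statement for the smooth $G$-torsor $\zeta$: a trivialization over $F_\eta$ is specified by finitely many elements of $F_\eta$, and these may be $t$-adically approximated by elements of some $F_V$; an application of Hensel's lemma to the smooth scheme $\zeta$ over $F_V$ then upgrades the approximation to a genuine $F_V$-point, equivalently a trivialization of $\zeta|_{F_V}$.
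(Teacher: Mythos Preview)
Your overall strategy---reduce to iterating Proposition~\ref{Up} one point at a time---is exactly what the paper does. The difference lies in how you reach the starting point of the iteration. The paper does not argue the spreading--out step itself: it simply invokes \cite[Corollary~5.9]{HHK3}, which gives $\Sha_X(F,G)=\bigcup_{\PP}\Sha_{\PP}(F,G)$, together with the monotonicity $\Sha_{\PP}\subseteq\Sha_{\PP'}$ for $\PP\subset\PP'$ \cite[\S5]{HHK3}. Thus any $\zeta\in\Sha_X(F,G)$ already lies in some $\Sha_{\PP}(F,G)$ with $\PP\supseteq\PP_0$, and one then passes from $\PP$ down to $\PP_0$ by adding back the points of $\PP\setminus\PP_0$ one at a time via Proposition~\ref{Up}. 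Your induction is the same induction, started from the same place; what differs is that you attempt to \emph{reprove} the cited spreading--out rather than quote it.

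That attempted reproof has a genuine gap. The assertion that $\hat{\OO}_{\XX,\eta}$ is ``morally a colimit'' of the $\hat{R}_V$ is false in the relevant sense: already for $\XX=\mathbb{P}^1_{k[[t]]}$ one has $\hat{R}_V\simeq k[x,1/f][[t]]$ for suitable $f$, whose colimit over $V$ consists of power series in $t$ with coefficients having \emph{bounded} poles, strictly smaller than $\hat{\OO}_{\XX,\eta}\simeq k(x)[[t]]$. Consequently the finite--presentation/approximation heuristic does not automatically give a $V$ over which the approximate point becomes exact: as you improve the $t$-adic approximation the required $V$ shrinks, and your Hensel step is not well-posed since $\zeta$ is a smooth scheme over the field $F_V$, not over a Henselian pair to which one can lift an approximate section. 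Making this precise is exactly the content of \cite[Proposition~5.8 and Corollary~5.9]{HHK3}; the cleanest fix is to cite that result, as the paper does, after which your iteration via Proposition~\ref{Up} completes the proof. Your first inclusion $\Sha_{\PP_0}(F,G)\subseteq\Sha_X(F,G)$ via the maps $F_U\to F_x$ is fine.
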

 
 \begin{proof}  By (\cite[Corollary 5.9]{HHK3}), we have 
 $\Sha_X(F, G) = \cup_\PP \Sha_\PP(F, G)$, where $\PP$ runs over finite sets of closed points of $X$ containing 
 all the singular points of $X$ and at least one closed point from each irreducible component of $X$.
 Since $\Sha_{\PP}(F, G) \subseteq \Sha_{\PP'}(F, G)$ for all such finite sets of closed points of 
 $\PP$ and $\PP'$ with $\PP \subset \PP'$ (\cite[\S 5]{HHK3}), we have 
  $\Sha_X(F, G) = \cup_\PP \Sha_\PP(F, G)$, where $\PP$ runs over finite sets of closed points of $X$ containing $\PP_0$. 

    Let  $\PP$ be a finite set of closed points of $X$   containing 
 $\PP_0$ and   $\zeta \in \Sha_\PP(F, G)$.  Let $U$ be an irreducible component of $X \setminus \PP$
 and $V$  the  irreducible component of $X \setminus \PP_0$ containing  $U$.  Since 
 $\zeta \in \Sha_\PP(F, G)$, $\zeta$ is trivial over $U$ and trivial over all $P \in \PP$.  
 Suppose $U \neq V$. Let  $P \in V  \setminus U$. Since $\zeta$ is trivial over $F_P$,
 by (\ref{Up}), $\zeta$ is trivial over $F_{U \cup \{ P \}}$. Since $V \setminus U \subset \PP$ is a finite set, we get that 
 $\zeta$ is trivial over $F_V$. Hence $\zeta \in \Sha_{\PP_0}(F, G)$ and $\Sha_X(F, G) \subseteq \Sha_{\PP_0}(F, G)$.
Hence 
 $ \Sha_X(F, G) = \Sha_{\PP_0}(F, G).$
 \end{proof}

\begin{cor} 
 \label{dvrsha}
  Let $F$, $G$ and $\XX$  be as above.  
  Suppose that 
  $G$ satisfies local injectivity hypothesis (\ref{hyp1})
   and  local  factorization hypothesis (\ref{hyp3}).
Let $\PP_0$ be a finite set of closed points of $\XX$ containing all the singular points of $X$
and at least one closed point from each irreducible component of $X$. 
 Then  
$$ \Sha_{div}(F, G) = \Sha_{\PP_0}(F, G).$$ 
 \end{cor}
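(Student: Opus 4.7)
The plan is to simply chain together the two previous results in the section. The local injectivity hypothesis was used in the theorem following Hypothesis~\ref{hyp1} to show $\Sha_{div}(F,G) = \Sha_X(F,G)$, and the local factorization hypothesis was used in Theorem~\ref{pointsha} to show $\Sha_X(F,G) = \Sha_{\PP_0}(F,G)$ for any admissible choice of $\PP_0$. Under both hypotheses, the conclusion is immediate by composing these equalities.

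More explicitly, first I would invoke the theorem stated right after Hypothesis~\ref{hyp1}: since $G$ satisfies local injectivity with respect to $\XX$, we get
\[
\Sha_{div}(F, G) = \Sha_X(F, G).
\]
Next, since $G$ satisfies the local factorization hypothesis and $\PP_0$ is a finite set of closed points of $\XX$ containing all the singular points of $X$ and at least one closed point from each irreducible component, Theorem~\ref{pointsha} applies and yields
\[
\Sha_X(F, G) = \Sha_{\PP_0}(F, G).
\]
Combining the two displays gives $\Sha_{div}(F,G) = \Sha_{\PP_0}(F,G)$, which is the claim.

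There is essentially no obstacle here; the only thing to verify is that the hypotheses of the two preceding statements are exactly the hypotheses of the corollary, which they are by construction. In particular, no additional assumption on $\PP_0$ beyond those already imposed in Theorem~\ref{pointsha} is required, and no further use of the structure of $F$ or $G$ is needed beyond what was built into the two invoked results.
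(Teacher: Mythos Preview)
Your proof is correct and is exactly the argument the paper intends: the corollary is stated without proof because it follows immediately by combining the theorem after Hypothesis~\ref{hyp1} (which gives $\Sha_{div}(F,G)=\Sha_X(F,G)$ under local injectivity) with Theorem~\ref{pointsha} (which gives $\Sha_X(F,G)=\Sha_{\PP_0}(F,G)$ under local factorization).
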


  \section{Simply connected strongly isotropic groups}
  \label{isotropic}

 For any algebraic group  $G$ over a field $L$, let $RG(L)$ denote the subgroup of 
 $G(L)$ consisting $R$-trivial elements and $G(L)/R = G(L)/RG(L)$  the set of $R$-equivalence classes (cf., \cite{Gil97}).
 
  Let $T$ be a complete discrete valuation ring with residue field $\kappa$ and field of 
fraction $K$. Let $F$ be the function field of a curve over $K$. 
  
  \begin{theorem} 
  \label{Sha-requiv}Let $F$ be as above and $G$ a semisimple simply connected   strongly isotropic  
 linear algebraic group  over $F$.  Let $\XX$ be a regular proper model of $F$
 and $\PP$ a finite set of closed points of $\XX$ containing all the singular points of the closed fibre  $X$ of 
 $\XX$ and at least one point from each irreducible  component of $X$.  Let $\UU$  be the set of irreducible 
 components of $X \setminus \PP$ and $\BB$ the set of branches. Then 
$$ \Sha_{\PP}(F, G)  \simeq \prod_{U \in \UU} G(F_{U}) /R \,\backslash \prod_{\wp \in \BB} G(F_\wp)/R 
  \,/ \prod_{P \in \PP} G(F_{P}) / R.$$ 
 \end{theorem}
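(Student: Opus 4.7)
The plan is a two-step descent. First, I would invoke the HHK patching theorem for $G$-torsors (specifically \cite[Theorem 2.4]{HHK3} together with \cite[Proposition 3.9]{HHK5}). Since $G$ is strongly isotropic and simply connected semisimple, $G$ is a rational connected linear algebraic group over each of the overfields $F_U$, $F_P$, $F_\wp$, so the patching hypotheses apply. This yields the identification
$$\Sha_\PP(F,G) \;\cong\; \prod_{U\in\UU}G(F_U)\,\Big\backslash\prod_{\wp\in\BB}G(F_\wp)\Big/\prod_{P\in\PP}G(F_P).$$

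The second step is to descend this to $R$-equivalence classes. The inclusions $F_U,F_P\hookrightarrow F_\wp$ are functorial for the $R$-equivalence relation, so the quotient maps $G(L)\to G(L)/R$ induce a natural projection $\Phi$ from the double coset space above onto
$$\prod_{U}G(F_U)/R\,\Big\backslash\prod_{\wp}G(F_\wp)/R\Big/\prod_{P}G(F_P)/R.$$
Surjectivity of $\Phi$ is immediate by lifting representatives. For injectivity, I would use that $G/R$ carries a natural group structure for strongly isotropic simply connected semisimple groups and that $RG$ is normal in $G$ (results of Gille); this reduces the problem to the decomposition statement $RG(F_\wp)\subseteq G(F_U)\cdot G(F_P)$.

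The main obstacle is precisely this factorization of $R$-trivial elements. By Gille's theorem, $RG(F_\wp)=G(F_\wp)^+$, the subgroup generated by $F_\wp$-points of unipotent radicals of parabolic $F_\wp$-subgroups of $G$. Each such unipotent radical $N$ is isomorphic to an affine space, so the required multiplicative decomposition $N(F_\wp)=N(F_U)\cdot N(F_P)$ reduces to the additive statement $\Ga(F_\wp)=\Ga(F_U)+\Ga(F_P)$, which is a consequence of the vanishing of $H^1$ for $\Ga$ in the HHK patching sequence. Each generator of $G(F_\wp)^+$ thus lies in the product $G(F_U)\cdot G(F_P)$; combining these into general products of $R$-trivial elements requires using the normality of $RG(F_\wp)$ to move factors past each other and stay inside the image of $G(F_U)\times G(F_P)$ acting on the double coset.

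Once this factorization is established, injectivity of $\Phi$ follows: if two tuples in $\prod G(F_\wp)$ become equal in the $R$-double-coset, then after multiplying by suitable lifts of $u\in\prod G(F_U)/R$ and $v\in\prod G(F_P)/R$ they differ componentwise by an element of $RG(F_\wp)$, which by the factorization lemma can be absorbed into an honest $\prod G(F_U)\cdot\prod G(F_P)$-action. Combined with Step 1 this gives the stated isomorphism.
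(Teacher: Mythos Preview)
Your two-step strategy is exactly the paper's: first the HHK double-coset description of $\Sha_\PP(F,G)$ (the paper cites \cite[Corollary~3.6]{HHK3}), then descent to $R$-equivalence via a factorization of $R$-trivial elements over the branch fields. The paper, however, does not attempt to prove the factorization; it invokes \cite[Proposition~3.7]{GP} directly to obtain
\[
\prod_{\wp\in\BB} RG(F_\wp)\;=\;\prod_{U\in\UU} RG(F_U)\cdot\prod_{P\in\PP} RG(F_P),
\]
and concludes from there.

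Your attempt to rederive this factorization has a genuine gap. Granting Gille's identification $RG(F_\wp)=G(F_\wp)^+$ and the additive HHK decomposition $N(F_\wp)=N(F_U)\cdot N(F_P)$ for each unipotent radical $N$ of an $F$-parabolic, you obtain only that each \emph{generator} of $G(F_\wp)^+$ lies in $G(F_U)\cdot G(F_P)$. A general element of $G(F_\wp)^+$ is an alternating word $a_1b_1a_2b_2\cdots a_kb_k$ with $a_i\in G(F_U)$ and $b_i\in G(F_P)$, and the set $G(F_U)\cdot G(F_P)$ is not a subgroup of $G(F_\wp)$ in general. Your appeal to normality of $RG(F_\wp)$ only lets you conjugate $b_i$ past $a_{i+1}$ at the cost of replacing $b_i$ by an element of $RG(F_\wp)$, not of $G(F_P)$, so the word does not shorten. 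The same obstruction reappears in your final ``absorption'' step: knowing $r\in RG(F_\wp)=RG(F_U)RG(F_P)$ does not by itself give $rg\in G(F_U)\,g\,G(F_P)$, because after absorbing the $RG(F_U)$-factor on the left you are left with an $RG(F_P)$-factor on the \emph{left} of $g$, where only $G(F_U)$ acts. Closing this loop is precisely the content of the Gille--Parimala result the paper cites; your sketch does not supply an independent argument for it.
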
 
 
 \begin{proof} By (\cite[Corollary 3.6]{HHK3}), we have 
 $$ \Sha_{\PP}(F, G)  \simeq \prod_{U \in \UU} G(F_{U})  \,\backslash \prod_{\wp \in \BB} G(F_\wp)
  \,/ \prod_{P \in \PP} G(F_{P}) .$$ 
  Since $G$ is a semisimple simply connected   strongly isotropic  
 linear algebraic group  over $F$, by (\cite[Propsition 3.7]{GP}) we have 
 $$\prod_{\wp \in \BB} RG(F_\wp) =  \prod_{U \in \UU} RG(F_{U})  \prod_{P \in \PP} RG(F_{P}).$$
 Hence we have 
 $$ \Sha_{\PP}(F, G)  \simeq \prod_{U \in \UU} G(F_{U}) /R \,\backslash \prod_{\wp \in \BB} G(F_\wp)/R 
  \,/ \prod_{P \in \PP} G(F_{P}) / R.$$ 
 \end{proof}

Let $G$ be a connected linear algebraic group over $F$.
Suppose that  $\XX$  is a regular proper model of 
$F$  with   the union of the  ramification locus of $G$ and the closed fibre of $\XX$ 
is a union of regular curves with normal crossings. Let $P \in \XX$ be closed point.  
Then there exist $\pi_P, \delta_P \in  \OO_{\XX, P}$  such that the maximal ideal 
of $\OO_{\XX, P}$ is generated by $\pi_P$ and $\delta_P$, and 
$G$ is unramified on $\OO_{\XX, P}$ except possibly at $(\pi_P)$ and $(\delta_P)$.   We fix such a choice of 
$\pi_P$ and $\delta_P$ at each $P$. 

\begin{hyp}
\label{hyp2}   We say that 
$G$ satisfies the local surjectivity  hypothesis  with respect to $\XX$   if 
for every nonsingular  closed point $P$ of $X$ and for every codimension one point $\eta$ of $X$ with 
$P$ in the closure of $\eta$,  the map $G(F_P) \to G(F_{P, \eta})/R$ is surjective. 
\end{hyp}

 \begin{prop} 
 \label{Up-simply-conn}
  Let $F$, $G$ and $\XX$  be as above.  
  Suppose that $G$ is  a semisimple simply connected  strongly isotropic   linear algebraic group  over $F$  and 
  $G$ satisfies the  local surjectivity hypothesis  (\ref{hyp2}).
  Then $G$ satisfies local factorization hypothesis (\ref{hyp3}).  
 \end{prop}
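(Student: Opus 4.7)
The plan is to take an arbitrary $g\in G(F_{U,P})$ and produce a factorization $g=g_U\cdot g_P$ with $g_U\in G(F_U)$ and $g_P\in G(F_P)$. Let $\eta$ denote the generic point of the affine irreducible subset $U$; by the definitions recalled just before~(\ref{hyp3}) one has $F_{U,P}=F_{P,\eta}$, so $g$ may be regarded as a point of $G(F_{P,\eta})$.

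The first step is to reduce to factoring $R$-trivial elements. Applying the local surjectivity hypothesis~(\ref{hyp2}) to the pair $(P,\eta)$, the surjectivity of $G(F_P)\to G(F_{P,\eta})/R$ furnishes an element $g_P\in G(F_P)$ with $g\cdot g_P^{-1}\in RG(F_{U,P})$. It therefore suffices to show that every $R$-trivial element of $G(F_{U,P})$ lies in $G(F_U)\cdot G(F_P)$.

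The second step is to run the Gille--Parimala factorization already invoked in the proof of Theorem~\ref{Sha-requiv}. Let $V$ be the irreducible component of $X$ containing $U$. In the cases of interest---in particular those needed for Proposition~\ref{Up} and Theorem~\ref{pointsha}, where $U=V\setminus\{P\}$---one has $P\notin U$. Choose a finite set $\PP$ of closed points of $X$ containing $P$, all singular points of $X$, and at least one point of every other irreducible component, arranged so that $\PP\cap U=\emptyset$. Then $V\setminus\PP\supseteq U$, which forces $F_{V\setminus\PP}\subseteq F_U$ and hence $G(F_{V\setminus\PP})\subseteq G(F_U)$; the branch of the resulting configuration at $P$ determined by $\eta$ has associated completed field $F_{U,P}$. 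Applying the equality from [GP, Prop~3.7]
\[
\prod_{\wp'\in\BB}RG(F_{\wp'})\;=\;\prod_{W\in\UU}RG(F_W)\cdot\prod_{Q\in\PP}RG(F_Q)
\]
to the family which is $g\,g_P^{-1}$ at the branch singled out above and trivial at every other branch produces $h\in RG(F_{V\setminus\PP})\subseteq G(F_U)$ and $h_P\in RG(F_P)$ with $g\,g_P^{-1}=h\cdot h_P$ inside $G(F_{U,P})$. Setting $g_U:=h$ and replacing $g_P$ by $h_Pg_P$ delivers the required factorization.

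The main obstacle is the bookkeeping in the choice of $\PP$: one needs $P\in\PP$ (so that $P$ is a patch of the configuration and the branch at $P$ along $\eta$ appears in $\BB$) and simultaneously $\PP\cap U=\emptyset$ (so that the irreducible component of $X\setminus\PP$ produced by the factorization contains $U$ and therefore has fraction field inside $F_U$ rather than outside of it). These requirements are compatible precisely when $P\notin U$, which is the case in the later applications of~(\ref{hyp3}); the edge case $P\in U$ would require a separate argument but does not arise in the sequel.
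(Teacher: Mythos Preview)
Your overall strategy matches the paper's: first use the local surjectivity hypothesis~(\ref{hyp2}) to reduce an arbitrary $g\in G(F_{U,P})$ to an $R$-trivial element, then factor that element using the Gille--Parimala result. The paper's proof is exactly this two-step argument.

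The difference lies in how the second step is carried out. The paper invokes \cite[Proposition~3.7]{GP} \emph{directly} for the single branch datum $(F_U,F_P,F_{U,P})$ to obtain $RG(F_{U,P})=RG(F_U)\,RG(F_P)$, without embedding anything in a global patching configuration. You instead manufacture a global finite set $\PP$ and extract the local factorization from the simultaneous one. This works, but it forces you into the bookkeeping you flag at the end: you need $P\notin U$ (and, implicitly, that $U$ contains no singular point of $X$, since singular points must lie in $\PP$). The paper's direct appeal to \cite[Proposition~3.7]{GP} sidesteps all of this; that result is stated so as to apply branch-by-branch, so no auxiliary $\PP$ is needed and the edge case $P\in U$ never has to be isolated.

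In short: your argument is correct where it applies and morally identical to the paper's, but the detour through a global configuration is unnecessary and is the sole source of the restriction you worry about. If you simply cite \cite[Proposition~3.7]{GP} for $RG(F_{U,P})=RG(F_U)RG(F_P)$ and combine it with hypothesis~(\ref{hyp2}), you recover the paper's proof verbatim and the caveat in your last paragraph disappears.
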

 
 \begin{proof} 
 Since $G$ is simply connected and strongly isotropic, by (\cite[Propsition 3.7]{GP}), $RG(F_{U, P}) = RG(F_U)RG(F_P)$.
 Since $G$ satisfies local surjectivity hypothesis (\ref{hyp2}), 
 it follows that $G$ satisfies local factorization property. 
 \end{proof}

\begin{theorem} 
 \label{point-sha}
  Let $F$, $G$ and $\XX$  be as above.   
Suppose  $G$   is a semisimple simply connected  strongly isotropic   linear algebraic group  over $F$  and 
satisfies the local  surjectivity     hypothesis (\ref{hyp2}) with respect to $\XX$. 
Let $\PP_0$ be a finite set of closed points of $\XX$ containing all the singular points of $X$
and at least one closed point from each irreducible component of $X$. 
Then  $$\Sha_X(F, G) = \Sha_{\PP_0}(F, G).$$ 
 \end{theorem}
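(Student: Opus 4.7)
The plan is to obtain this as a direct consequence of two results already established earlier in the paper. First, I would invoke Proposition \ref{Up-simply-conn}: since $G$ is a semisimple simply connected strongly isotropic linear algebraic group over $F$ and satisfies the local surjectivity hypothesis (\ref{hyp2}) with respect to $\XX$, it also satisfies the local factorization hypothesis (\ref{hyp3}). This is the point where the simply-connected-and-strongly-isotropic assumption is used: it allows one to invoke $RG(F_{U,P}) = RG(F_U)RG(F_P)$ from \cite[Proposition 3.7]{GP}, and combined with the surjectivity of $G(F_P) \to G(F_{P,\eta})/R$ this upgrades to the full factorization $G(F_{U,P}) = G(F_U)G(F_P)$.

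Once the local factorization hypothesis is in hand, $\PP_0$ satisfies the hypotheses required by Theorem \ref{pointsha}, namely that it is a finite set of closed points of $\XX$ containing all singular points of $X$ and at least one closed point from each irreducible component of $X$. Applying Theorem \ref{pointsha} directly yields
\[
\Sha_X(F,G) = \Sha_{\PP_0}(F,G),
\]
which is the desired conclusion.

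Since the theorem is essentially a corollary, there is no real obstacle in the argument itself: the substantive content has already been absorbed into Proposition \ref{Up-simply-conn} (which extracts factorization from surjectivity using $R$-equivalence theory for simply connected strongly isotropic groups) and into Theorem \ref{pointsha} (which is the main descent step, going from unions of $\Sha_\PP$'s across all admissible $\PP \supseteq \PP_0$ down to $\Sha_{\PP_0}$ via Proposition \ref{Up}). The only thing to verify is that the hypotheses propagate correctly, which they do verbatim, so the proof reduces to a one-line citation of these two results.
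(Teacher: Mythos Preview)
Your proposal is correct and matches the paper's proof exactly: the paper's argument is the single line ``Follows from (\ref{pointsha}) and (\ref{Up-simply-conn}),'' which is precisely the two-step reduction you describe.
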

 
 \begin{proof}  Follows from (\ref{pointsha}) and (\ref{Up-simply-conn}). 
 \end{proof}

 \begin{cor} 
 \label{dvrsha-requiv}
Let $F$, $G$ and $\XX$  be as above.   
Suppose  $G$   is a semisimple simply connected   linear algebraic group  over $F$ which is strongly isotropic  and 
satisfies the local injectiviry hypothesis (\ref{hyp1}) and the 
 local  surjectivity     hypothesis (\ref{hyp2}) with respect to $\XX$. 
Let $\PP_0$ be a finite set of closed points of $\XX$ containing all the singular points of $X$
and at least one closed point from each irreducible component of $X$. 
Then 
 $$\Sha_{div}(F,  G)  \simeq \prod_{U \in \UU_0} G(F_{U}) /R \,\backslash \prod_{\wp \in \BB_0} G(F_\wp)/R 
  \,/ \prod_{P \in \PP_0} G(F_{P}) / R.$$ 
where  $\UU_0$ is the set of irreducible components of $X \setminus \PP_0$ and 
$\BB_0$ is the set of branches with respect to $\PP_0$. 
 \end{cor}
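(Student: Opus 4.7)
The plan is to show that the stated description is an immediate consequence of the three previous results \textup{(\ref{dvrsha})}, \textup{(\ref{Up-simply-conn})} and \textup{(\ref{Sha-requiv})}, with the local surjectivity hypothesis playing the role of upgrading local surjectivity to local factorization.

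First, I would invoke Proposition \textup{(\ref{Up-simply-conn})}. Since by assumption $G$ is semisimple simply connected, strongly isotropic, and satisfies the local surjectivity hypothesis \textup{(\ref{hyp2})} with respect to $\XX$, that proposition implies that $G$ automatically satisfies the local factorization hypothesis \textup{(\ref{hyp3})}. The point is that strong isotropy and simple connectedness provide, via \cite[Proposition 3.7]{GP}, the identity $RG(F_{U,P}) = RG(F_U)RG(F_P)$; combined with $G(F_P) \to G(F_{P,\eta})/R$ being surjective, this upgrades to the full factorization $G(F_{U,P}) = G(F_U)G(F_P)$.

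With the local factorization hypothesis in hand, together with the local injectivity hypothesis \textup{(\ref{hyp1})} that is assumed directly, Corollary \textup{(\ref{dvrsha})} applies and gives the identification
\[
\Sha_{div}(F, G) \;=\; \Sha_{\PP_0}(F, G).
\]
This collapses the problem of controlling obstructions over all divisorial valuations of $F$ to the obstruction measured by the patching data attached to the chosen finite set $\PP_0$ and the components of $X \setminus \PP_0$.

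Finally, Theorem \textup{(\ref{Sha-requiv})} is directly applicable to $\Sha_{\PP_0}(F, G)$, since its hypotheses (namely, $G$ semisimple simply connected strongly isotropic, $\XX$ a regular proper model, and $\PP_0$ a finite set of closed points containing the singular points of $X$ and a point from each irreducible component) all match the hypotheses here. Applying it yields
\[
\Sha_{\PP_0}(F, G) \;\simeq\; \prod_{U \in \UU_0} G(F_{U})/R \,\big\backslash \prod_{\wp \in \BB_0} G(F_\wp)/R \,\big/ \prod_{P \in \PP_0} G(F_{P})/R,
\]
which, combined with the previous identification, is exactly the claimed formula. There is no real technical obstacle; the corollary is a formal combination, and the only subtlety to flag is that one must check that the choice of $\PP_0$ fulfills the hypotheses of all three cited results simultaneously, which it does by construction.
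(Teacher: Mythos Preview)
Your proposal is correct and follows essentially the same route as the paper's own proof, which simply cites \textup{(\ref{dvrsha})} and \textup{(\ref{Sha-requiv})}; you are just more explicit in invoking \textup{(\ref{Up-simply-conn})} to pass from the local surjectivity hypothesis \textup{(\ref{hyp2})} to the local factorization hypothesis \textup{(\ref{hyp3})} needed for \textup{(\ref{dvrsha})}, a step the paper leaves implicit.
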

 \begin{proof}  Follows from (\ref{dvrsha}) and (\ref{Sha-requiv}). 
 \end{proof}

 \section{Field extension over two dimensional complete fields} 
\label{field-extns}

Let $R$ be a   two dimensional  complete  regular local  ring with  maximal ideal $ m = (\pi, \delta)$, residue field 
 $\kappa$ and  field of fractions $F$.  In this section we describe finite extensions of $F$ of degree coprime to char$(\kappa)$
 which are unramified on $R$ except possibly at $(\pi)$ and $(\delta)$.

 For any prime $\theta \in R$, let $F_\theta$ denote the completion of 
 $F$ at the discrete valuation given by $\theta$.  Note that the residue field $\kappa(\pi)$  of $F_\pi$  is 
  the field of fractions of 
 $R/(\delta)$ and hence a  complete discretely valued field with the image $\bar{\delta}$ of $\delta$ as a parameter. 
 
  Let $E/F$ be  a finite  extension of degree $n$. Suppose that 
 $n$ is coprime to char$(k)$.  Then $E/F$ is separable. 
 Let $\widetilde{R}$ be the integral closure of $R$ in $E$.

\begin{prop}
\label{unramified} If $E/F$ is unramified on $R$, then $E\otimes F_\pi$   is a field. 
\end{prop}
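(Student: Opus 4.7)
The plan is to reduce the statement to the claim that there is a unique prime of the integral closure $\widetilde R$ of $R$ in $E$ lying over $(\pi)$. Since $n=[E:F]$ is coprime to $\mathrm{char}(\kappa)$, the extension $E/F$ is separable, and since $R$ is excellent (complete regular local), $\widetilde R$ is module-finite over $R$. Under this finiteness one has the standard decomposition
$$E\otimes_F F_\pi \;\cong\; \prod_{\mathfrak P \mid (\pi)}\Frac\bigl(\widehat{\widetilde R_{\mathfrak P}}\bigr),$$
so $E\otimes_F F_\pi$ is a field if and only if exactly one prime $\mathfrak P$ of $\widetilde R$ lies above $(\pi)$.

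Next I would upgrade the unramifiedness hypothesis to \'etaleness via purity of the branch locus. By assumption $R\to\widetilde R$ is unramified at every height one prime of $R$, and coprimality of $n$ with $\mathrm{char}(\kappa)$ forces all residue extensions to be separable, so the morphism is \'etale in codimension one. Applying Zariski--Nagata purity of the branch locus to the finite morphism $\Spec(\widetilde R)\to\Spec(R)$ from a normal scheme to a regular scheme, the branch locus has pure codimension one; since it is empty in codimension one, it is empty, so $R\to\widetilde R$ is finite \'etale.

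Finally I would exploit Henselianness. Since $R$ is complete local, any finite \'etale $R$-algebra decomposes as a finite product of local finite \'etale $R$-algebras, one per connected component. As $E$ is a field, $\widetilde R$ is a domain, hence connected, hence local. \'Etaleness gives $(\pi,\delta)\widetilde R=\mathfrak m_{\widetilde R}$, so $\pi,\delta$ is a regular system of parameters of the two-dimensional regular local ring $\widetilde R$; in particular $\pi\widetilde R$ is a height one prime of $\widetilde R$, and it is evidently the unique prime above $(\pi)$. Combined with the first paragraph, this yields that $E\otimes_F F_\pi$ is a field. The only nonformal input is Zariski--Nagata purity; the rest is routine, so I do not foresee any genuine obstacle.
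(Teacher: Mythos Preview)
Your argument is correct and lands on exactly the same mechanism as the paper: once one knows that $\widetilde R$ is a regular local ring with maximal ideal $(\pi,\delta)\widetilde R$, the element $\pi$ is prime in $\widetilde R$, hence inert, hence $E\otimes_F F_\pi$ is a field. The paper asserts that first step in a single sentence, whereas you justify it explicitly via Zariski--Nagata purity (to pass from unramified in codimension one to finite \'etale) and Henselianness of the complete local ring $R$ (to conclude $\widetilde R$ is local); your write-up is thus a more detailed version of the same proof rather than a different route.
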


\begin{proof}   
Since $E/F$ is unramified $R$, 
$\widetilde{R}$ is a regular local ring with maximal ideal $(\pi, \delta)\widetilde{R}$. In particular  $\pi$ is a
prime in $\widetilde{R}$ and hence $\pi$ is inert in $\widetilde{R}$. Thus $E\otimes F_\pi \simeq E_\pi$ is a field. 
\end{proof} 

\begin{prop} 
\label{pi-galois}  If $E/F$  is Galois and   unramified on $R$ except possibly at $\pi$, 
then $E = E_1(\sqrt[e]{u\pi})$ for some
 extension $E_1/F$ which is unramified on $R$ and $u \in E_1$ a unit in the integral closure of 
 $R$ in $E_1$.  
 \end{prop}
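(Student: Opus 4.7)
The plan is to take $E_1 = E^I$, where $I \subseteq \Gal(E/F)$ is the inertia subgroup at a prime $\mathfrak P'$ of the integral closure $\widetilde R$ of $R$ in $E$ lying above $(\pi)$. Since $[E:F]$ is coprime to $\mathrm{char}(\kappa)$, this inertia is tame and hence cyclic of some order $e$. By construction $E_1/F$ is unramified at $(\pi)$, and it inherits unramifiedness at every other height one prime of $R$ from $E/F$; thus $E_1/F$ is unramified on all of $R$. Zariski--Nagata purity then upgrades this to finite \'etaleness of $\widetilde R_1 \to R$, so $\widetilde R_1$ is itself a two-dimensional complete regular local ring, hence a UFD. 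Proposition~\ref{unramified} furthermore identifies a unique height one prime $\mathfrak P$ of $\widetilde R_1$ above $(\pi)$, at which $\pi$ remains a uniformizer.

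The extension $E/E_1$ is then cyclic of order $e$, totally tamely ramified at $\mathfrak P$ and unramified elsewhere on $\widetilde R_1$. The standard tame-inertia calculation applied to the completion $(E_1)_{\mathfrak P}$ places $\mu_e$ in its residue field $k_{\mathfrak P}$. Since $\widetilde R_1/(\pi)$ is a DVR with fraction field $k_{\mathfrak P}$, it is integrally closed, forcing $\mu_e \subset \widetilde R_1/(\pi)$; reduction modulo the maximal ideal is injective on $\mu_e$ because $e$ is invertible, so $\mu_e$ descends to the residue field $\widetilde \kappa_1$ of $\widetilde R_1$, and Hensel's lemma then lifts $\mu_e$ back into $\widetilde R_1 \subset E_1$.

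With $\mu_e \subset E_1$ in hand, Kummer theory writes $E = E_1(\sqrt[e]{a})$ for some $a \in E_1^\times$. Using the UFD property of $\widetilde R_1$ and the fact that $E/E_1$ is unramified at every height one prime of $\widetilde R_1$ other than $\mathfrak P$, I would modify $a$ by an $e$-th power so that its divisor on $\Spec \widetilde R_1$ is $k\mathfrak P$ with $\gcd(k,e) = 1$ (the coprimality being forced by total ramification at $\mathfrak P$). Then $a\pi^{-k}$ has trivial divisor, so by normality it is a unit $u \in \widetilde R_1^\times$, giving $a = u\pi^k$. Choosing $k'$ with $kk' \equiv 1 \pmod e$ yields $E_1(\sqrt[e]{a}) = E_1(\sqrt[e]{u^{k'}\pi})$, completing the argument.

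The most delicate step is the chain in paragraph two that establishes $\mu_e \subset E_1$; the remaining manipulations are essentially a transcription of standard tame/Kummer theory from the completion to the global ring, with Proposition~\ref{unramified} and the UFD property of $\widetilde R_1$ doing the real work.
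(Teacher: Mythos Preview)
Your overall strategy is the same as the paper's: set $E_1 = E^I$ for $I$ the inertia group at a prime above $(\pi)$, show $E_1/F$ is unramified on $R$, then use Kummer theory together with the UFD property of $\widetilde R_1$ to produce $E = E_1(\sqrt[e]{u\pi})$. The roots-of-unity chain and the divisor manipulation at the end are fine and essentially reproduce what the paper does.

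There is, however, a genuine gap at the first step. You write ``by construction $E_1/F$ is unramified at $(\pi)$'', but taking the fixed field of the inertia group $I = I_{\mathfrak P'}$ at one chosen prime $\mathfrak P'$ only guarantees that $E_1/F$ is unramified at the single prime $\mathfrak P' \cap \widetilde R_1$. If $\widetilde R$ had several primes over $(\pi)$, their inertia groups would merely be $G$-conjugate to $I$, not equal to $I$, and $E_1/F$ could then be ramified at the other primes of $\widetilde R_1$ lying over $(\pi)$. (A toy model: $G=S_3$ with three primes over $(\pi)$, each with decomposition and inertia group a distinct transposition; then $E^{\langle(12)\rangle}/F$ is ramified over $(\pi)$ at the prime coming from the $\{(13),(23)\}$-orbit.) So your appeal to Proposition~\ref{unramified} for $\widetilde R_1$ is premature: you need $E_1/F$ unramified on $R$ before you can invoke it, and that is exactly what is in question.

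The paper fills this gap by first passing to the decomposition field $N = E^{D}$ at $\mathfrak P'$. Since $N/F$ is completely split at $(\pi)$ and inherits unramifiedness at all other height-one primes from $E/F$, it is unramified on all of $R$; Proposition~\ref{unramified} then forces $N\otimes F_\pi$ to be a field, while by construction it is a product of $[N:F]$ copies of $F_\pi$. Hence $N=F$, i.e.\ $D=G$, so there is a unique prime of $\widetilde R$ above $(\pi)$ and $I\trianglelefteq G$. With this in hand your claim about $E_1$ becomes correct and the rest of your argument goes through, matching the paper's proof.
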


\begin{proof} 
Let $Q$ be a prime ideal of $\widetilde{R}$ lying over the prime ideal $(\pi)$ of $R$.
Let $H = \{ \sigma \in G(E/F) \mid  \sigma(Q) = Q\} $ be the decomposition group of $Q$.
Let $N = E^H \subseteq E$. Then  $N/F$ is unramified at $(\pi)$  and 
$N\otimes F_\pi $ is isomorphic to the product of $F_\pi$. 
 Since $E/F$ is unramified on $R$ except possibly at 
$(\pi)$, $N/F$ is unramified on $R$.  Hence, by (\ref{unramified}), $N\otimes F_\pi$ is a field.
In particular $N = F$, $H = G(E/F)$ and $Q$ is the unique prime ideal of $\widetilde{R}$ lying 
over $(\pi)$. Thus $E\otimes F_\pi$ is a field. 

Let $I = \{ \sigma \in G(E/F) \mid \sigma (x) = x {\rm ~mod ~} Q {\rm ~for~ all~} x \in \widetilde{R} \}$
be the inertia group and $E_1 = E^I$. Then $E_1/F$ is unramified at $(\pi)$.
Hence $E_1/F$ is unramified on $R$.  Then  the integral closure $\widetilde{R}_1$ of $R$ in $E_1$ is  a regular 
local ring with maximal ideal generated by $(\pi, \delta)$. 

Since   
$[E: F]$ is coprime to char$(k)$,  $[E\otimes_{E_1} E_{1\pi} : E_{1\pi}] = [E : E_1] = e$ is coprime to char$(k)$.
Since   
$E\otimes_{E_1}  E_{1\pi} /E_{1\pi}$ is totally ramified, 
$ E\otimes_{E_1}  E_{1\pi} = E_{1\pi}(\sqrt[e]{u\pi})$ for some $u \in E_1$ which is a 
unit in $\widetilde{R}_1$.  Since $E\otimes_{E_1}  E_{1\pi} /E_{1\pi}$ is Galois,
$E\otimes_{E_1}  E_{1\pi} /E_{1\pi}$ is cyclic and $E_{1\pi}$ contains a primitive 
$e^{th}$ root of unity.  Since $\widetilde{R}_1$ is a complete regular local ring, it follows that 
$E_1$ contains a primitive 
$e^{th}$ root of unity. 

Since 
$E/E_1$  Galois and $G(E/E_1) = G(E\otimes_{E_1}  E_{1\pi} /E_{1\pi})$,  
it follows that $E/E_1$ is cyclic. 
 Thus $E = E_1(\sqrt[e]{a})$ for some $a \in E_1$.   Since $E_1$ is a the field of fractions of 
$\widetilde{R}_1$, we assume that $ a\in \widetilde{R}_1$. Since $\widetilde{R}_1$ is a UFD, we have 
$a = u \pi^r \delta^s \theta_1^{d_1} \cdots \theta_m^{d_m}$ for some primes 
$\theta_1, \cdots , \theta_m \in \widetilde{R}_1$ and unit $u \in \widetilde{R}_1$. 
Since $E/E_1$ is unramified on $\widetilde{R}_1$ except at $(\pi)$, $e$ divides $s$, $d_1$, $ \cdots$, $d_m$.
Hence $E = E_1(\sqrt[e]{u\pi^r})$. Since $E\otimes _{E_1}  E_{1\pi} /E_{1\pi}$ is totally ramified, 
$r$ is coprime to $n$ and hence $E = E_1(\sqrt[e]{v\pi})$ for some unit $v \in \widetilde{R}_1$. 
\end{proof}

\begin{cor}
\label{pi}
 If $E/F$ is  unramified on $R$ except possibly at $(\pi)$, then 
  $E\otimes F_\pi$  and $E \otimes F_\delta$ are   fields.
\end{cor}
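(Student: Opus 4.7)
The plan is to pass to the Galois closure $L$ of $E/F$, prove the analogous statements for $L$, and descend to $E$ by flat base change. As the compositum of the $F$-conjugates of $E$, each unramified on $R$ except at $(\pi)$, the extension $L/F$ is Galois, separable, and also unramified on $R$ except possibly at $(\pi)$. Once $L \otimes F_\pi$ and $L \otimes F_\delta$ are shown to be fields, the flat injections $E \otimes F_\pi \hookrightarrow L \otimes F_\pi$ and $E \otimes F_\delta \hookrightarrow L \otimes F_\delta$ display $E \otimes F_\pi$ and $E \otimes F_\delta$ as subrings of fields, hence as domains; each is a product of fields (being étale), so must consist of a single factor.

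For $L \otimes F_\pi$ I would run the decomposition-group step from the proof of Proposition~\ref{pi-galois}: fix a prime $Q \subset \widetilde{R}_L$ above $(\pi)R$ and let $N = L^{D_Q}$ be its decomposition field. Then $N/F$ is unramified on $R$ (at $(\pi)$ by the decomposition-field property, and elsewhere because $L/F$ is), so Proposition~\ref{unramified} gives that $N \otimes F_\pi$ is a field. On the other hand, the local factor of $N \otimes F_\pi$ at the prime $Q_N = Q \cap \widetilde{R}_N$ equals $F_\pi$ (again by decomposition); since $N \otimes F_\pi$ is a field it has only this one factor, so $N \otimes F_\pi = F_\pi$, forcing $N = F$ and $D_Q = \Gal(L/F)$. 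Hence $\widetilde{R}_L$ admits a unique prime over $(\pi)R$ and $L \otimes F_\pi$ is a field.

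For $L \otimes F_\delta$ I would invoke the structural conclusion of Proposition~\ref{pi-galois} to write $L = L_1(\sqrt[e]{u\pi})$ with $L_1/F$ unramified on $R$ and $u \in \widetilde{R}_{L_1}$ a unit. By the symmetric form of Proposition~\ref{unramified} at $(\delta)$, $L_1 \otimes F_\delta$ is a complete discretely valued field whose residue field $\kappa' = \Frac(\widetilde{R}_{L_1}/(\delta))$ is itself a complete discrete valuation field with uniformizer $\bar\pi$, because $(\pi,\delta)$ remains a regular system of parameters on the regular local ring $\widetilde{R}_{L_1}$. Hence $\overline{u\pi} = \bar u\,\bar\pi \in \kappa'$ has $\bar\pi$-adic valuation one, so is not a $p$-th power in $\kappa'$ for any prime $p \mid e$; since $e$ is coprime to $\mathrm{char}(\kappa)$, the Kummer extension $\kappa'(\sqrt[e]{\overline{u\pi}})/\kappa'$ is totally tamely ramified of degree $e$. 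Because $\sqrt[e]{u\pi}$ has $\delta$-adic valuation zero, $(L_1 \otimes F_\delta)(\sqrt[e]{u\pi})$ is an unramified degree-$e$ extension of $L_1 \otimes F_\delta$ with residue extension $\kappa'(\sqrt[e]{\overline{u\pi}})/\kappa'$, hence a field; so $L \otimes F_\delta$ is a field.

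The principal obstacle is verifying the hypothesis of Proposition~\ref{pi-galois} for $L$: it demands $[L:F]$ be coprime to $\mathrm{char}(\kappa)$, whereas only $[E:F]$ is assumed coprime. Since $[L:F]$ can divide $[E:F]!$, new prime factors may appear. To close this gap one could either strengthen Proposition~\ref{pi-galois} so as to require only that the tame ramification index at $(\pi)$ be coprime to $\mathrm{char}(\kappa)$---a condition automatic here from $[E:F]$ being coprime---or give an argument intrinsic to $\widetilde{R}$ bypassing the Galois closure, for instance showing directly that $\widetilde{R}/(\pi)\widetilde{R}$ and $\widetilde{R}/(\delta)\widetilde{R}$ are domains using the henselianness of $R$ together with the tame ramification at $(\pi)$.
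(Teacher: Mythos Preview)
Your approach—pass to the Galois closure $L$, invoke Proposition~\ref{pi-galois} to obtain $L = L_1(\sqrt[e]{u\pi})$, verify the two tensor products for $L$, and descend—is exactly the route the paper takes; the paper is simply terser, writing ``it is easy to see'' for the $F_\delta$ step you spell out in detail.

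The obstacle you flag is genuine and is present verbatim in the paper's own proof: after replacing $E$ by its Galois closure $L$, one applies Proposition~\ref{pi-galois} to $L$, but $[L:F]$ can acquire the prime $\mathrm{char}(\kappa)$ even when $[E:F]$ avoids it. Your first proposed fix is correct and closes the gap cleanly. Inspecting the proof of Proposition~\ref{pi-galois}, the coprimality of $[E:F]$ with $\mathrm{char}(\kappa)$ is invoked only to deduce that the ramification index $e = |I| = [E:E_1]$ at $(\pi)$ is coprime to $\mathrm{char}(\kappa)$; the decomposition-group step, the regularity of $\widetilde{R}_1$ (which needs only that $E_1/F$ is unramified over the complete local ring $R$, hence \'etale, hence $\widetilde{R}_1$ is a connected finite \'etale $R$-algebra and therefore regular local), and the Kummer description all go through under this weaker hypothesis. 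Now $E/F$ is tamely ramified at $(\pi)$ because $[E:F]$ is coprime to $\mathrm{char}(\kappa)$, and tame ramification is preserved under compositum, so the Galois closure $L/F$ is again tame at $(\pi)$; thus $e_L$ is coprime to $\mathrm{char}(\kappa)$ even though $[L:F]$ need not be. With that observation both your argument and the paper's are complete.
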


\begin{proof} Let $L/E$ be the normal closure of $E/F$. 
Since $L/F$ is separable, $L/F$ is a Galois extension. 
Since $E/F$ is unramified on $R$ except possibly at $\pi$, $L/F$ is also 
unramified on $R$ except possibly at $\pi$.  Hence replacing $E$ by $L$, we assume that 
$E/F$ is a Galois extension.  Thus, by (\ref{pi-galois}), we have 
$E = E_1(\sqrt[m]{u\pi})$ for some
 extension $E_1/F$ which is unramified on $R$ and $u \in E_1$ a unit in the integral closure $\widetilde{R}_1$  of 
 $R$ in $E_1$.  Now it is easy to see that $E\otimes F_\pi$ and $E\otimes F_\delta$ are fields. 
\end{proof}

\begin{prop} 
\label{pi-delta} If $E/F$ is  unramified on $R$ except possibly at $(\pi)$ and $(\delta)$, then 
  $E\otimes F_\pi$  and $E \otimes F_\delta$ are   fields.
 \end{prop}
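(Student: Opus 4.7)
The plan is to reduce to the Galois case and then mirror the strategy of Proposition \ref{pi-galois}, this time exploiting Corollary \ref{pi} as an intermediate step.

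First, I would replace $E$ by its Galois closure. Let $L/F$ be the normal closure of $E/F$. Since $E/F$ is separable (its degree is coprime to $\mathrm{char}(\kappa)$) and unramified on $R$ except possibly at $(\pi)$ and $(\delta)$, so is $L/F$; indeed $L$ is generated by the Galois conjugates of $E$, and the ramification locus is preserved under composita. If $L \otimes F_\pi$ (respectively $L \otimes F_\delta$) is a field, then since $F \to F_\pi$ is flat we obtain an injection $E \otimes F_\pi \hookrightarrow L \otimes F_\pi$; as $E \otimes F_\pi$ is a finite-dimensional $F_\pi$-algebra that embeds into a field, it is itself a field. Thus one may assume $E/F$ is Galois with group $G$.

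Next I would mimic the proof of Proposition \ref{pi-galois}. Choose a prime $Q$ of the integral closure $\widetilde{R}$ of $R$ in $E$ lying over $(\pi)$, let $H \subseteq G$ be its decomposition group, and set $N = E^H$. The completion $E_Q$ is Galois over $F_\pi$ with group $H$, and since $H$ fixes $N$, the completion of $N$ at $Q \cap N$ sits inside $E_Q^H = F_\pi$, hence equals $F_\pi$; the same conclusion holds at each $G$-conjugate of $Q$, so $N \otimes F_\pi$ is isomorphic to a product of $[N:F]$ copies of $F_\pi$, and in particular $N/F$ is unramified at $(\pi)$. Since $N \subseteq E$, the extension $N/F$ is unramified on $R$ except possibly at $(\pi)$ and $(\delta)$; combined with the previous sentence, $N/F$ is unramified on $R$ except possibly at $(\delta)$.

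Applying the symmetric form of Corollary \ref{pi} (with the roles of $\pi$ and $\delta$ interchanged throughout \ref{unramified}, \ref{pi-galois}, and \ref{pi}; nothing in those proofs distinguishes one generator of the maximal ideal from the other), we conclude that $N \otimes F_\pi$ is a field. A field that is also a product of copies of $F_\pi$ must be a single copy, which forces $[N:F] = 1$, hence $N = F$ and $H = G$. Therefore $Q$ is the unique prime of $\widetilde{R}$ above $(\pi)$ and $E \otimes F_\pi = E_Q$ is a field. Interchanging $\pi$ and $\delta$ throughout gives that $E \otimes F_\delta$ is a field as well. The only delicate point is justifying the symmetric form of Corollary \ref{pi}, which is purely a relabeling; all remaining steps are direct consequences of the results established earlier in this section.
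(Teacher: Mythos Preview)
Your proof is correct and follows essentially the same approach as the paper: reduce to the Galois case, take the decomposition group at a prime over $(\pi)$, observe that its fixed field $N$ is unramified on $R$ except possibly at $(\delta)$, and then invoke (the $\pi\leftrightarrow\delta$ symmetric version of) Corollary~\ref{pi} to force $N=F$. In fact you are slightly more careful than the paper, which launches directly into the decomposition-group argument without explicitly passing to the Galois closure and without flagging that it is using the symmetric form of Corollary~\ref{pi}.
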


\begin{proof}  Let $Q$ be a prime ideal in $\widetilde{R}$  lying over $(\pi)$.
Let $H = \{ \sigma \in G(E/F) \mid  \sigma(Q) = Q\} $ be the decomposition group of $Q$.
Let $N = E^H \subseteq E$. Then  $N/F$ is unramified at $(\pi)$  and 
$N\otimes F_\pi $ is isomorphic to the product of $F_\pi$. 
 Since $E/F$ is unramified on $R$ except possibly at 
$(\pi)$ and $(\delta)$, $N/F$ is unramified on $R$ except possibly at $(\delta)$. 
 Hence, by (\ref{pi}), $N\otimes F_\pi$ is a field.
In particular $N = F$, $H = G(E/F)$ and $Q$ is the unique prime ideal of $\widetilde{R}$ lying 
over $(\pi)$. Thus $E\otimes F_\pi$ is a field.  Similarly $E\otimes F_\delta$ is a field. 
 \end{proof}

 \begin{prop} 
\label{pi-delta-galois} If $E/F$ is  Galois and  unramified on $R$ except possibly at $(\pi)$ and $(\delta)$, 
then 
  $E = E_1(\sqrt[e_1]{u\pi})(\sqrt[e_2]{v(\sqrt[e_1]{u\pi})^r\delta})$ for
  some extension $E_1/F$ which is unramified on $R$, $u \in E_1$ a unit in 
  the integral closure of $R$ in $E_1$ and $v \in E_1(\sqrt[e_1]{u\delta})$ a 
  unit in the integral closure of $R$ in $E_1(\sqrt[e_1]{u\delta})$. 
 \end{prop}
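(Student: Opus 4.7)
The plan is to mimic the proof of Proposition~\ref{pi-galois}, applying it iteratively at the two ramification divisors $(\pi)$ and $(\delta)$. First, by Proposition~\ref{pi-delta}, both $E \otimes_F F_\pi$ and $E \otimes_F F_\delta$ are fields, so there is a unique prime $Q_\pi$ (resp.\ $Q_\delta$) of $\widetilde{R}$ lying over $(\pi)$ (resp.\ $(\delta)$). Let $I_\pi, I_\delta \subseteq G(E/F)$ denote the respective inertia groups. Since $[E:F]$ is coprime to $\mathrm{char}(\kappa)$ the ramification is tame, so $I_\pi$ and $I_\delta$ are cyclic, commute, and intersect trivially --- this is the standard structure of the tame inertia at a simple normal crossing in a two-dimensional complete regular local ring. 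Set $E_1 = E^{I_\pi I_\delta}$; then $E_1/F$ is unramified at both $(\pi)$ and $(\delta)$, and since $E/F$ was assumed unramified elsewhere on $R$, $E_1/F$ is unramified on all of $R$. Consequently the integral closure $\widetilde{R}_1$ of $R$ in $E_1$ is itself a two-dimensional complete regular local ring with maximal ideal $(\pi,\delta)\widetilde{R}_1$.

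Next, introduce the intermediate field $E' = E^{I_\delta}$. Then $E'/E_1$ is Galois with group $I_\pi I_\delta/I_\delta \simeq I_\pi$, totally ramified at the prime of $\widetilde{R}_1$ over $(\pi)$ and unramified everywhere else on $\widetilde{R}_1$. Applying Proposition~\ref{pi-galois} to $E'/E_1$ over $\widetilde{R}_1$, and using that $E'/E_1$ has no nontrivial unramified subextension, yields $E' = E_1(\sqrt[e_1]{u\pi})$ for some $e_1 = |I_\pi|$ and some unit $u \in \widetilde{R}_1$. The integral closure $\widetilde{R}'$ of $R$ in $E'$ is again a two-dimensional complete regular local ring, with maximal ideal generated by $\varpi := \sqrt[e_1]{u\pi}$ and $\delta$. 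Now $E/E'$ is cyclic Galois with group $I_\delta$, totally ramified at the prime over $(\delta)$ and unramified at the prime over $\varpi$ (and unramified elsewhere on $\widetilde{R}'$). A second application of Proposition~\ref{pi-galois}, this time to $E/E'$ with the role of $\pi$ played by $\delta$, gives $E = E'(\sqrt[e_2]{w\delta})$ for some $e_2 = |I_\delta|$ and some unit $w \in \widetilde{R}'$. Any such $w$ can be written as $v\varpi^r$ for a unit $v \in \widetilde{R}'$ and an integer $r \geq 0$ (with $r = 0$ admissible), producing the stated form.

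The main obstacle is the structural input on tame ramification at a normal crossing: namely, verifying that $I_\pi \cap I_\delta = \{1\}$ (so that $I_\pi I_\delta$ behaves as a direct product of cyclic groups) and that the integral closures $\widetilde{R}_1 \subset \widetilde{R}'$ are again two-dimensional complete regular local rings with the expected uniformizers $(\pi,\delta)$ and $(\varpi,\delta)$ respectively. Both are standard consequences of Abhyankar-type analysis in the tame case, but need to be laid out carefully before the iterated application of Proposition~\ref{pi-galois} can be invoked; once in place, the conclusion follows directly.
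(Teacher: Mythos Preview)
Your argument is correct, but the paper organizes it more economically and avoids the structural input you flag as the ``main obstacle.'' The paper uses only the inertia group $I_\delta$ at $(\delta)$: since the prime over $(\delta)$ is unique (Proposition~\ref{pi-delta}), $I_\delta$ is normal in $G(E/F)$, so $L := E^{I_\delta}$ is Galois over $F$ and unramified on $R$ except possibly at $(\pi)$. One then applies Proposition~\ref{pi-galois} directly to $L/F$ over the base ring $R$; this already produces the unramified subfield $E_1$ with $L = E_1(\sqrt[e_1]{u\pi})$, without ever introducing $I_\pi$. The remaining extension $E/L$ is totally ramified at $(\delta)$, and a repeat of the argument in Proposition~\ref{pi-galois} (now over the integral closure of $R$ in $L$, with regular parameters $(\sqrt[e_1]{u\pi},\delta)$) gives the asserted form. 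Your intermediate field $E' = E^{I_\delta}$ coincides with the paper's $L$, but you extract $E_1$ via the Abhyankar-type structure of tame inertia at a normal crossing ($I_\pi$, $I_\delta$ cyclic, commuting, with trivial intersection) and then apply Proposition~\ref{pi-galois} over $\widetilde{R}_1$ rather than over $R$. That works, but the commutativity and trivial-intersection facts are simply not needed: Proposition~\ref{pi-galois} applied to $L/F$ over $R$ already isolates the unramified $E_1$ for free.
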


\begin{proof}  Since $E/F$ is unramified on $R$ except possibly at $(\pi)$ and $(\delta)$.
by (\ref{pi-delta}), $E \otimes F_\delta$ is a field.  Hence  there is only one 
prime $Q$ lying over the prime ideal $(\delta)$. In particular  the decomposition  group of 
$Q$ over $(\pi)$ is equal to the $G(E/F)$ and the inertia group $I$ of $Q$ is a normal subgroup of 
$G(E/F)$. Let $L = E^I$. Then $L/F$ is unramified at $(\delta)$ and hence unramified 
on $R$ except possibly at  $(\pi)$.  Thus, by (\ref{pi}), we have 
$L = E_1(\sqrt[e_1]{u\pi})$ for some $E_1/F$ unramified on $R$ and a 
 unit $u$ in the integral closure $\widetilde{R}_1$ of $R$ in $E_1$. 
 
 Since $E_1/F$ is unramified on $R$, $\widetilde{R}_1$ is a regular local ring of with maximal ideal 
 $(\sqrt[e_1]{u\pi}, \delta)$. Further $E/E_1$ is totally ramified  at $(\delta)$
 and hence $E\otimes E_{1\delta} = E_{1\delta}(\sqrt[e_2]{v\delta})$ for some $v \in E_{1\delta}$
 a unit in the valuation ring. In particular  $E \otimes E_{1\delta}/E_{1\delta}$ is a cyclic extension. 
As in the proof of (\ref{pi-galois}), we have $E = E_1(\sqrt[e_1]{u\pi})(\sqrt[e_2]{v(\sqrt[e_1]{u\pi})^r\delta})$.  
\end{proof}
 
\begin{lemma} 
\label{field-extension-branch}
Let $E_\pi/F_\pi$ be a finite extension of degree    coprime to char$(\kappa)$. 
Then  $E_\pi = (E_0 \otimes_F F_\pi)(\sqrt[e_2]{v\delta})(\sqrt[e_1]{u(\sqrt[e_2]{v\delta})^t\pi})$
for some extension  $E_0/F$ which is unramified  on $R$, $u, v $ units in the integral closure of 
$R$ in $E_0$  and $e_1, e_2, t \geq 0$. 
\end{lemma}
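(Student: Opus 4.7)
The strategy is to exploit the two-level complete discretely valued field structure underlying $F_\pi$: it is a complete DVF with uniformizer $\pi$, while its residue field $\kappa(\pi)=\Frac(R/(\pi))$ is itself a complete DVF, with uniformizer the image $\bar\delta$ of $\delta$ and residue field $\kappa$. I plan to decompose $E_\pi/F_\pi$ according to this structure one level at a time, and then lift the result back to $F_\pi$.

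Since $[E_\pi:F_\pi]$ is coprime to $\mathrm{char}(\kappa)$, the extension is separable. Let $E_\pi^{nr}$ denote the maximal unramified subextension of $E_\pi/F_\pi$; then $E_\pi/E_\pi^{nr}$ is totally tamely ramified, and the standard structure theory of tame extensions yields $E_\pi=E_\pi^{nr}\bigl(\sqrt[e_1]{u_1\pi}\bigr)$ for some unit $u_1$ in the ring of integers $\mathcal{O}_{E_\pi^{nr}}$. Repeating this analysis one level down, let $\bar E$ be the residue field of $E_\pi^{nr}$, viewed as a finite separable extension of the complete DVF $\kappa(\pi)$, let $\bar E^{nr}$ be the maximal subextension of $\bar E/\kappa(\pi)$ unramified at $\bar\delta$, and write $\bar E=\bar E^{nr}\bigl(\sqrt[e_2]{\bar v\,\bar\delta}\bigr)$ with $\bar v$ a unit in the valuation ring of $\bar E^{nr}$.

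Next I lift both layers back. The residue field $\kappa_0$ of $\bar E^{nr}$ is a finite separable extension of $\kappa$; using that $R$ is Henselian it lifts uniquely to an \'etale $R$-algebra $\widetilde R_0$, which is a two-dimensional complete regular local ring with residue field $\kappa_0$. Setting $E_0=\Frac(\widetilde R_0)$ produces a finite extension of $F$ unramified on $R$, and its completion $E_0\tensor_F F_\pi$ is then the unique unramified extension of $F_\pi$ with residue field $\bar E^{nr}$. Lifting $\bar v$ to a unit $v\in\widetilde R_0^*$ (possible because the reduction $\widetilde R_0 \to\kappa_0$ is surjective on units) and noting that $v\delta$ is a unit of $E_0\tensor_F F_\pi$ (since $\delta$ is a unit of $F_\pi$), I obtain $E_\pi^{nr}=(E_0\tensor_F F_\pi)\bigl(\sqrt[e_2]{v\delta}\bigr)$.

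The main obstacle is the final bookkeeping step: rewriting $u_1$ in the form $u\cdot s^t$ modulo $e_1$-th powers, where $s=\sqrt[e_2]{v\delta}$ and $u\in\widetilde R_0^*$. For this I plan to compute $\mathcal{O}_{E_\pi^{nr}}^*/(\mathcal{O}_{E_\pi^{nr}}^*)^{e_1}$ by a two-step Hensel argument: since $e_1$ is coprime to $\mathrm{char}(\kappa)$, the principal units $1+\pi\mathcal{O}_{E_\pi^{nr}}$ are $e_1$-divisible, so the reduction map induces an isomorphism to $\bar E^*/(\bar E^*)^{e_1}$; applying the same argument to the complete DVF $\bar E$ (with uniformizer $\bar s$ and residue field $\kappa_0$) identifies this quotient further with $\Z/e_1\oplus\kappa_0^*/(\kappa_0^*)^{e_1}$, generated by the class of $\bar s$ together with classes of units lifted from $\kappa_0$. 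Since every class in $\kappa_0^*$ lifts to a unit in $\widetilde R_0^*$ and the reduction of $s$ is $\bar s$, this yields $u_1\equiv u\cdot s^t$ modulo $e_1$-th powers for some $u\in\widetilde R_0^*$ and $t\geq 0$. Substituting back into $E_\pi=E_\pi^{nr}\bigl(\sqrt[e_1]{u_1\pi}\bigr)$ produces the claimed form $E_\pi=(E_0\tensor_F F_\pi)\bigl(\sqrt[e_2]{v\delta}\bigr)\bigl(\sqrt[e_1]{u(\sqrt[e_2]{v\delta})^t\pi}\bigr)$.
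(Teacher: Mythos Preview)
Your proposal is correct and follows essentially the same approach as the paper's proof: both exploit the two-level complete discrete valuation structure of $F_\pi$, decompose $E_\pi$ into unramified and totally tamely ramified pieces at each level, lift the bottom residue extension to an unramified $E_0/F$ via Henselian lifting, and then normalize the unit in the $e_1$-th root by reducing modulo $e_1$-th powers through the two residue maps. The only cosmetic difference is ordering (you first peel off the totally ramified part at the $\pi$-level and then analyze the residue field, whereas the paper first builds $E_0$ from the residue-of-residue and works outward), and one small imprecision: when you lift $\bar v$, the relevant surjectivity is that of $\widetilde R_0\to\widetilde R_0/(\pi)$ on units (i.e.\ onto the valuation ring of $\bar E^{nr}$), not $\widetilde R_0\to\kappa_0$.
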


\begin{proof}    
Let $E_\pi(\pi)$ be the residue field of $E_\pi$. Then $E_\pi(\pi)$ is a finite extension of $\kappa(\pi)$ and hence
a complete discretely valued field. 
Let  $E_\pi(\pi)(\delta)$ be the residue field of 
$E_\pi(\pi)$.  Since  $[E_\pi : F_\pi]$ is coprime to char$(\kappa)$,  $E_\pi(\pi)(\delta)$ is a finite separable  extension of $\kappa$. Write 
$E_\pi(\pi)(\delta) \simeq \kappa[x]/(g(x))$ for some monic polynomial $g(x) \in \kappa[x]$.
Let $g(x) \in R[x]$ be a monic polynomial with the image in $\kappa[x]$ equal to $g(x)$.
Let $E_0 = F[x]/(f(x))$.  Then $E_0/F$   is unramified on $R$.
 Let $S_0$ be the integral closure of $R$ in $E_0$.
 Then $S_0$ is a regular local ring with  maximal ideal $(\pi, \delta)$ and the residue field  $E(\pi)(\delta)$. 
 
 Let $E_0(\pi)$ be the field of fractions of $S_0/(\pi)$. Then $E_0(\pi)$ is the maximal unramified extension of 
 $E_\pi(\pi)$ and $E_\pi(\pi)/E_0(\pi)$ a totally ramified extension. Since $[E_\pi : F_\pi]$ is coprime to char$(\kappa)$,  
 $E_\pi(\pi) = E_0(\pi) (\sqrt[e_2]{ \bar{v} \bar{\delta}})$ for some $v \in S$ a unit. 
 
 Let $E_1 = E_0(\sqrt[e_2]{v\delta})$. Then $E_1\otimes_F F_\pi$ is the maximal unramified extension of $E_\pi/F_\pi$.
 Hence $E_\pi =  (E_1\otimes F_\pi)(\sqrt[e_1]{a \pi})$ for some $a \in E_1\otimes F_\pi$ which is a unit in the valuation ring of 
 $E_1\otimes F_\pi$.   Since   
  residue field of $E_1\otimes F_\pi$ is isomorphic to 
 $E_\pi(\pi) = E_0(\pi) (\sqrt[e_2]{ \bar{v} \bar{\delta}})$, $\bar{a} = b (\sqrt[e_2]{\bar{v}})^t c^n$ for some 
 $c \in  E_\pi(\pi)$  and $b \in E_0(\pi)$ a unit in the valuation ring. 
 Since $E_0(\pi)$ is a the field of fractions of $S_0/(\pi)$, we have $b = \bar{u}$ for some $u \in S_0$ a unit. 
 Hence we have  
 $$E_\pi =  (E_1\otimes F_\pi)(\sqrt[e_1]{u (\sqrt[e_2]{v})^t \pi}) = 
 (E_0 \otimes_F F_\pi)(\sqrt[e_2]{v\delta})(\sqrt[e_1]{u(\sqrt[e_2]{v\delta})^t\pi})$$
 for some units $u,v \in S_0$. 
\end{proof}

  The following is  proved in  (\cite[Lemma 5.1]{PPS}) for Galois extensions. 
 \begin{lemma} 
\label{field-extension}
Let $E_\pi/F_\pi$ be a finite extension of degree $n$  coprime to char$(\kappa)$. 
 Then there exists a field extension $E/F$ of degree   $n$   which is unramified on $R$ except possibly at 
 $(\pi)$ and $(\delta)$ such that $E\otimes F_\pi \simeq E_\pi$.  
 Further \\
 i) if   $E_\pi/F_\pi$ is unramified,   then $E/F$ is  unramified on $R$ except possibly at $(\delta)$ \\
 ii)  if   $E_\pi/F_\pi$ is unramified and the residue field $E_\pi(\pi)$ of $E_\pi$ is unramified over $\kappa(\pi)$, then 
 $E/F$ is unramified on $R$  \\
 iii)  if $E_\pi/F_\pi$ is Galois, then $E/F$ is Galois. 
\end{lemma}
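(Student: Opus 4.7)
The approach is to apply Lemma~\ref{field-extension-branch} to obtain the explicit tower presentation
$$E_\pi = (E_0 \otimes_F F_\pi)\bigl(\sqrt[e_2]{v\delta}\bigr)\bigl(\sqrt[e_1]{u(\sqrt[e_2]{v\delta})^t \pi}\bigr),$$
with $E_0/F$ unramified on $R$ and $u, v$ units in the integral closure $S_0$ of $R$ in $E_0$, and then to lift the construction globally by setting
$$E := E_0\bigl(\sqrt[e_2]{v\delta}\bigr)\bigl(\sqrt[e_1]{u(\sqrt[e_2]{v\delta})^t \pi}\bigr).$$
To check $E \otimes_F F_\pi \simeq E_\pi$ and $[E:F] = n$, I would compare $F_\pi$-dimensions: Proposition~\ref{unramified} yields that $E_0 \otimes F_\pi$ is a field of degree $[E_0:F]$ over $F_\pi$, and each of the two subsequent Kummer layers has degree at most $e_2$ and $e_1$ respectively, both globally and after tensoring with $F_\pi$. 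Since $[E_\pi:F_\pi] = n = [E_0:F]\cdot e_2 \cdot e_1$, every intermediate inequality must be an equality, so $E \otimes_F F_\pi \simeq E_\pi$ as $F_\pi$-algebras and $[E:F] = n$.

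For the ramification claim I would compute the discriminants of the two Kummer layers: $X^{e_2} - v\delta$ has discriminant a unit times $\delta^{e_2-1}$ (because $e_2$ and $v$ are units in $S_0$, using that $e_2$ is coprime to $\mathrm{char}(\kappa)$), and $X^{e_1} - u\alpha^t \pi$ with $\alpha = \sqrt[e_2]{v\delta}$ has discriminant a unit times a power of $\alpha^t\pi$; since the zero locus of $\alpha$ lies over $(\delta)$ (as $\alpha^{e_2} = v\delta$ with $v$ a unit), $E/F$ is unramified on $R$ outside $(\pi)$ and $(\delta)$. Part~(i) follows because $E_\pi/F_\pi$ unramified forces $e_1 = 1$, so $E = E_0(\sqrt[e_2]{v\delta})$ is unramified on $R$ outside $(\delta)$; part~(ii) follows because the additional hypothesis that $E_\pi(\pi)/\kappa(\pi)$ is unramified forces $e_2 = 1$ (as $e_2$ is precisely the ramification index of $E_\pi(\pi)/\kappa(\pi)$ in the construction of Lemma~\ref{field-extension-branch}), whence $E = E_0$ is unramified on all of $R$.

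The main obstacle is part~(iii), the Galois case, for which I would follow the argument of \cite[Lemma~5.1]{PPS}. Since successive residue field extensions of a Galois extension of complete discrete valuation rings remain Galois, $E_\pi(\pi)(\delta)/\kappa$ is Galois, so $E_0/F$ can be chosen as its unramified Galois lift. The delicate step is that the units $v$ and $u$ returned by Lemma~\ref{field-extension-branch} need not produce Galois Kummer extensions over $F$: however, modifying $v$ by an $e_2$-th power in $E_0$ leaves $E_\pi$ unchanged, and similarly for $u$ by an $e_1$-th power in $E_0(\sqrt[e_2]{v\delta})$. Using a Hensel/approximation argument---together with the fact that $E_\pi/F_\pi$ being Galois forces the requisite $e_1$-th and $e_2$-th roots of unity into $E_\pi(\pi)(\delta)$, and these roots of unity lift uniquely to $E_0$ because their orders are coprime to $\mathrm{char}(\kappa)$---one may adjust $v$ so that it lies in $F$ (hence $\sqrt[e_2]{v\delta}$ generates a $\Gal(E_0/F)$-stable Kummer extension) and similarly adjust $u$ so that the class of $u(\sqrt[e_2]{v\delta})^t\pi$ modulo $e_1$-th powers is $\Gal(E_0(\sqrt[e_2]{v\delta})/F)$-stable, making $E/F$ Galois.
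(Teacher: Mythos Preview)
Your proof is correct and takes essentially the same route as the paper: both invoke Lemma~\ref{field-extension-branch}, define $E$ by the identical global Kummer tower $E_0(\sqrt[e_2]{v\delta})(\sqrt[e_1]{u(\sqrt[e_2]{v\delta})^t\pi})$, read off (i) and (ii) from $e_1=1$ and $e_1=e_2=1$, and refer to \cite[Lemma~5.1]{PPS} for (iii). Your added dimension-count and discriminant arguments fill in details the paper leaves implicit; the one imprecision in your sketch of (iii)---adjusting $v$ to lie in $F$ rather than merely making the class of $v\delta$ modulo $e_2$-th powers $\Gal(E_0/F)$-stable---is stronger than what is actually needed, but this does not undermine the appeal to \cite{PPS}.
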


\begin{proof}   By (\ref{field-extension-branch}), there exists an extension 
$E_0/F$ which is unramified on $R$ such that 
$E_\pi = (E_0 \otimes_F F_\pi)(\sqrt[e_2]{v\delta})(\sqrt[e_1]{u(\sqrt[e_2]{v\delta})^t\pi})$
for some  $u, v $ units in the integral closure $S_0$ of $R$ in $E_0$  and $e_1, e_2, t \geq 0$.
Let $E = E_0((\sqrt[e_2]{v\delta})(\sqrt[e_1]{u(\sqrt[e_2]{v\delta})^t\pi})$.
Then $E\otimes F_\pi \simeq E_\pi$.  Clearly $E/F$ is unramified on $R$ except possible at 
$(\pi)$ and $(\delta)$. 

Suppose $E_\pi/F_\pi$ is unramified.  Then $e_1 = 1$ and 
 $E= E_0(\sqrt[e_2]{v\delta})$. Hence   $E/F$ is unramified on $R$ except possibly at $(\delta)$.
 
 Suppose that $E_\pi/F_\pi$ is unramified and the residue field $E_\pi(\pi)$ of $E_\pi$ is unramified over $\kappa(\pi)$.
 Then $e_1 = e_2 = 1$. Hence $E = E_0$ is unramified on $R$. 

Suppose that $E_\pi/F_\pi$ is Galois. Then $E/F$ is Galois follows as in the proof of 
(\cite[Lemma 5.1]{PPS}). 

\end{proof}

 \begin{lemma} 
\label{regular}
Let   $E_0/F$  be an extension  which is unramified over $R$.
Let $e \geq 1$ be  coprime to char$(\kappa)$. Let $E = E_0(\sqrt[e]{v\delta})$ for some unit $v$ in the integral closure of 
$R$ in $E_0$.
Then  the    integral closure $S$  of $R$ in $E$ is a complete regular local ring of dimension 2
 and    the maximal ideal of $S$  is given by  $(\pi,  \sqrt[e]{v\delta})$.
 \end{lemma}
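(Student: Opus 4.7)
The plan is to produce an explicit candidate $A$ for the integral closure $S$ and then verify that $A$ already has all the required properties. Let $S_0$ denote the integral closure of $R$ in $E_0$; since $E_0/F$ is unramified on $R$ and $R$ is a two-dimensional complete regular local ring, $S_0$ is a complete regular local ring of dimension $2$ with maximal ideal $(\pi,\delta)S_0$, and $v \in S_0^{\times}$. Set $\theta := \sqrt[e]{v\delta}$ and $A := S_0[\theta]$. Aside from the trivial case $e = 1$ (where $A = S_0$ and there is nothing to prove), I would then carry out the following steps.

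First I would show that $f(T) := T^e - v\delta$ is irreducible over $E_0$. Since $\delta$ is prime in the regular ring $S_0$, the element $v\delta$ has $(\delta)$-adic valuation $1$, so it is not a $p$-th power in $E_0$ for any prime $p \mid e$; when $4 \mid e$, the hypothesis that $e$ is coprime to $\mathrm{char}(\kappa)$ forces $2 \in S_0^{\times}$, and the same valuation argument rules out $-4v\delta$ being a fourth power. The standard irreducibility criterion for $T^e - a$ therefore applies, and by Gauss's lemma $f$ remains irreducible in $S_0[T]$. Hence $A \simeq S_0[T]/(f)$ is an integral domain with fraction field $E$ and $[E:E_0] = e$.

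Next, I would analyse the local structure of $A$. Reducing modulo $(\pi,\delta)$ gives $A/(\pi,\delta)A \simeq \overline{S_0}[T]/(T^e)$, where $\overline{S_0}$ is the residue field of $S_0$; this ring has a unique prime ideal, namely $(T)$. Since $A$ is integral over the local ring $S_0$, every maximal ideal of $A$ lies over $(\pi,\delta)S_0$, so $A$ is local with maximal ideal $\mathfrak{m}_A = (\pi,\delta,\theta) = (\pi,\theta)$ (because $\delta = v^{-1}\theta^e$). By going-up, $\dim A = \dim S_0 = 2$, and since $\mathfrak{m}_A$ is generated by two elements, $A$ is a regular local ring. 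By Auslander--Buchsbaum, $A$ is a UFD, hence integrally closed, and therefore $A$ is already the normalisation $S$ of $R$ in $E$.

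Finally, completeness of $A$ is automatic: $A$ is a finitely generated module over the complete ring $R$, hence $(\pi,\delta)$-adically complete, and the inclusions $\mathfrak{m}_A^{e} \subseteq (\pi,\delta)A = (\pi,\theta^e) \subseteq \mathfrak{m}_A$ show that the $(\pi,\delta)$-adic and $\mathfrak{m}_A$-adic topologies on $A$ coincide, so $A$ is complete in its own maximal ideal. The only genuinely subtle point is identifying the hand-made ring $S_0[\theta]$ with the \emph{full} integral closure, and this is precisely why the argument goes through regularity and then invokes Auslander--Buchsbaum; the irreducibility of $T^e - v\delta$ and the collapse $(\pi,\delta,\theta) = (\pi,\theta)$ are the two places where the hypotheses that $v$ is a unit and $e$ is coprime to $\mathrm{char}(\kappa)$ actually enter.
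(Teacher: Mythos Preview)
Your proof is correct. The paper's own proof is a one-line citation: it records that $S_0$ is regular local with maximal ideal $(\pi,\delta)$ and then appeals directly to \cite[Lemma~3.2]{PS2014} for the conclusion. Your argument instead works everything out from scratch: irreducibility of $T^e - v\delta$ via the standard criterion using the $(\delta)$-adic valuation, locality of $A = S_0[\theta]$ by analysing $A/(\pi,\delta)A \simeq \overline{S_0}[T]/(T^e)$, regularity from the two-generator maximal ideal $(\pi,\theta)$, normality via Auslander--Buchsbaum, and completeness by comparing the $\mathfrak m_R A$-adic and $\mathfrak m_A$-adic topologies. This is presumably close to what the cited lemma does anyway, so the mathematical content is not genuinely different; what your version buys is self-containedness, at the cost of length. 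One small remark: your phrasing ``rules out $-4v\delta$ being a fourth power'' is equivalent to the usual condition $v\delta \notin -4E_0^{4}$ precisely because $2$ is a unit, which you correctly noted; it might be clearer to state the criterion in its standard form.
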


\begin{proof}   Let $S_0$ be the integral closure of $R$ in $E_0$. Since $E_0/F$ is unramified on $R$,
$S_0$ is a regular local ring with maximal ideal $(\pi, \delta)$.
Since $e$ is coprime to char$(\kappa)$, $S$ is a regular local ring with 
maximal ideal $(\pi,  \sqrt[e]{v\delta})$ (\cite[Lemma 3.2]{PS2014}).
\end{proof}

The following is proved in (\cite{Sumit}) for Galois extensions. 
 \begin{lemma} 
\label{norm}
Let  $E_0/F$ be a field extension which is  unramified  on $R$. Let $u, v \in E$ be  units  in the integral closure of 
$R$ in $E_0$. Let   $E = E_0(\sqrt[e_2]{v\delta})(\sqrt[e_1]{u (\sqrt[e_2]{v\delta})^t \pi})$ for some $e_1, e_2, t \geq 0$.
Let $\lambda = w\pi^r\delta^s \in F$ for some  unit $w \in R$  and $r, s \in \Z$. 
If $\lambda$ is a   norm from $E \otimes F_\pi/F_\pi$, then $\lambda$ is a   norm from $E/F$.  
\end{lemma}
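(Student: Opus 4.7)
The plan is to adapt the strategy of \cite{Sumit} (which treats the Galois case) by working with the explicit three-step tower $F \subset E_0 \subset E_1 \subset E$, where $E_1 = E_0(\delta_1)$ with $\delta_1 = \sqrt[e_2]{v\delta}$ and $E = E_1(\alpha)$ with $\alpha = \sqrt[e_1]{u\delta_1^t\pi}$.

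First, I would iterate Lemma \ref{regular} to identify the integral closures $S_0 \subset S_1 \subset S$ of $R$ in $E_0, E_1, E$ as two-dimensional complete regular local rings with parameters $(\pi,\delta)$, $(\pi,\delta_1)$, and (up to a unit scaling) $(\alpha,\delta_1)$, respectively. From the defining equations $\alpha^{e_1} = u\delta_1^t \pi$ and $\delta_1^{e_2} = v\delta$, I would then compute $N_{E/F}(\alpha)$ and $N_{E/F}(\delta_1)$ stage by stage down the tower; each comes out as a unit of $R$ times an explicit monomial in $\pi$ and $\delta$, with exponents determined by $d_0 = [E_0:F]$, $e_1$, $e_2$ and $t$.

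Next, I would look for $\nu \in E^*$ of the form $\nu = \alpha^a \delta_1^b \eta$ with $\eta \in S^*$, and impose $N_{E/F}(\nu) = \lambda$. Matching $(\pi)$- and $(\delta)$-adic valuations produces a $2\times 2$ integer linear system in $(a,b)$ whose solvability requires specific divisibility conditions on $r$ and $s$. The hypothesis supplies these: the extension $E \otimes F_\pi / F_\pi$ has ramification index $e_1$ at $(\pi)$ and residue degree $d_0 e_2$, so $\lambda = N_{E\otimes F_\pi/F_\pi}(\mu)$ forces $d_0 e_2 \mid r$; factoring $\mu = \alpha^a \mu'$ with $\mu'$ a local unit then translates the residual unit information into the second divisibility needed to solve for $b$.

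With $(a,b) \in \Z^2$ in hand, the remaining task is to solve a unit equation $N_{E/F}(\eta) = w'$ for some $w' \in R^*$ that is, by construction, still a local norm at $(\pi)$. Here I would invoke that $S_0/R$ is finite \'etale of degree coprime to char$(\kappa)$: by Hensel's lemma on the henselian ring $R$, principal units lift through the norm, so the problem collapses to a statement about the residue field $\kappa$, where the residue of $w'$ is already a norm from the residue extension. The main obstacle will be the second step above: tracking the local norm hypothesis carefully enough through the three-layered tower to recover both integer divisibilities simultaneously. A possible shortcut, should the direct verification become unwieldy, is to pass to the Galois closure of $E/F$, apply \cite{Sumit} there, and descend via transitivity of norms using that $E_0/F$ is \'etale.
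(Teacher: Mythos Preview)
Your overall strategy is in the right spirit, but the execution diverges from the paper's and carries two real gaps. The paper does \emph{not} solve a $2\times 2$ valuation system and then a residual unit norm equation. Instead it successively approximates the given $z\in E\otimes F_\pi$ by an element of $E$ modulo $(E\otimes F_\pi)^{*n}$: first strip off a power of $\pi'=\sqrt[e_1]{u\delta_1^t\pi}$, then pass to the residue field $E_1(\pi)$, strip off a power of $\delta_1$ and lift the remaining unit residue to a unit $x\in S_0$. After dividing $\lambda$ by the $E/F$-norms of these explicit elements (each a unit times a monomial in $\pi,\delta$), the new $\lambda$ lies in $F_\pi^{*n}$; since it still has the shape $w\pi^r\delta^s$ with $w\in R^*$, one gets $n\mid r$, $n\mid s$, and $w\in R^{*n}$ by Hensel, so $\lambda\in F^{*n}\subset N_{E/F}(E^*)$. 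The point is that working modulo $n$-th powers, rather than modulo norms, makes the final unit step automatic.

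Your approach can be made to work but two places need repair. First, your claim that $S$ is a regular local ring with parameters $(\alpha,\delta_1)$ is false whenever $t\geq 1$ and $e_1\geq 2$: the relation $\alpha^{e_1}=u\delta_1^t\pi$ then lies in $m_S^2$, so $\dim_\kappa m_S/m_S^2=3$ and $S$ is singular; Lemma~\ref{regular} does not iterate to this step. (This is not fatal---the norm computations of $N_{E/F}(\alpha)$ and $N_{E/F}(\delta_1)$ go through anyway---but you should not invoke regularity of $S$.) Second, and more seriously, your unit step is underspecified: knowing only that $\bar w'\in\kappa$ is a norm from $\kappa_0/\kappa$ is not enough, since for $\eta\in E_0$ one has $N_{E/F}(\eta)=N_{E_0/F}(\eta)^{e_1e_2}$, so you need $\bar w'\in N_{\kappa_0/\kappa}(\kappa_0^*)^{e_1e_2}$. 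This \emph{does} follow from the two-step residue computation at $(\pi)$ and then at $(\bar\delta)$, but you must say so; once you have it, lift and use $1+m_R\subset R^{*n}\subset N_{E/F}(E^*)$. Finally, your proposed shortcut via the Galois closure $\tilde E$ does not work: from $\lambda\in N_{\tilde E/F}(\tilde E^*)$ one cannot in general descend to $\lambda\in N_{E/F}(E^*)$, since there is no reason a preimage in $\tilde E$ should lie in (or have norm lying in) $E$.
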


\begin{proof}  Suppose that  $\lambda$ is a   norm from $E \otimes F_\pi/F_\pi$.
Without loss of generality we assume that 
$0 \leq r, s < n =  [E : F]$. 

Let $z \in E \otimes F_\pi/F_\pi$ such that $N_{E \otimes F_\pi/F_\pi}(z) = \lambda.$
Let $E_1 = E_0(\sqrt[e_2]{v\delta})$.  Let $\pi' = \sqrt[e_1]{u (\sqrt[e_2]{v\delta})^t \pi} \in E$. 
Since $E_1 \otimes F_\pi$ is the maximal unramified extension of $E \otimes F_\pi/F_\pi$
and $ \pi'$ is a parameter in $E\otimes F_\pi$,
we have $z = a   \pi^{'m} b^n$ for som  unit $a$  in the valuation ring of 
$E_1 \otimes F_\pi$,  $b \in E\otimes F_\pi$ and $m \in \Z$. 
Since $\pi' \in E$ and $N_{E/F}(\pi') = w' \pi^{r'} \delta^{s'}$ for some $w' \in R$ a unit and $r', s' 
\in \Z$, replacing $\lambda$ by $\lambda N_{E/F}(\pi')^{-m}$, we assume that 
$\lambda = N_{E\otimes F_\pi/F_\pi}(a b^n)$ for some $a \in E_1  \otimes F_\pi$ 
a unit in the valuation ring of 
$E_1 \otimes F_\pi$ and $b \in E\otimes F_\pi$. 

Let $E_1(\pi)$ be the residue field of $E_1 \otimes F_\pi$ and $\bar{a} \in E_1(\pi)$ the image of  $a$.
Since $a \in E_1\otimes F_\pi$ is a unit in the valuation ring of $E_1\otimes F_\pi$, $\bar{a} \neq 0$. 
Let $E_0(\pi)$ be the residue field of $E_0 \otimes F_\pi$. Then we have 
$E_1(\pi) = E_0(\pi)(\sqrt[e_2]{\bar{v}\bar{\delta}})$ and $E_0(\pi)$ is the maximal unramified extension of 
$E_1(\pi)$. Hence we have $\bar{a} = \bar{a}_1 (\sqrt[e_2]{\bar{v}\bar{\delta}})^{m'} \bar{b_1}^n$
for some $a_1 \in E_0\otimes F_\pi$   and $b_1 \in  W_1$  which are   units  in the valuation rings. 
Further $\bar{a}_1 \in E_0(\pi)$ is a unit in the valuation ring of  $E_0(\pi)$. 

Let  $S_0$ be the integral closure of $R$ in $S_0$. 
Since $E_0/F$ is unramified in $R$, $S_0$ is a regular local ring with maximal ideal $(\pi, \delta)$.
Since $E_0(\pi)$ is the field of fractions of $S_0/(\pi)$ and $\bar{a}_1 \in E_0(\pi)$ is a unit in the valuation ring of  $E_0(\pi)$,
there exists a unit $x \in E_0$ such that $\bar{x} = \bar{a}_1 \in S_0/(\pi)$. 

Let $z_1 = x  (\sqrt[e_2]{v \delta})^{m'} $.  Then it is easy to see that $z_1^{-1} z \in (E\otimes F_\pi)^n$.
Since  $x \in S_0$ is a unit,  $N_{E/F}(z_1)$ is equal to a unit times a power of $\delta$. Hence 
replacing $\lambda$ by $\lambda N_{E/F}(z_1)^{-1}$,we assume that $\lambda = N_{E\otimes F_\pi/F_\pi}(z)$ with 
$z$ close to 1.  In particular $z \in (E\otimes F_\pi)^{*n}$ and hence 
$\lambda = N_{E\otimes F_\pi/F_\pi}(z) \in  F_\pi^{*n}$. 
Since $\lambda = w\pi^r \delta^s$, $r, s $ are divisible by $n$. 
Hence $w \in F_\pi^{*n}$. Since $w \in R$ is a unit, it follows that $w \in F^{*n}$.
Thus $\lambda \in N_{E/F}(E^*)$. 
\end{proof}

 \section{ Central simple algebras and reduced norms over two dimensional complete fields } 
\label{csa-2dim}

 Let $R$ be a   two dimensional  complete  regular local  ring with  maximal ideal $ m = (\pi, \delta)$, residue field 
 $\kappa$ and  field of fractions $F$. For any prime $\theta$, let $F_\theta$ be the completion of $F$ at 
 the discrete valuation given by $\theta$.  If  $\theta$ is a regular prime,  then $R/(\theta)$ is a complete discrete valuation ring 
 and hence    $\kappa(\theta)$ is   a complete discretely valued field. 
 Let $E/F$ be an extension which is unramified on $R$. 
 Let $S$ be the integral closure of $R$ in $E$. Let $\theta$ be a regular prime in $R$.
 Then $\theta$ is a prime in $S$ and we denote the residue field  of $E$ at $\theta$ by $E(\theta)$.

 \begin{lemma} 
\label{one}
Let  $D$ be a central simple  algebra over $F$ which is unramified on $R$ except possibly at $(\pi)$. 
 Suppose that the period of $D$ is coprime to char$(\kappa)$. 
  Then there exists a central simple algebra $D_0$ over $F$ which is unramified on $R$ and a cyclic extension 
$E/F$ which is unramified on $R$ such that $$D= D_0  +  (E,\sigma, \pi)$$ for some generator $\sigma$ of 
Gal$(E/F)$. 
\end{lemma}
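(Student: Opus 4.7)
The plan is to reduce to the known structure theorem over the complete discretely valued field $F_\pi$, and then lift the resulting cyclic extension back to $F$ in a way that preserves unramifiedness on all of $R$.

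First I would apply the structure result recalled in \S\ref{prel} to $D \otimes F_\pi$: since $F_\pi$ is complete discretely valued with uniformizer $\pi$ and residue field $\kappa(\pi)$, and since the period of $D$ is coprime to $\mathrm{char}(\kappa) = \mathrm{char}(\kappa(\pi))$, there exist an unramified central simple algebra $D_0'$ over $F_\pi$ and an unramified cyclic extension $E'/F_\pi$ with a generator $\sigma'$ of $\mathrm{Gal}(E'/F_\pi)$ such that
$$D \otimes F_\pi = D_0' + (E', \sigma', \pi).$$

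Next I would show that the residue field $E'(\pi)$ is itself unramified over $\kappa(\pi)$. The cyclic extension $E'(\pi)/\kappa(\pi)$ is precisely the one cut out by the residue character $\partial_\pi(D) \in H^1(\kappa(\pi), \mathbb{Q}/\mathbb{Z})$. The hypothesis that $D$ is unramified at $(\delta)$ gives $\partial_\delta(D) = 0$, and the standard anti-commutativity of iterated residues at the closed point $(\pi, \delta)$ (Kato's complex for the regular local ring $R$) yields $\partial_{\bar\delta}\partial_\pi(D) = -\partial_{\bar\pi}\partial_\delta(D) = 0$. Since $\kappa(\pi)$ is complete discretely valued with uniformizer $\bar\delta$, this vanishing forces $E'(\pi)/\kappa(\pi)$ to be unramified at its unique nontrivial place, hence unramified.

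Then I would invoke parts (ii) and (iii) of Lemma \ref{field-extension} to lift $E'/F_\pi$ to a Galois extension $E/F$ that is unramified on all of $R$ with $E \otimes F_\pi \cong E'$; this $E/F$ is cyclic since $E$ unramified on $R$ gives $E \otimes F_\pi$ a field (Proposition \ref{unramified}), forcing $\mathrm{Gal}(E/F) \cong \mathrm{Gal}(E'/F_\pi)$. Let $\sigma$ be the generator of $\mathrm{Gal}(E/F)$ corresponding to $\sigma'$, and set $D' = D - (E, \sigma, \pi) \in \mathrm{Br}(F)$. At any height-one prime $(\theta) \neq (\pi)$ of $R$, the element $\pi$ is a unit and $E/F$ is unramified, so $(E, \sigma, \pi)$ is unramified at $(\theta)$ and hence so is $D'$; at $(\pi)$, by construction $D' \otimes F_\pi = D_0'$ is unramified. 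Thus $D'$ is unramified at every height-one prime of $R$, and by Auslander--Goldman purity for the Brauer group of a two-dimensional regular local ring, $D'$ is Brauer equivalent to some $D_0$ unramified on $R$, giving $D = D_0 + (E, \sigma, \pi)$ as required.

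The main technical point is the second-residue computation showing $E'(\pi)/\kappa(\pi)$ is unramified; without it, Lemma \ref{field-extension}(ii) cannot be applied and the lift $E/F$ could pick up spurious ramification at $(\delta)$, breaking the unramifiedness of $D - (E,\sigma,\pi)$ at $(\delta)$. Everything else --- the structure theorem over a complete discrete valuation ring, the lifting lemma from \S\ref{field-extns}, and classical purity in dimension two --- is a direct application of tools already in hand.
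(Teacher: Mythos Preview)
Your proposal is correct and follows essentially the same route as the paper: identify the residue of $D$ at $(\pi)$ as a cyclic extension of $\kappa(\pi)$, show it is unramified over $\kappa(\pi)$ (the paper cites \cite[Lemma 5.2]{PPS} where you spell out the iterated-residue argument), lift via Lemma~\ref{field-extension} to a cyclic $E/F$ unramified on $R$, and subtract $(E,\sigma,\pi)$. Your final appeal to Auslander--Goldman purity is unnecessary, since by the paper's convention ``unramified on $R$'' already means unramified at every height-one prime.
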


\begin{proof}  Let $(E_0, \sigma_0)$ be the residue of $D$ at $\pi$. 
Let $E_\pi/F_\pi$ be the unique unramified extension with residue field $E_0$.
Since $E_0/\kappa(\pi)$ is cyclic, $E/F_\pi$ is cyclic.  
Since $D$ is unramified on $R$ except possibly at $(\pi)$,
the extension $E_0/\kappa(\pi)$ is an unramified extension (cf. \cite[Lemma 5.2]{PPS}). 
Since $E_\pi/F$ is a cyclic extension,  by (\ref{field-extension}), 
there exists a cyclic extension $E/F$ which is unramified on $R$ such that $E\otimes F_\pi \simeq E_\pi$. 
Let $\sigma \in $ Gal$(E/F)$  be the lift of $\sigma_0$  and  $D_0 = D - (E,\sigma, \pi)$.
Then $D_0$ is unramified on $R$ and $D = D_0 + (E,\sigma, \pi)$.  
\end{proof}

\begin{lemma} 
\label{split}
Let  $D$ be a central simple  algebra over $F$ which is unramified on $R$ except possibly at $(\pi)$ and $(\delta)$. 
 Suppose that the period of $D$ is coprime to  char$(\kappa)$.   If $D\otimes F_\delta$ is split, then $D$ is split. 
\end{lemma}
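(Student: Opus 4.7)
The plan is to use Lemma \ref{one} to decompose $D$ and then analyze the vanishing over $F_\delta$ via iterated residues inside the two-dimensional structure of $R$.

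Since $D \otimes F_\delta$ is split, the residue $\partial_{(\delta)}(D) \in H^1(\kappa(\delta), \Q/\Z)$ vanishes, so $D$ is unramified at $(\delta)$. Combined with the hypothesis, $D$ is unramified on $R$ except possibly at $(\pi)$, and by Lemma \ref{one} I can write $D = D_0 + (E, \sigma, \pi)$ with $D_0$ unramified on $R$ and $E/F$ a cyclic extension unramified on $R$. Let $\tilde R$ denote the integral closure of $R$ in $E$, which is a regular local ring with maximal ideal $(\pi,\delta)\tilde R$; in particular $E \otimes_F F_\delta$ is a field, giving an unramified cyclic extension of $F_\delta$ whose residue field is $E(\delta) = \Frac(\tilde R/\delta \tilde R)$.

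Both $D_0 \otimes F_\delta$ and $(E \otimes F_\delta, \sigma, \pi)$ are unramified over $F_\delta$ (the latter because $\pi$ is a unit in the valuation ring $R_\delta$ of $F_\delta$). Hence the relation $D \otimes F_\delta = 0$ reduces modulo $\delta$, via $\Br(R_\delta) \simeq \Br(\kappa(\delta))$, to
$$ \bar D_0^{(\delta)} + (E(\delta)/\kappa(\delta), \bar\sigma, \bar\pi) = 0 $$
in $\Br(\kappa(\delta))$, where $\bar\sigma$ and $\bar\pi$ are the images of $\sigma$ and $\pi$. Now $\kappa(\delta) = \Frac(R/\delta)$ is itself a complete discretely valued field with uniformizer $\bar\pi$ and residue field $\kappa$. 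Taking residue at $\bar\pi$, the unramified class $\bar D_0^{(\delta)}$ contributes zero, while the cyclic algebra contributes the character in $H^1(\kappa, \Z/n)$ corresponding to the unramified cyclic extension $\tilde R/(\pi,\delta)/\kappa$ with generator induced from $\sigma$. This character must vanish, so $\tilde R/(\pi,\delta) = \kappa$. Since $\tilde R$ is a local finite \'etale $R$-algebra and $R$ is henselian, the rank $[\tilde R:R] = [E:F]$ equals the residue degree $[\tilde R/(\pi,\delta):\kappa] = 1$, forcing $E = F$ and the vanishing of the cyclic summand.

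Thus $D = D_0$ is unramified on $R$, and the chain of injections $\Br(\kappa) \simeq \Br(R) \hookrightarrow \Br(R_\delta) \hookrightarrow \Br(F_\delta)$ together with $D_0 \otimes F_\delta = 0$ yields $D_0 = 0$, hence $D = 0$. The principal obstacle I anticipate is the second-residue computation: identifying the reduction of $(E \otimes F_\delta, \sigma, \pi)$ as precisely $(E(\delta), \bar\sigma, \bar\pi) \in \Br(\kappa(\delta))$ and then computing its further residue at the uniformizer $\bar\pi$ as the character of $\tilde R/(\pi, \delta)/\kappa$; both steps rely on the standard residue formulas for cyclic algebras over complete discretely valued fields, here applied iteratively in the two-dimensional complete regular local setting.
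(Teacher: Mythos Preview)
Your argument is correct and follows essentially the same route as the paper: reduce to the case where $D$ is unramified except at $(\pi)$, apply Lemma~\ref{one} to write $D = D_0 + (E,\sigma,\pi)$, specialize at $\delta$ to obtain $D_0(\delta) + (E(\delta),\bar\sigma,\bar\pi) = 0$ in $\Br(\kappa(\delta))$, and then use that $\kappa(\delta)$ is complete discretely valued with parameter $\bar\pi$ to force $E(\delta)=\kappa(\delta)$ and $D_0(\delta)=0$, whence $E=F$ and $D_0=0$ by completeness of $R$. The only cosmetic difference is that you phrase the last two deductions via explicit residue maps and the injection $\Br(R)\hookrightarrow\Br(F_\delta)$, whereas the paper invokes the unramified/ramified decomposition of $\Br(\kappa(\delta))$ directly; the content is the same.
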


\begin{proof}  Suppose $D\otimes F_\delta$ is split. 
Then $D$ is unramified at $(\delta)$. Hence $D$ is unramified on $R$ except possibly at $(\pi)$.
By (\ref{one}), $D = D_0 + (E,\sigma, \pi)$ for some $D_0$ and $E/F$ unramified on $R$. 
For any regular prime  $\theta \in R$, let  $D_0(\theta)$ be the specialization of $D_0$ at $\theta$. 
Since   $D_0$ and $E/F$ are unramified at $\delta$ and $D\otimes F_\delta$ splits,
$D(\delta) = D_0(\delta) + (E(\delta)/\kappa(\delta), \sigma_\delta, \overline{\pi})$ is split algebra over $\kappa(\delta)$. 
Since $\kappa(\delta)$ is a complete discretely  valued  field  with $\bar{\pi}$ is a parameter, it follows that 
$E(\delta) = \kappa(\delta)$ and $D_0(\delta)$ is split over $\kappa(\delta)$. 
Since $R$ is complete $D_0$ is split and $E = F$. Hence $D$ is split. 
\end{proof}

\begin{lemma} 
\label{division}
Let $R$ be a   two dimensional  complete  regular local  ring with  maximal ideal $(\pi, \delta)$ and  field of fractions $F$.
Let  $D$ be a central division   algebra over $F$ which is unramified on $R$ except possibly at $(\pi)$ and $(\delta)$. 
 Suppose that the period of $D$ is coprime  to char$(\kappa)$. Then  $D\otimes F_\pi$ is division. 
\end{lemma}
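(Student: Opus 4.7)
I argue by contradiction. Suppose $D \otimes F_\pi$ is not division, set $m = \mathrm{ind}(D \otimes F_\pi) < n = \deg(D)$, and observe that $m$ is coprime to char$(\kappa)$ because it divides $\mathrm{ind}(D) = n$, whose prime divisors coincide with those of the period of $D$. Let $\Delta$ denote the underlying division algebra of $D \otimes F_\pi$. By the standard structure of tame division algebras over complete discretely valued fields, $\Delta$ admits a maximal subfield of the form $E_\pi = L_{ur}(\sqrt[e]{u\pi})$, where $L_{ur}/F_\pi$ is the maximal unramified subfield of $\Delta$ (of degree $f$ equal to the residue degree of $\Delta$), $e = m/f$ is its ramification index, and $u$ is a unit in the valuation ring of $L_{ur}$. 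In particular $[E_\pi:F_\pi]=m$ and $E_\pi$ splits $D \otimes F_\pi$.

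I would then construct a suitable $F$-lift of $E_\pi$. By (\ref{field-extension})(i), $L_{ur}/F_\pi$ lifts to a field extension $L^*/F$ which is unramified on $R$ except possibly at $(\delta)$; arguing as in the proof of (\ref{field-extension-branch}), $L^* = E_0(\sqrt[e_2]{v\delta})$ for some $E_0/F$ unramified on $R$ and some unit $v$ in the integral closure of $R$ in $E_0$. Let $u^*$ be a unit in the integral closure of $R$ in $L^*$ lifting $u$, and set $E = L^*(\sqrt[e]{u^*\pi})$. Two successive applications of (\ref{regular}) then give that the integral closure $S$ of $R$ in $E$ is a two-dimensional complete regular local ring with maximal ideal $(\sqrt[e]{u^*\pi},\sqrt[e_2]{v\delta})$, the two generators being primes of $S$ above $(\pi)$ and $(\delta)$ respectively, and by construction $E \otimes_F F_\pi \cong E_\pi$.

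Finally, $D \otimes E$ is a central simple algebra over $E$ whose period divides that of $D$ (and so is coprime to char$(\kappa)$), is unramified on $S$ outside the two primes above $(\pi)$ and $(\delta)$ (since both $D/F$ and $E/F$ are unramified elsewhere), and at the completion of $E$ at the prime above $(\pi)$ it becomes $(D \otimes F_\pi) \otimes_{F_\pi} E_\pi$, which is split. Applying (\ref{split}) to $D \otimes E$ over $E$ with the roles of $\pi$ and $\delta$ interchanged, $D \otimes E$ is split, so $\mathrm{ind}(D)$ divides $[E:F] = m$, contradicting $\mathrm{ind}(D) = n > m$. The delicate point is the choice of $E_\pi$ as a tower (unramified) $\circ$ (Kummer totally ramified): this is what allows (\ref{regular}) to deliver regularity of $S$, and hence lets (\ref{split}) be applied directly; a more naive choice of splitting field would produce an $E$ of the form $E_0(\sqrt[e_2]{v\delta})(\sqrt[e_1]{u(\sqrt[e_2]{v\delta})^t\pi})$ with $0<t<e_1$, whose integral closure need not be regular, which is the main obstacle to be avoided.
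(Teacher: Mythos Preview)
Your argument is correct and follows the same overall strategy as the paper: pick a maximal subfield $E_\pi$ of the underlying division algebra of $D\otimes F_\pi$, lift it to an extension $E/F$ with regular integral closure $S$, and apply (\ref{split}) to $D\otimes E$ over $S$ to conclude $\mathrm{ind}(D)\le [E:F]=\mathrm{ind}(D\otimes F_\pi)$.

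The only difference is in the choice of $E_\pi$, and the paper's choice is simpler. By \cite[Lemma~5.1]{JW}, a tame division algebra over a complete discretely valued field always admits an \emph{unramified} maximal subfield; the paper takes $E_\pi$ to be such a field. In your notation this means $e=1$ and $E_\pi=L_{ur}$, so the second Kummer adjunction $\sqrt[e]{u^*\pi}$ never appears. Then (\ref{field-extension})(i) gives $E/F$ unramified on $R$ except possibly at $(\delta)$, and a single application of (\ref{regular}) yields $S$ regular with maximal ideal $(\pi,\delta')$. The delicate point you flag at the end---ensuring regularity of $S$ by avoiding a mixed radical $\sqrt[e_1]{u(\sqrt[e_2]{v\delta})^t\pi}$---is thus bypassed entirely: the unramified choice of $E_\pi$ makes the obstacle disappear rather than requiring it to be worked around. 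Your route works, but the paper's is shorter and avoids the lifting of the unit $u$ and the second invocation of (\ref{regular}).
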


\begin{proof} Let $D_\pi$ be the division algebra over $F_\pi$ which is Brauer equivalent to 
$D\otimes F_\pi$. 
 Since $F_\pi$ is a complete discretely valued field and period of $D$ is coprime to char$(\kappa(\pi))$,
there exists an unramified extension $E_\pi/F_\pi$ such that $E_\pi \subseteq D_\pi \otimes F_\pi$ a maximal subfield
 (cf., \cite[Lemma 5.1]{JW}). 
By (\ref{field-extension}), there exists an extension $E/F$ which is unramified on $R$ except possibly at $(\delta)$ such that 
$E\otimes_F F_\pi \simeq E_\pi$. 
Let $S$ be the integral closure of $R$ in $E$. Since $E/F$ is unramified on $R$ except possibly at $(\delta)$,
by the construction of $E/F$, 
$S$ is a complete two dimensional regular local ring with maximal ideal $m_S = (\pi, \delta')$ for some prime $\delta'$
in $S$ which lies over $\delta$ (cf. \cite[Lemma 3.1 and 3.2]{PS2014}).  Since $E\otimes_F F_\pi \simeq E_\pi$, $D\otimes_F  \otimes E_\pi$ is split, 
$D \otimes E$ is unramified on $S$ except possibly at $(\delta')$. 
Hence, by (\ref{split}), $D\otimes E$ is split. Hence ind$(D) \leq [E : F]$.
Since $[E : F] = [E_\pi, F_\pi] = $ ind$(D\otimes F\pi) \leq$ ind$(D) \leq [E :F]$, we have 
ind$(D\otimes F\pi) =$  ind$(D)$. 
\end{proof}

\begin{cor} 
\label{index}
Let  $D$ be a central simple   algebra over $F$ which is unramified on $R$ except possibly at $(\pi)$ and $(\delta)$. 
 Suppose that the period of $D$ is coprime to  to char$(\kappa)$.  Then  ind$(D) = $ ind$(D\otimes F_\pi)$. 
\end{cor}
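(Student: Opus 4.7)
The plan is to reduce immediately to the division-algebra case already handled in Lemma~\ref{division}. Let $D_0$ be the central division algebra over $F$ that is Brauer equivalent to $D$, so that $D = M_r(D_0)$ for some $r \geq 1$ and $\mathrm{ind}(D) = \deg(D_0)$. Since Brauer-equivalent algebras have the same ramification data, $D_0$ is unramified on $R$ except possibly at $(\pi)$ and $(\delta)$, and the period of $D_0$ equals the period of $D$, hence is coprime to $\mathrm{char}(\kappa)$.

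By Lemma~\ref{division} applied to $D_0$, the algebra $D_0 \otimes F_\pi$ is a division algebra. Therefore
\[
\mathrm{ind}(D_0 \otimes F_\pi) = \deg(D_0 \otimes F_\pi) = \deg(D_0) = \mathrm{ind}(D).
\]
On the other hand, $D \otimes F_\pi$ is Brauer equivalent to $D_0 \otimes F_\pi$, so $\mathrm{ind}(D \otimes F_\pi) = \mathrm{ind}(D_0 \otimes F_\pi)$. Combining, $\mathrm{ind}(D \otimes F_\pi) = \mathrm{ind}(D)$.

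There is no real obstacle here; the only minor point to verify is that the Brauer-equivalent division algebra $D_0$ inherits the hypotheses of Lemma~\ref{division} from $D$, which is immediate since ramification, period, and Brauer class are invariants of the Brauer class rather than the particular representative. All the essential work has been done in Lemma~\ref{division} via the construction of an unramified maximal subfield lifted from $F_\pi$ to $F$ using Lemma~\ref{field-extension} and the splitting criterion of Lemma~\ref{split}.
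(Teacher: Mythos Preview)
Your proposal is correct and is precisely the intended reduction: the paper states this as an immediate corollary of Lemma~\ref{division} with no proof, and passing to the underlying division algebra Brauer-equivalent to $D$ is exactly the one-line argument required.
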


\begin{lemma} 
\label{two}
Let  $D$ be a central simple  algebra over $F$ which is unramified on $R$ except possibly at $(\pi)$ and $(\delta)$. 
 Suppose that the period of $D$ is coprime to  to char$(\kappa)$. 
   Then there exists a central simple algebra $D_0$ over $F$ which is unramified on $R$, a cyclic extension 
$E_1/F$ which is unramified on $R$ and a cyclic extension 
$E_2/F$ which is unramified on $R$ except possibly at $(\pi)$  
 such that $$D= D_0  +  (E_1/F,\sigma_1, \pi) + (E_2/F, \sigma_2, \delta)$$ for some generators $\sigma_i$ of 
Gal$(E_i/F)$. 
\end{lemma}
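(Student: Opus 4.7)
The plan is to reduce Lemma~\ref{two} to the single-prime case already handled in Lemma~\ref{one} by peeling off the ramification at $(\delta)$. Specifically, I construct a cyclic $F$-algebra $(E_2,\sigma_2,\delta)$ whose residue at $(\delta)$ agrees with that of $D$, so that $D - (E_2,\sigma_2,\delta)$ is unramified on $R$ except possibly at $(\pi)$; Lemma~\ref{one} then handles what remains.

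To build $(E_2,\sigma_2,\delta)$, I apply the structure theorem for central simple algebras over a complete discretely valued field (recalled in Section~\ref{prel} from \cite[Lemma 4.1]{PPS}) to $D \otimes_F F_\delta$. The period of $D \otimes F_\delta$ divides $\mathrm{per}(D)$ and is therefore coprime to char$(\kappa)$, hence coprime to the residue characteristic of $F_\delta$ (which is $0$ or char$(\kappa)$), so the hypothesis holds. This yields an unramified cyclic extension $E_{2,\delta}/F_\delta$ of degree $n$ with a generator $\sigma_{2,\delta}$ and an unramified central simple algebra $A_{0,\delta}$ over $F_\delta$ with $D \otimes F_\delta = A_{0,\delta} + (E_{2,\delta},\sigma_{2,\delta},\delta)$. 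Now invoke Lemma~\ref{field-extension} with the roles of $\pi$ and $\delta$ interchanged (the setup is symmetric in the two generators of the maximal ideal of $R$): this lifts $E_{2,\delta}/F_\delta$ to a Galois extension $E_2/F$ of degree $n$ that is unramified on $R$ except possibly at $(\pi)$, with $E_2 \otimes_F F_\delta \simeq E_{2,\delta}$. The natural restriction map identifies $\mathrm{Gal}(E_2/F)$ with $\mathrm{Gal}(E_{2,\delta}/F_\delta)$, so $E_2/F$ is cyclic and $\sigma_{2,\delta}$ lifts uniquely to a generator $\sigma_2$ of $\mathrm{Gal}(E_2/F)$.

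Set $D' = D - (E_2,\sigma_2,\delta)$. I claim $D'$ is unramified on $R$ except possibly at $(\pi)$, with period coprime to char$(\kappa)$. At a codimension one regular prime $\theta$ with $\theta \neq (\pi),(\delta)$, $D$ is unramified by hypothesis while $(E_2,\sigma_2,\delta)$ is unramified because $E_2$ is unramified at $\theta$ and $\delta$ is a unit there. At $(\delta)$, base-change gives $(E_2,\sigma_2,\delta) \otimes F_\delta = (E_{2,\delta},\sigma_{2,\delta},\delta)$, whose residue character coincides by construction with that of $D \otimes F_\delta$, so the residues cancel and $D'$ is unramified at $(\delta)$. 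The period of $D'$ divides $\mathrm{lcm}(\mathrm{per}(D),n)$, which is coprime to char$(\kappa)$. Applying Lemma~\ref{one} to $D'$ then produces a central simple algebra $D_0$ over $F$ unramified on $R$ and a cyclic extension $E_1/F$ unramified on $R$, together with a generator $\sigma_1$ of $\mathrm{Gal}(E_1/F)$, satisfying $D' = D_0 + (E_1,\sigma_1,\pi)$. Rearranging yields the desired identity $D = D_0 + (E_1,\sigma_1,\pi) + (E_2,\sigma_2,\delta)$.

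The main technical point is the residue computation at $(\delta)$: one has to know that $(E_2,\sigma_2,\delta)$, built from an extension $E_2/F$ that itself may be ramified at $(\pi)$, nevertheless produces exactly the unramified cyclic residue character of $D$ at $(\delta)$. This rests on the base-change compatibility $(E_2,\sigma_2,\delta) \otimes F_\delta = (E_{2,\delta},\sigma_{2,\delta},\delta)$ together with the standard residue formula for a cyclic algebra $(L,\sigma,u\pi_v^k)$ over a complete discretely valued field with $L/K$ unramified.
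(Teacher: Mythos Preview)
Your proof is correct and follows essentially the same route as the paper: peel off the ramification at $(\delta)$ by lifting the residue character there to a cyclic extension $E_2/F$ via Lemma~\ref{field-extension} (with $\pi$ and $\delta$ swapped), subtract $(E_2,\sigma_2,\delta)$, and then invoke Lemma~\ref{one} on the remainder. The paper is terser about why $D'$ is unramified away from $(\pi)$, but your added verification is sound and matches the intended argument.
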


\begin{proof}   
Let  $(E_\delta, \sigma_0)$ be the lift of the residue of   $D$ at $\delta$. 
Since $E_\delta /F_\delta$ is a cyclic extension which is unramified, there exists a cyclic extension $E_2/F$ 
which is unramified on $R$ except possibly 
at $(\pi)$ such that $E_2 \otimes F_\delta \simeq E_\delta$  (\ref{field-extension}).  
Let $\sigma_2 \in $ Gal$(E_2/F)$  be the lift of $\sigma_0$  and  $D_1  = D - (E_2/F,\sigma_2, \delta)$.
Then $D_1$ is unramified on $R$ except possibly at $\pi$.
Hence, by (\ref{one}),  
there exists a central simple algebra $D_0$ over $F$ which is unramified on $R$ and a cyclic extension 
$E_1/F$ which is unramified on $R$ such that $D_1= D_0  +  (E_1/F,\sigma_1, \pi)$.
Hence $D = D_0 + (E_1/F,\sigma_1, \pi) + (E_2/F,\sigma_2, \delta)$ as required. 
\end{proof}

\begin{cor} 
\label{index-cal}
Let  $D$ be a central simple   algebra over $F$ which is unramified on $R$ except possibly at $(\pi)$ and $(\delta)$. 
 Suppose that the period of $D$ is coprime to  to char$(\kappa)$. 
  Let $D = D_0 + (E_1/F,\sigma_1, \pi) + (E_2/F,\sigma_2, \delta)$ be as in (\ref{two}). 
Then 
$$ind(D) =   ind(D_0 \otimes E_1E_2) [E_1E_2 : F].$$ 
\end{cor}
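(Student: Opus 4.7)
The plan is to prove the identity by a two-step descent: first reduce to the completion $F_\delta$ of $F$ via Corollary \ref{index}, then analyze the resulting residue algebra over the complete DVR $E_2(\delta)$. Throughout we use the standard index formula for a tame central simple algebra $B = B_0 + (L/K, \sigma, \theta)$ over a complete DVR $K$ with $B_0$ unramified, $L/K$ unramified cyclic, and $\theta$ a uniformizer of $K$: namely $\text{ind}(B) = [L:K] \cdot \text{ind}(B_0 \otimes L)$. By the symmetric role of $\pi$ and $\delta$ in the proof of Lemma \ref{division}, Corollary \ref{index} also yields $\text{ind}(D) = \text{ind}(D \otimes F_\delta)$.

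Step 1. Over $F_\delta$, the decomposition $D \otimes F_\delta = A \otimes F_\delta + (E_2 \otimes F_\delta, \sigma_2, \delta)$, with $A = D_0 + (E_1, \sigma_1, \pi)$, exhibits $D \otimes F_\delta$ in standard form: $A \otimes F_\delta$ is unramified (since $D_0$ and $E_1/F$ are unramified on $R$ and $\pi$ is a unit at $\delta$), and the second summand is the standard ramified cyclic piece because $E_2/F$ is unramified at $\delta$ by construction in Lemma \ref{two}, so that $E_2 \otimes F_\delta$ is unramified cyclic of degree $n_2 = [E_2 : F]$ over $F_\delta$. The index formula therefore gives
\[
\text{ind}(D \otimes F_\delta) = n_2 \cdot \text{ind}(\bar\Delta_\delta),
\]
where $\bar\Delta_\delta$ is the residue division algebra, with center $E_2(\delta)$ (the residue field of $E_2 \otimes F_\delta$) and Brauer class $\bar\Delta_\delta \sim [D_0(\delta) + (E_1(\delta), \sigma_1, \bar\pi)] \otimes E_2(\delta)$ in $\text{Br}(E_2(\delta))$.

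Step 2. Iterate on $\bar\Delta_\delta$, itself a CSA over the complete DVR $E_2(\delta)$. By Proposition \ref{pi-galois}, $E_2 = E_{2,0}(\sqrt[e]{u\pi})$, so $E_2(\delta) = E_{2,0}(\delta)(\sqrt[e]{\bar u \bar\pi})$ has uniformizer $\sqrt[e]{\bar u \bar\pi}$, and $\bar\pi$ has valuation $e$ in $E_2(\delta)$. Writing $\bar\pi = (\sqrt[e]{\bar u \bar\pi})^e/\bar u$ and splitting the Brauer class accordingly, the cyclic algebra $(E_1(\delta) \otimes E_2(\delta), \sigma_1, \bar\pi)$ decomposes as an unramified summand $(E_1(\delta) \otimes E_2(\delta), \sigma_1, \bar u^{-1})$ plus a standard ramified cyclic piece with uniformizer $\sqrt[e]{\bar u \bar\pi}$. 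Applying the index formula to $\bar\Delta_\delta$ and invoking Corollary \ref{index} on $D_0 \otimes E_1 E_2$ over the integral closure of $R$ in $E_1 E_2$ to identify the final residue with a specialization of $D_0 \otimes E_1 E_2$, one obtains
\[
\text{ind}(\bar\Delta_\delta) = [E_1 E_2 : E_2] \cdot \text{ind}(D_0 \otimes E_1 E_2).
\]
Combined with Step 1, this yields $\text{ind}(D) = n_2 \cdot [E_1 E_2 : E_2] \cdot \text{ind}(D_0 \otimes E_1 E_2) = [E_1 E_2 : F] \cdot \text{ind}(D_0 \otimes E_1 E_2)$.

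The main technical obstacle lies in Step 2. A direct application of the index formula to $\bar\Delta_\delta$ over $E_2(\delta)$ extracts only a cyclic factor of $[E_1 E_2 : E_2]/\gcd([E_1 E_2 : E_2], e)$, the degree of the subfield of $E_1(\delta) \otimes E_2(\delta)$ cut out by the character $e\chi_1$, leaving a factor of $\gcd([E_1 E_2 : E_2], e)$ to be supplied by the index of the iterated residue. Verifying that the unramified remainder $(E_1(\delta) \otimes E_2(\delta), \sigma_1, \bar u^{-1})$, combined with $D_0(\delta) \otimes E_2(\delta)$ and specialized to the residue of $E_2(\delta)$, contributes precisely this missing factor---and that it matches $D_0 \otimes E_1 E_2$ through Corollary \ref{index}---is the essential calculation of the proof.
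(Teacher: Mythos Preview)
Your Step 1 is fine, but Step 2 contains a genuine gap that cannot be closed, because the statement as written is false. The paper gives no proof of this corollary (and it is not used elsewhere), and in fact it fails already for quaternion algebras.

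Take $R$ with $\mathrm{char}(\kappa)\neq 2$, choose a unit $a\in R$ with $\bar a\notin\kappa^{*2}$, and let $D=(a,\pi)_2+(\pi,\delta)_2$. Following the construction of Lemma~\ref{two} literally: the residue of $D$ at $\delta$ is $\bar\pi$, so $E_2=F(\sqrt{\pi})$; then $D_1=D-(\pi,\delta)=(a,\pi)$, whose residue at $\pi$ is $\bar a$, so $E_1=F(\sqrt a)$ and $D_0=0$. Thus $E_1E_2=F(\sqrt a,\sqrt\pi)$ has degree $4$ over $F$ and the formula predicts $\mathrm{ind}(D)=4$. But $(a,\pi)+(\pi,\delta)=(a,\pi)+(\delta,\pi)=(a\delta,\pi)$ is a single quaternion algebra, and over $F_\pi$ it is nonsplit (since $a\delta$ is a non-square unit and $\pi$ is a parameter), so $\mathrm{ind}(D)=2$ by Corollary~\ref{index}.

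This is exactly the obstruction you flagged. In your Step~2, when $e=[E_2:E_{2,0}]>1$ the uniformizer $\bar\pi$ becomes an $e$-th power in $E_2(\delta)$, so the class $(E_1(\delta)\otimes E_2(\delta),\sigma_1,\bar\pi)$ collapses to the class attached to $e\chi_1$, of order $[E_1E_2:E_2]/\gcd([E_1E_2:E_2],e)$. The ``missing'' factor $\gcd([E_1E_2:E_2],e)$ is \emph{not} recovered by the unramified remainder $(E_1(\delta)\otimes E_2(\delta),\sigma_1,\bar u^{-1})+D_0(\delta)\otimes E_2(\delta)$: in the example above $e=2$, $u=1$, $D_0=0$, and the remainder is trivially split, so $\mathrm{ind}(\bar\Delta_\delta)=1$, not $[E_1E_2:E_2]\cdot\mathrm{ind}(D_0\otimes E_1E_2)=2$. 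Your ``essential calculation'' therefore cannot succeed in general; the corollary needs an extra hypothesis (for instance that $E_2/F$ be unramified on all of $R$, equivalently that the residue of $D$ at $\delta$ lift to an unramified extension of $\kappa(\delta)$) or a different right-hand side.
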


\begin{lemma} 
\label{splitting-field}
Let  $D$ be a central simple  algebra over $F$ which is unramified on $R$ except possibly at $(\pi)$ and $(\delta)$. 
 Suppose that the period of $D$ is coprime to  to char$(\kappa)$. 
  Let $E_\pi/F_\pi$ be a finite   extension of fields of degree coprime to char$(\kappa)$. If   $  D\otimes E_\pi$ is split, then there exists an extension 
$E/F$ of fields such that $E/F$ is unramified on $R$ except possibly at $(\pi)$ and $(\delta)$ such that 
$E\otimes F_\pi \simeq E_\pi$ and $D\otimes E$ is split. 
\end{lemma}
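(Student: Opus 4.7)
The plan is to take $E/F$ to be a lift of $E_\pi/F_\pi$ produced by Lemma \ref{field-extension}, and then deduce that $E$ splits $D$ by transferring the splitting hypothesis from $E_\pi$ to $E$ via the completion--index identity of Corollary \ref{index}.

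Since $[E_\pi:F_\pi]$ is coprime to $\mathrm{char}(\kappa)$, Lemma \ref{field-extension} produces a field extension $E/F$ of the same degree that is unramified on $R$ except possibly at $(\pi)$ and $(\delta)$, and satisfies $E \otimes_F F_\pi \simeq E_\pi$. By Lemma \ref{pi-delta} applied to $E/F$, both $E \otimes_F F_\pi$ and $E \otimes_F F_\delta$ are fields, so the integral closure $S$ of $R$ in $E$ has a unique height-one prime $\pi'$ lying above $(\pi)$ and a unique height-one prime $\delta'$ lying above $(\delta)$, with $E_{\pi'} \simeq E \otimes_F F_\pi \simeq E_\pi$. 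Using the explicit description $E = E_0(\sqrt[e_2]{v\delta})(\sqrt[e_1]{u(\sqrt[e_2]{v\delta})^t \pi})$ coming from Lemma \ref{field-extension-branch}, I would iterate Lemma \ref{regular}---possibly after a change of Kummer generator reducing the exponent $t$ modulo $e_1$ by absorbing $e_1$-th powers of $\sqrt[e_2]{v\delta}$---to identify $S$ as a two-dimensional complete regular local ring with maximal ideal $(\pi', \delta')$.

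The algebra $D \otimes_F E$ is then central simple over $E$ of period coprime to $\mathrm{char}(\kappa)$, and unramified on $S$ away from $(\pi')$ and $(\delta')$, since $D$ is unramified on $R$ away from $(\pi)$ and $(\delta)$ and $E/F$ is unramified there as well. Corollary \ref{index}, applied with $(E, S, \pi', \delta')$ in place of $(F, R, \pi, \delta)$, then yields
\[
\mathrm{ind}(D \otimes_F E) \;=\; \mathrm{ind}\bigl((D \otimes_F E) \otimes_E E_{\pi'}\bigr) \;=\; \mathrm{ind}(D \otimes_F E_\pi) \;=\; 1,
\]
so $D \otimes_F E$ is split, as required. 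The main technical obstacle is the regularity of $S$ in the case $t > 0$: the element under the $e_1$-th root is then not directly of the shape assumed in Lemma \ref{regular}, and a careful integral change of variables---exploiting the coprimality of $e_1$ with $\mathrm{char}(\kappa)$ and the flexibility in choosing Kummer generators modulo $e_1$-th powers---is needed in order to reduce to that lemma and thereby realize $S$ as a regular local ring with the prescribed maximal ideal so that Corollary \ref{index} can be applied.
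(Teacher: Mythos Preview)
Your strategy has a genuine gap: the integral closure $S$ of $R$ in $E$ need not be a regular local ring, so Corollary~\ref{index} cannot be applied over $E$. The simplest instance already occurs with $E_\pi = F_\pi(\sqrt{w\pi})$ where the image $\bar{w}$ in $\kappa(\pi)$ has odd $\bar{\delta}$-valuation (so $e_2 = 1$, $e_1 = 2$, $t = 1$). Any lift $E/F$ unramified away from $(\pi)$ and $(\delta)$ with $E\otimes F_\pi \simeq E_\pi$ is then forced to be $F(\sqrt{u\pi\delta})$ for some unit $u\in R$, and the integral closure $S = R[x]/(x^2 - u\pi\delta)$ is normal but \emph{not} regular at its closed point: one has $\dim_\kappa \mathfrak{m}_S/\mathfrak{m}_S^2 = 3 > 2$. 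Your proposed remedy of changing Kummer generators cannot help here, because altering the generator of $E/F$ does not change the field $E$ and hence does not change the ring $S$; the exponent $t$ is already reduced modulo $e_1$, and when $0 < t < e_1$ the singularity is intrinsic to the extension.

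The paper circumvents this by never passing to $E$ directly. Instead it lifts only the maximal unramified part $E_{\pi,nr}$ to an extension $E_1/F$, whose integral closure $S_1$ \emph{is} regular (Lemma~\ref{regular} applies since the adjoined root involves only $\delta$). Over $E_1$ one then exploits that $E_\pi/E_{1\pi}$ is totally ramified: by Proposition~\ref{cyclic} the algebra $D\otimes E_{1\pi}$ becomes a single cyclic algebra $(L_\pi,\sigma,u\delta_1^r\pi)$, the cyclic extension $L_\pi$ lifts to $L/E_1$, and Lemma~\ref{split} (applied over the regular ring $S_1$) shows $D\otimes E_1$ is Brauer equivalent to $(L/E_1,\sigma,u\delta_1^r\pi)$, which is visibly killed by adjoining $\sqrt[e]{u\delta_1^r\pi}$. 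The key difference is that all appeals to the two-dimensional machinery occur over the regular ring $S_1$, and the final totally ramified step is handled by an explicit cyclic-algebra identity rather than by an index computation.
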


\begin{proof}   Let $E_{\pi, nr}$ be the maximal unramified extension of 
$E_\pi/F_\pi$. 
By (\ref{field-extension}),     
there exists an extension $E_1/F$ of fields such that $E_1/F$ is unramified on $R$ except possibly at   $(\delta)$ 
such that  $E_1\otimes F_\pi \simeq E_{\pi, nr}$.   Let $S_1$ be the integral closure  of $R$ in $E_1$. 
By (\ref{regular}), $S_1$ is a  complete regular local ring of dimension 2
 and there is a unique (up to units) $\delta_1 \in S_1$ which lies over $\delta$
 with the maximal ideal of $S_1$ given by  $(\pi, \delta_1)$. 
 
 Since $[E_\pi : F_\pi]$ is coprime to char$(\kappa)$, we have
$E_\pi = E_{\pi, nr}(\sqrt[e]{v\pi})$ for some $v \in E_{\pi, nr}$ which is unit in the valuation ring of $E_{\pi, nr}$. 
We have $v = u \delta_1^r a^e$ for some unit  $u \in S_1$, $r, s \in {\mathbb Z}$ and $a \in E_1$ (cf., \cite[Remark 7.1]{PS2022}). 
Let $E = E_1(\sqrt[e]{u \delta_1^r \pi})$. 

Suppose that $D\otimes E_\pi$ is split. 
Let $D_1 = D\otimes_F E_1$. Since $D\otimes E_\pi$ is split, we have 
$D_1 \otimes_{E_1} E_\pi$ is split.  Hence by (\ref{cyclic}), we have $D_1\otimes E_{\pi, nr} = (L_\pi, \sigma, u\delta_1^r\pi)$
for some cyclic extension $L_\pi/E_{\pi, nr}$ which is unramified. 

By (\ref{field-extension}), there exists a cyclic extension $L/E_1$ such that 
$L\otimes E_{1, \pi} \simeq L_\pi$. Let $\sigma_1$ be the generator of Gal$(L/E_1)$ which is the restriction of $\sigma$.
Then $(D_1 \otimes (L/E_1, \sigma, u\delta_1^r\pi)) \otimes E_{1\pi}$ is split. 
Hence, by (\ref{split}), $D_1 \otimes (L/E_1, \sigma, u\delta_1^r\pi)$ is split over $E_1$.
Since $(L/E_1, \sigma, u\delta_1^r\pi)\otimes_{E_1}E$ is split, $D_1 \otimes E$ is split.
In particular $D\otimes E$ is split. 
\end{proof}

The following is proved in (\cite{PPS}) if $\kappa$ is a finite field.

 \begin{prop} 
\label{reduced-norm}
Let  $A$ be a central simple  algebra over $F$ which is unramified on $R$ except possibly at $(\pi)$ and $(\delta)$. 
 Suppose that the period of $A$ is coprime to char$(\kappa)$. 
   Let $\lambda = u\pi^r\delta^s \in R$ for some $u \in R$ a unit and $r, s \geq 0$. 
If $\lambda$ is a reduced norm from $A\otimes F_\pi$, then $\lambda$ is a reduced norm from $A$.  
\end{prop}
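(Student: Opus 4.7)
The strategy is to reduce to the underlying division algebra, use the hypothesis to produce a maximal subfield of the completed algebra through which $\lambda$ factors as a norm, lift this subfield to $F$ in a form that controls its ramification at $(\pi)$ and $(\delta)$, and then apply the norm transfer of Lemma~\ref{norm}.

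First, let $D$ be the central division $F$-algebra Brauer equivalent to $A$; since $Nrd(A^*) = Nrd(D^*)$, it suffices to prove $\lambda \in Nrd(D^*)$. By Lemma~\ref{division}, $D \otimes F_\pi$ is again a division algebra, and by Corollary~\ref{index} its index equals $\mathrm{ind}(D)$. Writing $\lambda = Nrd(x)$ for some $x \in (D \otimes F_\pi)^*$ and embedding the commutative subfield $F_\pi(x)$ into a maximal subfield $E_\pi \subset D \otimes F_\pi$ (every subfield of a central division algebra lies in one), we obtain $[E_\pi : F_\pi] = \mathrm{ind}(D)$ and $\lambda = N_{E_\pi/F_\pi}(x)$.

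Next apply Lemma~\ref{splitting-field} to $D$ and the extension $E_\pi/F_\pi$: this yields a field extension $E/F$ of degree $[E_\pi : F_\pi] = \mathrm{ind}(D)$, unramified on $R$ except possibly at $(\pi)$ and $(\delta)$, with $E \otimes F_\pi \simeq E_\pi$ and $D \otimes E$ split. A degree count then forces $E$ to embed in $D$ as a maximal subfield. Tracing through the construction in the proof of Lemma~\ref{splitting-field}, which in turn invokes Lemma~\ref{field-extension} and Lemma~\ref{field-extension-branch}, one sees that $E$ can be arranged in the explicit shape $E_0(\sqrt[e_2]{v\delta})(\sqrt[e_1]{u(\sqrt[e_2]{v\delta})^t \pi})$, with $E_0/F$ unramified on $R$ and $u,v$ units in the appropriate integral closures of $R$ --- precisely the form required by Lemma~\ref{norm}.

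Since $\lambda = u\pi^r \delta^s$ with $u \in R^*$ and $r, s \geq 0$, and $\lambda$ is a norm from $E \otimes F_\pi / F_\pi$, Lemma~\ref{norm} upgrades this to $\lambda \in N_{E/F}(E^*)$. Because $E$ is a maximal subfield of the division algebra $D$, every element of $N_{E/F}(E^*)$ is a reduced norm from $D$, so $\lambda \in Nrd(D^*) = Nrd(A^*)$, as required. The delicate point in the argument is verifying that the field constructed by Lemma~\ref{splitting-field} really admits the presentation demanded by Lemma~\ref{norm}; the remaining steps combine these already-established lemmas in a routine way.
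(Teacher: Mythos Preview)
Your argument is correct and follows essentially the same route as the paper: reduce to the underlying division algebra, use Lemma~\ref{division} to know $D\otimes F_\pi$ is division so that $\lambda$ is a norm from a maximal subfield $E_\pi$, lift $E_\pi$ to a splitting field $E/F$ via Lemma~\ref{splitting-field} whose explicit construction puts it in the shape required by Lemma~\ref{norm}, and conclude. The only extra detail you supply is the degree-count justification for why $E$ embeds as a maximal subfield of $D$, which the paper leaves implicit.
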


\begin{proof} Let $A = M_n(D)$ for some central division algebra $D$ over $F$.
Since for any field extension $L/F$, Nrd$(A^*) = $ Nrd$(D^*)$, replacing $A$ by $D$, 
we assume that $A = D$ is division. 
 Suppose $\lambda$ is a reduced norm from $D\otimes F_\pi$.
Since $D\otimes F_\pi$ is a division algebra (\ref{division}),  
there exists a maximal subfield $E_\pi$ of $D\otimes F_\pi$ such that $\lambda$ is a norm from the 
extension $E_\pi/F_\pi$. Then, by (\ref{splitting-field}), there exists an extension 
$E/F$ which is unramified on $R$ except possibly at $(\pi)$ and $(\delta)$
such that  $E\otimes F_\pi \simeq E_\pi$ and $D\otimes E$ is split.  Further from the proof of 
(\ref{splitting-field}), $E$ is an in (\ref{norm}). 
Since $\lambda$ is a norm from $E_\pi/F_\pi$, by (\ref{norm}), $\lambda$ is a norm from $E/F$. 
Hence $\lambda$ is a reduced norm from $D$.
\end{proof}

\section{Local injectivity for groups of type $A_n$}
\label{local-inj}

Let $R$ be a   two dimensional  complete  regular local  ring with  maximal ideal $ m = (\pi, \delta)$, residue field 
 $\kappa$ and  field of fractions $F$.

\begin{prop} 
\label{sl1-pidelta}
Let  $A$ be a central simple  algebra over $F$ which is unramified on $R$ except possibly at $(\pi)$ and $(\delta)$. 
 Suppose that the period of $A$ is coprime to char$(\kappa)$. 
 Then $H^1_{\pi\delta}(F, SL_1(A)) \to H^1(F_\pi, SL_1(A))$ is an isomorphism.  
 \end{prop}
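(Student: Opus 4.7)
The plan is to interpret the map concretely via the identification $H^1(F, SL_1(A)) = F^*/\mathrm{Nrd}(A^*)$ (and similarly over $F_\pi$), which follows from Hilbert 90 applied to the exact sequence $1 \to SL_1(A) \to GL_1(A) \to \G_m \to 1$, as recalled in \S\ref{prel}. Under this identification, $H^1_{\pi\delta}(F, SL_1(A))$ corresponds to the subgroup of $F^*/\mathrm{Nrd}(A^*)$ consisting of classes admitting a representative of the form $\lambda = u\pi^r\delta^s$ with $u \in R^*$ and $r,s \in \Z$; indeed, this is exactly the condition that a representative be a unit locally at every codimension one prime of $R$ other than $(\pi)$ and $(\delta)$, hence give a torsor unramified off those primes.

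For injectivity, I would start with $[\lambda] \in H^1_{\pi\delta}(F, SL_1(A))$ whose image in $F_\pi^*/\mathrm{Nrd}((A\otimes F_\pi)^*)$ is trivial. Choosing a representative $\lambda = u\pi^r\delta^s$, and using that $\pi^n, \delta^n \in \mathrm{Nrd}(A^*)$ for $n = \deg(A)$ (since the reduced norm of a central scalar matrix is its $n$th power), I may freely alter $r$ and $s$ modulo $n$, so WLOG $r, s \geq 0$. Then Proposition \ref{reduced-norm} applies directly and forces $\lambda \in \mathrm{Nrd}(A^*)$, so $[\lambda]$ is already trivial.

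For surjectivity, I would take $\mu \in F_\pi^*$, write $\mu = v\pi^r$ with $v \in \OO_{F_\pi}^*$, and exploit the fact that the residue field $\kappa(\pi)$ is the fraction field of $R/(\pi)$, which is itself a complete discrete valuation ring with uniformizer $\bar\delta$ and residue field $\kappa$. The strategy is to unwind in stages: $\bar v = \bar w \bar\delta^s$ for some $\bar w \in (R/(\pi))^*$ which lifts to $w \in R^*$, and the residual factor $v/(w\delta^s)$ lies in $1 + \pi\OO_{F_\pi}$. This residual factor is absorbed into $\mathrm{Nrd}((A\otimes F_\pi)^*)$ once I invoke an unramified maximal subfield $L$ of $A\otimes F_\pi$, which exists since the period of $A$ is coprime to $\mathrm{char}\,\kappa(\pi)$, and note that $N_{L/F_\pi}$ carries $1 + \pi\OO_L$ onto $1 + \pi\OO_{F_\pi}$. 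Then $\lambda := w\pi^r\delta^s \in F^*$ has divisor on $\Spec R$ supported in $\{(\pi),(\delta)\}$, defines a class in $H^1_{\pi\delta}(F, SL_1(A))$, and maps to $[\mu]$ by construction.

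The main obstacle will be the surjectivity step: it requires simultaneously lifting units from $\kappa(\pi)$ back to $R$, tracking the $\bar\delta$-adic information through the two-step description of $\kappa(\pi)^* = \bar\delta^{\Z} \cdot (R/(\pi))^{*}$, and verifying that $1 + \pi\OO_{F_\pi}$ is covered by reduced norms. Injectivity, by contrast, amounts to a direct appeal to Proposition \ref{reduced-norm} once the exponents of $\pi$ and $\delta$ are normalized.
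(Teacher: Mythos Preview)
Your proof is correct and follows the paper's approach: both identify $H^1_{\pi\delta}(F, SL_1(A))$ with classes in $F^*/\mathrm{Nrd}(A^*)$ represented by $u\pi^r\delta^s$, deduce injectivity directly from Proposition~\ref{reduced-norm}, and obtain surjectivity from the fact that such elements surject onto $F_\pi^*/F_\pi^{*n}$ (your two-step unwinding through $\kappa(\pi)$ is exactly the content of the citation \cite[Remark~7.1]{PS2022} in the paper, and your maximal-subfield argument can be shortened to the observation $1+\pi\OO_{F_\pi}\subset F_\pi^{*n}$ by Hensel). The one point to tighten is your identification of $H^1_{\pi\delta}(F, SL_1(A))$: your justification (``a representative be a unit locally at every codimension one prime\dots'') gives only the easy inclusion, since being unramified at each height-one prime separately does not a priori produce a single global representative of the form $u\pi^r\delta^s$; the paper closes this by passing to the PID $R_{\pi\delta}=R[\tfrac{1}{\pi\delta}]$, invoking purity for torsors \cite[Theorem~6.13]{CTS} to get $H^1_{\pi\delta}(F, SL_1(A))\simeq H^1_{\et}(R_{\pi\delta}, SL_1(\AA))\simeq R_{\pi\delta}^*/\mathrm{Nrd}(\AA^*)$.
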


\begin{proof}  Let $R_{\pi\delta} = R[\frac{1}{\pi\delta}] \subset F$.
Then $R_{\pi\delta}$ is a principal ideal domain with field of fractions $F$. 
Since $A$ is unramified on $R$ except possibly at $(\pi)$ and $(\delta)$,
there exists an Azumaya algebra $\AA$ over $R_{\pi\delta}$ such that $\AA \otimes F \simeq A$ (cf. \cite[Lemma 3.1]{LPS}). 

Since $H^1_{et}(R_{\pi\delta},  GL_1(\AA))$ is trivial, the   exact sequence 
$ 1 \to SL_1(\AA) \to GL_1(\AA) \to \G_m \to 1$ 
of algebraic groups over $R_{\pi\delta}$ gives an isomorphism 
$$H^1_{et}(R_{\pi\delta},  SL_1(\AA)) \simeq R_{\pi\delta}^*/ Nrd(\AA^*).$$

Since $H^1_{\pi\delta}(F, SL_1(A)) \simeq H^1_{et}(R_{\pi\delta}, SL_1(\AA)$ (\cite[Theroem 6.13]{CTS}), 
we have $H^1_{\pi\delta}(F, SL_1(A)) \simeq R_{\pi\delta}^*/ Nrd(\AA^*).$
Let $\zeta \in H^1_{\pi\delta}(F, SL_1(A))$ which maps to the trivial element in $H^1(F_\pi, SL_1(A))$.
Let $\lambda \in R_{\pi\delta}^*$ representing $\zeta$.  Since $\lambda$ is a unit in $R_{\pi\delta}^*$,
we have $\lambda = u \pi^r\delta^s$ for some unit $u \in R$ and $r, s \in \Z$.
Since $\zeta$ maps to the trivial element in $H^1(F_\pi, SL_1(A)) \simeq F_\pi^*/Nrd(A\otimes F_\pi)$,
$\lambda$ is a reduced norm from  $A\otimes F_\pi$. Hence, by (\ref{reduced-norm}),
$\lambda$ is a reduced norm from $A$. In particular  the map  $H^1_{\pi\delta}(F, SL_1(A)) \to H^1(F_\pi, SL_1(A))$
is injective.  

Let $n = $ ind$(A)$. Since $n$ is coprime to char$(\kappa)$,  $R_{\pi\delta}^*  \to F_\pi^*/F_\pi^{*n}$ is 
surjective (cf., \cite[Remark 7.1]{PS2022}). 
Since $F_\pi^{*n} \subseteq Nrd(A^*)$,  the map  $H^1_{\pi\delta}(F, SL_1(A)) \to H^1(F_\pi, SL_1(A))$
is surjective. 
\end{proof}

 \begin{prop} 
\label{sl1-hyp1}
Let  $A$ be a central simple  algebra over $F$ which is unramified on $R$ except possibly at $(\pi)$ and $(\delta)$. 
 Suppose that the period of $A$ is coprime to char$(\kappa)$. 
 Then $H^1(F, SL_1(A)) \to  \prod_\nu H^1(F_\nu, SL_1(A))$ is injective, where
 $\nu$ running over all height one prime ideals of $R$. 
  \end{prop}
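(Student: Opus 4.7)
The plan is to use the identification $H^1(F,SL_1(A)) \simeq F^*/Nrd(A^*)$ (and the analogous one over each $F_\nu$) to reformulate the statement as a local-global principle for reduced norms: if $\lambda \in F^*$ lies in $Nrd((A\otimes F_\nu)^*)$ for every height one prime $\nu$ of $R$, then $\lambda \in Nrd(A^*)$. Given such a $\lambda$, the strategy is to modify it by a global reduced norm so that its support lies in $\{(\pi),(\delta)\}$, reducing to the situation already handled by (\ref{sl1-pidelta}).

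Since $R$ is a two-dimensional regular local ring, it is a UFD, so one can factor $\lambda = u\pi^a\delta^b\prod_{i=1}^m \theta_i^{c_i}$ with $u\in R^*$, $a,b,c_i\in\Z$, and $\theta_1,\ldots,\theta_m$ the primes of $R$ in the support of $\lambda$ other than $(\pi)$ and $(\delta)$. Since $A$ is unramified at each $\theta_i$, it extends to an Azumaya algebra $\AA$ over the one-dimensional regular (hence Dedekind) localization $R' = R[(\pi\delta\theta_1\cdots\theta_m)^{-1}]$. For each $i$, choose $x_i \in (A\otimes F_{\theta_i})^*$ with $Nrd(x_i)=\lambda$, which exists by the local hypothesis at $\theta_i$.

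The core step is to find $y\in\AA(R')^*$ which simultaneously approximates each $x_i$ in the $\theta_i$-adic topology. This is a weak approximation statement for the unit group of an Azumaya algebra over a Dedekind domain at finitely many places; it follows from weak approximation for $F$ in $\prod_i F_{\theta_i}$, applied coefficient-wise in an $R'$-basis of $\AA$, combined with the openness in $\AA\otimes F_{\theta_i}$ of the unit locus cut out by non-vanishing of the reduced norm. Such a $y$ has $Nrd(y)\in R'^*$, so its support is contained in $\{(\pi),(\delta),(\theta_1),\ldots,(\theta_m)\}$, and by closeness one has $v_{\theta_i}(Nrd(y)) = v_{\theta_i}(\lambda) = c_i$ for each $i$.

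Setting $\lambda' := \lambda\cdot Nrd(y)^{-1}$, the support of $\lambda'$ is contained in $\{(\pi),(\delta)\}$, i.e.\ $\lambda'\in R_{\pi\delta}^*$. Moreover, $\lambda'$ remains a local reduced norm at every height one prime of $R$: trivially at the $\theta_i$ and at primes outside our finite list, and at $(\pi),(\delta)$ because $\lambda$ is a local reduced norm there and $Nrd(y)$ is a global one. Thus $\lambda'$ represents a class in $H^1_{\pi\delta}(F,SL_1(A))$ mapping to the trivial element of $H^1(F_\pi,SL_1(A))$, and (\ref{sl1-pidelta}) yields $\lambda'\in Nrd(A^*)$, whence $\lambda\in Nrd(A^*)$. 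The main obstacle is the approximation step, which must simultaneously match local data at all the $\theta_i$ while keeping $y$ in the unit group of the integral model $\AA$ over $R'$; this is where the Dedekind hypothesis on $R'$ (inherited from $R$ being two-dimensional regular local) and the openness of the unit locus are essential.
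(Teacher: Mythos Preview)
Your overall strategy---modify $\lambda$ by a global reduced norm so that it lands in $R_{\pi\delta}^*$, then invoke (\ref{sl1-pidelta})---is the right one, and it matches the paper's approach. But the approximation step has a gap. Weak approximation for $F$ in $\prod_i F_{\theta_i}$, applied coefficient-wise in an $R'$-basis of $\AA$, produces an element $y \in A$ with $F$-coefficients close to each $x_i$; openness then gives $y \in A^*$. It does \emph{not} give $y \in \AA(R')^*$. Even if you strengthen weak approximation to density of $R'$ in $\prod_i F_{\theta_i}$ (which is true, via CRT in the PID $R_{\pi\delta}$), you only obtain $y \in \AA(R') \cap A^*$. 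For $y$ to be a unit of $\AA(R')$ you need $Nrd(y) \in R'^*$, i.e.\ $Nrd(y)$ has no zeros at the infinitely many primes of $R'$, and nothing in the approximation controls this. Consequently $\lambda' = \lambda\,Nrd(y)^{-1}$ may acquire support at new height-one primes of $R$, and you have not reduced to the situation of (\ref{sl1-pidelta}).

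The paper sidesteps this entirely. Rather than constructing $y$ by hand, it observes that for each height-one prime $\nu \neq (\pi),(\delta)$ the group $SL_1(A)$ is reductive over $R_{(\nu)}$ (since $A$ is unramified there), and a torsor under a reductive group scheme over a DVR which becomes trivial over the completed fraction field is unramified (cf.\ \cite[Corollary A.6]{GP08}, used elsewhere in the paper). Thus $\zeta$ is unramified at every such $\nu$, so $\zeta \in H^1_{\pi\delta}(F, SL_1(A))$ directly, and (\ref{sl1-pidelta}) finishes. In effect, the purity input replaces your approximation argument and is what guarantees that the representative of $\zeta$ can be taken in $R_{\pi\delta}^*$ without introducing new primes.
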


\begin{proof}  Let $\zeta \in  ker(H^1(F, SL_1(A)) \to  \prod_\nu H^1(F_\nu, SL_1(A))$.
Let $\nu$ be a height one prime ideal of $R$ which is not equal to $(\pi)$ and $(\delta)$.
Since $A$ is unramified at $\nu$ and $\zeta$ maps to the trivial element in $H^1(F_\nu, SL_1(A))$,
$\zeta$ is unramified at $\nu$. Hence $\zeta \in H^1_{\pi\delta}(F, SL_1(A))$.
Since the image of $\zeta$ in $H^1(F_\pi, SL_1(A))$ is trivial, by (\ref{sl1-pidelta}), 
$\zeta$ is trivial.  
\end{proof}
 
  Let $L = F(\sqrt{d})$ be a quadratic extension  with  
   $ d = u$ or $\pi$ or $\delta$  for some unit  $u \in R^*$. 
    Let $S$ be the integral closure of $R$ in $L$. Then 
    $S$ is a regular local ring with maximal ideal $(\pi_1,  \delta_1)$, where 
    $\pi_1$ and $\delta_1$ are unique (up to units) primes in $S$ lying over $\pi$ and $\delta$
    respectively.

 Let   $A$ be  a central simple algebra  over $L$ with a $L/F$-involution $\tau$.
 Suppose that $2per(A)$ is coprime to char$(\kappa)$ and $(A, \tau)$ is unramified on $R$
 except possibly at $(\pi)$ and $(\delta)$. 

 The following is proved in (\cite[Lemma 8.1]{PS2022}) when $\kappa$ is a finite field. 

 \begin{lemma}
  \label{scaling}
     Let $a = w \pi_1^r \delta_1^s \in L$ for some unit $w \in S$ and $r, s \in \Z$.
      Suppose there exists $\theta_\pi \in F_{\pi}^*$ such tat $a \theta_\pi \in Nrd(A\otimes F_\pi)$.
      Then there exists $\theta\in F$  such that 
      $a \theta \in Nrd(A)$.
       \end{lemma}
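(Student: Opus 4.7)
The plan is to reduce the statement to Proposition~\ref{reduced-norm} applied not to the ring $R$ but to its integral closure $S$ in $L$. The algebra $A$, viewed as a central simple algebra over $L=\mathrm{Frac}(S)$, satisfies the hypotheses of that proposition with $(R,F,\pi,\delta)$ replaced by $(S,L,\pi_1,\delta_1)$: the ring $S$ is complete two dimensional regular local with maximal ideal $(\pi_1,\delta_1)$ by hypothesis, the period of $A$ is coprime to char$(\kappa)$ since $2\,\mathrm{per}(A)$ is, and the Azumaya model of $(A,\tau)$ over $R_{\pi\delta}$ extends along $R_{\pi\delta}\subseteq S_{\pi_1\delta_1}$, so $A$ is unramified on $S$ outside $(\pi_1)$ and $(\delta_1)$. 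Since $(\pi_1)$ is the unique prime of $S$ above $(\pi)$, one has $L\otimes_F F_\pi \simeq L_{\pi_1}$, and hence $Nrd(A\otimes F_\pi)=Nrd(A\otimes_L L_{\pi_1})$.

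Next I would construct the desired $\theta\in F^*$ as follows. Write $\pi=\epsilon_\pi\pi_1^{e_\pi}$ and $\delta=\epsilon_\delta\delta_1^{e_\delta}$ with $e_\pi,e_\delta\in\{1,2\}$ and units $\epsilon_\pi,\epsilon_\delta\in S^*$, and set $n=\mathrm{ind}(A)$. The surjection $R_{\pi\delta}^*\twoheadrightarrow F_\pi^*/F_\pi^{*n}$ used in the proof of~\ref{sl1-pidelta}, combined with the inclusion $F_\pi^{*n}\subseteq Nrd(A\otimes F_\pi)$, produces an element $\theta_0=u_0\pi^{a}\delta^{b}\in R_{\pi\delta}^*$ with $\theta_0\theta_\pi^{-1}\in Nrd(A\otimes F_\pi)$; in particular $a\theta_0\in Nrd(A\otimes F_\pi)$, and it equals a unit of $S$ times $\pi_1^{r+ae_\pi}\delta_1^{s+be_\delta}$. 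Since $\pi^n$ and $\delta^n$ lie in $F_\pi^{*n}\subseteq Nrd(A\otimes F_\pi)$, I may then replace $\theta_0$ by $\theta=\theta_0\pi^{nN}\delta^{nN}$ for $N$ sufficiently large, preserving the membership $a\theta\in Nrd(A\otimes F_\pi)$ while forcing the $\pi_1$- and $\delta_1$-adic valuations of $a\theta$ to be non-negative. The resulting $\theta\in F^*$ satisfies $a\theta=w'\pi_1^{r'}\delta_1^{s'}$ for some unit $w'\in S^*$ and $r',s'\geq 0$.

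Finally, applying Proposition~\ref{reduced-norm} to $A$ over $L$ with respect to $S$ and its primes $(\pi_1),(\delta_1)$, the element $a\theta$ meets the prescribed form and lies in $Nrd(A\otimes_L L_{\pi_1})=Nrd(A\otimes F_\pi)$, giving $a\theta\in Nrd(A)$, as wanted. I expect the main obstacle to be precisely this base change: verifying the unramifiedness of $A$ on $S$ outside $(\pi_1),(\delta_1)$ and the period-characteristic hypothesis, both of which are inherited routinely from the corresponding facts over $R$. The involution $\tau$ plays no role in this reduced-norm reduction and is simply forgotten after the unramifiedness transfer.
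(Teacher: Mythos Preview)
Your approach is correct and is essentially the same as the paper's: both reduce to the reduced-norm machinery over $S$ rather than $R$. The paper phrases this via Proposition~\ref{sl1-hyp1} (injectivity of $H^1(L,SL_1(A))\to\prod_\nu H^1(L_\nu,SL_1(A))$, which itself rests on Proposition~\ref{reduced-norm}), while you invoke Proposition~\ref{reduced-norm} directly after shaping $a\theta$ into the required monomial form; this is only a difference in packaging, not in substance.
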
 
       
\begin{proof} Using (\ref{sl1-hyp1}), the proof is similar to the proof of  (\cite[Lemma 8.1]{PS2022}). 
\end{proof}

\begin{prop}
\label{su-hyp11} 
    Let $\zeta \in H^1(F, SU(A,\tau))$ which maps to 
 the trivial element in $H^1(F, U(A, \tau))$. 
 If $\zeta$ maps to the trivial element  in $H^1(F_{\nu}, SU(A,\tau))$ for all height one prime ideals  $\nu$
 of $R$,  then $\zeta$ is the trivial element.  
 \end{prop}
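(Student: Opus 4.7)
From the short exact sequence $1\to SU(A,\tau)\to U(A,\tau)\to R^1_{L/F}\G_m\to 1$ and Hilbert 90 for $R^1_{L/F}\G_m$, one obtains the exact sequence of pointed sets
\[
1\to L^{*1}/H_A\to H^1(F,SU(A,\tau))\to H^1(F,U(A,\tau)).
\]
Since $\zeta$ maps trivially to $H^1(F,U(A,\tau))$, it is represented by a class $[a]$ with $a\in L^{*1}$, and it suffices to show $a\in H_A$, i.e., that $a=\theta\sigma(\theta)^{-1}$ for some $\theta\in Nrd(A^*)$, where $\sigma$ generates $\Gal(L/F)$.

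The overall structure parallels the proof of Proposition \ref{sl1-hyp1}. For any height one prime $\nu$ of $R$ distinct from $(\pi)$ and $(\delta)$, the pair $(A,\tau)$, and hence $SU(A,\tau)$, is unramified at $\nu$; the triviality of $\zeta|_{F_\nu}$ together with \cite[Corollary A.6]{GP08} forces $\zeta$ to be unramified at $\nu$. Translating back to the representative $a$, using Proposition \ref{purity} to extend $(A,\tau)$ to an Azumaya algebra with involution over the principal ideal domain $S_{\pi\delta}:=S[1/(\pi\delta)]$, I can modify $a$ within its $H_A$-class to arrange that $a=w\pi_1^r\delta_1^s$ for some unit $w\in S$ and integers $r,s$.

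I then invoke the local hypothesis at $\nu=(\pi)$: $a\in H_{A\otimes L_\pi}$, so $a=\theta_\pi\sigma(\theta_\pi)^{-1}$ with $\theta_\pi\in Nrd(A\otimes L_\pi)$. Setting $\mu_\pi:=N_{L_\pi/F_\pi}(\theta_\pi)\in F_\pi^*$, I have $a\mu_\pi=\theta_\pi^2\in Nrd(A\otimes L_\pi)$, and Lemma \ref{scaling} produces $\mu\in F^*$ and $y\in A^*$ with $a\mu=Nrd(y)$. Using $\sigma(\mu)=\mu$ and $\sigma(a)=a^{-1}$ one computes $\sigma(Nrd(y))=\mu/a$, whence
\[
a^2=Nrd(y)/\sigma(Nrd(y))\in H_A.
\]

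The final step, which I anticipate being the main obstacle, is the descent from $a^2\in H_A$ to $a\in H_A$, essentially a $2$-torsion question in $L^{*1}/H_A$. My plan is to run the symmetric computation at $\nu=(\delta)$, obtaining $\mu'\in F^*$ and $y'\in A^*$ with $a\mu'=Nrd(y')$; then $Nrd(y{y'}^{-1})=\mu/\mu'\in F^*$, so $y{y'}^{-1}$ represents a class in $SK_1(A)$ whose image in $SK_1(A\otimes F_\pi)$ is controlled by the surjectivity of \S\ref{sk1d-2dim} (and whose unitary counterpart is controlled by the $SUK_1$ surjectivity of \S\ref{sk1ud-2dim}). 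Combined with the constraint $a^2\in H_A$, a careful manipulation in the spirit of \cite[Lemma 8.1]{PS2022} should allow me to replace $y$ by an element $\widetilde y\in A^*$ satisfying $a=Nrd(\widetilde y)/\sigma(Nrd(\widetilde y))$, placing $a$ in $H_A$. This adapts the finite-residue-field argument of \cite{PS2022} to a general residue field $\kappa$ via the $SK_1$/$SUK_1$ machinery developed earlier in the manuscript.
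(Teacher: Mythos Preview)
Your overall setup is fine, but the final ``$a^2\in H_A \Rightarrow a\in H_A$'' step is a genuine gap, and your proposed fix via $SK_1/SUK_1$ is speculative. The $2$-torsion of $L^{*1}/H_A$ is not controlled by anything you have established, and the $SK_1$ surjectivity results of \S\ref{sk1d-2dim}--\S\ref{sk1ud-2dim} do not obviously bear on it.

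The detour through $\mu_\pi=N_{L_\pi/F_\pi}(\theta_\pi)$ is what creates the squaring. The paper avoids this entirely by lifting the norm-one element via Hilbert~90 \emph{before} invoking Lemma~\ref{scaling}. Concretely: working over the PID $R_{\pi\delta}$, Proposition~\ref{pid} gives $\alpha\in S_{\pi\delta}^*$ (hence $\alpha=w\pi_1^r\delta_1^s$) with $a=\alpha\tau(\alpha)^{-1}$. The local triviality at $(\pi)$ says $a=\theta_\pi\tau(\theta_\pi)^{-1}$ with $\theta_\pi\in Nrd((A\otimes L_\pi)^*)$; comparing the two expressions for $a$ yields $\tau(\alpha^{-1}\theta_\pi)=\alpha^{-1}\theta_\pi$, so $\theta_\pi=\mu_\pi\alpha$ for some $\mu_\pi\in F_\pi^*$. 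Thus $\mu_\pi\alpha\in Nrd((A\otimes L_\pi)^*)$, and Lemma~\ref{scaling} applied to $\alpha$ (not to $a$) produces $\mu\in F^*$ with $\mu\alpha\in Nrd(A^*)$. Since $\mu\in F^*$ is $\tau$-fixed, $a=\alpha\tau(\alpha)^{-1}=(\mu\alpha)\tau(\mu\alpha)^{-1}\in H_A$, and you are done with no $2$-torsion issue. (The paper also uses \cite{Nis84}, \cite{Gil94} to ensure the triviality of the image in $H^1(F,U(A,\tau))$ descends to $H^1_{\et}(R_{\pi\delta},U(\AA,\tau))$, which justifies lifting the whole picture to $R_{\pi\delta}$.)
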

 
 \begin{proof}  Suppose that $d$ is a square in $F$. Then $L \simeq F \times F$, 
 $A = A_0 \times A^{op}$  for some central simple algebra $A_0$ over $F$,
  $\tau(x, y) = (y, x)$  and  $SU(A, \tau) = SL_1(A)$  (\cite[p.346]{KMRT}). 
 Hence the result follows from (\ref{sl1-hyp1}). 
 
 Suppose that $d$ is not a square in $F$. Then 
 $L$ is a quadratic field extension of $F$.

 Let $R_{\pi\delta} = R[\frac{1}{\pi\delta}]$. Since $R$ is a principal ideal domain, $R_{\pi\delta}$ is
 also a principal ideal domain. Since $d \in R_{\pi\delta}$ is a unit, $S_{\pi\delta} =  R_{\pi\delta}[\sqrt{d}]$ is an 
 \'etale extension and $S_{\pi\delta}$ is a principal ideal domain.

Since $(A,\tau)$ is unramified on $R$ except possibly at $(\pi)$ and $(\delta)$,
$(A,\tau)$ is unramified on $R_{\pi\delta}$. 
Hence there exists an Azumaya algebra $\AA$ over $S_{\pi\delta}$ and 
  such that $\AA \otimes_{S_{\pi\delta}} L  = A$ and $\tau(\AA) = \AA$ (\ref{purity}). 
 Since $S_{\pi\delta}/R_{\pi\delta}$ is \'etale, the groups $U(\AA, {\bf \tau})$ and $SU(\AA, {\bf \tau})$
 are reductive groups over $R_{\pi\delta}$. 

 Let $\zeta \in H^1(F, SU(A,\tau))$ which maps to 
 the trivial element in $H^1(F, U(A, \tau))$. 
Suppose that  $\zeta$ maps to the trivial element  $H^1(F_{\nu}, SU(A,\tau))$
 for all height one prime ideals  $\nu$  of $R$. 
Since $\zeta$ maps to the trivial element in 
$H^1(F_{\nu}, SU(A,\tau))$
 for all height one prime ideals  $\nu$  of $R$,   
there exists $\zeta_0 \in H^1_{et}(R_{\pi\delta}, SU( \AA, \tau))$ such 
that $\zeta_0$ maps to $\zeta$ (\cite[Proposition 6.8]{CTS}). 
Since the image of $\zeta$ in $H^1(F, U(A,\tau))$ is trivial, 
the image of $\zeta_0$ in $H^1(R_{\pi\delta}, U(\AA, \tau))$ is trivial (see \cite{Nis84} and \cite[Th\'eor\`eme~I.1.2.2]{Gil94}). 

Since  $R^1_{S_{\pi\delta}/R_{\pi\delta}}\G_m (R_{\pi\delta}) = \{ a \tau(a)^{-1}  \mid a \in S^*_{\pi\delta} \}$
(cf. \ref{pid}), the exact sequence 
$$U(\AA, \tau)(R_{\pi\delta})  \to  R^1_{S_{\pi\delta}/R_{\pi\delta}}\G_m (R_{\pi\delta})  \to 
H^1(R_{\pi\delta}, SU(\AA, \tau)) \to  H^1(R_{\pi\delta}, U(\AA, \tau_))$$ gives an 
element $a \in  S_{\pi\delta}^*$   such that   the element  $a\tau(a)^{-1} $ in $ R^1_{S_{\pi\delta}/R_{\pi\delta}}\G_m (R_{\pi\delta}) $
 maps to $\zeta_0$. 

Since $a \in S_{\pi\delta}^*$, we have $a = w\pi_1^r \delta_1^s$ for some unit $w \in S$ and $r,s \in\Z$. 
Since the image of  the class of $\zeta$ in $H^1(F_{\pi}, SU(A, \tau))$ is trivial,  
$a\tau(a)^{-1}$ is in the image of $U(A, \tau)(F_{\pi})$. 
Hence there exists $b \in L_{\pi}^* \cap  Nrd(A\otimes F_\pi)$ such that $a\tau(a)^{-1} = b\tau(b)^{-1}$.
Since $L_\pi^\tau = K_\pi$ and $\tau(a^{-1}b) = a^{-1}b$, there exists $\theta_\pi  \in K_\pi$ such that 
$b = \theta_\pi a$.    Hence, by (\ref{scaling}), there exists $\theta \in F$ such that 
$\theta a \in Nrd(A)$. Since $a\tau(a)^{-1} = \theta a  \tau( \theta a )^{-1}$, replacing 
$a$ by $\theta a$, we assume that   $a \in Nrd(A)^*$. 
In particular  $a\tau(a)^{-1}$ is in the image of $U(A, \tau)(F)$.  Since $a\tau(a)^{-1}$ maps to $\zeta$,  
 $\zeta$ is trivial in $H^1(F, SU(A, \tau))$. 
 \end{proof}

 \section{The group $SK_1(D)$ over a discrete valued fied}
 \label{sk1d-dvr}
 
 Let $R$ be an integral domain with a discrete valuation   $\nu$  (not necessarily complete), 
 $R_\nu$ the valuation ring at $\nu$,  $m_\nu$ the maximal ideal of $R_\nu$, 
  $F$ the field of fractions of $R$ and    $\kappa(\nu)$ the residue field at $\nu$.  
Let $n$ be an integer coprime to char$(\kappa(\nu))$. 
Let $\hat{F}$ be the completion of $F$ at $\nu$. 

 Let $D$ be a central  division  algebra over $F$.
 Suppose that $D\otimes_F \hat{F}$ is division. 
We recall the following from ( \cite[CH. 5]{reiner}). 
 Let $\Lambda = \{ x \in D \mid Nrd_D(x) \in R_\nu \}$. Then $\Lambda$  is the unique maximal $R_\nu$-order of $D$ 
 and $m_D = \{ x \in D \mid Nrd_D(x) \in  m_\nu \}$ is the unique maximal (2-sided) ideal of $\Lambda$.
 There exists $\pi_D \in \Lambda$ such that $m_D = \Lambda \pi_D$ and 
 every nonzero  element $z \in \Lambda$ is of the form $u \pi_D^n$ for some $u \in \Lambda$ a unit and 
 $n \in {\mathbb Z}$. 
 Further $\bar{D} = \Lambda/m_D$ is a division algebra  (\cite[Theorem 12.8]{reiner}).
 Let $Z(\bar{D} )$ be the center of $\bar{D}$. Suppose  the  index  of $D$ is coprime to char$(\kappa(\nu))$.
 Then $Z(\bar{D})/\kappa(\nu)$ is a cyclic extension (cf. \cite[Proposition 1.7]{JW}). 
 For any $z \in \Lambda$, let $\bar{z} \in \bar{D}$ be the image of $z$. 
 The   inner automorphism given by  $\pi_D$ induces an automorphism $\bar{\tau}$ of $\bar{D}$.
 Let $\tau$ be the restriction of $\bar{\tau}$ to   $Z(\bar{D})$.
 Then $\tau$ is a generator   of  Gal$(Z(\bar{D})/K)$.

Let $\pi \in F$ be a parameter.  Since index of $D$ is coprime to char$(\kappa(\nu))$, we have 
$D = D_0 + (E, \sigma, \pi)$ for some   central division algebra $D_0$ over $F$ unramified at $\nu$
and   cyclic extension $E/F$ unramified at $\nu$.  Also we  have   
$E \subseteq D$,  the residue field of $E$ at the extension of $\nu$ is $Z(\bar{D})$ and  $\bar{D} = \bar{D}_0 \otimes Z(\bar{D})$. 
 Further the inner automorphism given by $\pi_D$  restricted to 
 $E$ is a generator of Gal$(E/F)$ which is a lift of    $\tau$. 
   
 Let $D_1 = C_D(E)$. Then $D_1/E$ is unramified at the extension of $\nu$ to $E$.
  Let  $S_\nu$ be the 
 integral closure of $R_\nu$ in $E$ and $\Lambda_1 = \{ x \in D_1 \mid Nrd_{D_1}(x) \in S_\nu \}$.
 Then $\Lambda_1$ is  the maximal  $S_\nu$-order of $D_1$
 and  $\Lambda_1 \subseteq \Lambda$  with   $\bar{D}_1  = \bar{D}$. 
 Since the inner automorphism given by $\pi_D$  restricted to  $E$ is an automorphism of $E/F$, 
 $\pi_D D_1 \pi_D^{-1} = D_1$ and hence  $\pi_D \Lambda_1 \pi_D^{-1} = \Lambda_1$. 
  
Let 
$$Z(\bar{D})^{*1} = \{  \beta \in Z(\bar{D})^* \mid N_{Z(\bar{D})/\kappa(\nu)}(\beta) = 1\}$$
and 
$$ G = Z(\bar{D})^{*1} \cap Nrd_{\bar{D}}(\bar{D}^*).$$

 Let $z \in SL_1(D)$. Then $z \in \Lambda$ is a unit.  Let $\beta = Nrd_{\bar{D}}(\bar{z}) \in Z(\bar{D})^*$. 
  Since $\overline{Nrd_D(z)} =  N_{Z(\bar{D})/\kappa(\nu)}(Nrd_{\bar{D}}(\bar{z}))$, 
   $ N_{Z(\bar{D})/\kappa(\nu)}(\beta)= 1$ (\cite[Lemma 2]{Platonov1976}).
  Hence we have a homomorphism of groups 
  $$\eta : SL_1(D) \to G$$
  given by $z \mapsto Nrd_{\bar{D}}(\bar{z})$. 
  
   Let $$H = \{ \beta \in Z(\bar{D})^* \mid \beta = \theta \tau(\theta)^{-1} {\rm ~ for ~ some ~}  \theta \in 
  Nrd_{\bar{D}}(\bar{D}^*) \}  \subseteq G.$$
  Let $x, y \in D^*$. Then  $x = u_1\pi_D^{n_1}$ and $y = u_2\pi_D^{n_2}$
  for some $u_1, u_2 \in \Lambda$ units and $n_1, n_2 \in \Z$.  
  Then $xyx^{-1}y^{-1} = u_1\pi_D^{n_1}   u_2\pi_D^{n_2} \pi_D^{-n_1}  u^{-1}_1  \pi_D^{-n_2} u^{-1}_2 = 
  u_1 ( \pi_D^{n_1}   u_2\pi_D^{-1}) ( \pi_D^{n_2}  u^{-1}_1  \pi_D^{-n_2})  u^{-1}_2.    $
Since for any $z \in \Lambda$,  Nrd$_{\bar{D}}(  \overline{\pi_D z \pi_D{-1}}) = 
  \tau$( Nrd$_{\bar{D}}(  \bar{  z}))$,
  we have 
  $$ 
  \begin{array}{rcl}
  Nrd_{\bar{D}}(\overline{xyx^{-1}y^{-1}})  & = &  Nrd_{\bar{D}}(  \bar{u}_1) 
 Nrd_{\bar{D}}(   \overline{( \pi_D^{n_1}   u_2\pi_D^{-1})})   Nrd_{\bar{D}}(   \overline{ ( \pi_D^{n_2}  u^{-1}_1  \pi_D^{-n_2})})   Nrd_{\bar{D}}(   \overline{u^{-1}_2}) \\ 
 & = & Nrd_{\bar{D}}(  \bar{u}_1)  \tau ( Nrd_{\bar{D}}(   \bar{ u}_2   )
\tau(  Nrd_{\bar{D}}(   \bar{u}^{-1}_1   )  )  Nrd_{\bar{D}}(  \bar{u}^{-1}_2 ) \\
& = &  Nrd_{\bar{D}}(  \bar{u}_1 \bar{u}^{-1}_2)  \tau ( Nrd_{\bar{D}}( \bar{u}_2  \bar{u}^{-1}_1 ) ) \\
& = &  Nrd_{\bar{D}}(  \bar{u}_1 \bar{u}^{-1}_2) \tau ( Nrd_{\bar{D}}( \bar{u}_1  \bar{u}^{-1}_2 ) )^{-1} \in H.
 \end{array}
 $$
  Hence we have the induced homomorphism 
  $$ \bar{\eta} : SK_1(D) \to G/H.$$

 \begin{prop} 
 \label{complex} The sequence  
 $$
SK_1(D_1) \to SK_1(D)  \buildrel{\bar{\eta}} \over{\to}  G/H \to 1 $$
is a complex. 
\end{prop}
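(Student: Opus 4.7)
To say the sequence is a complex amounts to checking that the composition $SK_1(D_1)\to SK_1(D)\xrightarrow{\bar\eta} G/H$ is trivial; the complex condition at $G/H$ (namely $\operatorname{im}\bar\eta\subseteq G/H=\ker(G/H\to 1)$) is automatic. So the plan is to unwind the definition of $\bar\eta$ on an element of $SL_1(D_1)$ and show it is $1\in G$.

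First I would record that the inclusion $D_1=C_D(E)\hookrightarrow D$ does induce a map $SK_1(D_1)\to SK_1(D)$. This uses the standard transitivity formula $Nrd_D(z)=N_{E/F}(Nrd_{D_1}(z))$ for $z\in D_1$, from which $Nrd_{D_1}(z)=1$ forces $Nrd_D(z)=1$; thus $SL_1(D_1)\subseteq SL_1(D)$ and commutators go to commutators.

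Next, I would take $z\in SL_1(D_1)$ and compute $\bar\eta(z)$. Since $Nrd_{D_1}(z)=1\in S_\nu^*$, the element $z$ lies in $\Lambda_1^*$, so its reduction $\bar z\in\bar D_1^*$ is well defined. The compatibility of the reduced norm with reduction modulo the maximal ideal of $\Lambda_1$ yields $Nrd_{\bar D_1}(\bar z)=\overline{Nrd_{D_1}(z)}=\bar 1=1$. Using the identification $\bar D_1=\bar D$ supplied by the unramifiedness of $D_1/E$ (recorded in the preceding discussion), the two reduced norms on the common underlying algebra coincide, so $Nrd_{\bar D}(\bar z)=1\in Z(\bar D)^{*1}$. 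By the definition of $\bar\eta$ this means $\bar\eta$ of the class of $z$ in $SK_1(D)$ equals $1\cdot H=H$ in $G/H$, as required.

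The argument is essentially bookkeeping; the only substantive inputs are the transitivity of the reduced norm and the identity $\bar D_1=\bar D$, both already available. Consequently I do not foresee any genuine obstacle: the proof should be a short, direct verification, and surjectivity of $\bar\eta$ (if needed to upgrade the complex to an exact sequence) would be addressed separately and is not part of the present claim.
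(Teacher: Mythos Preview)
Your proposal is correct and follows essentially the same approach as the paper: take $z\in SL_1(D_1)$, use the identification $\bar D_1=\bar D$ to compute $Nrd_{\bar D}(\bar z)=Nrd_{\bar D_1}(\bar z)=\overline{Nrd_{D_1}(z)}=1$, and conclude $\bar\eta(z)=1$. Your write-up is slightly more detailed (e.g.\ justifying that $SL_1(D_1)\subseteq SL_1(D)$ via transitivity of the reduced norm and that $z\in\Lambda_1^*$), but the argument is the same.
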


 \begin{proof}  
 
 Let $z \in SL_1(D_1)$.  Then Nrd$_{\bar{D}}(\bar{z}) =$ Nrd$_{\bar{D}_1}(\bar{z}) = 
 \overline{Nrd_{D_1}(z)} = 1$. Hence $\bar{\eta}(z) = 1$ and the image of $SK_1(D_1)$ is $SK_1(D)$ is in the kernel of 
 $\bar{\eta}$.  
  \end{proof}

  \begin{lemma}
  \label{eta-surj}Suppose that $F$ is complete. 
  Then the  homomorphism  $\eta$ is surjective. 
  \end{lemma}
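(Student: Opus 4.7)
The plan is, given $\beta \in G$, to lift a preimage of $\beta$ under $Nrd_{\bar D}$ to a unit of the Azumaya order $\Lambda_1 \subseteq \Lambda$, then correct the resulting reduced norm over $F$ by multiplying by a carefully chosen principal unit drawn from the unramified subfield $E \subseteq D$; completeness of $F$ is used to solve the resulting norm equation.

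Write $\beta = Nrd_{\bar D}(\bar y)$ with $\bar y \in \bar D^*$. Because $D_1 = C_D(E)$ is unramified over $E$ and $\bar D_1 = \bar D$, the order $\Lambda_1$ is local with residue $\bar D$, so any lift $w \in \Lambda_1$ of $\bar y$ is automatically a unit of $\Lambda_1 \subseteq \Lambda$. The tower formula $Nrd_D(w) = N_{E/F}(Nrd_{D_1}(w))$, combined with the fact that the reduced norm commutes with reduction on the Azumaya order $\Lambda_1$ and that for the unramified extension $E/F$ one has $\overline{N_{E/F}(\,\cdot\,)} = N_{Z(\bar D)/\kappa}(\overline{(\,\cdot\,)})$, yields $\overline{Nrd_D(w)} = N_{Z(\bar D)/\kappa}(\beta) = 1$. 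Hence $\gamma := Nrd_D(w)$ lies in $1 + m_\nu$.

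Now invoke the classical identity $Nrd_D(u) = N_{E/F}(u)^{n/f}$ for $u \in E \subseteq D$, where $n = \deg D$ and $f = [E:F] = [Z(\bar D):\kappa]$. Since $n$ is coprime to $\mathrm{char}(\kappa)$, so is $n/f$, and the $(n/f)$-th power map is a bijection of $1 + m_\nu$ by Hensel's lemma; choose $\alpha \in 1 + m_\nu$ with $\alpha^{n/f} = \gamma^{-1}$. Next, solve $N_{E/F}(u) = \alpha$ for $u \in 1 + m_\nu S_\nu$: this norm map is surjective because on each graded quotient $m_\nu^k S_\nu / m_\nu^{k+1} S_\nu$ it reduces to the separable trace $Tr_{Z(\bar D)/\kappa}\colon Z(\bar D) \to \kappa$ (which is surjective as $[Z(\bar D):\kappa]$ is coprime to $\mathrm{char}(\kappa)$), and completeness of $F$ lets us lift successively through these quotients.

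Setting $z := wu$ gives $Nrd_D(z) = \gamma\gamma^{-1} = 1$ and $\bar z = \bar w\,\bar u = \bar y$ (since $\bar u = 1$), so $z \in SL_1(D)$ and $\eta(z) = Nrd_{\bar D}(\bar y) = \beta$, as required. The main obstacle in this program is the surjectivity of $N_{E/F}$ on the principal-unit subgroups used in the third paragraph; both completeness of $F$ and the coprimality of $n$ with $\mathrm{char}(\kappa)$ are indispensable there.
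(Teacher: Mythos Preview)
Your argument is correct but follows a different route from the paper. The paper applies Hilbert~90 to the cyclic extension $Z(\bar D)/\kappa(\nu)$ to write $\beta = \theta^{-1}\tau(\theta)$, lifts $\theta$ to $a \in E$, and sets $b = a^{-1}\tilde\tau(a)$; then $N_{E/F}(b) = 1$ is automatic, and completeness is invoked only once---to conclude from $\bar b = \beta \in Nrd_{\bar D_1}(\bar D_1^*)$ that $b$ is a reduced norm from the unramified algebra $D_1$, yielding $z \in D_1$ with $Nrd_{D_1}(z) = b$ and hence $Nrd_D(z)=N_{E/F}(b)=1$. You instead lift an arbitrary $Nrd_{\bar D}$-preimage of $\beta$ to $w\in\Lambda_1$ and then repair $Nrd_D(w)$ by a principal-unit correction $u\in E$, which costs two uses of completeness (a Hensel step for the $(n/f)$-th root and surjectivity of $N_{E/F}$ on $1+m_\nu$). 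The paper's path is shorter because Hilbert~90 delivers the norm-one condition for free; your path avoids appealing to the cyclic Galois structure of $Z(\bar D)/\kappa(\nu)$ and works directly with the unramified filtration, which is a bit more hands-on but equally valid.
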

  
  \begin{proof}    
 Let $\beta \in G$.
  Since $N_{Z(\bar{D})/\kappa(\nu)}(\beta) = 1$, 
 there exists $\theta \in Z(\bar{D})$ such that $\beta = \theta^{-1}\tau(\theta)$. 
 Let $a \in E^*$ be a lift of $\theta$ and $b  = a^{-1}\tilde{\tau}(a) \in E^*$. 
 Since  $\beta$ the image  of $b$ is a reduced norm from $\bar{D}$,  $D_1$ is unramified and 
 $F$ is complete,  $b$ is a reduced norm from $D_1$.
 Let $z \in D_1$ be such that Nrd$_{D_1}(z) = b$.  
 Since $Nrd_D(z) = N_{E/F}(Nrd_{D_1}(z)) = N_{E/F}(b) = N_{E/F}(a^{-1}\tilde{\tau}(a)) = 1$,
 $z \in SL_1(D)$.  Since $Nrd_{\bar{D}}(\bar{z}) = \beta$, $\eta$ is surjective. 
  \end{proof}

\begin{prop}(\cite[Corollary 3.12]{Platonov1976})
\label{1modt}  Suppose that $F$ is complete.
Let $x \in SL_1(D)$. If $\bar{x} = 1  \in \bar{D}$, then 
$x \in [D^*, D^*]$
\end{prop}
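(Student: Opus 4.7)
The plan is to follow Platonov's strategy of exploiting the unit filtration of the maximal order. Set $U_k = 1 + m_D^k$ for $k \geq 1$; since $\bar{x} = 1$ we have $x \in U_1$. The key structural input is that $U_k/U_{k+1}$ is identified with the additive group of $\bar{D}$ via $1 + \pi_D^k y \mapsto \bar{y}$ (twisted by the appropriate power of the automorphism $\bar{\tau}$), and that multiplicative commutators respect the filtration in the sense $[U_j, U_k] \subseteq U_{j+k}$, inducing the Lie bracket of $\bar{D}$ on the associated graded pieces.

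The first step is to check that the condition $Nrd(x) = 1$ forces each successive residue class of $x$ in $U_k/U_{k+1} \cong \bar{D}$ to lie in the commutator subspace $[\bar{D}, \bar{D}]$. This uses two ingredients: first, the reduction of $Nrd$ on $U_k$ to the reduced trace on $\bar{D}$ modulo $U_{k+1}$ (together with the twist coming from conjugation by $\pi_D$ and the Galois generator $\tau$ of $Z(\bar{D})/\kappa(\nu)$); and second, the classical identity $[A, A] = \ker Tr_{A/Z(A)}$ for a central simple algebra $A$. Granting this, one inductively constructs elements $u_i, v_i \in \Lambda^*$ such that for every $N \geq 1$,
\[
x \cdot [u_1, v_1]^{-1} [u_2, v_2]^{-1} \cdots [u_N, v_N]^{-1} \in U_{N+1}.
\]

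The main obstacle is converting this asymptotic approximation into a genuine \emph{finite} product of commutators, which amounts to showing $U_N \subseteq [D^*, D^*]$ once $N$ is sufficiently large. In residue characteristic zero (or more generally when the residue characteristic is coprime to an explicit bound depending on the index of $D$) this follows cleanly from the exponential/logarithm isomorphism between $U_N$ and a Lie subalgebra of $m_D^N$: intersecting with $SL_1(D)$ cuts out the trace-zero part, which equals $[D, D]$ as Lie commutators, and Baker--Campbell--Hausdorff converts these into multiplicative commutators. In small residue characteristic the analytic step must be replaced by an explicit algebraic commutator calculation inside $\Lambda$, using the structure of $\bar{D}$ as a central division algebra over $Z(\bar{D})$ and a careful choice of liftings of separating elements; this is the delicate heart of Platonov's original argument in \cite{Platonov1976}, which I would invoke rather than reproduce.
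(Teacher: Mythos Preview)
Your filtration approach is essentially Platonov's original argument and is sound in outline, but it is a genuinely different route from the paper's proof, which is far shorter and exploits the standing hypothesis (stated earlier in \S\ref{sk1d-dvr}) that $n=\mathrm{ind}(D)$ is coprime to $\mathrm{char}(\kappa(\nu))$.

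The paper's argument runs as follows. Pick a maximal subfield $L\subset D$ containing $x$; since $F$ is complete and $\bar{x}=1$, Hensel's lemma (using that $n$ is a unit in the residue ring) gives $y\in L$ with $y^n=x$ and $\bar{y}=1$. One checks $N_{L/F}(y)^n=Nrd_D(x)=1$ and $\overline{N_{L/F}(y)}=1$, so again by coprimality $N_{L/F}(y)=1$, i.e.\ $y\in SL_1(D)$. Finally, the standard fact $SL_1(D)^n\subseteq [D^*,D^*]$ (Draxl, \cite[p.~157, Lemma~2]{Draxl}) gives $x=y^n\in [D^*,D^*]$.

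So where you build an infinite product of commutators via the graded pieces $U_k/U_{k+1}$ and then worry about convergence (invoking BCH or Platonov's delicate endgame), the paper sidesteps all of this with a single $n$-th root and a one-line appeal to Draxl. Your approach would be needed if the coprimality hypothesis were dropped; under the paper's assumptions it is unnecessary machinery. Note also that your ``key structural input'' about residues landing in $[\bar{D},\bar{D}]=\ker Trd_{\bar{D}/Z(\bar{D})}$ is not quite what $Nrd_D=1$ gives directly in the ramified case: the reduction of $Nrd_D$ involves $N_{Z(\bar{D})/\kappa(\nu)}\circ Trd_{\bar{D}}$, so an extra argument is needed there.
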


\begin{proof} Suppose $\bar{x} = 1 \in \bar{D}$.  
Let $L$ be a maximal subfield of $D$ containing $x$.  Then $[L : F] = n$. 
Let $e$ be the ramification index of $L/F$ and $L_0$ the residue field $L$. 
Since $n$ is coprime to char$(\kappa(\nu))$ and $F$ is complete, 
there exists $y \in  L$ such that $\bar{y} = 1 \in L_0$ and $y^n = x$. 
Then Nrd$_D(x) = N_{L/F}(x)  = N_{L/F}(y)^n= 1$.
  Since $ \overline{N_{L/F}(y)}  = N_{L_0/\kappa(\nu)}(\bar{y})^e$   and $\bar{y} = 1$,
  we have $ \overline{N_{L/F}(y)} = 1$. Since $n$ is coprime to char$(\kappa(\nu))$,
  $N_{L/F}(y)^n= 1$ and $ \overline{N_{L/F}(y)} = 1$, it follows that 
  $  N_{L/F}(y) = 1$. In particular $y \in SL_1(D)$.  
  Since $x = y^n$,   $x\in [D^*, D^*]$ (\cite[Lemma 2, p. 157]{Draxl}).
  \end{proof}

 \begin{prop}(\cite[Page 68]{Ershov}) 
 \label{exact} Suppose that $F$ is complete.
 Then the  complex
 $$
SK_1(D_1) \to SK_1(D)  \buildrel{\bar{\eta}} \over{\to}  G/H \to 1$$
is exact.  
\end{prop}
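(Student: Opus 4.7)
The plan is as follows. Surjectivity of $\bar\eta$ is immediate from Lemma~\ref{eta-surj}: that lemma asserts $\eta : SL_1(D) \to G$ is surjective, and $\bar\eta$ is just the composite with the quotient $G \to G/H$. The substantive content is therefore exactness at $SK_1(D)$: given $z \in SL_1(D)$ with $Nrd_{\bar D}(\bar z) \in H$, one must show that the class of $z$ in $SK_1(D)$ lies in the image of $SK_1(D_1) \to SK_1(D)$.

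The key move is a commutator correction to kill the ``$\theta\tau(\theta)^{-1}$'' part of the residue reduced norm. By definition of $H$, write $Nrd_{\bar D}(\bar z) = \theta\,\tau(\theta)^{-1}$ with $\theta = Nrd_{\bar D}(\bar w)$ for some $\bar w \in \bar D^*$. Lift $\bar w$ to a unit $\tilde w \in \Lambda^*$ and form the commutator
\[
c \;:=\; \tilde w\,\pi_D\,\tilde w^{-1}\,\pi_D^{-1} \;\in\; [D^*,D^*].
\]
Both factors $\tilde w$ and $\pi_D\tilde w^{-1}\pi_D^{-1}$ lie in $\Lambda^*$ (the inner automorphism by $\pi_D$ preserves $\Lambda$), so $c \in \Lambda^*$. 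Using the identity $Nrd_{\bar D}(\overline{\pi_D x \pi_D^{-1}}) = \tau\bigl(Nrd_{\bar D}(\bar x)\bigr)$ already recorded in the excerpt, a direct computation gives $Nrd_{\bar D}(\bar c) = \theta\,\tau(\theta)^{-1} = Nrd_{\bar D}(\bar z)$. Hence $zc^{-1} \in SL_1(D) \cap \Lambda^*$ and $\overline{zc^{-1}} \in SL_1(\bar D)$.

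Next I invoke the lifting $SL_1(D_1) \twoheadrightarrow SL_1(\bar D)$. Since $D_1$ is unramified over the complete field $E$ with residue division algebra $\bar D_1 = \bar D$, any element of $SL_1(\bar D)$ is the reduction of some element of $SL_1(D_1)$: lift to a unit of $\Lambda_1^*$ and adjust its reduced norm to $1$ using Hensel's lemma, exactly as in the proof of Lemma~\ref{eta-surj}. Choose $y \in SL_1(D_1)$ with $\bar y = \overline{zc^{-1}}$; then $zc^{-1}y^{-1} \in SL_1(D)$ reduces to $1 \in \bar D$, so Proposition~\ref{1modt} yields $zc^{-1}y^{-1} \in [D^*,D^*]$. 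Since $c \in [D^*,D^*]$, this gives $z \equiv y \pmod{[D^*,D^*]}$, so the class of $z$ in $SK_1(D)$ equals the image of the class of $y \in SL_1(D_1)$, as required.

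The main obstacles I anticipate are two bookkeeping points: verifying that the twist by $\tau$ appears with the correct sign in $Nrd_{\bar D}(\bar c)$ (which requires using that conjugation by $\pi_D$ restricts to a generator of $\mathrm{Gal}(Z(\bar D)/\kappa(\nu))$ lifting $\tau$), and justifying the surjectivity $SL_1(D_1) \twoheadrightarrow SL_1(\bar D)$, for which completeness of $F$ together with the unramified hypothesis on $D_1$ are both essential.
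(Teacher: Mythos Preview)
Your proposal is correct and follows essentially the same route as the paper's proof: both multiply $z$ by a commutator of the form $[\tilde w,\pi_D]$ (with $Nrd_{\bar D}(\overline{\tilde w})=\theta$) to reduce to the case $Nrd_{\bar D}(\bar z)=1$, then lift from $SL_1(\bar D_1)$ to $SL_1(D_1)$ and apply Proposition~\ref{1modt}. The only cosmetic difference is that the paper cites \cite[Lemma 3.3]{Platonov1976} for the surjectivity $SL_1(D_1)\twoheadrightarrow SL_1(\bar D_1)$ rather than sketching the Hensel argument, and writes the element of $H$ as $\theta^{-1}\tau(\theta)$ instead of $\theta\,\tau(\theta)^{-1}$.
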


 \begin{proof} Since $\eta$ is surjective (\ref{eta-surj}), $\bar{\eta}$ is surjective. 
 
 Let $z \in SL_1(D_1)$.  Then Nrd$_{\bar{D}}(\bar{z}) =$ Nrd$_{\bar{D}_1}(\bar{z}) = 
 \overline{Nrd_{D_1}(z)} = 1$. Hence $\bar{\eta}(z) = 1$ and the image of $SK_1(D_1)$ in $SK_1(D)$ is in the kernel of 
 $\bar{\eta}$.  
 
 Let $z \in SL_1(D)$ with $\bar{\eta}(z) = 1$. 
 Then   $Nrd_{\bar{D}}(\bar{z}) = \theta^{-1} \tau(\theta)$ for some $\theta \in 
  Nrd_{\bar{D}}(\bar{D}^*)$. 
  Let $x \in \Lambda$ such that  $Nrd_{\bar{D}}(\bar{x}) = \theta$.
  Since $x^{-1} \pi_D x \pi_D^{-1} \in [D^*, D^*]$, replacing $z$ by 
  $z x \pi_D x^{-1} \pi_D^{-1} $, we assume that  $Nrd_{\bar{D}}(\bar{z}) = 1$. 
  Since $\bar{D} = \bar{D}_1$, $\bar{z} \in SL_1(\bar{D}_1)$. 
Since $D_1$ is unramified, the map $SL_1(D_1) \to SL_1(\bar{D}_1)$ is onto (\cite[Lemma 3.3]{Platonov1976})
and hence there exists $z_1 \in SL_1(D_1)$ such that $\bar{z}_1 = \bar{z}$.
Since $\bar{z}_1^{-1}\bar{z} = 1$, $z_1^{-1}z \in [D^*, D^*]$ (\ref{1modt}). 
In particular the $z_1 \mapsto z \in SK_1(D)$ and hence  the image of $SK_1(D_1)$ equal to ker$(\bar{\eta})$. 
 \end{proof}
 
 \begin{prop} 
 \label{sk1d-cd2}    Let $F$ be a complete discretely valued field with residue field $K$.
 Suppose that $K$ is a complete discretely valued field with residue field $\kappa$.
 Let $D$ be a central  simple  algebra over $F$ with ind$(A)$ coprime to char$(\kappa)$. 
 Suppose that $\bar{D}$ and   $Z(\bar{D})/K$  are unramified.
 If   cd$(\kappa) \leq 2$.
 Then  $SK_1(D) = \{1 \}$.
\end{prop}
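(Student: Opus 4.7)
The plan is to apply Proposition~\ref{exact} to $D$, which yields the exact sequence
\[
SK_1(D_1) \to SK_1(D) \xrightarrow{\bar\eta} G/H \to 1,
\]
and then to establish $SK_1(D_1) = 1$ and $G/H = 1$ separately. Since $SK_1$ depends only on the Brauer class, I may assume that $D$ is a division algebra.

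For $SK_1(D_1) = 1$, I would exploit the fact that $D_1 = C_D(E)$ is unramified over $E$ with residue algebra $\bar D$. Proposition~\ref{1modt} shows that the reduction map $SK_1(D_1) \to SK_1(\bar D)$ has trivial kernel, while Platonov's surjectivity $SL_1(D_1) \twoheadrightarrow SL_1(\bar D)$ for unramified $D_1$ (as invoked in the proof of Proposition~\ref{exact}) makes this map surjective; hence it is a bijection. Now the hypotheses provide a second layer of unramifiedness: $\bar D$ is unramified over $Z(\bar D)$ and $Z(\bar D)/K$ is unramified, so $Z(\bar D)$ is itself complete discretely valued with residue field $\kappa'$ a finite (automatically unramified) extension of $\kappa$, and $\bar D$ has a residue algebra $\overline{\bar D}$ which is central simple over $\kappa'$. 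Re-applying the same bijection gives $SK_1(\bar D) \simeq SK_1(\overline{\bar D})$. Since $\mathrm{cd}(\kappa') \leq \mathrm{cd}(\kappa) \leq 2$ and $\mathrm{ind}(\overline{\bar D})$ divides $\mathrm{ind}(D)$, hence is coprime to $\mathrm{char}(\kappa)$, Suslin's theorem on $SK_1$ of central simple algebras over fields of cohomological dimension at most $2$ yields $SK_1(\overline{\bar D}) = 1$.

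For $G/H = 1$, I would take $\beta \in G$ and, using Hilbert~90 for the cyclic extension $Z(\bar D)/K$, write $\beta = \theta\tau(\theta)^{-1}$ for some $\theta \in Z(\bar D)^*$. Because $Z(\bar D)/K$ is unramified, a uniformizer $\pi$ of $K$ is also one of $Z(\bar D)$ and is $\tau$-fixed, so absorbing a power of $\pi$ into the ambiguity of $\theta$ reduces to the case $\theta \in \OO^*$, where $\OO$ is the valuation ring of $Z(\bar D)$. Because $\bar D$ is unramified over $Z(\bar D)$, there is an Azumaya $\OO$-order $\Lambda$ in $\bar D$ with residue $\overline{\bar D}$. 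The sequence $1 \to SL_1(\Lambda) \to GL_1(\Lambda) \to \G_m \to 1$ on $\Spec(\OO)_\et$, together with the vanishing of $H^1_\et(\OO, GL_1(\Lambda))$ over the local ring $\OO$, identifies $\OO^*/\mathrm{Nrd}(\Lambda^*)$ with $H^1_\et(\OO, SL_1(\Lambda))$; Henselian rigidity identifies the latter with $H^1(\kappa', SL_1(\overline{\bar D})) = (\kappa')^*/\mathrm{Nrd}(\overline{\bar D}^*)$. By Merkurjev--Suslin (surjectivity of the reduced norm in cohomological dimension at most $2$, i.e., Serre's Conjecture~II for type~${}^1A_n$), this last quotient vanishes, so $\OO^* = \mathrm{Nrd}(\Lambda^*) \subseteq \mathrm{Nrd}(\bar D^*)$. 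Hence $\theta \in \mathrm{Nrd}(\bar D^*)$ and $\beta \in H$, as required.

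The main obstacle is locating and invoking the two deep cohomological-dimension-$2$ inputs: the vanishing of $SK_1$ for the double-residue algebra $\overline{\bar D}$ (Suslin), and the surjectivity of the reduced norm on $\overline{\bar D}^*$ (Merkurjev--Suslin). Both are precisely where the hypothesis $\mathrm{cd}(\kappa) \leq 2$ is needed. Everything else is a systematic descent through the tower $F \supset K \supset \kappa$, combining Proposition~\ref{exact} at the top layer, the unramified/residue comparison of $SK_1$ to descend one valuation at a time, and Henselian \'etale cohomology at the base.
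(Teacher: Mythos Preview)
Your proposal is correct and follows essentially the same route as the paper: apply the exact sequence of Proposition~\ref{exact}, kill $SK_1(D_1)$ by descending through unramified reductions to a central simple algebra over a field of cohomological dimension $\leq 2$, and kill $G/H$ by choosing $\theta$ to be a unit (via Hilbert~90 and the unramifiedness of $Z(\bar D)/K$) and then invoking surjectivity of the reduced norm. The paper is terser---it cites \cite[Corollary~3.13]{Platonov1976} and \cite{Platonov1978} for the first part and \cite[Theorem~24.8]{Suslin1985} for the second---whereas you unpack both steps explicitly (two layers of unramified reduction for $SK_1(D_1)$, and the Henselian \'etale cohomology argument for reduced norms), but the logic is the same.
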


 \begin{proof} Let $E \subseteq D$ be the lift of $Z(\bar{D})$ and $D_1 = C_D(E)$. Then,  
  by (\ref{exact}), we have 
  $$
 SK_1(D_1) \to SK_1(D)  \buildrel{\bar{\eta}} \over{\to}  G/H \to 1.$$ 
 Since $D_1$ is unramified, $SK_1(D_1) \simeq SK_1(\bar{D}_1)$ (\cite[Corollary 3.13]{Platonov1976}).
 Since cd$(\kappa) \leq 2$, by  (\cite{Platonov1978}), $SK_1(D_1)= \{ 1 \}$.
Let $\beta \in G$.  Then   $\beta  \in Z(\bar{D})^*$  with $N_{Z(\bar{D})/K}(\beta) = 1$. 
Since $Z(\bar{D})/K$ is unramified,  there exists $\theta \in Z(\bar{D})^*$ which is a unit
in the valuation ring of $Z(\bar{D})$ such that
$\beta = \theta^{-1} \tau(\theta)$.  Since $\bar{D}$ is unramified and 
cd$(\kappa) \leq 2$, $\theta$ is a reduced norm from $\bar{D}$ (\cite[Theorem 24.8]{Suslin1985}). Hence $G = H$
and $SK_1(D)$  is trivial. 
 \end{proof}

  \section{The group $SK_1(D)$ over a  two dimensional complete field}
 \label{sk1d-2dim}
 
 Let $R$ be a complete regular local ring of dimension 2, $m = (\pi, \delta)$ its maximal ideal, 
 $\kappa$ its residue field and 
 $F$ its field of fractions.  
 
 Let $D$ be a central division algebra over $F$ which is unramified on $R$ except possibly   
 at $(\delta)$.   Suppose that the period of  $D$ is coprime to char$(\kappa)$. 
 Then $D = D_0  +  (E, \sigma, \delta)$ for some central division algebra $D_0$ over
 $F$ which is unramified on $R$ and $E/F$ a cyclic extension which is unramified on $R$ (\ref{one}).
 
 Since $E/F$ is unramified on $R$, the integral closure $S$  of $R$ in $E$ is a complete regular 
 local ring with the maximal ideal $mS$. Let $E(\pi)$  be the  field of fractions of $S/(\pi)$. 
 Then $E(\pi)$ is the residue field of $E$ at $\pi$. Further $E(\pi)$ is a complete discrete valued field 
 with residue field $E_0 = S/mS$.

 Since $D_0$ is unramified on $R$, there exists an Azumaya algebra $\AA_0$ over $R$ such that 
 $\AA_0\otimes F \simeq D_0$.  
 Let $D_1$ be the central division algebra over $E$ which is Brauer equivalent to $D_0 \otimes_F E$. 
Then $D_1$ is unramified on $S$ and hence there exists an Azumaya algebra $\AA_1$ over $S$ such that 
$\AA_1 \otimes E \simeq D_1$. 
  
  Let $D(\pi)$ be the central division algebra over  $\kappa(\pi)$ which Brauer equivalent to 
  $(\AA_0\otimes_R \kappa(\pi)) \otimes (E(\pi), \sigma, \bar{\delta})$ and 
  $D_1(\pi) = \AA_1 \otimes _S E(\pi)$.  
  
  Let  $\bar{D}_1 = \AA_1 \otimes_S E_0  ( =  \AA_0 \otimes_R E_0) $.
  Let $G_0 = \{ a \in E_0^* \mid N_{E_0}/\kappa(a) = 1 \} \cap Nrd(\bar{D}_1)^*$ and 
  $H_0 = \{ \theta \tau(\theta)^{-1} \mid \theta \in Nrd(\bar{D}_1)^* \}$. 
By (\ref{exact}), we have the 
exact  sequence 
 $$ SK_1(D_1(\pi)) \to SK_1(D(\pi))  \buildrel{\bar{\eta}} \over{\to}  G_0/H_0 \to 1.$$

  Let $\Lambda_\pi$ be the maximal $\hat{R}_{(\pi)}$-order of $D\otimes F_\pi$.
  Since $D$ is unramified at $(\pi)$, we have $\Lambda_\pi/\pi\Lambda_\pi \simeq D(\pi)$. 
  Let $z \in SL_1(D\otimes F_\pi)$. Then $z \in \Lambda_\pi$.
  The map $z \mapsto \bar{z} \in  D(\pi)$ induces an isomorphism 
  $\phi : SK_1(D\otimes F_\pi) \to SK_1(D(\pi))$ (\cite[Corollary 3.13]{Platonov1976}).  
  
  Since $D_1$ is a central simple algebra over $E$ which is unramified on $S$ and $S$ is 
  a complete regular local ring we have an 
  isomorphism $\phi_1 : SK_1(D_1) \to SK_1(D_1(\pi))$ such that 
  if $x \in SL_1(\AA_1)$, then $\phi_1(x)$ is the class of the image of 
  $x$ in $\AA_1 \otimes _S E(\pi)$.

  \begin{lemma}
  \label{surj1-2dim}  The map $SK_1(D) \to SK_1(D \otimes F_\pi)$ is surjective.    
  \end{lemma}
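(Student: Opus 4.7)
The plan is to combine the exact sequence of Proposition~\ref{exact} applied to $D(\pi)$ over the complete discretely valued field $\kappa(\pi)$,
$$SK_1(D_1(\pi)) \to SK_1(D(\pi)) \xrightarrow{\bar\eta} G_0/H_0 \to 1,$$
with the reduction isomorphisms $\phi\colon SK_1(D\otimes F_\pi)\xrightarrow{\sim} SK_1(D(\pi))$ and $\phi_1\colon SK_1(D_1)\xrightarrow{\sim} SK_1(D_1(\pi))$, together with the natural map $SK_1(D_1)\to SK_1(D)$ induced by the inclusion $D_1\subset D$. Let $\psi$ denote the composite $SK_1(D)\to SK_1(D\otimes F_\pi)\xrightarrow{\phi} SK_1(D(\pi))$; I will show $\psi$ is surjective, which gives the lemma. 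First I verify that the square
$$\begin{array}{ccc}
SK_1(D_1) & \xrightarrow{\phi_1} & SK_1(D_1(\pi)) \\
\downarrow & & \downarrow \\
SK_1(D) & \xrightarrow{\psi} & SK_1(D(\pi))
\end{array}$$
commutes, the right vertical arrow being the connecting map from the exact sequence (induced by $D_1(\pi)\subset D(\pi)$). This is immediate on representatives, since $z\in SL_1(D_1)\subset \AA_1^\times$ has the same image in $D(\pi)$ whether computed via $\phi_1$ or via $\phi$.

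Given $\alpha\in SK_1(D\otimes F_\pi)$, set $\bar\alpha=\phi(\alpha)$ and consider $\bar\eta(\bar\alpha)=\bar\beta\cdot H_0\in G_0/H_0$. The key step is to produce $z_1\in SL_1(D)$ with $\bar\eta(\psi(z_1))=\bar\beta\cdot H_0$. Pick $\bar y\in\bar{D}_1^\times$ with $Nrd_{\bar{D}_1}(\bar y)=\bar\beta$; Hensel's lemma lifts $\bar y$ to $y\in\AA_1^\times$ since $\AA_1$ is Azumaya over the complete local ring $S$. Set $b=Nrd_{D_1}(y)\in S^\times$; then $\bar b=\bar\beta$ and $N_{E/F}(b)\in R^\times$ reduces to $N_{E_0/\kappa}(\bar\beta)=1$, so $N_{E/F}(b)\in 1+m$. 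Since $S/R$ is \'etale (because $E/F$ is unramified on $R$) and $R$ is complete, the norm map $N_{E/F}\colon 1+mS\to 1+m$ is surjective by a standard Hensel/trace argument; choose $c\in 1+mS$ with $N_{E/F}(bc)=1$. An analogous Hensel argument applied to the reduced trace $\AA_1\to S$ then produces $w\in 1+mS\cdot\AA_1\subset\AA_1^\times$ with $Nrd_{D_1}(w)=c$. Setting $z_1:=yw$, one checks $Nrd_D(z_1)=N_{E/F}(bc)=1$, so $z_1\in SL_1(D)$, while $\overline{z_1}=\bar y$ in $\bar{D}_1$, giving $\bar\eta(\psi(z_1))=\bar\beta\cdot H_0$ as required.

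Now $\bar\alpha\cdot\psi(z_1)^{-1}$ lies in $\ker\bar\eta$, which by the exact sequence equals the image of $SK_1(D_1(\pi))\to SK_1(D(\pi))$. Pulling back through $\phi_1^{-1}$ yields a class $z_2'\in SK_1(D_1)$, and its image $z_2\in SK_1(D)$ satisfies $\psi(z_2)=\bar\alpha\cdot\psi(z_1)^{-1}$ by commutativity of the square. Hence $\psi(z_1 z_2)=\bar\alpha$, proving that $\psi$, and therefore $SK_1(D)\to SK_1(D\otimes F_\pi)$, is surjective.

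The main obstacle is the double lifting carried out in the middle paragraph: surjectivity of $N_{E/F}\colon 1+mS\twoheadrightarrow 1+m$ and of $Nrd_{D_1}\colon 1+mS\cdot\AA_1\twoheadrightarrow 1+mS$. Both reduce to Hensel-style arguments combining completeness of $R$ with surjectivity at the residue level of the trace $Tr_{E/F}$ and the reduced trace $Trd_{\AA_1/S}$; these residual surjectivities hold because $S/R$ is \'etale and $\AA_1$ is Azumaya, so the true geometric input is the unramifiedness of $D_0$ and $E$ on $R$.
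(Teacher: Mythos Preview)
Your proof is correct and follows essentially the same strategy as the paper: reduce via the isomorphism $\phi$ to $SK_1(D(\pi))$, use the exact sequence of Proposition~\ref{exact} together with the commutative square relating $SK_1(D_1)$ and $SK_1(D_1(\pi))$, and produce the required lift by Hensel-type arguments exploiting that $S/R$ is \'etale and $\AA_1$ is Azumaya over the complete local ring $S$. The paper streamlines your three-step lifting into two steps by first lifting $a\in G_0$ directly to a norm-one element $b\in S^\times$ (using smoothness of the norm-one torus $R^1_{S/R}\G_m$ over the complete local $R$) and then finding $z_1\in\AA_1$ with $Nrd_{D_1}(z_1)=b$, but this is only a cosmetic reordering of the same Hensel inputs you identify.
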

  
\begin{proof} Let $z \in SL_1(D\otimes F_\pi)$.  Let $a  \in G_0$ with 
$\bar{\eta}(\phi(z)) = a\in G_0/H_0$.  Since $E/F$ is unramified on $R$ with residue field $E_0$, 
there exists $b \in E$  a lift of $a$ such that $N_{E/F}(b) = 1$. Since $\AA_1$ is unramified on $S$ and 
$a \in Nrd(\bar{D}_1)$ and $\bar{D}_1 =  \AA_1 \otimes_S E_0$, $b \in  Nrd (\AA_1) \subset Nrd(D_1)$. 
Let $z_1 \in D_1 $ with $Nrd_{D_1}(z_1) = b$. 
Since $D_1 \subseteq D$ and the center of $D_1$ is $E$, we have $Nrd_D(z_1) = N_{E/F}(Nrd_{D_1}(z_1)) = 1$. 
Since  $ \bar{\eta}(\phi(z_1) ) =  \bar{\eta}(\phi(z))$, replacing $z$ by $z_1^{-1}z$, we assume that 
$\bar{\eta}(\phi(z)) = 1$.  Hence there exists $x \in SK_1(D_1(\pi))$  which maps to $\phi(z)$ in $SK_1(D(\pi))$. 
Let $ y \in   SL_1( \AA_1)$  be such that   $\phi_1(x) = x$. 
Then $y\in SL_1(D)$ and maps to $z$ in $SK_1(D\otimes F_\pi)$.
\end{proof}

 Let $D$ be a central division algebra over $F$ which is unramified except possibly at $(\pi)$ and $(\delta)$.   
  Then $D\otimes F_\pi$ is a division algebra (\ref{division}).   
  Let $\Lambda_\pi = \{ x \in D \mid Nrd(x) \in R_{(\pi)} \}$.
  Then as in  (\S \ref{sk1d-dvr}), there exists $\pi_D \in \Lambda$ such that 
 $\bar{D}_\pi = \Lambda_\pi/\Lambda \pi_D$ is a division algebra over $\kappa(\pi)$. 
 Then there exists a cyclic unramified  extension $E_\pi/F_\pi$  with residue field $Z(\bar{D}_\pi)$ (cf. \S \ref{sk1d-dvr}). 
Then  by (\ref{splitting-field}), there exists a field extension $E/F$ unramified on $R$
except possibly at $(\pi)$ and $(\delta)$  such that  $E\otimes F_\pi \simeq E_\pi$
and $D \otimes E$ is split. In particular $E/F$ is unramified on $R$ except possibly at $(\delta)$.
Since $E_\pi/F_\pi$ is cyclic,   the proof of  (\ref{splitting-field}) also gives that 
$E/F$ is cyclic  and $E = E_1(\sqrt[e]{u\delta}) $ for some unramified extension $E_1/F$ and 
 $u$ a unit in the integral closure of 
$R$ in $E_1$. In particular  the integral closure $S$ of $R$ in $E$ is  a regular local ring with maximal ideal 
$(\pi, \delta')$  where $\delta' = \sqrt[e]{u\delta}$.  Let $\bar{S} = S/(\pi)$.  The $Z(\bar{D}_\pi)$ is the field of fractions of 
$\bar{S}$. 
 Let $D_1 = C_E(D)$.  Then $D_1$ is unramified on $S$ except possibly 
  at $(\delta')$.

 Let   $\tau$, $G$ and $H$  be as  in (\S \ref{sk1d-dvr})  defined with respect to the discrete valuation on $R$ given by 
 the prime ideal $(\pi)$. Then $\tau$ is a generator of Gal$(Z(\bar{D}/\kappa(\pi))$. Let $\tilde{\tau} \in Gal(E/F)$ be a lift of $\tau$. 
 Then $\tilde{\tau}$ is a generator of Gal$(E/F)$.

  \begin{prop} 
 \label{surj-2dim}  
 The map $\eta :   SL_1(D)  \to   G  $
is onto.  
\end{prop}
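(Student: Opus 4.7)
The plan is to mirror the proof of Lemma \ref{eta-surj}, but to replace the step that used completeness of the base field by an application of Proposition \ref{reduced-norm} to $D_1$ over $E$ and the two-dimensional complete regular local ring $S$. Starting from $\beta \in G$, Hilbert 90 applied to the cyclic extension $Z(\bar{D}_\pi)/\kappa(\pi)$ with generator $\tau$ produces $\theta \in Z(\bar{D}_\pi)^*$ with $\beta = \theta^{-1}\tau(\theta)$.

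Since $Z(\bar{D}_\pi) = \Frac(\bar{S})$, where $\bar{S} = S/(\pi)$ is a complete discrete valuation ring with uniformizer $\overline{\delta'}$, and since $\bar{\delta} \in \kappa(\pi)$ is $\tau$-fixed, I may multiply $\theta$ by a suitable power of $\bar{\delta}$ without altering $\beta$, and thereby arrange $\theta = \overline{\delta'}^{\,j}\, u$ with $j \geq 0$ and $u \in \bar{S}^*$. I then lift $u$ to $\tilde{u} \in S^*$ (possible since $S$ is local and $S^* \twoheadrightarrow \bar{S}^*$) and set $a = (\delta')^{j} \tilde{u} \in S$ and $b = a^{-1}\tilde{\tau}(a) \in E^*$, where $\tilde{\tau} \in \Gal(E/F)$ is the chosen lift of $\tau$.

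The first key claim is that $b \in S^*$. Both $(\pi)$ and $(\delta')$ are $\tilde{\tau}$-invariant height-one primes of $S$: the former since $E \otimes F_\pi$ is a field, so $(\pi)$ is the unique prime of $S$ lying over $(\pi) \subset R$; the latter because the relation $(\delta')^{e} = u\delta$ shows $(\delta')$ is the unique prime of $S$ lying over $(\delta) \subset R$. Writing $\tilde{\tau}(\delta') = v\, \delta'$ with $v \in S^*$ and noting that $\tilde{\tau}(\tilde{u}) \in S^*$, a direct computation gives $b = v^{j}\, \tilde{u}^{-1}\, \tilde{\tau}(\tilde{u}) \in S^*$. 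In particular $b$ has valuation zero at every height-one prime of $S$, so Proposition \ref{reduced-norm} will apply to $b$ with the simplest possible form $r = s = 0$.

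Reducing modulo $(\pi)$, $\bar{b} = \theta^{-1}\tau(\theta) = \beta \in Nrd(\bar{D}_\pi^*)$. Since $D_1 \otimes_E E_\pi$ is unramified at $(\pi)$ with residue division algebra $\bar{D}_\pi$, the standard Hensel-type lifting of reduced norms over an unramified Azumaya algebra on a complete discrete valuation ring gives $b \in Nrd((D_1 \otimes_E E_\pi)^*)$. Proposition \ref{reduced-norm}, applied to $D_1$ over $E$ (which is unramified on $S$ away from $(\pi)$ and $(\delta')$), then forces $b \in Nrd(D_1^*)$. Choosing $z \in D_1^*$ with $Nrd_{D_1}(z) = b$, the identity $Nrd_D(z) = N_{E/F}(b) = N_{E/F}(a^{-1}\tilde{\tau}(a)) = 1$ places $z$ in $SL_1(D)$, and passing to residues yields $\eta(z) = Nrd_{\bar{D}_\pi}(\bar{z}) = \bar{b} = \beta$. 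The main obstacle will be engineering the lift $a$ so that $b$ is a unit at every height-one prime of $S$, not merely at $(\pi)$ and $(\delta')$; this is what permits an application of Proposition \ref{reduced-norm} in its simplest form, and it relies crucially on $\tilde{\tau}$ preserving both distinguished primes of $S$.
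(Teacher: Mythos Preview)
Your proof is correct and follows essentially the same route as the paper's own argument: lift $\theta$ to $a = (\text{unit})\cdot(\delta')^{j}$ in $S$, observe that $b = a^{-1}\tilde{\tau}(a)$ is a unit in $S$ because $\tilde{\tau}$ preserves the prime $(\delta')$, deduce that $b$ is a reduced norm from $D_1 \otimes E_\pi$ by Hensel lifting, and then invoke Proposition~\ref{reduced-norm} over the two-dimensional ring $S$ to conclude $b \in \mathrm{Nrd}(D_1^*)$. Your write-up is in fact slightly more careful than the paper's in two places: you justify why $(\pi)$ and $(\delta')$ are $\tilde{\tau}$-stable, and you only claim $\tilde{\tau}(\delta') = v\,\delta'$ with $v \in S^*$ (the paper asserts $v$ is a root of unity, which need not hold when $E_1 \neq F$, though only the unit property is used).
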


 \begin{proof}   Let $\beta \in G$.
  Since $N_{Z(\bar{D})/K}(\beta) = 1$, 
 there exists $\theta \in Z(\bar{D})$ such that $\beta = \theta^{-1}\tau(\theta)$. 
 We have  $\theta = w (\bar{\delta'})^i$ for some $w \in \bar{S} $ a unit and $i $ an integer.  
 Let $w_1 \in S$ be a lift of $w$ and $a = w_1 (\delta')^i$. Then $a$ is a lift of $\theta$. 
 Let    $b = a^{-1}\tilde{\tau}(a) \in E^*$.  Since $\tilde{\tau}(w_1) \in S$ is a unit and 
 $\tilde{\tau}( \delta') = \rho \delta'$ for some root of unity $\rho$,  
 $b \in S_1$ is a unit.   
 Since  $\beta$ the image  of $b$ is a reduced norm from $\bar{D}_\pi = \bar{D_1}_\pi$ and 
 $D_1 \otimes E_\pi$ is unramified, 
 $b$ is a reduced norm from $D_1 \otimes E_\pi$. Since $b \in S$ is a unit and $D_1$ is unramified on $S$ except possibly at 
 $(\pi)$ and $( \delta')$, by (\ref{reduced-norm}), $b$ is a reduced norm from $D_1$.
 Let $z \in D_1$ be such that Nrd$_{D_1}(z) = b$.  
 Since $Nrd_D(z) = N_{E/F}(Nrd_{D_1}(z)) = N_{E/F}(b) = N_{E/F}(a^{-1}\tilde{\tau}(a)) = 1$,
 $z \in SL_1(D)$.  Since $Nrd_{\bar{D}}(\bar{z}) = \beta$, $\eta$ is onto. 
 \end{proof}

  \begin{theorem} 
 \label{surj-2dim}  Let $R$ be a complete regular local ring of dimension 2, $m$ its maximal ideal, 
 $\kappa$ its residue field and 
 $F$ its field of fractions.  Let $D$ be a central division algebra over $F$ which is unramified on $R$ except possibly   
 at $(\pi)$ and $(\delta)$ for some primes with $m = (\pi, \delta)$.  
Then the map $SK_1(D) \to SK_1(D\otimes F_\pi)$ is surjective. 
\end{theorem}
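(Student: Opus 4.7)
The plan is to combine the exact sequence of Proposition \ref{exact} applied over the complete field $F_\pi$ with the surjectivity of $\eta : SL_1(D) \to G$ just established (the proposition immediately preceding the theorem), and then invoke the earlier Lemma \ref{surj1-2dim} on the centralizer $D_1$. First, by Lemma \ref{division}, $D \otimes F_\pi$ is a division algebra over the complete discretely valued field $F_\pi$. Let $\bar D_\pi$ be its residue division algebra and $Z(\bar D_\pi)$ its center; let $E_\pi/F_\pi$ be the unramified cyclic lift of $Z(\bar D_\pi)/\kappa(\pi)$. As in the discussion preceding the theorem (using Lemma \ref{splitting-field}), there is a cyclic extension $E/F$ of the form $E_1(\sqrt[e]{u\delta})$, unramified on $R$ except possibly at $(\delta)$, with $E \otimes_F F_\pi \isom E_\pi$ and $D \otimes_F E$ split. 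The integral closure $S$ of $R$ in $E$ is a two-dimensional complete regular local ring with maximal ideal $(\pi,\delta')$, where $\delta' = \sqrt[e]{u\delta}$, and the centralizer $D_1 = C_D(E)$ is central simple over $E$ and unramified on $S$ except possibly at $(\delta')$.

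Applying Proposition \ref{exact} to $D \otimes F_\pi$ over $F_\pi$ and noting that $D_1 \otimes_E E_\pi = C_{D \otimes F_\pi}(E_\pi)$, we obtain an exact sequence
$$SK_1(D_1 \otimes_E E_\pi) \longrightarrow SK_1(D \otimes F_\pi) \xrightarrow{\bar\eta} G/H \longrightarrow 1,$$
with $G$, $H$, and $\bar\eta$ as defined in \S\ref{sk1d-dvr}. Given $z \in SL_1(D \otimes F_\pi)$, the surjectivity of $\eta : SL_1(D) \to G$ (proposition preceding the theorem) yields $z_0 \in SL_1(D)$ with $\eta(z_0) = \eta(z)$; replacing $z$ by $z_0^{-1} z$ we may assume $\bar\eta([z]) = 1$. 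By exactness, $[z]$ is then in the image of $SK_1(D_1 \otimes_E E_\pi) \to SK_1(D \otimes F_\pi)$.

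Since $E/F$ is unramified at $(\pi)$, the prime $\pi$ remains a parameter of $S$, so Lemma \ref{surj1-2dim} applied to $D_1$ over the two-dimensional complete regular local ring $S$ (with parameters $\pi$ and $\delta'$, and $D_1$ unramified at $(\pi)$) gives that $SK_1(D_1) \to SK_1(D_1 \otimes_E E_\pi)$ is surjective. Composing with the natural map $SK_1(D_1) \to SK_1(D)$ induced by the inclusion $D_1 \subset D$, we conclude that $[z]$ lifts to $SK_1(D)$. The main point to verify is the naturality of the exact sequence of Proposition \ref{exact} and of $\eta$ under the scalar extension $F \to F_\pi$, so that the surjectivity of $\eta$ on $SL_1(D)$ indeed accounts for the $G/H$ quotient of $SK_1(D \otimes F_\pi)$, together with the compatibility of reduced norms through $D_1 \subset D$; once these routine compatibilities are set up, the theorem follows by a diagram chase.
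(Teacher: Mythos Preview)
Your proposal is correct and follows essentially the same approach as the paper: both use the exact sequence of Proposition~\ref{exact} over $F_\pi$, the surjectivity of $\eta$ on $SL_1(D)$ from the preceding proposition to kill the $G/H$ contribution, and Lemma~\ref{surj1-2dim} applied to $D_1$ over the regular local ring $S$ to handle the remaining piece. The paper packages this as a commutative diagram of complexes and a diagram chase (and separates out the case where $D$ is unramified at $(\pi)$ explicitly), whereas you spell out the element chase directly, but the content is the same.
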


 \begin{proof}  
  Suppose $D$ is unramified at $(\pi)$. Then, by (\ref{surj1-2dim}), $SK_1(D) \to SK_1(D\otimes_FF_\pi)$ is surjective. 
  
  Suppose that $D$ is ramified at $(\pi)$.  By (\ref{division}), $D \otimes F_\pi$ is division. 
  
  Since  $D$ is unramified on $R$ except possibly at  $(\pi)$ and $(\delta)$, we have 
 $E$, $D_1$, $G$ and $H$ be as  above  with respect to the valuation given by $(\pi)$. 
 Then the integral closure $S_1$ of $R$ in $E$ is a regular local ring with maximal
  ideal $(\pi, \delta')$ for some $\delta'$ lying over $\delta$. Further $D_{1}$ is unramified on $S_1$
  except possibly at  $(\delta')$. 
  
 We have the following commutative diagram  of complexes with bottom row exact (\ref{complex}, \ref{exact}) and the $\bar{\eta}$
  in the top row is 
 onto  (\ref{surj-2dim}).   
 $$
 \begin{array}{cccccc}
 SK_1(D_{1}) &  \to &  SK_1(D )  &  \buildrel{\bar{\eta}} \over{\to}  & G/H &  \to 1 \\
 \downarrow & & \downarrow& & \downarrow  \\

 SK_1(D_{1} \otimes F_\pi )  & \to & SK_1(D \otimes F_\pi ) &   \buildrel{\bar{\eta}} \over{\to}  &  G/H  & \to 1
 
 \end{array}
 $$
 
 Since $D_1$ is a central simple algebra over $E$, $E$ is the field of fractions of  a complete regular local ring $S_2$
 with maximal ideal $(\pi,\delta')$ and $D_1$ is unramified on $S_2$ except possibly at $(\delta')$, 
 by (\ref{surj1-2dim}), $SK_1(D_1) \to SK_1(D_1\otimes F_\pi)$ is surjective. 
 
 Hence a simple diagram chase gives the surjectivity of the middle map
  $SK_1(D) \to SK_1(D\otimes F_\pi)$. 
 \end{proof}
 
  \begin{theorem} 
 \label{2dim-sk1-cd2}  Let $R$ be a complete regular local ring of dimension 2, maximal ideal $m = (\pi, \delta)$, 
 $\kappa$ its residue field and 
 $F$ its field of fractions.  Let $\ell$ be a prime not equal to char$(\kappa)$.
  Let $D_0$ be a central division algebra over $F$ which is unramified on $R$ 
 and $E/F$ a cyclic extension  of degree $\ell^n$ which is unramified on $R$.  Let $\sigma$ be a generator of Gal$(E/F)$.
Let $D$ be a division algebra which Brauer equivalent to $ D_0 + (E, \sigma, \pi^i\delta^j)$ for some $0 \leq i, j \leq \ell^n-1$. 
Suppose that cd$(\kappa) \leq 2$ and per$(D_0)$ is coprime to char$(\kappa)$.
Then either $SK_1(D \otimes F_\pi)$ or $SK_1(D\otimes F_\delta)$ is trivial. 
In particular   the map 
$$SK_1(D) \to SK_1(D\otimes F_\pi) \times SK_1(D\otimes F_\delta)$$ is surjective. 
\end{theorem}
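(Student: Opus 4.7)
The plan is to apply Proposition~\ref{sk1d-cd2} to one of $D\otimes F_\pi$ or $D\otimes F_\delta$. The ``In particular'' statement is then automatic: Theorem~\ref{surj-2dim} (applied once for $\pi$ and, by the symmetric role of the two uniformisers, also for $\delta$) yields that each of the maps $SK_1(D)\to SK_1(D\otimes F_\pi)$ and $SK_1(D)\to SK_1(D\otimes F_\delta)$ is surjective, so once one of the two factors on the right is shown to be trivial, the product map surjects via the other factor, hence onto the product. It therefore suffices to prove that at least one of $SK_1(D\otimes F_\pi)$, $SK_1(D\otimes F_\delta)$ vanishes.

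Set $a=v_\ell(i)$ and $b=v_\ell(j)$, with the convention $v_\ell(0)=n$. By the symmetry of the hypotheses under $(\pi,i)\leftrightarrow(\delta,j)$ I may assume $a\le b$ and will prove $SK_1(D\otimes F_\pi)=1$ by verifying the hypotheses of Proposition~\ref{sk1d-cd2} over the complete discretely valued field $F_\pi$, whose residue field $\kappa(\pi)$ is itself a complete discretely valued field with residue field $\kappa$ of cohomological dimension at most $2$. The index of $D\otimes F_\pi$ is coprime to $\mathrm{char}(\kappa)$ because $\mathrm{per}(D)$ divides $\mathrm{lcm}(\mathrm{per}(D_0),\ell^n)$, which is coprime to $\mathrm{char}(\kappa)$ by hypothesis, so the remaining conditions to check are that $Z(\bar D_\pi)/\kappa(\pi)$ be unramified and that $\bar D_\pi$ be unramified at $\bar\delta$ over $Z(\bar D_\pi)$.

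For these I use the Brauer group identity $i\cdot[E/F,\sigma,\pi]=[E_0/F,\sigma_0,\pi]$, where $E_0 = E^{\langle\sigma^{\ell^{n-a}}\rangle}$ is the intermediate cyclic subfield of $E/F$ of degree $\ell^{n-a}$ and $\sigma_0$ is the resulting generator of $\Gal(E_0/F)$. Tensoring with $F_\pi$ and splitting off the $\delta^j$-factor presents $D\otimes F_\pi$ as the Brauer sum of an unramified class $U = D_0\otimes F_\pi + (E\otimes F_\pi,\sigma,\delta^j)$ and a totally ramified cyclic class $(E_0\otimes F_\pi,\sigma_0,\pi)$. The tame structure theory of division algebras over complete discretely valued fields recalled in Section~\ref{sk1d-dvr} (namely, $Z(\bar D)=$ residue of the unramified cyclic subextension and $\bar D \sim \bar D_0 \otimes Z(\bar D)$) identifies $Z(\bar D_\pi)=E_0(\pi)$, which is unramified cyclic of degree $\ell^{n-a}$ over $\kappa(\pi)$ since $E_0/F$ is unramified on $R$---settling the second hypothesis---and yields $\bar D_\pi \sim \bar U\otimes_{\kappa(\pi)}E_0(\pi)$ in $\Br(E_0(\pi))$, with $\bar U = \bar D_0(\pi) + (E(\pi)/\kappa(\pi),\sigma,\bar\delta^j)$. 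Since $\bar D_0(\pi)$ is unramified and the base-change identity $(L/K,\sigma,u)\otimes_K M \sim (L/M,\sigma^{[M:K]},u)$ for $M\subseteq L$ Galois turns the cyclic summand into $(E(\pi)/E_0(\pi),\sigma^{\ell^{n-a}},\bar\delta^j)$, the residue of $\bar D_\pi$ at $\bar\delta$ is the character $(\sigma^{\ell^{n-a}})^j$ in $\Gal(E(\pi)/E_0(\pi))=\langle\sigma^{\ell^{n-a}}\rangle$, of order $\ell^a/\gcd(j,\ell^a)=\ell^{\max(a-b,0)}$; this equals $1$ under the assumption $a\le b$, so the third hypothesis holds and Proposition~\ref{sk1d-cd2} gives $SK_1(D\otimes F_\pi)=1$.

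The main point I expect will require care is the clean bookkeeping in the last step: identifying $Z(\bar D_\pi)$ with $E_0(\pi)$ via the ``unramified plus totally ramified'' decomposition of $D\otimes F_\pi$, and applying the base-change formula for cyclic algebras with the restricted generator $\sigma^{\ell^{n-a}}$. These are direct consequences of the tame structure theory already used in Section~\ref{sk1d-2dim}, and it is precisely at the final order-of-character computation $\ell^a/\gcd(j,\ell^a)$ that the case split $a\le b$ versus $a\ge b$ enters and breaks the symmetry between $\pi$ and $\delta$.
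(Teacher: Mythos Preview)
Your proof is correct and follows essentially the same strategy as the paper's: reduce to Proposition~\ref{sk1d-cd2} by showing that over one of the two completions the residue division algebra and its centre are unramified, and then invoke Theorem~\ref{surj-2dim} for the surjectivity statement.

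The only difference is bookkeeping. The paper first performs a reduction: it extracts the common $\ell$-power $\ell^d$ from $i$ and $j$, replaces $E$ by the subfield of degree $\ell^{n-d}$, and then (assuming $\ell\nmid i$) changes the generator $\sigma$ so that $i=1$. After this, $Z(\bar D_\pi)$ is the full residue field $E(\pi)$, and the cyclic summand $(E(\pi),\sigma,\bar\delta^{j})$ splits completely upon base change to $E(\pi)$, leaving $\bar D_\pi\sim \bar D_0(\pi)\otimes E(\pi)$ with no residual computation needed. Your route skips this normalisation, takes $E_0$ of degree $\ell^{n-a}$ directly, and therefore has to verify that the leftover piece $(E(\pi)/E_0(\pi),\sigma^{\ell^{n-a}},\bar\delta^{j})$ is unramified at $\bar\delta$; your order-of-character calculation $\ell^{\max(a-b,0)}=1$ under $a\le b$ does exactly this. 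The paper's reduction is slightly cleaner, but the two arguments are equivalent in content.
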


 \begin{proof}  
  
 Suppose $\ell$ divides both $i$ and $j$.  Write $i  = \ell^d i_0$ and $j = \ell^dj_0$ for some $d \geq 1$ and
$\ell$ does not divide at least one of $i_0$ and $j_0$. Then 
$$(E, \sigma, \pi^i \delta^j) = (E, \sigma, (\pi^{i_0}\delta^{j_0})^{\ell^d}) = (E, \sigma)^{\ell^d} \cdot (\pi^{i_0} \delta^{j_0})
= (E^{\sigma^{\ell^{n-d}}}, \sigma \mid_{E^{\sigma^{\ell^{n-d}}}}, \pi^{i_0} \delta^{j_0}).$$
Thus replacing $(E, \sigma)$ by $(E^{\sigma^{\ell^{n-d}}}, \sigma \mid_{E^{\sigma^{\ell^{n-d}}}})$,
$i$ by $i_0$ and $j$ by $j_0$, we assume that either $i$ or $j$ is not divisible by $\ell$. 

Suppose $\ell$ does not divide $i$. Then there exists $1 \leq i' \leq \ell^n-1$ such 
 $ii' = 1$ modulo $\ell^n$. We have 
 $$  (E, \sigma, \pi^i\delta^j) =  ii'(E, \sigma, \pi^i\delta^j) =
  (E, \sigma^i, \pi \delta^{i'j}).$$ Hence replacing $\sigma$ by $\sigma^i$ and $j$ by $i'j$, 
  we assume that $i = 1$. 
  
Since $D$ is unramified on $R$ except possibly at $(\pi)$ and $(\delta)$, $D \otimes F_\pi$ is a division algebra  (\ref{division}).
Let $\Lambda_\pi$ be the maximal $\hat{R}_\pi$-order of $D\otimes F_\pi$. 
Since $E/F$ is unramified on $R$, the integral closure $S$ of $R$ in $E$ is a  regular local ring with maximal ideal 
$(\pi, \delta)$.   Since $D_0$ is unramified on $R$, there exists am Azumaya $R$-algebra $\DD_0$ such that 
$\DD_0 \otimes F \simeq D_0 $  (cf. \cite[Lemma 3.1]{LPS}).  Since $i = 1$, $Z(\bar{D}_\pi)$ is the field of fractions of $S/(\pi)$
and   $\bar{D}_\pi =  ( \DD_0 \otimes S/(\pi)) \otimes Z(\bar{D}_\pi)$.  In particular 
$\bar{D}_\pi $ is unramified on $S/(\pi)$.  Thus, $SK_1(D\otimes F_\pi)$ is trivial (\ref{sk1d-cd2}). 

Since $SK_1(D) \to SK_1(D\otimes F_\delta)$ is onto (\ref{surj-2dim}), 
$$SK_1(D) \to SK_1(D\otimes F_\pi) \times SK_1(D\otimes F_\delta)$$ is surjective.   
 \end{proof}

 \section{The group $SUK_1(A, \tau)$ over a discrete valued field }
  \label{sk1ud-dvr}
 
 Let $F_0$ be a field and  $F/F_0$ be a quadratic  \'etale extension.
 Let $A$ be a central simple algebra over $F$ with a $F/F_0$-involution $\tau$.  
 Let $Sym(A,\tau) = \{ x \in A \mid \tau(x)= x \}$ and  let $\Sigma(A, \tau)$
 be the subgroup of $A^*$ generated by $Sym(A, \tau)^*$. 
 Then for any $z \in \Sigma(A,\tau)$, $Nrd_A(z) \in F_0$.  
  Let 
 $$\Sigma'(A, \tau)  = \{  z  \in B^* \mid Nrd_A(z) \in F_0^*\}.$$
 Then $\Sigma(A, \tau) \subseteq \Sigma'(A, \tau)$. 
 Let 
 $$SKU_1(A, \tau) = \Sigma'(A,\tau) / \Sigma(A, \tau) .$$
 
 The reduced norm homomorphism gives a  surjective homomorphism   
 $$SUK_1(A,\tau) \to F_0^* \cap Nrd_A(A^*)/ Nrd_A( \Sigma(A, \sigma)). $$
 Since $SL_1(A) \subset \Sigma'(A,\tau)$ and  $[A^*, A^*] \subset \Sigma(A, \tau)$ (cf. \cite[17.26]{KMRT}), 
 we get complex 
  $$
 SK_1(A) \to SUK_1(A, \tau) \to F_0^* \cap Nrd_A(A^*)/ Nrd_A( \Sigma(A, \sigma)) \to 1$$
 
 Let $z \in \Sigma'(A, \tau)$. Suppose that Nrd$(z) \in \Sigma(A, \tau)$.
 Then there exists $x \in \Sigma(A, \tau)$ such that Nrd$(z) = $ Nrd$(x)$.
 Then $x^{-1}z \in SL_1(A)$ and maps to $z$ in $SUK_1(A, \tau)$. Hence the above complex is exact.

 Suppose $F_0$ is a   complete discretely valued field  with residue field $K_0$
 and $F/F_0$ a quadratic separable extension. 
 Let $D$ be a central division  algebra over $F$ with  a $F/F_0$-involution $\tau$.
 Let $R_0$ be the valuation ring of $F_0$ and $R$ the integral closure of $R_0$ in $F$. 
 Let $\Lambda$ be the maximal $R$-order in $D$ and $m_D$ the unique maximal ideal of
 $\Lambda$.  Let $\bar{D} = \Lambda/m_D$.  Suppose that 
 2ind$(D)$ is coprime to char$(K_0)$. 
  Suppose that $F/F_0$ is unramified.  Let $K$ be the residue field of $F$.
 Then $Z(\bar{D})/K$ is a cyclic extension. Let $\sigma$ be a generator of Gal$(Z(\bar{D}/K)$. 

There exist parameter $\pi_D \in \Lambda$ such that $\tau(\pi_D) = \pi_D$.
Then $\tau$ induces a involution $\bar{\tau} $ on $\bar{D}$ of second kind. 
Let $\tau' = int(\pi_D) \tau$. Then $\tau'$ is also a $F/F_0$-involution on $D$
and induces an involution $\bar{\tau}'$ on $\bar{D}$. 
We recall the following from (\cite[\S 4]{Y1979}).  An element $a \in Z(\bar{D})^*$ is called a 
{\it projective unitary conorm} if there exist $b \in Nrd_{\bar{D}}(\bar{D}^*)$ such that 
$\sigma(a) a^{-1} = b\bar{\tau}'(b)^{-1}$. Let $P(A, \tau) $ be the set of all projective unitary norms.
Then $K^*Nrd_{\bar{D}}(\Sigma(\bar{D}, \bar{\tau})) \subset P(A, \tau)$.
Let $PU(A, \tau) = P(A, \tau)/ K^*Nrd_{\bar{D}}(\Sigma(\bar{D}, \bar{\tau})) $. 

 Then by (\cite[Theorem 4.12]{Y1979}), we have an exact sequence 
 $$ SUK_1(\bar{D}, \bar{\tau}') \to SUK_1(A, \tau) \to PU(A, \tau)\to 1.$$
  
 \begin{theorem}
 \label{sk1u-dvr}  Let $K_0$ be a complete discretely  valued field with residue field $\kappa_0$ and 
 $K/K_0$ an unramified quadratic  extension.   Let $F_0$ be a complete discretely valued field  with residue field $K_0$
  and  $F/F_0$ a unramified   quadratic  extension with residue field $K$. Let $\pi \in F_0$ be a parameter. 
 Let $D/F$ be a central division  algebra with an $F/F_0$-involution $\tau$. 
 Suppose that $\bar{D}$ and $Z(\bar{D})/K$ are  unramified. Suppose that 
 2per$(A)$ is coprime to char$(\kappa_0)$. 
 If   cd$(\kappa_0) \leq 2$, then  $SUK_1(A, \tau)$ is trivial. 
 \end{theorem}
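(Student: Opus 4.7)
The plan is to invoke the Yanchevskii exact sequence
$$SUK_1(\bar{D}, \bar{\tau}') \to SUK_1(A, \tau) \to PU(A, \tau) \to 1$$
displayed just before the theorem, and to show that both flanking terms vanish. The argument parallels that of Proposition~\ref{sk1d-cd2} for $SK_1$, but with one extra level of complete discrete valuation to descend through.

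For the vanishing of $SUK_1(\bar{D}, \bar{\tau}')$: the algebra $\bar{D}$ is central simple of period coprime to $\mathrm{char}(\kappa_0)$ with an involution of the second kind over the complete discretely valued field $Z(\bar{D})$, and is unramified there. Its residue algebra $\bar{D}_0$, together with the induced involution $\bar{\tau}'_0$ of the second kind, is central simple over the residue field of $Z(\bar{D})$; this residue field is an unramified finite extension of (a quadratic extension of) $\kappa_0$ and therefore has cohomological dimension at most $2$. Applying the Yanchevskii sequence once more to $(\bar{D}, \bar{\tau}')$, in combination with the unitary analog of Platonov's reduction $SUK_1(\bar{D}, \bar{\tau}') \simeq SUK_1(\bar{D}_0, \bar{\tau}'_0)$ valid for unramified algebras, reduces the claim to the vanishing of $SUK_1$ for a central simple algebra with involution of the second kind over a field of cohomological dimension $\leq 2$, which is the unitary analog of Suslin's theorem.

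For the vanishing of $PU(A, \tau)$: let $a \in P(A, \tau)$, so $\sigma(a)a^{-1} = b\bar{\tau}'(b)^{-1}$ for some $b \in Nrd_{\bar{D}}(\bar{D}^*) \subseteq Z(\bar{D})^*$. Set $L = Z(\bar{D})$, $\tau_1 = \bar{\tau}|_L$, so that $\tau_2 = \sigma\tau_1 = \bar{\tau}'|_L$. A direct computation from $\tau(\pi_D) = \pi_D$ yields $\sigma\tau_1 = \tau_1\sigma^{-1}$, establishing the dihedral hypothesis of Lemma~\ref{conorms}. Applying that lemma produces $a = c \cdot d$ with $c \in K^*$ and $d \in L_1^*$, where $L_1 = L^{\tau_1}$. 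Since $d \in L_1$ is $\bar{\tau}$-symmetric it lies in $\Sigma(\bar{D}, \bar{\tau})$, but $Nrd_{\bar{D}}(d) = d^n$ with $n = \deg(\bar{D}/Z(\bar{D}))$ rather than $d$, so one must exhibit $y \in \Sigma(\bar{D}, \bar{\tau})$ with $Nrd_{\bar{D}}(y) = d$. The cohomological dimension hypothesis on $\kappa_0$, applied at the level of $\bar{D}_0$ and lifted through unramifiedness of $\bar{D}$, supplies such a symmetric preimage, giving $a \in K^* \cdot Nrd_{\bar{D}}(\Sigma(\bar{D}, \bar{\tau}))$.

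The main obstacle is this last symmetrization step. Suslin--Merkurjev surjectivity of reduced norms over fields of cohomological dimension $\leq 2$ only produces a preimage in $\bar{D}^*$, and promoting it to an element of $\Sigma(\bar{D}, \bar{\tau})$ requires a unitary variant of reduced norm surjectivity for involutions of the second kind. The same ingredient is what closes up the iteration used to vanish $SUK_1(\bar{D}, \bar{\tau}')$.
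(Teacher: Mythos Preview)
Your approach is the same as the paper's, and your identification of the two flanking terms as the targets is exactly right. But the ``obstacle'' you flag at the end is not a genuine one---you already have the ingredients to close it, and the paper's proof shows how.

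The point you are missing is that the ``unitary variant of reduced norm surjectivity'' you seek is nothing other than the vanishing of $SUK_1(\bar D,\bar\tau)$ itself, combined with ordinary reduced norm surjectivity. Recall that by definition $SUK_1(\bar D,\bar\tau)=\Sigma'(\bar D,\bar\tau)/\Sigma(\bar D,\bar\tau)$, so its vanishing says exactly $\Sigma'=\Sigma$, and hence $Nrd_{\bar D}(\Sigma(\bar D,\bar\tau))=Nrd_{\bar D}(\Sigma'(\bar D,\bar\tau))=Z(\bar D)^{\bar\tau*}\cap Nrd_{\bar D}(\bar D^*)$. So once you know $d\in Z(\bar D)^{\bar\tau*}$ is a reduced norm from $\bar D$ at all, you get $d\in Nrd_{\bar D}(\Sigma(\bar D,\bar\tau))$ for free. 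Your argument for the vanishing of $SUK_1(\bar D,\bar\tau')$ applies verbatim to $\bar\tau$ (both are just \cite[Corollary~4.15]{Y1979}: $Z(\bar D)$ is a complete discretely valued field with residue of cohomological dimension $\le 2$), so this piece is already in hand.

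The one step you do omit is arranging $d$ to be a \emph{unit} in the valuation ring of $L_1=Z(\bar D)^{\bar\tau}$. Since $Z(\bar D)/K$ and $K/K_0$ are unramified, so is $L_1/K_0$; thus a parameter $\delta\in K_0$ is also a parameter in $L_1$, and any $d\in L_1^*$ factors as a unit times a power of $\delta\in K^*$. Absorb that power into the $K^*$ factor of $a=c\cdot d$. Now $d$ is a unit, $\bar D$ is unramified, and the residue field has cohomological dimension $\le 2$, so ordinary Merkurjev--Suslin surjectivity gives $d\in Nrd_{\bar D}(\bar D^*)$, and the previous paragraph finishes.
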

 
 \begin{proof}   
 Since $\bar{D}$ is unramified and cd$(\kappa_0) \leq 2$, 
 $SUK_1(\bar{D}, \bar{\tau})$ and $SUK_1(\bar{D}, \bar{\tau}')$  are  trivial. 
 
 Let $a \in  P(A, \tau)$. Then, by (\ref{conorms} ), 
 $a \in K^* Z(\bar{D})^{\bar{\tau}}$.  Let $\delta \in K_0$ be a parameter.
 Since $K/K_0$ and $Z(\bar{D})/K$ is unramified, $\delta \in Z(\bar{D}^{\bar{\tau}})$ is a parameter. 
 Since every element in $Z(\bar{D}^{\bar{\tau}}$ is a product of power of $\delta$ and a unit,
 we can write $ a= bc$ for some $c \in Z(\bar{D}^{\bar{\tau}}$ which is a unit. 
 Since $\bar{D}$  is unramified and cd$(\kappa_0) \leq 2$, $c \in \Sigma'(\bar{D}, \bar{\tau})$. 
Since $SUK_1(\bar{D}, \bar{\tau})$ is trivial, $  \Sigma'(\bar{D}, \bar{\tau}) =  \Sigma(\bar{D}, \bar{\tau})$
and hence $a$ is trivial in $PU(A, \tau)$.

 Hence,  by exact sequence   
 $$ SUK_1(\bar{D}, \bar{\tau}') \to SUK_1(A, \tau) \to PU(A, \tau)\to 1, $$
 $SUK_1(A, \tau)$ is trivial. 
 \end{proof}

  \section{The group $SUK_1(A, \tau)$ over a two dimensional  complete  fields }
   \label{sk1ud-2dim}

 Let $R_0$ be a complete regular local ring with maximal ideal $(\pi, \delta)$, residue field $\kappa$ and field of 
fraction $F_0$. Let $F/F_0$ be a quadratic  field  extension.  
Suppose that char$(\kappa) \neq 2$.
Then $F = F(\sqrt{d})$ for some $d \in F_0^*$. 
Suppose that $d = u a^2$ or $\pi  a^2$  or $\delta a^2$ for some $u \in R$ a unit and $a \in F^*$. 
Let $R$ be the integral closure of $R_0$ in $F$. Then 
$R$ is a complete regular local ring with maximal ideal $(\pi', \delta')$ for some primes $\pi'$ and 
$\delta'$ lying over $\pi$ and $\delta$ respectively.

 \begin{theorem}
 \label{sk1u-hyp2} Let $A/F$ be  a central simple algebra   with a $F/F_0$-involution $\tau$.
Suppose that 2 and  period  of $A$ are  coprime to char$(\kappa)$. Suppose that 
$A$ is unramified on $R$ except possibly at $(\pi')$ and $(\delta')$. 
Then the map 
 $$ SUK_1( A, \tau)(F_P)   \to SUK_1( (( A, \tau) \otimes F_{0\pi}))$$
 is onto.  
 \end{theorem}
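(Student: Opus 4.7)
The plan is to mimic the proof of Theorem \ref{surj-2dim} (the analogous surjectivity $SK_1(D)\to SK_1(D\otimes F_\pi)$), with Yanchevskii's exact sequence
$$SUK_1(\bar{D}_\pi,\bar{\tau}')\to SUK_1(A\otimes F_{0\pi},\tau)\to PU(A\otimes F_{0\pi},\tau)\to 1$$
recalled just before \ref{sk1u-dvr} playing the role previously played by $SK_1(D_1)\to SK_1(D)\to G/H\to 1$. First I would reduce to the division algebra case: since $SUK_1$ is insensitive to Morita equivalence (the reduced norm, $\Sigma$ and $\Sigma'$ are the same for $M_n(D)$ as for $D$), we may assume $A=D$ is division. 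Let $\Lambda_\pi$ be the maximal $\hat{R}_{(\pi')}$-order of $D\otimes F_{0\pi}$ and $\bar{D}_\pi$ its residue division algebra. Using Corollary \ref{index} and Lemma \ref{splitting-field}, I would construct a cyclic extension $E/F$ which is unramified on $R$ except possibly at $(\delta')$, which splits $D$, and whose completion $E\otimes F_{0\pi}$ is the maximal unramified lift of $Z(\bar{D}_\pi)$; since $\tau$ is an $F/F_0$-involution one may arrange, as in \S\ref{sk1ud-dvr}, that $E/F_0$ is dihedral so that $\tau$ induces a compatible order-two automorphism of $E$. Set $D_1=C_D(E)$: it is a central simple algebra over $E$ unramified on the integral closure of $R$ in $E$ except possibly at the prime lying over $(\delta')$, and therefore sits in the context of Theorem \ref{surj-2dim}.

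Given $\zeta\in SUK_1(D\otimes F_{0\pi},\tau)$, I would first lift its image in $PU(D\otimes F_{0\pi},\tau)$. This image is represented by some $a\in Z(\bar{D}_\pi)^*$ with $\sigma(a)a^{-1}=b\bar{\tau}'(b)^{-1}$ for a reduced norm $b$ from $\bar{D}_\pi$. Lemma \ref{conorms} places $a$ in $\kappa(\pi)^*Z(\bar{D}_\pi)^{\bar\tau}$, so after scaling by $\kappa(\pi)^*$ I may assume $a$ is the residue of a unit $\tilde{a}$ in the integral closure $S$ of $R$ in $E$, chosen with $N_{E/F}(\tilde{a})=1$. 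Set $\tilde{b}=\tilde{a}^{-1}\tilde\tau(\tilde{a})\in S^*$. By Proposition \ref{reduced-norm} applied to $D_1$, the element $\tilde{b}$ is a reduced norm from $D_1$, producing $z\in D_1^*$ with $Nrd_{D_1}(z)=\tilde{b}$ and hence $Nrd_D(z)=N_{E/F}(\tilde{b})=1$. A routine computation (mirroring the one in the proof of \ref{surj-2dim}) shows $z\in\Sigma'(D,\tau)$ and that its class in $SUK_1(D,\tau)\to SUK_1(D\otimes F_{0\pi},\tau)$ has the same $PU$-image as $\zeta$. After replacing $\zeta$ by the product with the inverse image of $z$, we are reduced to lifting an element coming from $SUK_1(\bar{D}_\pi,\bar{\tau}')$; using the commutative diagram of Yanchevskii sequences applied to $D_1$ (which is unramified at $(\pi')$), together with Theorem \ref{surj-2dim} for $SK_1(D_1)\to SK_1(D_1\otimes E_\pi)$ and the exact sequence $SK_1\to SUK_1\to F_0^*\cap Nrd(A^*)/Nrd(\Sigma(A,\tau))\to 1$ from \S\ref{sk1ud-dvr}, a diagram chase completes the lift.

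The main obstacle is the projective unitary conorm step: we must simultaneously lift the scalar $a$ from $Z(\bar{D}_\pi)$ to an element of $E$ that still satisfies the dihedral cocycle identity $\sigma(\tilde{a})\tilde{a}^{-1}=\tilde{b}\tilde\tau(\tilde{b})^{-1}$, and lift the residual reduced norm $\tilde{b}$ to an actual reduced norm from $D_1$ which is compatible with the dihedral structure on $E/F_0$. Lemma \ref{conorms} is precisely what makes the first half feasible, and Proposition \ref{reduced-norm} together with Lemma \ref{norm} is what makes the second half feasible; everything else is a diagram chase over the residue field combined with the surjectivity already established in \S\ref{sk1d-2dim}.
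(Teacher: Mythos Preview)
Your approach is genuinely different from the paper's, and considerably more complicated than necessary. The paper does not use Yanchevskii's residue exact sequence $SUK_1(\bar D_\pi,\bar\tau')\to SUK_1\to PU\to 1$ at all here. Instead it uses the \emph{other} exact sequence recalled at the beginning of \S\ref{sk1ud-dvr}, valid over any base field:
\[
SK_1(A)\to SUK_1(A,\tau)\to \frac{F_0^*\cap Nrd_A(A^*)}{Nrd_A(\Sigma(A,\tau))}\to 1.
\]
Writing this for $F_0$ and for $F_{0\pi}$ gives a commutative ladder with exact rows. The left vertical map is surjective by Theorem~\ref{surj-2dim}; the right vertical map is surjective because any $a\in F_{0\pi}^*$ can be written as $w\pi^r\delta^s b^n$ with $w\in R_0^*$, so modulo $n$th powers it comes from $F_0$, and if it is a reduced norm over $F_{0\pi}$ then Proposition~\ref{reduced-norm} makes it one over $F$. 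A one-line diagram chase finishes. That is the whole proof.

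Your route, by contrast, requires building a dihedral extension $E/F_0$ lifting $Z(\bar D_\pi)$ in the two-dimensional setting (the paper never establishes this; \S\ref{sk1ud-dvr} only treats the complete DVR case), and then lifting classes from $SUK_1(\bar D_\pi,\bar\tau')$ all the way to $SUK_1(D,\tau)$ over the two-dimensional field $F$, for which no analogue of the residue isomorphism is available. The final ``diagram chase'' you sketch ends up invoking the very $SK_1\to SUK_1\to(\text{norm quotient})$ sequence that, used directly, makes the entire projective-conorm detour unnecessary. So while the ingredients you cite (Lemma~\ref{conorms}, Proposition~\ref{reduced-norm}) are relevant to the paper elsewhere, here they are not needed, and the steps you label ``routine'' (the dihedral lift, the compatibility of $\tau$ with $E$, the lift from the residue $SUK_1$) are genuine gaps that the simpler argument avoids entirely.
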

 
 \begin{proof}   We have the following commutative diagram  $$
 \begin{array}{cccccc} 
SK_1(A) & \to &  SUK_1(A, \tau) &  \to &  \frac{F_0^* \cap Nrd_A(A^*)}{Nrd_A( \Sigma(A, \sigma)) }& \to 1 \\
\downarrow & & \downarrow & & \downarrow \\
SK_1(A \otimes F_{0\pi} ) & \to &  SUK_1((A, \tau) \otimes F_{0\pi})  & \to &  \frac{ F_{0\pi}^* \cap Nrd_A((A\otimes F_{0\pi})^*)}{
 Nrd_A( \Sigma(A \otimes F_{0\pi}, \sigma))} & \to 1 
 \end{array}
$$
with   exact rows (cf. \S \ref{sk1ud-dvr}).  

Let $a \in F_{0\pi}^*$.  Let $n = ind(A)$. Then $a = w \pi^r\delta^s b^n$ for some 
$w \in R_0$ a unit, $b \in F_0$ and $r, s \in \Z$ (cf., \cite[Remark 7.1]{PS2022}). 
Since $b^n \in Nrd(\Sigma((A\otimes F_{0\pi}), \sigma))$, we have $a =  w \pi^r \delta^s
 \in F_{0\pi}^* \cap Nrd_A((A\otimes F_{0\pi})^*)/ Nrd_A( \Sigma(A \otimes F_{0\pi}, \sigma)).$
Since $w \pi^r \delta^s \in F_0$ and $w \pi^r \delta^s$ is a reduced from $A\otimes F_{0\pi}$, 
$w \pi^r \delta^s $ is a reduced norm from $A$ (\ref{reduced-norm}). Hence the right  vertical map in the above diagram is onto.

Since the first vertical map is onto (\ref{surj-2dim}), the middle vertical map is onto. 
 \end{proof}
 
  \begin{theorem} 
 \label{2dim-sk1u-cd2}    
  Let $A_0$ be a central simple algebra over $F$ which is unramified on $R$. 
 and $E/F$ a cyclic extension    which is unramified on $R$.  Let $\sigma$ be a generator of Gal$(E/F)$.
Let $A =  A_0 + (E, \sigma, \pi^i\delta^j)$ for some $0 \leq i, j \leq \ell^n-1$. 
Suppose that cd$(\kappa) \leq 2$ and per$(A)$ is coprime to char$(\kappa)$.
If $A$ admits a $F/F_0$-involution $\tau$, then 
 either $SUK_1(A \otimes F_{0 \pi}, \tau)$ or $SUK_1(A\otimes F_{0\delta}, \tau)$ is trivial. 
In particular   the map 
$$SUK_1(A, \tau) \to SUK_1(A\otimes F_{0\pi}, \tau) \times SUK_1(A\otimes F_{0\delta}, \tau)$$ is surjective. 
\end{theorem}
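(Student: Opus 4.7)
The plan is to follow the proof of Theorem~\ref{2dim-sk1-cd2} almost verbatim, replacing the appeal to Theorem~\ref{sk1d-cd2} by Theorem~\ref{sk1u-dvr} for the unitary statement, and then deducing the product surjectivity from this triviality together with the surjectivity result Theorem~\ref{sk1u-hyp2}.

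The first step I would carry out is the purely algebraic manipulation of the cyclic factor $(E,\sigma,\pi^i\delta^j)$ exactly as in the proof of Theorem~\ref{2dim-sk1-cd2}: factor out a common power $\ell^d$ from $(i,j)$ by passing to the subfield $E^{\sigma^{\ell^{n-d}}}$, and then replace $\sigma$ by a suitable power to normalize, by symmetry in $\pi$ and $\delta$, to $i=1$. The Brauer class of $A$ is preserved throughout, so the existence of the $F/F_0$-involution $\tau$ is unaffected. The symmetry exchange is to be chosen compatibly with the ramification of $F/F_0$ dictated by $d$; since $d\in\{ua^2,\pi a^2,\delta a^2\}$ for some unit $u$ and $a\in F^{\ast}$, at least one of the primes $(\pi)$, $(\delta)$ satisfies the ``doubly unramified'' residue-tower condition required by Theorem~\ref{sk1u-dvr}, and we arrange for that prime to play the role of $\pi$ in what follows.

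Second, with $i=1$ and $A_0$, $E/F$ unramified on $R$, the residue computation from the proof of Theorem~\ref{2dim-sk1-cd2} applies verbatim: $A\otimes F_{0\pi}$ is a division algebra (Lemma~\ref{division}) with $Z(\bar D_\pi)$ equal to the function field of $S/(\pi)$ and $\bar D_\pi$ Brauer equivalent to $(\mathcal{A}_0\otimes_R S/(\pi))\otimes Z(\bar D_\pi)$, so both $\bar D_\pi$ and $Z(\bar D_\pi)/\kappa(\pi)$ are unramified. Together with $\mathrm{cd}(\kappa)\leq 2$, the good-characteristic hypothesis on $\mathrm{per}(A)$, and the unramified quadratic structure of $F\otimes F_{0\pi}/F_{0\pi}$ arranged in Step~1, every hypothesis of Theorem~\ref{sk1u-dvr} is met for $(A\otimes F_{0\pi},\tau)$, yielding $SUK_1(A\otimes F_{0\pi},\tau)=1$. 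The ``In particular'' statement now follows at once: Theorem~\ref{sk1u-hyp2} applied at $(\delta)$ gives surjectivity of $SUK_1(A,\tau)\to SUK_1(A\otimes F_{0\delta},\tau)$, and composing this with the trivial $(\pi)$-factor produces the surjectivity of the map to the product.

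The main obstacle I anticipate is the compatibility of the two normalizations in Step~1: the reduction to $i=1$ versus $j=1$ is dictated by which of $i,j$ is coprime to $\ell$ after factoring, whereas the ``doubly unramified'' side of $F/F_0$ is dictated by $d$. In the delicate configuration where the only admissible normalization of the cyclic algebra places the exponent $1$ at a prime whose residue tower is ramified (for instance, $d=\pi a^2$ with $\ell\mid j$ forcing reduction to $i=1$), Theorem~\ref{sk1u-dvr} does not apply directly. Resolving this will require either a mild extension of Theorem~\ref{sk1u-dvr} permitting a ramified residue quadratic extension, or an argument showing that the existence of an $F/F_0$-involution on $A$ precludes this mismatched configuration.
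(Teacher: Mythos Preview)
Your approach is essentially identical to the paper's: reduce to $i=1$ or $j=1$ exactly as in Theorem~\ref{2dim-sk1-cd2}, observe that with $i=1$ the centre $Z(\bar D_\pi)$ is the residue field of $E$ at $\pi$ (hence unramified over $\kappa(\pi)$) and $\bar D_\pi$ is Brauer equivalent to $\bar D_0\otimes Z(\bar D_\pi)$ (hence unramified), and then invoke Theorem~\ref{sk1u-dvr}. The paper's proof is in fact shorter than yours: it simply writes ``Say $i=1$'' and applies Theorem~\ref{sk1u-dvr} with no further justification, and does not spell out the ``In particular'' clause at all.

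The compatibility obstacle you flag in your final paragraph---that the choice of which exponent becomes $1$ is forced by divisibility, while the doubly-unramified hypothesis of Theorem~\ref{sk1u-dvr} on $F\otimes F_{0\pi}/F_{0\pi}$ and on its residue extension is forced by $d$---is \emph{not} addressed in the paper's proof either. The paper does not check that the quadratic extension is unramified at both levels of the residue tower before citing Theorem~\ref{sk1u-dvr}. So you have matched the paper's argument and, in addition, isolated a point that the paper leaves implicit. Your worry is legitimate: when $d=\pi a^2$ or $d=\delta a^2$, neither prime yields a doubly-unramified residue tower in the sense required by the stated hypotheses of Theorem~\ref{sk1u-dvr}, so either a variant of that theorem covering the ramified-residue case is needed, or one must argue separately (for instance, that in the ramified case the unitary group degenerates in a way that reduces to the $SK_1$ statement already handled).
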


 \begin{proof}  As in the proof of  (\ref{2dim-sk1-cd2}),  we can assume that 
 $i = 1$ or $j = 1$.  Say $i = 1$.
  Let $D$ and $D_0$ be the division algebras Brauer equivalent to $A$ and $A_0$ respectively. 
  Then $Z(\bar{D \otimes F_\pi})$ is the residue field of $E$ at $ \pi$ and hence 
 unramified over the residue field of $K$ at $\pi$.
  Further  $\bar{D\otimes F_\pi}$ is Brauer equivalent to $\bar{D}_0 \times Z(\bar{D \otimes F_\pi})$ and hence
  unramified. Therefore, by (\ref{sk1u-dvr}), $SUK_1(D\otimes F_\pi, \tau)$ is trivial. 
 \end{proof}

 \section{Groups of Type $^1A_n$} 
  \label{type-inner}

\begin{theorem}
\label{sk1-requiv} Let $T$ be a complete discrete valuation ring with residue field $\kappa$ and field of 
fraction $K$. Let $F$ be the function field of a curve over $K$. 
Let $A/F$ be  a central simple algebra  of  period  coprime to char$(\kappa)$.
Let $\XX_0$ be a regular proper model of $F$ such that  the union of 
the  ramification locus of $A$ and the closed fibre  $X_0$
is a union of regular curves with normal crossings. 
Let $\PP_0$ be a finite set of closed points of $\XX$ containing all the singular points of $X_0$
and at least one closed point from each irreducible component of $X_0$.  Let $\UU_0$ be the set
of irreducible components of $X \setminus \PP_0$ and $\BB_0$ the set of branches with respect to $\PP_0$.
Then 
 $$\Sha_{div}(F,  SL_1(A)) \simeq  \prod_{U \in \UU_0} SK_1(A)(F_U) \,\backslash \prod_{\wp \in \BB_0} 
 SK_1(A)(F_\wp) \,/ \prod_{P \in \PP_0} SK_1(A)(F_{P}),$$
\end{theorem}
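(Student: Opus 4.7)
The plan is to apply Corollary \ref{dvrsha-requiv} to $G = SL_1(A)$ and then identify $SL_1(A)(L)/R$ with $SK_1(A \otimes_F L)$ for each overfield $L$ appearing in the resulting double coset formula.

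First, I would reduce to the strongly isotropic case. Since $\mathrm{Nrd}(A^*) = \mathrm{Nrd}(M_n(A)^*)$, replacing $A$ by $M_n(A)$ for $n$ large does not alter $H^1(L, SL_1(A))$ nor $SK_1(A \otimes_F L)$ over any overfield $L$ (as recalled in Section \ref{prel}), but it ensures $SL_1(A)$ is isotropic, hence strongly isotropic, over every such $L$. The group is semisimple simply connected of type $A$, so it fits the hypotheses of Corollary \ref{dvrsha-requiv} provided we verify the local injectivity and local surjectivity hypotheses.

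For the local injectivity hypothesis (\ref{hyp1}) at a closed point $P \in \XX_0$, the completion $\hat{\OO}_{\XX_0, P}$ is a two dimensional complete regular local ring, and the normal crossing condition supplies regular parameters $\pi_P, \delta_P$ generating its maximal ideal such that $A \otimes F_P$ is unramified on $\hat{\OO}_{\XX_0, P}$ away from $(\pi_P)$ and $(\delta_P)$. Proposition \ref{sl1-hyp1} applied over $F_P$ then gives the required injectivity. For the local surjectivity hypothesis (\ref{hyp2}), I would use the classical identification $SL_1(B)(L)/R \simeq SK_1(B \otimes_F L)$, under which the required surjectivity of $SL_1(A)(F_P) \to SL_1(A)(F_{P,\eta})/R$ becomes the surjectivity of $SK_1(A \otimes F_P) \to SK_1(A \otimes F_{P,\eta})$. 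Since $F_{P,\eta}$ is the completion of $F_P$ at the regular prime of $\hat{\OO}_{\XX_0,P}$ cut out by $\eta$ (which is one of $(\pi_P)$, $(\delta_P)$ by our choice of parameters), this is precisely Theorem \ref{surj-2dim} applied to the ring $\hat{\OO}_{\XX_0,P}$.

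With both hypotheses verified, Corollary \ref{dvrsha-requiv} yields
\[
\Sha_{div}(F, SL_1(A)) \simeq \prod_{U \in \UU_0} SL_1(A)(F_U)/R \,\backslash\, \prod_{\wp \in \BB_0} SL_1(A)(F_\wp)/R \,/\, \prod_{P \in \PP_0} SL_1(A)(F_P)/R,
\]
and replacing each factor $SL_1(A)(L)/R$ by $SK_1(A)(L)$ produces the claimed isomorphism. The substantive step is the verification of the local surjectivity hypothesis, which rests on Theorem \ref{surj-2dim} of Section \ref{sk1d-2dim} (itself built on the delicate exact sequence of Proposition \ref{exact} relating $SK_1$ to reduced norm classes in the residue division algebra, together with the reduction via a well chosen cyclic splitting extension in the ramified case). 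Everything else is a formal application of the machinery developed in Sections \ref{gen} and \ref{isotropic}.
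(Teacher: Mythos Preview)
Your proposal is correct and follows essentially the same route as the paper: reduce to the isotropic case by passing to $M_n(A)$, verify hypothesis (\ref{hyp1}) via Proposition~\ref{sl1-hyp1} and hypothesis (\ref{hyp2}) via Theorem~\ref{surj-2dim}, then apply Corollary~\ref{dvrsha-requiv}. You add the explicit identification $SL_1(A)(L)/R \simeq SK_1(A \otimes_F L)$, which the paper leaves implicit in passing from the $G(L)/R$ formula of Corollary~\ref{dvrsha-requiv} to the $SK_1$ formula in the statement, but this is a cosmetic elaboration rather than a different argument.
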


\begin{proof}  By (\ref{sl1-hyp1}),  the group $ SL_1(A)$ satisfies the hypothesis (\ref{hyp1}) and 
by (\ref{surj-2dim}), $SL_1(A)$ satisfies the hypothesis (\ref{hyp2}). 
Since $H^1(L, SL_1(A)) \simeq H^1(L, SL_1(M_m(A))$ for all extensions $L/F$ and $m \geq 1$, 
we assume that $A$ is not division. Hence the group $SL_1(A)$ is isotorpic.
Thus the theorem follows from (\ref{dvrsha-requiv}). 
\end{proof}

\begin{cor}
\label{good-reduction-sl1} Let $T$ be a complete discrete valuation ring with residue field $\kappa$ and field of 
fraction $K$.   Let $X$ be a smooth projective curve over $K$ which has a good reduction. 
Let $F$ be the function field  a curve over $K$ and $\XX$ a smooth projective model of $X$.   
Let $A$ be a central simple algebra over $F$ which is unramified on $\XX$ except possibly at the special fibre. 
Then $\Sha_{div}(F, SL_1(A)) = \{ 1 \}$.
\end{cor}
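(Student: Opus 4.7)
The strategy is to apply Theorem \ref{sk1-requiv} directly to $\XX$ and then show that the resulting double coset space collapses, using the local surjectivity of Theorem \ref{surj-2dim}. Since $\XX$ is smooth over $T$, the special fibre $X_0$ is a smooth projective curve over $\kappa$; its irreducible components are regular and pairwise disjoint, so the union of the ramification locus of $A$ with $X_0$ is simply $X_0$, which is trivially a union of regular curves with normal crossings. Hence $\XX$ meets the hypothesis of Theorem \ref{sk1-requiv}.

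Since $X_0$ has no singular points, I take $\PP_0 = \{P_1, \dots, P_r\}$ with one closed point $P_i$ on each connected component $Y_i$ of $X_0$. Then $\UU_0 = \{Y_i \setminus \{P_i\}\}_i$ and, because each $Y_i$ is smooth at $P_i$, there is exactly one branch $\wp_i$ at $P_i$ along $Y_i \setminus \{P_i\}$, so $\BB_0 = \{\wp_1, \dots, \wp_r\}$. At each such $P_i$, the complete local ring $R_i := \hat{\OO}_{\XX, P_i}$ is regular of dimension two with regular system of parameters $(t, \pi_i)$, where $t \in T$ is a uniformizer (locally cutting out $X_0$) and $\pi_i$ is any transverse regular parameter; the branch $\wp_i$ is then precisely the completion of $F_{P_i}$ along $(t)$. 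Purity of the Brauer group on the regular scheme $\XX$ extends $A$ to an Azumaya algebra over $\XX \setminus X_0$; base changing along $\Spec R_i[1/t] \to \XX \setminus X_0$ shows that $A \otimes F_{P_i}$ is unramified on $R_i$ at every height-one prime other than $(t)$. Therefore Theorem \ref{surj-2dim} applies with its $\pi$ taken to be $t$ and its $\delta$ taken to be $\pi_i$, yielding the surjection
\begin{equation*}
SK_1(A)(F_{P_i}) \twoheadrightarrow SK_1(A)(F_{\wp_i}).
\end{equation*}

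By Theorem \ref{sk1-requiv} we have
\begin{equation*}
\Sha_{div}(F, SL_1(A)) \simeq \prod_i SK_1(A)(F_{U_i}) \,\backslash\, \prod_i SK_1(A)(F_{\wp_i}) \,/\, \prod_i SK_1(A)(F_{P_i}),
\end{equation*}
and since all groups involved are abelian and the right action of $\prod_i SK_1(A)(F_{P_i})$ is already surjective factor-by-factor by the previous step, the double coset space is trivial. Hence $\Sha_{div}(F, SL_1(A)) = \{1\}$. The only non-routine point is the Brauer-purity verification that $A \otimes F_{P_i}$ is unramified on $R_i$ away from $(t)$; once this is in hand, the rest is a direct assembly of the two quoted theorems.
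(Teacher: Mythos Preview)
Your proof is correct and follows essentially the same route as the paper: apply Theorem \ref{sk1-requiv}, use that smoothness of the special fibre forces a bijection between $\PP_0$ and $\BB_0$, and collapse the double coset via the local surjectivity $SK_1(A)(F_P) \twoheadrightarrow SK_1(A)(F_\wp)$. Your citation of Theorem \ref{surj-2dim} is in fact the appropriate one here, since $A$ may be ramified along the branch prime $(t)$ (the paper cites Lemma \ref{surj1-2dim}, which strictly speaking handles the opposite direction), and your purity justification for unramifiedness on $R_i[1/t]$ makes explicit a step the paper leaves implicit.
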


\begin{proof} Since $\XX$ is a smooth model, the special fibre is a smooth irreducible curve. 
Let  $\PP_0$ be any nonempty finite subset of closed points of $\XX$.
Then, by \ref{sk1-requiv}, we have 
 $$\Sha_{div}(F,  SL_1(A)) \simeq  \prod_{U \in \UU_0} SK_1(A)(F_U) \,\backslash \prod_{\wp \in \BB_0} 
 SK_1(A)(F_\wp) \,/ \prod_{P \in \PP_0} SK_1(A)(F_{P}).$$
 Since there are no singular points on the special fibre, there is a bijection 
 between $\PP_0$ and the branches $\BB_0$.
 Since $SK_1(A)(F_P) \to SK_1(A)(F_\wp)$ is onto (\ref{surj1-2dim}) the branch $\wp$ at $P$, for all $P \in \PP_0$,
 it follows that $\Sha_{div}(F, SL_1(A)) = \{1 \}$.  
\end{proof}

\begin{cor}
\label{good-reduction-sl11} Let $T$ be a complete discrete valuation ring with residue field $\kappa$ and field of 
fraction $K$. Let $X$ be a smooth projective curve over $K$ which has a good reduction. 
Let $F$ be the function field $X$. Lee Let $A$ be a central simple algebra over $K$.
Then $\Sha_{div}(F, SL_1(A)) = \{ 1 \}$.
\end{cor}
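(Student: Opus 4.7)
The strategy is to reduce to Corollary~\ref{good-reduction-sl1} by verifying its hypothesis for the base change $A_F := A \otimes_K F$, regarded as a central simple algebra over $F$.

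First I would fix a smooth projective model $\XX$ of $X$ over $T$, which exists by the good reduction hypothesis; its special fibre is a smooth irreducible curve over $\kappa$. The codimension one points of $\XX$ split into horizontal ones, whose image in $\Spec(T)$ is the generic point, and vertical ones, which are precisely the components of the special fibre.

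The key observation is that $A_F$ is automatically unramified at every horizontal codimension one point $\eta$ of $\XX$. Indeed, at such a point a uniformizer $t$ of $T$ is a unit in $\OO_{\XX,\eta}$, so the structure map $T \to \OO_{\XX,\eta}$ factors through $K = T[t\inv]$. Tensoring $A$ with $\OO_{\XX,\eta}$ over $K$ then produces an Azumaya $\OO_{\XX,\eta}$-algebra whose generic fibre is $A_F$, witnessing the unramifiedness of $A_F$ at $\eta$. Consequently $A_F$ is unramified on $\XX$ except possibly along the special fibre.

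Applying Corollary~\ref{good-reduction-sl1} to $A_F$ then yields $\Sha_{div}(F, SL_1(A)) = \{1\}$. The argument is essentially a bookkeeping reduction to the previous corollary, so no step presents a real obstacle; the only point requiring attention is the factorization $T \to K \to \OO_{\XX,\eta}$ at horizontal points, which is immediate from the fact that $t$ is invertible in the horizontal local rings.
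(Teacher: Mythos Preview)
Your proposal is correct and follows essentially the same approach as the paper: pick a smooth model $\XX$ afforded by good reduction, observe that $A_F$ is unramified away from the special fibre because $A$ comes from $K$, and invoke Corollary~\ref{good-reduction-sl1}. The paper's proof simply asserts the unramifiedness at horizontal points without the explicit justification you give via the factorization $T \to K \to \OO_{\XX,\eta}$.
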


\begin{proof} Since $X$ has a good reduction, there exists a smooth proper model $\XX$  of $F$.
Since $A$ is defined over $K$, $A$ is unramified on $\XX$ except possibly at the special fibre of $\XX$.
Hence, by (\ref{good-reduction-sl1}),  $\Sha_{div}(F, G) = \{1\}$. 
\end{proof}

\section{Local-global principle for Unitary groups} 
 \label{lgp-unitary}

  Let $T$ be a complete discrete valuation ring with residue field $\kappa$ and field of 
fraction $K$. Let $F$ be the function field of a curve over $K$. 
Let $A/F$ be  a central simple algebra   with an involution $\tau$  of any kind.
Let $F_0 = F^\tau$. 
Suppose that $2per(A)$ is  coprime to char$(\kappa)$. 

\begin{prop}  
\label{unitary}    $\Sha_{div}(F_0, U(A, \tau))  = \{ 1 \}$.
 \end{prop}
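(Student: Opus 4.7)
Proof plan. The unitary group $G = U(A, \tau)$ is a rational connected linear algebraic group over $F_0$ (rationality via the Cayley parametrization; connectedness holds for $\tau$ of second kind or symplectic, and the orthogonal case can be dealt with by working with the identity component $SO(A,\tau)$ and the discriminant class separately). By the Harbater--Hartmann--Krashen local-global principle for rational groups (\cite[Theorem 3.7]{HHK1}), $G$ satisfies the local factorization property (Hypothesis \ref{hyp3}), so Theorem \ref{pointsha} gives $\Sha_{X_0}(F_0, G) = \{1\}$ for any regular proper model $\XX_0$ of $F_0$.

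By Proposition \ref{exact-seq} it remains to show that $\Sha_{div}(F_{0,P}, G) = \{1\}$ at each closed point $P$ of $\XX_0$. I would choose $\XX_0$ so that the ramification locus of $(A, \tau)$ together with the special fibre forms a normal-crossings divisor. At each closed point $P$ with regular parameters $\pi, \delta$ of $\hat{\OO}_{\XX_0, P}$, $(A, \tau)$ is then unramified outside $(\pi)$ and $(\delta)$, and the task becomes to establish the local injectivity hypothesis \ref{hyp1} for $G$: the map $H^1_{\pi\delta}(F_{0,P}, G) \to \prod_{\nu} H^1(F_{0,P,\nu}, G)$ has trivial kernel, which is the analogue for the unitary group of Proposition \ref{sl1-hyp1}.

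For this step, apply Proposition \ref{purity} to lift $(A, \tau)$ to an Azumaya algebra with involution $(\AA, \tau)$ over the integral closure $S_{\pi\delta}$ of $R_{\pi\delta} = \hat{\OO}_{\XX_0,P}[\frac{1}{\pi\delta}]$, and analyse $H^1_{\et}(R_{\pi\delta}, U(\AA, \tau))$ via the short exact sequence
\[ 1 \to SU(\AA, \tau) \to U(\AA, \tau) \to R^1_{S_{\pi\delta}/R_{\pi\delta}} \Gm \to 1 \]
together with Hilbert 90 (Proposition \ref{pid}). Any class then lifts to an element $a \in S^*_{\pi\delta}$ of the form $w\pi_1^r\delta_1^s$; local triviality at $(\pi)$ supplies $\theta_\pi \in F_{0,P,\pi}^*$ with $a\theta_\pi \in Nrd(A \otimes F_{0,P,\pi})$, and Lemma \ref{scaling} produces a global $\theta \in F_0$ such that $a\theta \in Nrd(A)$, forcing $a\tau(a)^{-1}$ into the image of $U(A, \tau)(F_{0,P})$ and hence $\zeta$ to be trivial.

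The main obstacle will be the case analysis by the type of $\tau$ and by the ramification of $F/F_0$ on $\hat{\OO}_{\XX_0, P}$; the ramified second-kind cases rely on the two-dimensional structure theorems of Sections \ref{field-extns} and \ref{csa-2dim}, in particular Lemma \ref{conorms}, to convert local conorm data into global conorm data. Once the local injectivity is in hand, Corollary \ref{dvrsha} closes out $\Sha_{div}(F_0, G) = \{1\}$.
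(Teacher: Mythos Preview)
Your step 3 contains a genuine error: the argument you sketch is the proof of local injectivity for $SU(A,\tau)$ (this is exactly Proposition~\ref{su-hyp11} in the paper), not for $U(A,\tau)$. In the long exact sequence attached to $1 \to SU \to U \to R^1_{S_{\pi\delta}/R_{\pi\delta}}\Gm \to 1$, an element of $R^1\Gm(R_{\pi\delta})$ (represented by some $a\tau(a)^{-1}$ with $a\in S_{\pi\delta}^*$) maps \emph{forward} into $H^1(SU)$, and its image in $H^1(U)$ is automatically trivial. A class $\zeta \in H^1_{\pi\delta}(F_{0,P},U)$ does \emph{not} lift to such an $a$; rather it maps forward to $H^1(R_{\pi\delta},R^1\Gm)$, and even if that image vanished you would only learn that $\zeta$ lifts to $H^1(SU)$, whose local injectivity (Proposition~\ref{su-hyp1}) in turn already presupposes the result you are trying to prove (via Corollary~\ref{unitary-hyp1}). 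So the scheme is both misapplied and circular as stated.

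The paper's proof is completely different and much shorter: it interprets $H^1(F_0,U(A,\tau))$ as isomorphism classes of involutions $\tau'$ on $A$, so that a class in $\Sha_{div}$ corresponds to an involution $\tau'$ with $(A,\tau')\otimes F_{0,\nu}\simeq(A,\tau)\otimes F_{0,\nu}$ for all divisorial $\nu$. It then invokes the Gille--Parimala local-global principle for twisted flag varieties (\cite{GP}) to conclude $(A,\tau')\simeq(A,\tau)$ globally. No two-dimensional local analysis for $U$ is needed; the heavy lifting is outsourced to \cite{GP}. Your step~1 (rationality plus HHK giving $\Sha_X=\{1\}$) is fine, but to close the gap between $\Sha_X$ and $\Sha_{div}$ for $U$ you need a separate input of this kind, not the $SU$ argument.
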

  \begin{proof}   Let $\zeta \in H^1(F_0, U(A, \tau))$.  Since $H^1(F_0, U(A, \tau))$  is in bijection with
 the set of isomorphism classes of   involutions on  $A$, $\zeta$ defines an involution $\tau'$ such that 
 $(A, \tau')  \otimes F_\nu \simeq  (A, \tau) \otimes F_\nu$ for all $\nu \in \Omega_F$.
 Hence,  by  (\cite{GP}),  $(A,\tau') \simeq (A, \tau)$. Thus $\Sha_{div}(F_0, U(A, \tau))  = \{ 1 \}$. 
 
 \end{proof}
 
\begin{cor}
\label{unitary-hyp1}  Let $\XX_0$ be a regular proper model of $F_0$. 
Then for every closed point $P$ of $\XX_0$,  $\Sha_{div}(F_{0P}, U(A,\tau)) = \{ 1\}$.
 \end{cor}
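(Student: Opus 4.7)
The plan is to mirror the argument of Proposition \ref{unitary} in the local setting of $F_{0P}$. First I would identify $H^1(F_{0P}, U(A \otimes F_{0P}, \tau))$ with the set of isomorphism classes of involutions on $A \otimes_{F_0} F_{0P}$ of the same kind as $\tau$. A class $\zeta \in \Sha_{div}(F_{0P}, U(A,\tau))$ is then represented by an involution $\tau'$ on $A \otimes_{F_0} F_{0P}$ satisfying $(A \otimes F_{0P}, \tau') \otimes_{F_{0P}} (F_{0P})_\nu \simeq (A \otimes F_{0P}, \tau) \otimes_{F_{0P}} (F_{0P})_\nu$ for every divisorial valuation $\nu$ of $F_{0P}$, and the goal is to deduce $(A \otimes F_{0P}, \tau') \simeq (A \otimes F_{0P}, \tau)$ over $F_{0P}$ itself.

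The second step is to appeal to the local-global principle of Gille--Parimala \cite{GP}. The $F_{0P}$-variety parametrizing involutions on $A \otimes F_{0P}$ of the prescribed kind is a projective homogeneous space under the adjoint group of $U(A,\tau)\otimes F_{0P}$, so the divisorial triviality of $\zeta$ translates into the assertion that this projective homogeneous space has rational points at every divisorial completion of $F_{0P}$. Invoking \cite{GP} then yields a rational point over $F_{0P}$, which gives the desired isomorphism $(A \otimes F_{0P}, \tau') \simeq (A \otimes F_{0P}, \tau)$ and hence $\zeta = 1$.

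The main obstacle is that \cite{GP} is stated for function fields of curves over complete discretely valued fields, whereas $F_{0P}$ is the field of fractions of the two-dimensional complete regular local ring $\hat{\OO}_{\XX_0, P}$. I would bridge this gap by exhibiting $F_{0P}$ as a function field of the required form: taking a uniformizer $\pi \in T$ contained in the maximal ideal of $\hat{\OO}_{\XX_0, P}$, the $\pi$-adic completion of $\hat{\OO}_{\XX_0, P}[1/\delta_P]$ is a complete discrete valuation ring $T'$ such that $F_{0P}$ is the function field of a regular proper curve over $\text{Frac}(T')$ obtained from iterated blow-ups of $\text{Spec}(\hat{\OO}_{\XX_0, P})$. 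Under this identification the divisorial valuations of $F_{0P}$ correspond to the divisorial valuations of the curve over $\text{Frac}(T')$, so \cite{GP} applies and the corollary follows.
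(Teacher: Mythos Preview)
Your approach has a genuine gap in the third step. The field $F_{0P}$ is the fraction field of the two-dimensional complete regular local ring $\hat{\OO}_{\XX_0,P}$, and such a field is \emph{not} a semi-global field in the sense required by \cite{GP}. Your proposed construction does not remedy this: the $\pi$-adic completion of $\hat{\OO}_{\XX_0,P}[1/\delta_P]$ is indeed a complete discrete valuation ring $T'$, but its fraction field is the completion $(F_{0P})_{\pi}$, not $F_{0P}$ itself. Thus $F_{0P}$ is not the function field of a curve over $\mathrm{Frac}(T')$, and the hypothesis of \cite{GP} is not met. (Concretely, in equal characteristic one has $\hat{\OO}_{\XX_0,P}\simeq\kappa(P)[[\pi_P,\delta_P]]$, and $\mathrm{Frac}(\kappa(P)[[\pi_P,\delta_P]])$ is not finitely generated of transcendence degree one over any complete discretely valued subfield.)

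The paper's argument avoids this difficulty entirely and is much shorter. It does not attempt a direct local analysis of $F_{0P}$ at all; instead it invokes the exact sequence of Proposition~\ref{exact-seq},
\[
1 \to \Sha_{X}(F_0,U(A,\tau)) \to \Sha_{div}(F_0,U(A,\tau)) \to \prod_{P\in X_{(0)}}{}^{\!\!\prime}\,\Sha_{div}(F_{0P},U(A,\tau)) \to 1,
\]
and uses the global vanishing $\Sha_{div}(F_0,U(A,\tau))=\{1\}$ already established in Proposition~\ref{unitary} (where \cite{GP} is legitimately applied to the genuine semi-global field $F_0$). Surjectivity of the right-hand map then forces each $\Sha_{div}(F_{0P},U(A,\tau))$ to be trivial. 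In other words, the local statement is deduced from the global one via~\ref{exact-seq}, rather than proved from scratch.
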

 
 \begin{proof} Let  $X_0$ be the  closed fibre of $\XX_0$.
 Then, by (\ref{exact-seq}), we have the following exact sequence 
 $$1 \to \Sha_X(F_0, U(A, \tau)) \to \Sha_{div}(F_0, U(A,\tau))  \to \prod_{P\in X_{(0)}}^{\prime} \Sha_{div}(F_{0P}, U(A, \tau)) \to 1.$$ 
  By  (\ref{unitary}),   $\Sha_{div}(F_0, U(A,\sigma)) = \{ 1\}$.
  Hence, we have $\Sha_{div}(F_{0P}, U(A, \tau))  = \{ 1 \}$ for every closed point 
$P$ of $\XX_0$.   
\end{proof}

Suppose $\tau$ is of second kind.  Write $F = F_0(\sqrt{d})$ for some $d \in F_0^*$. 
 Let $\XX_0$ be a regular proper model of $F_0$ such that the 
 ramification locus of $(A,\tau)$, the closed fibre  and the support of $d$ is a union of regular curves with normal crossings.
 Further we assume that  for every closed point $P \in \XX_0$,   the maximal ideal  at $P$ is given by $(\pi_P, \delta_P)$ 
 such that $d = u_P f^2$ or $ u_P\pi_P f^2 $ or $u_P\delta_Pf^2$  for some unit $u_P$ at $P$ and
 $f \in F_P^*$.  We note that such a model exists (\cite[Theorem 11.2]{PS2022}). 
 
  \begin{prop}  
\label{su-hyp1} The group $SU(A,\tau)$ satisfies the local injectivity hypothesis (\ref{hyp1}) with respect to $\XX_0$.
 \end{prop}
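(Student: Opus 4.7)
The plan is to combine two earlier ingredients: the triviality of $\Sha_{div}$ for the unitary group $U(A,\tau)$ at the closed point $P$ given by Corollary \ref{unitary-hyp1}, and the two-dimensional local injectivity statement Proposition \ref{su-hyp11} for $SU(A,\tau)$. Fix a closed point $P \in \XX_0$ and set $R_0 = \hat{\OO}_{\XX_0, P}$; this is a two-dimensional complete regular local ring with maximal ideal $(\pi_P, \delta_P)$ and fraction field $F_{0P}$. By the hypotheses built into the choice of model, $(A, \tau)$ is unramified on $R_0$ except possibly at $(\pi_P)$ and $(\delta_P)$, and after multiplying $\pi_P$ or $\delta_P$ by a suitable unit if needed, the element $d$ defining the quadratic extension $F_P = F_{0P}(\sqrt{d})$ is, up to squares in $F_{0P}^*$, either a unit, or equal to $\pi_P$, or equal to $\delta_P$. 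The setup of Proposition \ref{su-hyp11} therefore applies at $P$.

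Now let $\zeta \in H^1_{\pi_P \delta_P}(F_{0P}, SU(A,\tau))$ become trivial in $H^1(F_{0P, \nu}, SU(A,\tau))$ for every $\nu \in \Omega_P$. I first push $\zeta$ forward along the map $H^1(F_{0P}, SU(A,\tau)) \to H^1(F_{0P}, U(A,\tau))$ induced by the inclusion $SU(A,\tau) \hookrightarrow U(A,\tau)$, and call the image $\zeta_U$. Since $\zeta$ is trivial at every $\nu \in \Omega_P$, so is $\zeta_U$; hence $\zeta_U \in \Sha_{div}(F_{0P}, U(A,\tau))$, which is trivial by Corollary \ref{unitary-hyp1}. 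Thus $\zeta$ maps to the trivial class in $H^1(F_{0P}, U(A,\tau))$.

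I then invoke Proposition \ref{su-hyp11} with $R = R_0$, $F = F_{0P}$, and the quadratic extension $F_P / F_{0P}$. Its two hypotheses on $\zeta$ are: triviality in $H^1(F_{0P}, U(A,\tau))$, just verified, and triviality in $H^1(F_{0P, \nu}, SU(A,\tau))$ for every height-one prime $\nu$ of $R_0$. Every such prime defines a divisorial valuation of $F_{0P}$ lying in $\Omega_P$, so the second condition is also given. The proposition concludes that $\zeta$ is trivial, which is precisely Hypothesis \ref{hyp1} for $SU(A,\tau)$ at $P$. The only step specific to this proposition is the reduction through $U(A,\tau)$; the two-dimensional proposition \ref{su-hyp11} does the real work, and the normal-form hypotheses it requires are precisely what the careful construction of $\XX_0$ before the statement was designed to produce, so no serious obstacle arises.
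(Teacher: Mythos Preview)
Your proof is correct and follows essentially the same approach as the paper: push $\zeta$ forward to $U(A,\tau)$, use Corollary~\ref{unitary-hyp1} to conclude its image is trivial, and then invoke Proposition~\ref{su-hyp11}. You are simply more explicit than the paper about why the normal-form hypotheses of Proposition~\ref{su-hyp11} are satisfied at each closed point $P$.
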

 
 \begin{proof}  Let $P \in \XX_0$ be a closed point. 
 Let $\zeta \in H^1(F_{0P}, SU(A, \tau))$ which maps to the trivial element in $H^1(F_{0P\nu}, SU(A, \tau))$ for all divisorial discrete valuations of 
 $F_{0P}$.  Then, by (\ref{unitary-hyp1}), the image of $\zeta \in  H^1(F_{0P}, U(A, \tau))$ is trivial.
 Hence, by (\ref{su-hyp11}), $\zeta$ is trivial. 
 \end{proof}

 \section{Groups of Type $^2A_n$} 
  \label{type-outer}

\begin{theorem} 
\label{su-iso} Let $T$ be a complete discrete valuation ring with residue field $\kappa$ and field of 
fraction $K$. Let $F$ be the function field of a curve over $K$. 
Let $A/F$ be  a central simple algebra   with an involution $\tau$ of second kind.
Suppose that $(A,\tau)$ is isotropic. 
Suppose that 2 and  period  of $A$ are  coprime to char$(\kappa)$. 
Let $\XX_0$ be a regular proper model of $F$ with the union of 
the  ramification locus of $(A,\tau)$ and the closed fibre  $X_0$
is a union of regular curves with normal crossings. 
Let $\PP_0$ be a finite set of closed points of $\XX$ containing all the singular points of $X_0$
and at least one closed point from each irreducible component of $X_0$. 
 Let $\UU_0$ be the set
of irreducible components of $X \setminus \PP_0$ and $\BB_0$ the set of branches with respect to $\PP_0$.
Let $F_0 = F^\tau$. 
Then 
 $$\Sha_{div}(F_0,  SU(A, \tau)) \simeq  \prod_{U \in \UU_0} K_1SU(A, \tau)(F_U) \,
 \backslash \prod_{\wp \in \BB_0} 
 K_1SU(A, \tau)(F_\wp) \,/ \prod_{P \in \PP_0}K_1SU(A, \tau)(F_{P}).$$  
 \end{theorem}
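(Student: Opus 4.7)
The plan is to apply Corollary \ref{dvrsha-requiv} to the group $G = SU(A,\tau)$, viewed as a semisimple simply connected group over the fixed field $F_0 = F^{\tau}$. Three hypotheses must be verified: that $SU(A,\tau)$ is strongly isotropic over $F_0$; that it satisfies the local injectivity hypothesis (\ref{hyp1}); and that it satisfies the local surjectivity hypothesis (\ref{hyp2}).

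Since $\tau$ is of second kind, $SU(A,\tau)$ is a connected simply connected almost simple group of (outer) type $A_n$ over $F_0$. By hypothesis $(A,\tau)$ is isotropic, so $SU(A,\tau)$ contains a nontrivial $F_0$-split torus, and because the group is almost simple, isotropic and strongly isotropic coincide. The local injectivity hypothesis is exactly the content of Proposition \ref{su-hyp1}. For the local surjectivity hypothesis, fix a nonsingular closed point $P$ of $X_0$ and a codimension one point $\eta$ of $X_0$ with $P$ in its closure; one must show $SU(A,\tau)(F_P) \to SU(A,\tau)(F_{P,\eta})/R$ is surjective. Translating $R$-equivalence classes via the identification $SU(A,\tau)(L)/R \simeq SUK_1(A\otimes_F L,\tau)$ (the analog for outer type $A$ of the Chernousov--Merkurjev description used implicitly in Theorem \ref{sk1-requiv}, valid under our assumptions on the characteristic of $\kappa$), this surjectivity is exactly Theorem \ref{sk1u-hyp2}.

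With all three hypotheses in place, Corollary \ref{dvrsha-requiv} applied to $SU(A,\tau)$ gives
$$\Sha_{div}(F_0, SU(A,\tau)) \simeq \prod_{U \in \UU_0} SU(A,\tau)(F_U)/R \,\backslash \prod_{\wp \in \BB_0} SU(A,\tau)(F_\wp)/R \,/ \prod_{P \in \PP_0} SU(A,\tau)(F_P)/R.$$
Applying the $R$-equivalence identification once more on each factor (over $F_U$, $F_\wp$, and $F_P$ respectively) rewrites the right hand side in terms of $K_1SU(A,\tau)$, yielding the asserted formula.

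The main obstacle is the $R$-equivalence identification $SU(A,\tau)(L)/R \simeq K_1SU(A,\tau)(L)$, which must be verified uniformly across the three families of overfields $F_U$, $F_\wp$, $F_P$. The isotropy of $(A,\tau)$ is essential here so that $SU(A,\tau)$ becomes a rational variety over the relevant fields, allowing the description of its $R$-equivalence classes via $SUK_1$; once this is in hand, the reduction to Corollary \ref{dvrsha-requiv} is purely formal, and all verifications of the hypotheses have already been done in the preceding sections.
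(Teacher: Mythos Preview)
Your proposal is correct and follows essentially the same approach as the paper: verify the local injectivity hypothesis via Proposition~\ref{su-hyp1}, the local surjectivity hypothesis via Theorem~\ref{sk1u-hyp2}, note that the isotropy of $(A,\tau)$ makes the absolutely almost simple group $SU(A,\tau)$ (strongly) isotropic, and then apply Corollary~\ref{dvrsha-requiv}. Your treatment is in fact slightly more explicit than the paper's, which silently uses the identification $SU(A,\tau)(L)/R \simeq K_1SU(A,\tau)(L)$ when passing from the conclusion of Corollary~\ref{dvrsha-requiv} to the stated double-coset formula.
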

 
 \begin{proof}  By (\ref{su-hyp1}),  the group $ SU(A, \tau)$ satisfies the hypothesis (\ref{hyp1}) and 
by (\ref{sk1u-hyp2}) satisfies the hypothesis (\ref{hyp2}). 
 Since $(A,\tau)$ is isotropic, the group   $SU(A, \tau)$   is isotropic. 
  Hence the theorem follows from (\ref{dvrsha-requiv}). 
\end{proof}

Let $L$ be a field and $E/L$ a quadratic \'etale extension. Let $L^{*1} = \{ a \in L^* \mid N_{E/L}(a) = 1\}$.
Let $A$ be a central simple algebra over $E$ with a $E/L$-involution $\sigma$. 
Let $H_A = \{ \theta \sigma(\theta)^{-1} \mid \theta \in Nrd(A) \} \subseteq L^{*1}$. 

\begin{prop}  
\label{su-compu} Let $F$  be as in (\ref{su-iso}). 
Let $A$ be a central simple algebra over $F$ with an involution $\sigma$ of second kind. Let $F_0 = F^\tau$.
Then 
$$\Sha_{div}(F_0, SU(A, \sigma)) \simeq ker ( F^{*1}/H_A \to \prod_{\nu \in \Omega_{F_0}} F_\nu^{*1}/ H_{A\otimes F_{0\nu}}),$$
where  $F_\nu = F\otimes F_{0\nu}$. 
\end{prop}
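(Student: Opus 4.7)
My plan is to derive the identification from the long exact sequence of cohomology pointed sets recorded in Section~\ref{prel} attached to $1 \to SU(A,\sigma) \to U(A,\sigma) \to R^1_{F/F_0}\G_m \to 1$, and to use the local-global theorem for unitary groups, Proposition~\ref{unitary}, to kill the $H^1(F_0, U(A,\sigma))$-component. The first step is to write down the commutative ladder whose top row is
$$1 \to F^{*1}/H_A \to H^1(F_0, SU(A,\sigma)) \to H^1(F_0, U(A,\sigma))$$
and whose bottom row is the product over $\nu \in \Omega_{F_0}$ of the analogous sequences over $F_{0\nu}$. At places $\nu$ where $F\otimes_{F_0} F_{0\nu}$ splits as $F_{0\nu}\times F_{0\nu}$, I would invoke the conventions of Section~\ref{prel} (under which $SU(A,\sigma)(F_{0\nu}) = SL_1(A\otimes_F F_{0\nu})$ via one of the two embeddings $F \hookrightarrow F_{0\nu}$) to verify that the local arrow $F_\nu^{*1}/H_{A\otimes F_{0\nu}} \hookrightarrow H^1(F_{0\nu}, SU(A,\sigma))$ is still injective; in fact it is the isomorphism onto $F_{0\nu}^*/Nrd((A\otimes_F F_{0\nu})^*)$. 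Hence injectivity on the left of the bottom row holds uniformly in $\nu$.

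Next, I would take $\zeta \in \Sha_{div}(F_0, SU(A,\sigma))$ and chase its image in $H^1(F_0, U(A,\sigma))$: this image restricts trivially to each $H^1(F_{0\nu}, U(A,\sigma))$, so it lies in $\Sha_{div}(F_0, U(A,\sigma))$, which is trivial by Proposition~\ref{unitary}. Exactness of the top row then lifts $\zeta$ to a unique class $\bar a \in F^{*1}/H_A$, producing an injection $\Sha_{div}(F_0, SU(A,\sigma)) \hookrightarrow F^{*1}/H_A$.

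To identify the image, I would combine commutativity of the ladder with the injectivity of each local horizontal arrow: the class $\zeta$ is trivial over every $F_{0\nu}$ precisely when the image of $\bar a$ in $F_\nu^{*1}/H_{A\otimes F_{0\nu}}$ vanishes for every $\nu$. This yields the asserted bijection
$$\Sha_{div}(F_0, SU(A,\sigma)) \simeq ker\left(F^{*1}/H_A \to \prod_{\nu \in \Omega_{F_0}} F_\nu^{*1}/H_{A\otimes F_{0\nu}}\right).$$
The substantive content of the argument is concentrated in Proposition~\ref{unitary}; what remains is a functorial diagram chase, and the only subtle point I expect is the bookkeeping at split places, which is handled by the conventions of Section~\ref{prel}.
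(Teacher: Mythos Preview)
Your proposal is correct and follows essentially the same approach as the paper: both arguments use the commutative ladder coming from the exact sequence $1 \to SU(A,\sigma) \to U(A,\sigma) \to R^1_{F/F_0}\G_m \to 1$ over $F_0$ and over each $F_{0\nu}$, together with Proposition~\ref{unitary} to kill the contribution of $H^1(F_0, U(A,\sigma))$, and then a diagram chase. Your write-up is in fact more careful than the paper's, which simply asserts that ``the last vertical arrow is injective'' and that ``the result follows''; your explicit handling of the split places and of both directions of the bijection fills in details the paper leaves implicit.
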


\begin{proof}  The short exact sequence of algebraic groups 
$$ 1 \to SU(A, \sigma) \to U(B, \sigma) \to R^1_{F/F_0}\G_m \to 1$$
gives rise to the following commutative diagram of exact rows

$$
\begin{array}{cccccccc}
U(A, \tau)(F_0) &  \to &  F^{*1} & \to & H^1(F_0, SU(A, \sigma)) &  \to &  H^1(F_0, U(A, \sigma)) \\

\downarrow & & \downarrow & & \downarrow & & \downarrow \\

\prod_\nu U(A, \tau)(F_{0\nu})  & \to &\prod_\nu  F_\nu^{*1}  & \to & \prod_\nu H^1(F_{0\nu}, SU(A, \sigma)) & \to & 
\prod_\nu H^1(F_{0\nu}, U(A, \sigma)).
\end{array} 
$$
Hence, by the definition of the subgroups $H_A$, we have 
the following commutative diagram  of exact rows 
$$
\begin{array}{cccccccc}
 1  &  \to &  F^{*1}/ H_A  & \to & H^1(F_0, SU(A, \sigma)) &  \to &  H^1(F_0, U(A, \sigma)) \\

 & & \downarrow & & \downarrow & & \downarrow \\

1  & \to &\prod_\nu  F_\nu^{*1} / H_{A\otimes F_{0\nu}} & \to & \prod_\nu H^1(F_{0\nu}, SU(A, \sigma)) & \to & 
\prod_\nu H^1(F_{0\nu}, U(A, \sigma)). 
\end{array} 
$$
Since the last vertical arrow in the above diagram is injective (\ref{unitary}), the result follows. 
\end{proof}

 Now we prove a   result similar to (\ref{su-iso}) without the assumption that $(A, \tau)$ is isotropic. 
 
\begin{theorem} 
\label{su}
Let $T$ be a complete discrete valuation ring with residue field $\kappa$ and field of 
fraction $K$. Let $F$ be the function field of a curve over $K$. 
Let $A/F$ be  a central simple algebra   with an involution $\tau$ of second kind.
Suppose that 2 and  period  of $A$ are  coprime to char$(\kappa)$. 
Let $\XX_0$ be a regular proper model of $F$ with the union of 
the  ramification locus of $(A,\tau)$ and the closed fibre  $X_0$
is a union of regular curves with normal crossings. 
Let $\PP_0$ be a finite set of closed points of $\XX_0$ containing all the singular points of $X_0$
and at least one closed point from each irreducible component of $X_0$. 
 Let $\UU_0$ be the set
of irreducible components of $X_0 \setminus \PP_0$ and $\BB_0$ the set of branches with respect to $\PP_0$.
Let $F_0 = F^\tau$.
Then 
 $$\Sha_{div}(F_0,  SU_1(A, \tau)) \simeq  \prod_{U \in \UU_0} K_1SU(A, \tau)(F_U) \,
 \backslash \prod_{\wp \in \BB_0} 
 K_1SU(A, \tau)(F_\wp) \,/ \prod_{P \in \PP_0}K_1SU(A, \tau)(F_{P}).$$  
\end{theorem}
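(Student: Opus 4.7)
The plan is to reduce to the isotropic case handled by Theorem~\ref{su-iso} through a hermitian Morita equivalence. Propositions~\ref{su-hyp1} and Theorem~\ref{sk1u-hyp2} have already established that $SU(A,\tau)$ satisfies the local injectivity hypothesis~(\ref{hyp1}) and the local surjectivity hypothesis~(\ref{hyp2}) without any isotropy assumption on $(A,\tau)$, so the only obstacle to repeating the argument of Theorem~\ref{su-iso} verbatim is the isotropy of $(A,\tau)$ used to invoke \cite[Proposition~3.7]{GP} inside Corollary~\ref{dvrsha-requiv}.

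To circumvent this, I would pass to the pair $(B,\sigma)$ where $B = M_2(A)$ and $\sigma$ is the adjoint involution of the hyperbolic rank-two hermitian form over $(A,\tau)$. Then $(B,\sigma)$ is hyperbolic, hence isotropic, so the algebraic group $SU(B,\sigma)$ is isotropic and Theorem~\ref{su-iso} applies to it, yielding the double coset formula for $\Sha_{div}(F_0, SU(B,\sigma))$. At the same time, hermitian Morita equivalence between $(A,\tau)$ and $(B,\sigma)$ provides, for every extension $L/F_0$, natural bijections on torsor sets $H^1(L, SU(A,\tau)) \simeq H^1(L, SU(B,\sigma))$ as well as compatible isomorphisms of $R$-equivalence quotients $K_1SU(A,\tau)(L) \simeq K_1SU(B,\sigma)(L)$, the latter being the standard invariance of the reduced unitary Whitehead group under hermitian Morita equivalence.

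Transporting the conclusion of Theorem~\ref{su-iso} applied to $(B,\sigma)$ through these Morita identifications at each of the local fields $F_U$, $F_\wp$, and $F_P$, and at $F_0$ itself, then yields the desired double coset description of $\Sha_{div}(F_0, SU(A,\tau))$.

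The main obstacle will be verifying that the hermitian Morita bijections are functorial with respect to restriction to all the relevant completions simultaneously, so that the identifications intertwine with the diagonal maps that define the double coset on branches. Once this compatibility is spelled out, the structural argument parallels that of Theorem~\ref{su-iso} without modification.
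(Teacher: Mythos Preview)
Your overall strategy---pass to $(B,\sigma)=(M_2(A),\text{hyperbolic adjoint involution})$, use invariance of the reduced unitary Whitehead group, then invoke Theorem~\ref{su-iso}---is exactly the paper's. The gap is in your justification of $\Sha_{div}(F_0,SU(A,\tau))\simeq\Sha_{div}(F_0,SU(B,\sigma))$. You claim that hermitian Morita equivalence furnishes bijections $H^1(L,SU(A,\tau))\simeq H^1(L,SU(B,\sigma))$ for all $L/F_0$. This is false: if $\deg(A)=n$ then $SU(A,\tau)$ is of type $^2A_{n-1}$ while $SU(B,\sigma)$ is of type $^2A_{2n-1}$; these groups are not isomorphic and their torsor sets are not in natural bijection. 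Hermitian Morita equivalence identifies the category of hermitian forms over $(A,\tau)$ with that over $(B,\sigma)$, but it shifts ranks, so it sends the forms classified by $H^1(L,U(A,\tau))$ (rank~$1$ over $(A,\tau)$) to rank-$1$ forms over $(B,\sigma)$, which correspond to rank-$2$ forms over $(A,\tau)$---a different set.

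The paper circumvents this with Proposition~\ref{su-compu}: using the local-global principle for $U(A,\tau)$ (Proposition~\ref{unitary}) and the exact sequence $1\to SU\to U\to R^1_{F/F_0}\G_m\to 1$, one identifies $\Sha_{div}(F_0,SU(A,\tau))$ with the kernel of $F^{*1}/H_A\to\prod_\nu F_\nu^{*1}/H_{A\otimes F_{0\nu}}$. Since $H_A$ depends only on $\mathrm{Nrd}(A^*)$, and reduced norms are unchanged under passage to a Brauer-equivalent algebra, one gets $\Sha_{div}(F_0,SU(A,\tau))\simeq\Sha_{div}(F_0,SU(A',\tau'))$ for \emph{any} $(A',\tau')$ with $A'$ Brauer-equivalent to $A$. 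Taking $A'=M_2(A)$ with the hyperbolic involution then finishes exactly as you outlined. So the missing ingredient in your argument is precisely this concrete description of $\Sha_{div}$ via Proposition~\ref{su-compu}, replacing the incorrect Morita claim on $H^1$.
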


 \begin{proof} Let $A'$ be central simple algebra over $F$ which is Brauer equivalent  to $A$
 and $\tau'$ a $F/F_0$-involution.  
  Since for any  field extension $L/F_0$, 
   Nrd$(A \otimes L) = $ Nrd$(A' \otimes L)$,  we have $H_{A\otimes L} = H_{A' \otimes L}$. 
   Hence, by (\ref{su-compu}), 
    $$\Sha_{div}(F_0, SU(A, \sigma)) \simeq \Sha_{div}(F_0, SU(A', \tau').$$
    
  Let $A' = M_2(A)$ and $\tau'$ the adjoint involution on $A'$ given by the hyperbolic form. 
  Then, $SU(A', \tau')$ is isotropic. 
 Since for any  field extension $L/F_0$,  $K_1SU(A,\tau)(L) \simeq K_1SU(A', \tau')$ (\cite[Lemma 3]{Y1974}), 
 the result follows from (\ref{su-iso}).
 \end{proof} 
 
Let $L$ be a field and $E/L$ a quadratic \'etale extension. Let $L^{*1} = \{ a \in L^* \mid N_{E/L}(a) = 1\}$.
Let $A$ be a central simple algebra over $E$ with a $E/L$-involution $\sigma$. 
Let $H_A = \{ \theta \sigma(\theta)^{-1} \mid \theta \in Nrd(A) \} \subseteq L^{*1}$.

  \begin{cor}
\label{good-reduction-su} Let $T$ be a complete discrete valuation ring with residue field $\kappa$ and field of 
fraction $K$.  Let $K/K_0$ be a quadratic  separable extension.  Let $A$ be a central simple algebra over 
$K$ with a $K/K_0$-involution.  Suppose that 2ind$(A)$ is a coprime to char$(\kappa)$.
Let $X$ be a smooth projective curve over $K$ which has a good reduction. 
Let $F_0$ be the function field    $X$ and $F = K \otimes F_0$. 
Then $\Sha_{div}(F_0, SU(A,\tau)) = \{ 1 \}$.
\end{cor}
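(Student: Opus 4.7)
The plan is to parallel the proof of Corollary \ref{good-reduction-sl11} in the unitary setting, using Theorem \ref{su} in place of Theorem \ref{sk1-requiv} and Theorem \ref{sk1u-hyp2} in place of Proposition \ref{surj1-2dim}. Since $X$ has good reduction, I fix a smooth proper model $\XX_0$ (over the appropriate valuation ring) with function field $F_0$. Its special fibre $X_0$ is then a smooth, hence regular and irreducible, curve. Because $(A, \tau)$ descends from $K/K_0$, it extends to an Azumaya algebra with involution on $\XX_0$ and is unramified at every codimension-one point of $\XX_0$ outside the special fibre. Thus the union of the ramification locus of $(A, \tau)$ and $X_0$ is simply $X_0$, which is a regular curve, so the normal-crossings condition of Theorem \ref{su} is trivially satisfied; the requirement that $2\,\mathrm{per}(A)$ be coprime to $\mathrm{char}(\kappa)$ follows from the hypothesis on $2\,\mathrm{ind}(A)$.

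Next I take $\PP_0$ to be any nonempty finite set of closed points of $X_0$. Because $X_0$ is smooth and irreducible, $\UU_0 = \{U\}$ with $U = X_0 \setminus \PP_0$, and there is a canonical bijection $P \leftrightarrow \wp_P$ between $\PP_0$ and the set $\BB_0$ of branches (each point of a smooth curve lies on a unique branch). Theorem \ref{su} then specializes to
\[
\Sha_{div}(F_0, SU(A, \tau)) \;\simeq\; K_1SU(A,\tau)(F_U) \,\big\backslash\, \prod_{P \in \PP_0} K_1SU(A,\tau)(F_{\wp_P}) \,\big/\, \prod_{P \in \PP_0} K_1SU(A,\tau)(F_P).
\]
At each $P \in \PP_0$ the maximal ideal of $\hat{\OO}_{\XX_0,P}$ has the form $(\pi_P, \delta_P)$ with $\pi_P$ cutting out the special fibre, and $(A,\tau)$ is unramified at $(\delta_P)$; therefore Theorem \ref{sk1u-hyp2} applies and yields surjectivity of $K_1SU(A,\tau)(F_P) \to K_1SU(A,\tau)(F_{\wp_P})$. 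Consequently the right action is transitive on the middle factor, the double-coset set collapses, and $\Sha_{div}(F_0, SU(A, \tau)) = \{1\}$.

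The main obstacle is just to verify that the hypotheses of Theorems \ref{su} and \ref{sk1u-hyp2} are genuinely in force. The good-reduction assumption handles smoothness of $\XX_0$ (and thereby regularity and irreducibility of $X_0$, and the absence of extra branches at closed points), while the fact that $(A,\tau)$ descends from the constant field extension $K/K_0$ delivers unramifiedness outside the special fibre automatically, placing the local two-dimensional situation at each $P$ precisely in the hypothesis setting of Theorem \ref{sk1u-hyp2}.
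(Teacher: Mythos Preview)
Your proposal is correct and follows essentially the same approach as the paper's own proof: choose a smooth proper model using good reduction, apply Theorem~\ref{su} to obtain the double-coset description, use the bijection between $\PP_0$ and $\BB_0$ (no singular points), and then invoke Theorem~\ref{sk1u-hyp2} for the surjectivity $K_1SU(A,\tau)(F_P)\to K_1SU(A,\tau)(F_{\wp_P})$ to collapse the double cosets. Your write-up is in fact somewhat more careful than the paper's in checking the hypotheses of Theorem~\ref{su} (normal crossings, the $2\,\mathrm{per}(A)$ condition), but the strategy is identical.
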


\begin{proof} Since  $X$ has a good reduction, there exists a  smooth  projective model $\XX$ of $F$.
Let $X_0$ be the special fibre of $\XX$. Then $X_0$ is a smooth irreducible curve. 
Let  $\PP_0$ be any nonempty finite subset of closed points of $\XX$.
Then, by \ref{su}, we have 
 $$\Sha_{div}(F_0,  SU(A, \tau)) \simeq  \prod_{U \in \UU_0} K_1SU(A, \tau)(F_U) \,
 \backslash \prod_{\wp \in \BB_0} 
 K_1SU(A, \tau)(F_\wp) \,/ \prod_{P \in \PP_0}K_1SU(A, \tau)(F_{P}).$$  
 Since there are no singular points on the special fibre, there is a bijection 
 between $\PP_0$ and the branches $\BB_0$.
 Since $SUK_1(A, \tau)(F_P) \to SK_1(A, \tau)(F_\wp)$ is onto (\ref{sk1u-hyp2}) 
 for  the branch $\wp$ at $P$, for all $P \in \PP_0$,
 it follows that $\Sha_{div}(F, SU(A, \tau)) = \{1 \}$.  
\end{proof}

  \section{Cohomological dimension 1}
  \label{cd1}
 
 In this section we prove our main theorem when the residue field has cohomological dimension at most 1.

 \begin{theorem}
 \label{sk1-cd1} Let $T$ be a complete discrete valuation ring with residue field $\kappa$ and field of 
fraction $K$. Let $F$ be the function field of a curve over $K$. 
Let $A/F$ be  a central simple algebra  of  period  coprime to char$(\kappa)$.
If cd$(\kappa) \leq 1$, then   $\Sha_{div}(F, SL_1(D))$ is trivial. 
\end{theorem}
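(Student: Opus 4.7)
My plan is to combine Theorem \ref{sk1-requiv} with the vanishing of $SK_1$ over every branch field, the latter deduced from $\mathrm{cd}(\kappa) \leq 1$ via Platonov's exact sequence together with theorems of Suslin and Merkurjev--Suslin.

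First I would choose a regular proper model $\XX_0$ of $F$ (Lipman) on which the union of the ramification locus of $A$ and the closed fibre $X_0$ is a union of regular curves with normal crossings, together with any finite set $\PP_0$ of closed points of $\XX_0$ containing the singular points of $X_0$ and at least one point of each irreducible component. By Theorem \ref{sk1-requiv},
\[
\Sha_{div}(F, SL_1(A)) \;\simeq\; \prod_{U \in \UU_0} SK_1(A)(F_U) \,\backslash \prod_{\wp \in \BB_0} SK_1(A)(F_\wp) \,/ \prod_{P \in \PP_0} SK_1(A)(F_P),
\]
so it suffices to show $SK_1(A)(F_\wp) = \{1\}$ for every branch $\wp \in \BB_0$.

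Fix such a $\wp$ at a closed point $P$. The field $F_\wp$ is complete discretely valued, and its residue field $\kappa(\wp)$ is itself the fraction field of a complete discrete valuation ring having residue field $\kappa(P)$. Since $\kappa(P)/\kappa$ is finite, $\mathrm{cd}(\kappa(P)) \leq 1$ and hence $\mathrm{cd}(\kappa(\wp)) \leq 2$. Let $D$ denote the division algebra Brauer-equivalent to $A \otimes_F F_\wp$; its period is coprime to $\mathrm{char}(\kappa)$. Proposition \ref{exact} applied over the complete field $F_\wp$ produces an exact sequence
\[
SK_1(D_1) \longrightarrow SK_1(D) \xrightarrow{\;\bar\eta\;} G/H \longrightarrow 1,
\]
where $D_1 = C_D(E)$, $E \subseteq D$ is the lift of $Z(\bar D)$, and $G,H$ are the subgroups of $Z(\bar D)^*$ introduced in \S\ref{sk1d-dvr}.

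The crux is to show that both outer terms vanish. Since $\mathrm{cd}(\kappa(\wp)) \leq 2$ at every prime dividing $\mathrm{per}(D)$, Suslin's theorem \cite[Theorem 24.8]{Suslin1985} gives $Nrd_{\bar D}(\bar D^*) = Z(\bar D)^*$, and Hilbert 90 for the cyclic extension $Z(\bar D)/\kappa(\wp)$ then forces $H = Z(\bar D)^{*1} = G$, so $G/H = 1$. On the other side, $D_1$ is unramified over the valuation ring of $E$, so $SK_1(D_1) \simeq SK_1(\bar D_1)$ by \cite[Corollary 3.13]{Platonov1976}; since $\bar D_1$ is a central simple algebra over $Z(\bar D)$, which again has cohomological dimension $\leq 2$ at the primes of interest, the Merkurjev--Suslin vanishing theorem \cite{Platonov1978} yields $SK_1(\bar D_1) = 1$. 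Combining, $SK_1(D) = 1$, as required.

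I expect the main obstacle to be bookkeeping at the level of cohomological dimensions: verifying that Suslin's reduced-norm surjectivity and the Merkurjev--Suslin vanishing of $SK_1$ genuinely apply at the correct cd for every prime dividing $\mathrm{per}(D)$, and that the residue-field tower $\kappa(\wp)/\kappa(P)/\kappa$ really does force $\mathrm{cd}(\kappa(\wp)) \leq 2$ at those primes. All of this follows automatically from $\mathrm{cd}(\kappa) \leq 1$ together with the hypothesis that $\mathrm{per}(A)$ is coprime to $\mathrm{char}(\kappa)$, but it is the place where the argument must be stated with care.
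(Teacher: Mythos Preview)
Your approach is essentially the paper's: both apply Theorem \ref{sk1-requiv} to reduce to the vanishing of $SK_1(A)(F_\wp)$ at every branch, observe that $\kappa(\wp)$ is complete discretely valued with residue a finite extension of $\kappa$ so that $\mathrm{cd}(\kappa(\wp))\leq 2$, and then kill $SK_1$ over $F_\wp$. The paper simply invokes \cite{Soman} for that last step, whereas you unpack it via Proposition \ref{exact} together with Suslin's surjectivity of the reduced norm.

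One small caution on wording: the blanket claim ``$SK_1(\bar D_1)=1$ because $Z(\bar D)$ has $\mathrm{cd}\leq 2$'' is Suslin's conjecture, not a theorem attributable to Merkurjev--Suslin in general. In your situation it is nonetheless true, because $Z(\bar D)$ is itself complete discretely valued with residue field of $\mathrm{cd}\leq 1$; a second pass through Proposition \ref{exact} (or the reference \cite{Platonov1978} you already cite, exactly as the paper does in Proposition \ref{sk1d-cd2}) gives the vanishing. With that adjustment your argument is complete and matches the paper's route.
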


\begin{proof}
Let $\XX_0$ be a regular proper model of $F$ such that  the union of 
the  ramification locus of $A$ and the closed fibre  $X_0$
is a union of regular curves with normal crossings. 
Let $\PP_0$ be a finite set of closed points of $\XX$ containing all the singular points of $X_0$
and at least one closed point from each irreducible component of $X_0$.  Let $\UU_0$ be the set
of irreducible components of $X \setminus \PP_0$ and $\BB_0$ the set of branches with respect to $\PP_0$.
Then, by (\ref{sk1-requiv}), we have. 
 $$\Sha_{div}(F,  SL_1(A)) \simeq  \prod_{U \in \UU_0} SK_1(A)(F_U) \,\backslash \prod_{\wp \in \BB_0} 
 SK_1(A)(F_\wp) \,/ \prod_{P \in \PP_0} SK_1(A)(F_{P}).$$
 Let  $\wp \in \BB$ be a branch and $\kappa(\wp)$ be the residue field of $F_\wp$.
 Then $\kappa(\wp)$   a complete discrete valued field with residue field a finite extension of 
 $\kappa$. Since cd$(\kappa) \leq 1$, cd$(\kappa(\wp) \leq 2$ (\cite[p. 85, Proposition 12]{SerreGC}). Hence,
by (cf. \cite{Soman}), 
 $SK_1(A)(F_\wp)$ is trivial. In particular $\Sha(F, SL_1(D))$ is trivial. 
\end{proof}

\begin{theorem}
 \label{sk1u-cd1} Let $T$ be a complete discrete valuation ring with residue field $\kappa$ and field of 
fraction $K$. Let $F_0$ be the function field of a curve over $K$ and $F/F_0$ a  quadratic  field extension. 
Let $A/F$ be  a central simple algebra  over $F$ of period  $n$ with a $F/F_0$-involution $\tau$.
Suppose that $2n$  is   coprime to char$(\kappa)$. 
If cd$_n(\kappa) \leq 1$, then   $\Sha_{div}(F_0, SU_1(A, \tau))$ is trivial.  
 \end{theorem}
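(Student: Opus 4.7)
The plan is to mirror the proof of Theorem \ref{sk1-cd1}. First I would choose a regular proper model $\XX_0$ of $F_0$ such that the union of the ramification locus of $(A,\tau)$, the support of an element $d \in F_0^*$ with $F = F_0(\sqrt{d})$, and the closed fibre $X_0$ forms a union of regular curves with normal crossings, with the local $d$-data at each closed point of the form $u$, $u\pi_P$, or $u\delta_P$ up to squares, as guaranteed by \cite[Theorem 11.2]{PS2022}. Pick a finite set $\PP_0$ of closed points of $\XX_0$ containing all singular points of $X_0$ and at least one point from each irreducible component; let $\UU_0$ be the set of components of $X_0 \setminus \PP_0$ and $\BB_0$ the corresponding set of branches. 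Theorem \ref{su} then gives
$$\Sha_{div}(F_0, SU(A,\tau)) \;\simeq\; \prod_{U \in \UU_0} K_1SU(A,\tau)(F_U) \,\Big\backslash\, \prod_{\wp \in \BB_0} K_1SU(A,\tau)(F_\wp) \,\Big/\, \prod_{P \in \PP_0} K_1SU(A,\tau)(F_P).$$

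The next and central step is to show that every branch contribution $K_1SU(A,\tau)(F_\wp)$ is trivial. For a branch $\wp \in \BB_0$, the residue field $\kappa(\wp)$ of $F_\wp$ is itself a complete discretely valued field with residue field a finite extension of $\kappa$, and the hypothesis $\mathrm{cd}_n(\kappa) \leq 1$ together with \cite[I.3.3, Proposition 12]{SerreGC} gives $\mathrm{cd}_n(\kappa(\wp)) \leq 2$. By choice of $\XX_0$ the algebra $A\otimes F_\wp$ and the involution $\tau$ have the standard two-variable form needed to apply the structure theory of \S\ref{sk1d-dvr} and \S\ref{sk1ud-dvr}: one may write $D \otimes F_\wp$ as in Lemma \ref{one}, so that $\bar D$ is Brauer equivalent to $\bar D_0 \otimes Z(\bar D)$ with $\bar D_0$ unramified and $Z(\bar D)/\kappa(\wp)$ unramified cyclic. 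Theorem \ref{sk1u-dvr} applied to $F_\wp$ (viewed as a two-dimensional complete discretely valued field whose deepest residue has cohomological dimension at most $1$, hence its residue field has cohomological dimension at most $2$) then yields $K_1SU(A\otimes F_\wp, \tau) = 0$. Since every factor in the middle product of the double coset is trivial, the double coset is trivial and $\Sha_{div}(F_0, SU(A,\tau)) = \{1\}$.

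The hard part will be the branch-vanishing step. Theorem \ref{sk1u-dvr} as stated requires that $\bar D$ and $Z(\bar D)/K$ be unramified and that the quadratic extension $F/F_0$ be unramified at the valuation in question. Arranging these hypotheses uniformly along every branch is exactly why one chooses the model so carefully at the outset (normal crossings, transverse ramification, and normalised form of $d$), and it is the analogue here of the appeal to Soman's theorem in the proof of Theorem \ref{sk1-cd1}. If the direct application of Theorem \ref{sk1u-dvr} fails for some branch because of residual ramification of $(A,\tau)$ at $\wp$, the fallback would be to invoke a more general vanishing of $SUK_1$ over complete discretely valued fields whose residue field has $n$-cohomological dimension at most $2$, in the spirit of the Platonov–Yanchevskii results reviewed in \S\ref{sk1ud-dvr}; establishing such a uniform statement — independent of the ramification of $(A,\tau)$ at $\wp$ — is what the argument ultimately reduces to.
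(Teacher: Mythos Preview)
Your overall strategy is exactly the paper's: pass to a good model, apply Theorem~\ref{su} to get the double-coset description, and then kill each branch factor $K_1SU(A,\tau)(F_\wp)$ using that $\kappa(\wp)$ has $n$-cohomological dimension at most~$2$.

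The only place you diverge is in the branch-vanishing step. You first try to force Theorem~\ref{sk1u-dvr} to apply, then correctly observe that its unramifiedness hypotheses on $\bar D$, $Z(\bar D)/K$, and $F/F_0$ need not hold at an arbitrary branch, and finally say the argument ``ultimately reduces to'' establishing a general vanishing of $SUK_1$ over complete discretely valued fields with residue of cohomological dimension~$\leq 2$. That general statement is not something you have to prove: it is already in the literature, namely \cite[Corollary~4.15]{Y1979}, and this is precisely what the paper invokes (in one line) in place of your detour through Theorem~\ref{sk1u-dvr}. Yanchevski\u{\i}'s result requires no unramifiedness assumptions on $(A,\tau)$ at $\wp$, so all the care you propose about arranging the ramification and the shape of $d$ along branches is unnecessary here --- the model hypotheses are needed only to feed into Theorem~\ref{su}, not for the branch vanishing itself. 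Once you replace your appeal to Theorem~\ref{sk1u-dvr} by a direct citation of \cite[Corollary~4.15]{Y1979}, your proof coincides with the paper's.
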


 \begin{proof}
 Let $\XX_0$ be a regular proper model of $F_0$ with the union of 
the  ramification locus of $(A,\tau)$ and the closed fibre  $X_0$
is a union of regular curves with normal crossings. 
Let $\PP_0$ be a finite set of closed points of $\XX$ containing all the singular points of $X_0$
and at least one closed point from each irreducible component of $X_0$. 
 Let $\UU_0$ be the set
of irreducible components of $X \setminus \PP_0$ and $\BB_0$ the set of branches with respect to $\PP_0$.
Then, by (\ref{su}), 
 $$\Sha_{div}(F_0,  SU(A, \tau)) \simeq  \prod_{U \in \UU_0} K_1SU(A, \tau)(F_U) \,
 \backslash \prod_{\wp \in \BB_0} 
 K_1SU(A, \tau)(F_\wp) \,/ \prod_{P \in \PP_0}K_1SU(A, \tau)(F_{P}).$$  
 Let $\wp \in \BB_0$. Since $F_\wp$ is a complete discretely valued field with  cohomological dimension of
 the residue field   $\kappa(\wp)$ is at most 2, by  (\cite[Corollary 4.15]{Y1979}), $K_1SU(A, \tau)(F_\wp)$ is trivial. Hence 
  $\Sha_{div}(F, SU_1(A, \tau))$ is trivial.   
 \end{proof}

  \begin{cor} \label{an-cd1}
  Let $K$ be a complete discretely  valued  field  with residue field $\kappa$. 
  Let $F$ be the function field of a curve over $K$.  Let $G$ be a semisimple simply connected group  over $F$ of type
  $^iA_n$.  If   cd$_{n+1}(\kappa) \leq 1$ and $i(n+1)$ is coprime to 
  char$(\kappa)$, then $\Sha_{div}(F, G) = \{ 1 \}$. 
 \end{cor}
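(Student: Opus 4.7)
The plan is to split the argument according to whether $G$ is of inner type $^1A_n$ or outer type $^2A_n$, and to reduce each case to one of the two main results established in this section.

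In the inner case $i = 1$, I would invoke the classification of simply connected groups of type $^1A_n$ to write $G \simeq SL_1(A)$ for a central simple algebra $A$ over $F$ of degree dividing $n+1$. The hypothesis that $n+1$ is coprime to $\mathrm{char}(\kappa)$ ensures that the period of $A$ (which divides $n+1$) is coprime to $\mathrm{char}(\kappa)$, so Theorem \ref{sk1-cd1} applies. One small point is that the corollary assumes only $\mathrm{cd}_{n+1}(\kappa) \leq 1$ rather than the unqualified $\mathrm{cd}(\kappa) \leq 1$, but the proof of \ref{sk1-cd1} only uses that $SK_1(A)(F_\wp)$ vanishes at each branch $\wp$; since $SK_1(A)$ is $(n+1)$-torsion, one needs only $\mathrm{cd}_{n+1}(\kappa(\wp)) \leq 2$, which follows from $\mathrm{cd}_{n+1}(\kappa) \leq 1$ via the standard estimate for the cohomological dimension of a complete discretely valued field (Serre). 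This yields $\Sha_{div}(F, G) = \{1\}$.

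In the outer case $i = 2$, the classification provides a separable quadratic extension $L/F_0$ and a central simple algebra $A$ over $L$ of degree $n+1$ equipped with an $L/F_0$-involution $\tau$, such that $G \simeq SU(A, \tau)$ as a group scheme over the fixed field $F_0$; here the base function field in the statement of the corollary is to be identified with $F_0$. The hypothesis that $2(n+1)$ is coprime to $\mathrm{char}(\kappa)$ guarantees both that $2$ is invertible in $\kappa$ and that the period of $A$ is coprime to $\mathrm{char}(\kappa)$. I would then apply Theorem \ref{sk1u-cd1} directly, with the same remark as before that $\mathrm{cd}_{n+1}(\kappa) \leq 1$ is sufficient since $K_1SU(A, \tau)$ is $(n+1)$-torsion so only primes dividing $n+1$ contribute at the branches; this gives $\Sha_{div}(F_0, G) = \{1\}$.

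The hard part will really be bookkeeping rather than new mathematics: correctly identifying the field of definition of $G$ in each type (for outer type this is the fixed field of the involution, not the quadratic extension), reconciling the notation of the corollary with that of Theorems \ref{sk1-cd1} and \ref{sk1u-cd1}, and checking that the cohomological dimension hypothesis $\mathrm{cd}_{n+1}(\kappa) \leq 1$ does the same work in the proofs of those theorems as the unqualified version. Once these identifications are made, there is no additional content to supply.
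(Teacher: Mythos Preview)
Your approach matches the paper's in spirit, but there is one omitted step. A semisimple simply connected group ``of type $^iA_n$'' over $F$ need not be absolutely simple over $F$; it may only be $F$-simple. The classification you invoke, giving $G \simeq SL_1(A)$ in the inner case and $G \simeq SU(A,\tau)$ in the outer case, applies only to \emph{absolutely} simple simply connected groups (cf.\ \cite[Theorem~26.9]{KMRT}). The paper handles this by first writing $G = R_{E/F}(G')$ for a finite separable extension $E/F$ and an absolutely simple simply connected $G'$ over $E$ (cf.\ \cite[Theorem~26.8]{KMRT}), then using Shapiro's lemma $H^1(M, R_{E/F}(G')) \simeq H^1(M\otimes_F E, G')$ for every extension $M/F$ to replace $F$ by $E$ and $G$ by $G'$. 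Only after this reduction can one apply the classification and Theorems~\ref{sk1-cd1} and~\ref{sk1u-cd1}. Once you insert this Weil restriction step, your argument is exactly the paper's.

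Your remark that the hypothesis $\mathrm{cd}_{n+1}(\kappa)\leq 1$ suffices (in place of the unqualified $\mathrm{cd}(\kappa)\leq 1$ appearing in Theorem~\ref{sk1-cd1}) because only primes dividing $n+1$ matter for $SK_1(A)$ and $SUK_1(A,\tau)$ is correct, and is a point the paper does not spell out in the proof of the corollary; note that Theorem~\ref{sk1u-cd1} already carries the refined hypothesis. Also, a small correction: for $SL_1(A)$ to be of type $A_n$ the degree of $A$ is exactly $n+1$, not merely a divisor of $n+1$.
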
 
 
 \begin{proof} Since $G$ is a semisimple simply connected group over $F$ of type $A_n$, 
 there exists a finite extension $E/F$ and a absolutely simple simply connected group $G'$  of type $A_n$
 such that $G$ is the corestriction $R_{E/F}(G')$  (cf., \cite[Theorem 26.8]{KMRT}).  Since 
 $H^1(M,  R_{E/F}(G')) \simeq H^1(M \otimes E, G')$ for any field extension $M/F$ (cf., \cite[Lemma 29.6]{KMRT}), 
 replacing $F$ by $E$ and $G$ by $G'$ we assume that $G$ is absolutely  simple. 
 
 Suppose $G$ is of type $^1A_n$. Then  there exists a central simple algebra $A/F$ 
 of degree $n+1$ such that $G = SL_1(A)$ (cf., \cite[Theorem 26.9]{KMRT}). 
  Hence, by (\ref{sk1-cd1}), $\Sha_{div}(F, G) = \{ 1 \}$.

  Suppose $G$ is of type $^2A_n$. Then  there exists a 
  quadratic field  extension $L/F$ and   a central simple algebra $A$ of degree $n+1$ with a unitary 
 involution $\tau$ such that $G = SU(A, \tau)$   (cf., \cite[Theorem 26.9]{KMRT}). 
  Hence,    by (\ref{sk1u-cd1}), $\Sha_{div}(F, G) = \{ 1 \}$. 
 \end{proof}
%
%
%
%
%
%

 \section{Cohomological dimension 2}
   \label{cd2}
 
 In this section we prove our main theorem when the residue field has cohomological dimension at most 2.

    \begin{theorem} 
    \label{sk1-cd2}
  Let $T$ be a complete discrete valuation ring with residue field $\kappa$ and field of 
fraction $K$. Let $F$ be the function field of a curve over $K$. 
Let $D/K$ be  a central  division algebra  of  period   $n$ coprime to char$(\kappa)$.
If cd$_n(\kappa) \leq 2$, then   $\Sha_{div}(F, SL_1(D))$ is trivial. 
\end{theorem}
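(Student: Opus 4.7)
The plan is to follow the template of Theorem~\ref{sk1-cd1}, using Theorem~\ref{sk1-requiv} to realise $\Sha_{div}(F,SL_1(D))$ as a double coset of $SK_1$ classes and then collapsing it. The novelty compared to the $\mathrm{cd}_n(\kappa)\le 1$ case is that the branch terms $SK_1(D)(F_\wp)$ need no longer vanish, so I will kill them from the right using closed points by invoking Theorem~\ref{2dim-sk1-cd2}. The key observation that makes this possible is that $D$ descends to $K$, so at every closed point of a good model its Brauer class lands in the explicit form required by Theorem~\ref{2dim-sk1-cd2}.

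Concretely, choose a regular proper model $\XX$ of $F$ over $T$ so that the union of the ramification locus of $D$ on $\XX$ and the closed fibre $X$ is a union of regular curves with normal crossings, and fix a finite set $\PP_0$ of closed points of $\XX$ containing all singular points of $X$ and at least one closed point from each irreducible component of $X$. With $\UU_0$ the irreducible components of $X \setminus \PP_0$ and $\BB_0$ the associated set of branches, Theorem~\ref{sk1-requiv} yields
$$
\Sha_{div}(F, SL_1(D)) \simeq \prod_{U \in \UU_0} SK_1(D)(F_U) \,\backslash\, \prod_{\wp \in \BB_0} SK_1(D)(F_\wp) \,/\, \prod_{P \in \PP_0} SK_1(D)(F_P).
$$
Since each branch is attached to a unique point of $\PP_0$, it suffices to establish, for every $P \in \PP_0$ with (at most two, by normal crossings) branches $\wp_1, \dots, \wp_r$, the surjectivity of the map $SK_1(D)(F_P) \to \prod_i SK_1(D)(F_{\wp_i})$.

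Fix such a $P$ and let $(\pi_P, \delta_P)$ generate the maximal ideal of $\hat{\OO}_{\XX, P}$. Because $D$ lives over the complete discretely valued field $K$ and has period coprime to $\mathrm{char}(\kappa)$, the preliminaries produce a decomposition $D \sim A_0 + (E_K, \sigma_K, t)$ with $A_0/K$ central simple unramified on $T$, $E_K/K$ an unramified cyclic extension, and $t \in K$ a parameter. Writing $t = u_P \pi_P^i \delta_P^j$ with $u_P$ a unit and $i, j \geq 0$, and absorbing the unramified factor $(E_K \otimes F_P, \sigma, u_P)$ into the unramified part, I obtain over $F_P$
$$
D \otimes_K F_P \sim D_0 + (E, \sigma, \pi_P^i \delta_P^j),
$$
with $D_0$ and the cyclic extension $E/F_P$ both unramified on $\hat{\OO}_{\XX, P}$. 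Passing to the $\ell$-primary components for each prime $\ell \mid n$ puts each piece into the exact setting of Theorem~\ref{2dim-sk1-cd2}, and the hypothesis $\mathrm{cd}_n(\kappa) \le 2$ supplies $\mathrm{cd}(\kappa) \le 2$ at each such $\ell$. That theorem then gives the desired surjectivity, the double coset collapses, and $\Sha_{div}(F, SL_1(D)) = \{1\}$. The main technical point is checking that the $K$-descent of $D$ really does produce, at each $P$, the form $D_0+(E,\sigma,\pi_P^i\delta_P^j)$ required by Theorem~\ref{2dim-sk1-cd2}; once that is in hand everything else follows the scheme of Theorem~\ref{sk1-cd1}.
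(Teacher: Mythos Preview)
Your argument is correct, but the paper takes a more direct route. Rather than proving surjectivity of $SK_1(D)(F_P)\to\prod_i SK_1(D)(F_{\wp_i})$ at closed points via Theorem~\ref{2dim-sk1-cd2}, the paper shows that the branch terms themselves vanish: for each branch $\wp$ the residue field $\kappa(\wp)$ is complete discretely valued with residue a finite extension of $\kappa$, and since $D=D_0+(E,\sigma,t)$ with $D_0$ and $E$ unramified over $T$, one finds that $Z(\overline{D\otimes F_\wp})$ is (a factor of) $E\otimes\kappa(\wp)$ and $\overline{D\otimes F_\wp}$ is Brauer equivalent to $\overline{D_0}\otimes\overline{E}$, both unramified over $\kappa(\wp)$. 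Proposition~\ref{sk1d-cd2} then gives $SK_1(D)(F_\wp)=\{1\}$ directly, and the double coset collapses exactly as in Theorem~\ref{sk1-cd1}. So your assertion that ``the branch terms $SK_1(D)(F_\wp)$ need no longer vanish'' is in fact too pessimistic here: the descent of $D$ to $K$ forces them to vanish. Your detour through closed points and Theorem~\ref{2dim-sk1-cd2} works (indeed that theorem is proved precisely by showing one of the two branch terms vanishes via~\ref{sk1d-cd2} and then invoking~\ref{surj-2dim}), but it adds an unnecessary layer, together with the bookkeeping of the $\ell$-primary decomposition, over what is needed.
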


\begin{proof}  
Let $\XX_0$ be a regular proper model of $F$ such that    the closed fibre  $X_0$
is a union of regular curves with normal crossings.  Since 
$D$ is defined over $K$, the ramification locus of $D$ is contained in $X_0$. 

Let $t \in K$ be a parameter. Since per$(D)$ is coprime to char$(\kappa)$, we have 
$D = D_0 + (E, \sigma, t)$ for some  unramified central division algebra $D_0$ over $K$
and $E/K$ an unramified cyclic extension.  

Let $\PP_0$ be a finite set of closed points of $\XX$ containing all the singular points of $X_0$
and at least one closed point from each irreducible component of $X_0$.  Let $\UU_0$ be the set
of irreducible components of $X \setminus \PP_0$ and $\BB_0$ the set of branches with respect to $\PP_0$.

Let  $\wp \in \BB$.  Let $\hat{R}_\wp$ be the valuation ring of $F_\wp$.
Then $K \subset F_\wp$ and  $T  \subset \hat{R}_\wp$.   Let  $D_\wp$ be the 
central division algebra over $F_\wp$ which is Brauer equivalent  $D \otimes F_\wp$.
Let $\Lambda$ be the 
unique maximal $\hat{R}_\wp$-order of $D_\wp$. Then we have 
$E \otimes F_\wp = \prod Z(\bar{D}) $ and $\bar{D}$ Brauer equivalent  to 
$\bar{D}_0 \otimes \bar{E}$. In particular  $Z(\bar{D})$ and $\bar{D}$ are unramified.
Hence, by (\ref{sk1d-cd2}), $SK_1(D)(F_\wp)$ is trivial.   
Thus, as in the proof of (\ref{sk1-cd1}),  
  $\Sha_{div}(F, SL_1(D))$ is trivial. 
 \end{proof}

\begin{theorem}
\label{sk1u-cd2}
 Let $T_0$ be a complete discrete valuation ring with residue field $\kappa$ and field of 
fraction $K_0$. Let $K/K_0$ be  a quadratic  \'etale extension and 
  $A/K$    a central simple algebra over $K$ of period $n$   with a $K/K_0$-involution. 
 Let $F_0$ be the function field of a curve over $K_0$ and  $F = F_0K$. 
Suppose that $2n$ is   coprime to char$(\kappa)$. 
Let $\tau$ be  a $F/F_0$-involution on $A$. 
If cd$_n(\kappa) \leq 2$, then   $\Sha_{div}(F_0, SU(A, \tau))$ is trivial.  
 \end{theorem}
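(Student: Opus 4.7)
The plan is to mimic the proof of Theorem \ref{sk1-cd2}: use Theorem \ref{su} to present $\Sha_{div}(F_0, SU(A,\tau))$ as a double coset of local $K_1$-contributions at the branches of a carefully chosen model, and invoke Theorem \ref{sk1u-dvr} (the unitary analogue of \ref{sk1d-cd2}) to prove each branchwise contribution is trivial. The crucial input, as in \ref{sk1-cd2}, is that $A$ and its involution are defined over $K$, which forces the residue data to be unramified at every branch.

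First, choose a regular proper model $\XX$ of $F$ so that the union of the ramification locus of $(A,\tau)$, the support of the discriminant of $F/F_0$, and the closed fibre $X$ of $\XX$ is a union of regular curves with normal crossings; such a model exists by the desingularization arguments used elsewhere in the paper. Because $A$ and $\tau$ are defined over $K$, the ramification of $(A,\tau)$ is contained in the pull-back of the special fibre. Choose a finite set $\PP_0$ of closed points containing the singular locus of $X$ and at least one point from each component of $X$. By Theorem \ref{su},
$$\Sha_{div}(F_0, SU(A,\tau)) \simeq \prod_{U\in \UU_0} K_1SU(A,\tau)(F_U) \,\big\backslash\, \prod_{\wp\in \BB_0} K_1SU(A,\tau)(F_\wp) \,\big/\, \prod_{P\in\PP_0} K_1SU(A,\tau)(F_P),$$
so it is enough to prove that $K_1SU(A,\tau)(F_\wp)=1$ for every branch $\wp\in\BB_0$.

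Fix such a branch $\wp$ above a closed point $P$; it lies above a branch $\wp_0$ of the corresponding model of $F_0$, with $F_\wp$ sitting above $F_{0,\wp_0}$ and $F_\wp^{\tau}=F_{0,\wp_0}$. The field $F_{0,\wp_0}$ is a two-dimensional complete field whose iterated residue field is a finite extension of $\kappa$, so has $n$-cohomological dimension at most $2$. Since $K=K_0(\sqrt{u})$ for some unit $u\in T_0^*$, Hensel's lemma shows that $F_\wp/F_{0,\wp_0}$ is either totally split or an unramified quadratic field extension; in the latter case, the residual quadratic extension (playing the role of $K/K_0$ in \ref{sk1u-dvr}) is also unramified. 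In the split case one has $(A,\tau)\otimes_{F_0} F_{0,\wp_0}\cong (B\times B^{op},\mathrm{swap})$ with $B=A\otimes_K F_{0,\wp_0}$, and $K_1SU(A,\tau)(F_\wp)=SK_1(B)$; because $A$ is defined over $K$, the residue datum of $B$ is unramified over the residue of $F_{0,\wp_0}$, and the branchwise argument of \ref{sk1-cd2} (via Proposition \ref{sk1d-cd2}) gives $SK_1(B)=1$. In the unramified case, write $A=A_0+(E,\sigma,t)$ with $A_0$ and $E/K$ unramified; then the division part $D_\wp$ of $A\otimes F_\wp$ has both $\bar{D}_\wp$ and $Z(\bar{D}_\wp)$ unramified over the residue field of $F_\wp$, so Theorem \ref{sk1u-dvr} applies and yields $SUK_1(A\otimes F_\wp,\tau)=1$.

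Substituting these branchwise vanishings into the double coset formula gives the desired triviality of $\Sha_{div}(F_0,SU(A,\tau))$. The main obstacle will be the careful bookkeeping around the quadratic extension $F/F_0$ at the branches---in particular, verifying that both $F_\wp/F_{0,\wp_0}$ and its residual extension are split or unramified, and that the hypotheses on $(\bar{D}_\wp, Z(\bar{D}_\wp))$ in \ref{sk1u-dvr} hold at each branch. Both points reduce to the fact that $A$, $\tau$, and the quadratic extension $K/K_0$ are defined over the base complete discrete valuation ring $T_0$; once these verifications are in place, Theorem \ref{sk1u-dvr} supplies the substantive content.
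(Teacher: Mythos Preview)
Your approach is essentially the same as the paper's: apply Theorem~\ref{su} to obtain the double coset description of $\Sha_{div}(F_0, SU(A,\tau))$, then kill each branch term $SUK_1(A,\tau)(F_{0\wp})$ via Theorem~\ref{sk1u-dvr} (with the split case handled by reduction to $SK_1$ and Proposition~\ref{sk1d-cd2}), using throughout that $A$, $\tau$, and $K/K_0$ come from the base so that the residue data are unramified. One minor correction: the model you choose should be a regular proper model of $F_0$, not of $F$, since that is what Theorem~\ref{su} requires and what the paper does.
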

 
 \begin{proof}   Let $\XX_0$ be a regular proper model of $F_0$ with the union of 
the  ramification locus of $(A,\tau)$ and the closed fibre  $X_0$
is a union of regular curves with normal crossings. 
Let $\PP_0$ be a finite set of closed points of $\XX$ containing all the singular points of $X_0$
and at least one closed point from each irreducible component of $X_0$. 
Let $\UU_0$ be the set
of irreducible components of $X \setminus \PP_0$ and $\BB_0$ the set of branches with respect to $\PP_0$.

Let  $\wp \in \BB$.  Then $F_{0\wp}$ is a complete discretely  valued field with residue field $\kappa(\wp)$ also a complete 
discretely valued field with residue field a finite extension of $\kappa$. 
As in the proof of (\ref{sk1-cd2}), by using (\ref{sk1u-dvr}),  we get the triviality of $SUK_1(A, \tau)(F_{0\wp})$. 
 
Thus, as in the proof of (\ref{sk1u-cd1}), $\Sha_{div}(F_0, SU(A, \tau))$ is trivial.  
 \end{proof}

 \begin{cor} \label{an-cd2}
 Let $K$ be a complete discretely  valued  field  with residue field $\kappa$. 
  Let $F$ be the function field of a curve over $K$.  Let $G$ be a semisimple simply connected group over $K$
   of type $^iA_n$. If   cd$_{n+1}(\kappa) \leq 2$ and $i(n+1)$ is coprime to 
  char$(\kappa)$, then $\Sha_{div}(F, G) = \{ 1 \}$.
  \end{cor}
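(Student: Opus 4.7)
The plan is to mimic the proof of Corollary \ref{an-cd1} step by step, replacing the cohomological dimension $1$ inputs with their cohomological dimension $2$ analogues Theorems \ref{sk1-cd2} and \ref{sk1u-cd2}. First I would reduce to the absolutely simple case. Since $G$ is simply connected of type $A_n$ over $K$, it can be written as $G = R_{E/K}(G')$ for a finite separable extension $E/K$ and $G'$ absolutely simple simply connected of type $^iA_n$ over $E$ (see \cite[Theorem 26.8]{KMRT}). Shapiro's lemma $H^1(M, R_{E/K}(G')) \simeq H^1(M \otimes_K E, G')$, valid for any field extension $M/K$, reduces the computation of $\Sha_{div}(F, G)$ to the analogous problem for $G'$ over the compositum $FE$, which is the function field of a curve over $E$. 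Since $E/K$ is finite separable, $E$ is itself complete discretely valued with residue field $\kappa'$ a finite separable extension of $\kappa$; the hypothesis $\operatorname{cd}_{n+1}(\kappa) \le 2$ therefore descends to $\operatorname{cd}_{n+1}(\kappa') \le 2$, and the residue characteristic is unchanged, so the good-characteristic hypothesis still holds for $G'$.

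Next, in the inner case $i = 1$, \cite[Theorem 26.9]{KMRT} supplies a central simple algebra $A$ over $K$ (or $E$, after the reduction) of degree $n+1$ with $G = SL_1(A)$. The period of $A$ divides $n+1$ and is therefore coprime to $\operatorname{char}(\kappa)$. Theorem \ref{sk1-cd2} applies directly and gives $\Sha_{div}(F, SL_1(A)) = \{1\}$.

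For the outer case $i = 2$, \cite[Theorem 26.9]{KMRT} produces a quadratic separable extension $L/K$, a central simple algebra $A$ over $L$ of degree $n+1$ and an $L/K$-involution $\tau$ on $A$ with $G = SU(A, \tau)$. I would apply Theorem \ref{sk1u-cd2} taking $K_0 = K$, $K$ (in the notation of the theorem) $= L$, $F_0 = F$, and $F = FL$; the hypotheses are satisfied because $2(n+1)$ is coprime to $\operatorname{char}(\kappa)$ and $\operatorname{cd}_{n+1}(\kappa) \le 2$. This yields $\Sha_{div}(F, G) = \{1\}$.

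There is no substantial analytic difficulty here: the corollary is essentially an assembly of the infrastructure built earlier in the paper, namely the $R$-equivalence description of $\Sha_{div}$ for strongly isotropic simply connected groups (Corollary \ref{dvrsha-requiv}), the surjectivity of $SK_1$ and $SUK_1$ from a two-dimensional complete regular local ring onto the branch (Theorems \ref{surj-2dim}, \ref{sk1u-hyp2}), and the triviality of $SK_1$ and $SUK_1$ for central simple algebras unramified over a complete discretely valued field whose residue field has cohomological dimension $\le 2$ (Propositions \ref{sk1d-cd2}, \ref{sk1u-dvr}). The one point that warrants care is that when $F \otimes_K E$ is a product of fields rather than a single field, the reduction step must be applied factor by factor; each factor is still a function field of a curve over a complete discretely valued field with the required properties, so this is only a bookkeeping issue.
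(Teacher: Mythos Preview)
Your proposal is correct and takes essentially the same approach as the paper: the paper's proof is literally ``Using (\ref{sk1-cd2}) and (\ref{sk1u-cd2}), the proof is similar to the proof of (\ref{an-cd1})'', and you have spelled this out, correctly noting that here $G$ is defined over $K$ (not $F$) so the Weil restriction is along a finite extension $E/K$, keeping the underlying algebra constant over the base and allowing Theorems~\ref{sk1-cd2} and~\ref{sk1u-cd2} to apply. Your bookkeeping remarks (descent of $\operatorname{cd}_{n+1}$ to $\kappa'$, the product-of-fields case for $F\otimes_K E$) are apt refinements the paper leaves implicit; the only minor slip is that in the outer case you should write $L/E$ and take $K_0=E$, $F_0=FE$ after the reduction, rather than $L/K$ and $K_0=K$.
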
 
 
 \begin{proof} Using (\ref{sk1-cd2})) and (\ref{sk1u-cd2}), the proof is similar to the proof of (\ref{an-cd1}). 
 \end{proof}

\providecommand{\bysame}{\leavevmode\hbox to3em{\hrulefill}\thinspace}

 \end{document}